\documentclass[11pt]{article}

\usepackage[T1]{fontenc}
\usepackage{multicol}
\usepackage{amssymb,amsmath,amsthm,bm}
\usepackage[colorlinks=true,urlcolor=blue,linkcolor=blue,citecolor=blue]{hyperref}
\usepackage{aliascnt}
\usepackage[nameinlink,capitalise,noabbrev]{cleveref}
\usepackage{mathrsfs,verbatim}
\usepackage{caption}
\usepackage{graphicx,float}
\usepackage{color,mathtools,tikz,xcolor}
\usepackage{enumerate,enumitem}
\usepackage{lineno}
\usepackage{comment}
\usepackage{pdfpages}
\usepackage{algorithm}
\usepackage{algorithmic}
\usepackage[mathscr]{euscript}
\usepackage{appendix}
\usepackage{geometry}
\usepackage{svg}

\usetikzlibrary{shapes.geometric,calc}
\numberwithin{equation}{section}

\newcommand{\eps}{\varepsilon}
\newcommand{\e}{\varepsilon}

\newcommand{\C}{\mathcal{C}}

\newcommand{\K}{\mathcal{K}}
\newcommand{\J}{\mathcal{J}}

\newcommand{\h}{H}
\newcommand{\cP}{\mathcal{P}}

\newcommand{\proofstep}[2]{%
  \par\medskip
  \noindent\textbf{Step #1.}\ \emph{#2.}%
  \par\nobreak\smallskip\noindent\ignorespaces
}

\newcommand{\sketchstep}[2]{\proofstep{#1}{#2}}

\theoremstyle{plain}

\newtheorem{theorem}{Theorem}[section]
\crefname{theorem}{Theorem}{Theorems}
\Crefname{theorem}{Theorem}{Theorems}

\newcommand{\newaliastheorem}[3]{%
  \newaliascnt{#1}{theorem}%
  \newtheorem{#1}[#1]{#2}%
  \aliascntresetthe{#1}%
  \crefname{#1}{#2}{#3}%
  \Crefname{#1}{#2}{#3}%
}

\newaliastheorem{problem}{Problem}{Problems}
\newaliastheorem{question}{Question}{Questions}
\newaliastheorem{exercise}{Exercise}{Exercises}
\newaliastheorem{coro}{Corollary}{Corollaries}
\newaliastheorem{conjecture}{Conjecture}{Conjectures}
\newaliastheorem{claim}{Claim}{Claims}
\newaliastheorem{proposition}{Proposition}{Propositions}
\newaliastheorem{lemma}{Lemma}{Lemmas}
\newaliastheorem{fact}{Fact}{Facts}
\newaliastheorem{observation}{Observation}{Observations}
\newaliastheorem{case}{Case}{Cases}

\theoremstyle{definition}
\newaliastheorem{defn}{Definition}{Definitions}
\newaliastheorem{example}{Example}{Examples}
\newaliastheorem{condition}{Condition}{Conditions}
\newaliastheorem{assumption}{Assumption}{Assumptions}
\newaliastheorem{solution}{Solution}{Solutions}

\newaliastheorem{remark}{Remark}{Remarks}
\newaliastheorem{note}{Note}{Notes}
\newaliastheorem{notes}{Notes}{Notes}

\crefname{section}{Section}{Sections}
\Crefname{section}{Section}{Sections}
\crefname{subsection}{Section}{Sections}
\Crefname{subsection}{Section}{Sections}
\crefname{figure}{Figure}{Figures}
\Crefname{figure}{Figure}{Figures}
\crefname{equation}{Equation}{Equations}
\Crefname{equation}{Equation}{Equations}

\newenvironment{proofclaim}[1][Proof of claim]
{
\begin{proof}[#1]}
  {
\end{proof}}

\setlist{nolistsep}

\title{\Large\bf Tight Hamiltonian Cycles in Uniformly Dense $3$-Graphs}

\author{
  Yaobin Chen \thanks{Extremal Combinatorics and Probability Group (ECOPRO), Institute for Basic Science (IBS), Daejeon, South Korea. Email: {\tt ybchen@ibs.re.kr}.}
  \and Jie Han \thanks{School of Mathematics and Statistics, Beijing Institute of Technology, Beijing, China. Email:~{\tt han.jie@bit.edu.cn}.}
  \and Xizhi Liu\thanks{School of Mathematical Sciences, University of Science and Technology of China, Hefei, China. Email:~{\tt liuxizhi@ustc.edu.cn}.}
}

\date{}

\begin{document}

\maketitle

\begin{abstract}
  We study minimum degree conditions for tight Hamiltonian cycles in uniformly dense $3$-uniform hypergraphs.
  We prove that for every $d,\alpha>0$, every sufficiently large $(\rho,d)$-dense $3$-graph on $n$ vertices with minimum codegree at least $(1/3+\alpha)n$ contains a tight Hamiltonian cycle.
  This resolves a problem of Aigner-Horev and Levy in a stronger form, and the constant $1/3$ is asymptotically best possible.
  We also show that uniform density does not lower the asymptotic vertex-degree threshold: there are $(\rho,d)$-dense $3$-graphs with minimum vertex degree $(5/9-o(1))\binom{n}{2}$ and no tight Hamiltonian cycle.
  Finally, we construct $(\rho,2-\sqrt{3})$-dense examples with minimum codegree $(2-\sqrt{3}-o(1))n$ and no tight Hamiltonian cycle, answering negatively a question of Ara{\'u}jo, Piga and Schacht.
\end{abstract}

\section{Introduction}

Hamiltonian cycles are a central object in extremal combinatorics.
The classical theorem of Dirac~\cite{Dirac1952} asserts that every graph on $n\ge 3$ vertices with minimum degree at least $n/2$ contains a Hamiltonian cycle.
For $k$-uniform hypergraphs, or $k$-graphs, there are several natural analogues of cycles.
A cycle is called \emph{loose} if consecutive edges intersect in exactly one vertex, and \emph{tight} if consecutive edges intersect in exactly $k-1$ vertices.
This work is concerned with tight Hamiltonian cycles.
Dirac-type degree conditions for tight Hamiltonian cycles in hypergraphs have been studied extensively over the past two decades.
For $3$-graphs, classical results of R\"odl, Ruci\'nski and Szemer\'edi~\cite{RRS2006,RRS2008} show that the minimum codegree threshold for tight Hamiltonian cycles is $n/2+o(n)$.
The corresponding minimum \emph{vertex-degree} threshold was a long-standing open problem, and was eventually shown by Reiher, R\"odl, Ruci\'nski, Schacht and Szemer\'edi~\cite{RRRSZ} to be $\frac{5}{9}\binom{n}{2}+o(n^2)$.
This vertex-degree result was later generalized to $k$-graphs independently by Lang and Sanhueza-Matamala~\cite{LS2022} and by Polcyn, Reiher, R\"odl and Sch\"ulke~\cite{PRRS2021}.

There has been growing interest in extremal problems under pseudorandomness assumptions, especially for uniformly dense hypergraphs~\cite{Reiher2020UniformlyDense,RRS2018Vanishing,RRS2018Mantel,RRS2018WeakQuasirandom,Schacht2023Restricted}.
The relevant notion, introduced by Erd\H{o}s and S\'os~\cite{ES}, is \emph{uniform density}.
In the $3$-uniform case, it can be formulated as follows.
For vertex sets $X_1,X_2,X_3\subseteq V(H)$, let $e_H(X_1,X_2,X_3)$ denote the number of ordered triples $(x_1,x_2,x_3)\in X_1\times X_2\times X_3$ whose entries are distinct and satisfy $x_1x_2x_3\in E(H)$.
An $n$-vertex $3$-graph $H$ is called $(\rho,d)$-dense if, for every three
sets $X_1,X_2,X_3\subseteq V(H)$, not necessarily pairwise disjoint,
\[
  e_H(X_1,X_2,X_3) \ge d|X_1||X_2||X_3|-\rho n^3.
\]
The systematic study of Hamiltonian cycles in uniformly dense hypergraphs was initiated by Lenz, Mubayi and Mycroft~\cite{LMM2016}, who showed that uniform density can substantially lower the degree conditions needed for loose Hamiltonian cycles.
Many related results have since been obtained for spanning subgraphs in pseudorandom and related hypergraph models; see, for example,~\cite{CHKPO,HZH,BAW,GH1,MR3853886,DHS}.

By contrast, tight Hamiltonicity in uniformly dense hypergraphs remains much less developed.
Motivated by their work on tight cycles in related pseudorandom settings, Aigner-Horev and Levy~\cite{AignerHorevLevy_cite} asked for the codegree threshold for tight Hamiltonian cycles in $(\rho,d)$-dense $3$-graphs, especially when $d>1/3$.
Their question suggests that the threshold should be close to $n/3$, substantially below the $n/2+o(n)$ threshold for general $3$-graphs.

Our first result shows that this heuristic is asymptotically correct.
In fact, no density threshold bounded away from zero is needed: for every fixed $d,\alpha>0$, all sufficiently large $(\rho,d)$-dense $3$-graphs with minimum codegree at least $(1/3+\alpha)n$ contain a tight Hamiltonian cycle.

\begin{theorem}\label{thm:main_codegree}
  For every $d,\alpha>0$, there exist $\rho>0$ and $n_0\in\mathbb{N}$ such that every $(\rho,d)$-dense $3$-graph $H$ on $n\ge n_0$ vertices with $\delta_2(H)\ge \left(\frac{1}{3}+\alpha\right)n$ contains a tight Hamiltonian cycle.
\end{theorem}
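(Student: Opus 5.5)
We will follow the absorbing method, adapted to the uniformly dense setting. Throughout, an \emph{edge} of $H$ is an ordered pair $(u,v)$ with positive codegree. We work in the \emph{tight walk} sense: a tight path is determined by its sequence of vertices, and its two end-pairs are ordered pairs of vertices. The proof has four ingredients: connecting, absorbing, reserving, and covering. We carry them out in that order.

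\textbf{Step 1: Connecting lemma.} We aim to show that for any two disjoint ordered pairs $(a,b)$ and $(c,d')$, there are $\Omega(n^{\ell})$ tight paths of bounded length $\ell$ from $(a,b)$ to $(c,d')$. The codegree condition $\delta_2(H)\ge(1/3+\alpha)n$ alone does not make the \emph{tight-link graph} connected; uniform density is what supplies connectivity. Concretely:
\begin{itemize}
\item From $(a,b)$, the set $N(a,b)$ has size at least $(1/3+\alpha)n$.
\item Iterating, the pairs $(b,x)$ with $x\in N(a,b)$ generate a set $R_2$ of second-step endpoints of linear size, and so on.
\item Uniform density with $X_1,X_2,X_3$ chosen as the linear-size reachable sets from both sides forces $d|X_1||X_2||X_3|-\rho n^3>0$ edges among them.
\item These edges splice the two reachable families together, and a counting argument gives many connecting paths.
\end{itemize}
The role of $1/3$ is subtle here, since density alone could give parity or orientation obstructions. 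The condition $1/3+\alpha$ should ensure that within each link the neighbourhoods of consecutive pairs overlap sufficiently to avoid "bipartite-like" orientation obstructions. This is exactly where the $1/3$ threshold, which is sharp, enters: three codegree neighbourhoods of size more than $n/3$ must intersect pairwise in linearly many vertices.

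\textbf{Step 2: Absorbing path.} For each vertex $v$ we will count absorbers: tight paths $P$ of bounded length such that $P$ and a path with the same ends that additionally contains $v$ both exist. The simplest absorber is a path $x_1x_2x_3x_4$ with $x_2x_3v$, $x_1x_2v$ and $x_3x_4v$ all edges, together with $x_1x_2x_3$ and $x_2x_3x_4$. Its existence in $\Omega(n^4)$ copies follows from the following estimates:
\begin{itemize}
\item The link graph of $v$ has $\Omega(n^2)$ edges, by uniform density.
\item Codegree $>n/3$ lets us close up the local structure.
\item If necessary, we use slightly longer absorbers joined via Step 1.
\end{itemize}
A standard random selection then yields a family of $o(n)$ disjoint absorbers such that every vertex has linearly many of them. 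We connect these absorbers into one path $A$ using Step 1. We also set aside a small random reservoir $R$ that retains the connecting property.

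\textbf{Step 3: Almost cover, and closing the cycle.} On $V\setminus(V(A)\cup R)$ we apply the weak hypergraph regularity lemma. The reduced $3$-graph inherits both uniform density and a minimum codegree of roughly $(1/3+\alpha/2)t$. In it we find an almost perfect fractional matching, or tiling, into \emph{tight-connected} structures, for example $K_4^{(3)-}$-type or tight-path blow-ups. We expect this to be the main obstacle. The codegree $1/3$ is too low for a perfect fractional matching of a single tight component by degree alone, so we must use density to show that the reduced graph has one dominant tight component. We also need that this component supports a fractional perfect matching, via LP duality: a fractional cover of weight less than $t/3$ would yield a sparse set contradicting uniform density.

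Each regular cluster tuple then gives long tight paths covering almost everything. We link these paths, together with $A$, using $R$ and Step 1. The leftover vertices, $o(n)$ of them, are absorbed by $A$, which produces a tight Hamiltonian cycle.
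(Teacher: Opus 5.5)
\textbf{There is a genuine gap: your Step 1 is false, and everything after it relies on it.}

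Here is a counterexample. Let $V=V_0\cup U$ with $|V_0|=0.4n$ and $|U|=0.6n$. Colour every pair meeting $V_0$ red, and colour each pair inside $U$ independently blue with probability $0.8$ and red with probability $0.2$. Let $H$ be the $3$-graph of monochromatic triangles. The codegrees are then roughly as follows: blue pairs have codegree about $0.64\cdot 0.6n=0.384n$, red pairs inside $U$ about $0.4n+0.04\cdot 0.6n$, and red pairs meeting $V_0$ at least about $0.52n$. So $\delta_2(H)\ge(1/3+0.04)n$ with high probability. Every triple is an edge with probability at least $0.2$, so the concentration argument used for the lower-bound constructions makes $H$ $(\rho,0.2)$-dense for every $\rho>0$.

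However, consecutive edges of a tight path share a pair, so every tight path is monochromatic. Hence no blue ordered pair can be joined to a red one by any tight path. So neither uniform density nor the $1/3+\alpha$ codegree gives connectivity of ordered pairs. The edges that density provides between your ``reachable'' vertex sets need not splice anything, because connectivity lives on pairs (here, on colour classes), not on vertices. Also, three neighbourhoods of size above $n/3$ need not intersect pairwise; only some two of them must meet, which is what yields copies of $K_4^{(3)-}$.

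Consequently three steps are unjustified: joining the absorbers into one path, linking the covering paths through the reservoir, and closing the cycle. In this example, vertices of $V_0$ lie only in red edges, so any Hamiltonian cycle is red. A regularity cover of $H$ would naturally use many blue triangles inside $U$, and those paths can never be linked into it.

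\textbf{What is missing is a way to work with up to two connection classes.} The paper does this as follows.
\begin{enumerate}
\item It applies the regular slice lemma inside a random reservoir $A$. The reduced graph has minimum positive codegree at least $t/3$, so by Han's lemma it has at most two tight components.
\item Lifting these components partitions the ordered shadow into at most two classes. Within each class, disjoint pairs are robustly connected through $A$.
\item If two disjoint switcher pairs or switcher edges exist, they are used to move an end between classes when closing Han's two-path cover.
\item Otherwise, after deleting $O(1)$ vertices, every edge is monochromatic in the induced $2$-colouring of pairs. Comparing monochromatic triangles with bichromatic cherries under the codegree condition gives a linear set $V_0$ all of whose pairs have one colour $i$.
\item Uniform density applied to triples $(X,Y,V_0)$ then shows that the colour-$i$ shadow is itself dense.
\end{enumerate}

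This is the real role of uniform density. It lets three ingredients all be built inside a single class: the path cover (weak regularity plus an almost perfect matching in the reduced graph of the colour-$i$ triangles), the absorber with colour-$i$ ends, and the $C_{10}^{(3)}[2]$ gadgets for the mod-$3$ adjustment.

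Your Step 3 remark about a ``dominant tight component'' points in this direction. But it contradicts your Step 1, and you give no mechanism for finding or exploiting such a component. Your single-vertex absorber count is likewise only asserted. The paper instead absorbs sets whose size is divisible by $3$, balanced between two closed parts when necessary, using reachability and frequent edge index vectors.
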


The following result shows that the constant $1/3$ in \cref{thm:main_codegree} is essentially tight.
It is witnessed by a construction obtained from a random $2$-edge-coloring of the complete graph, where the hyperedges are the monochromatic triangles.

\begin{theorem}\label{thm:codegree_lower_bound}
  For every $\gamma,\rho>0$, there exist $d>0$ and $n_0\in\mathbb{N}$ such that, for every $n\ge n_0$, there is an $n$-vertex $(\rho,d)$-dense $3$-graph $H$ with
  $
  \delta_2(H)\ge \left(\frac{1}{3}-\gamma\right)n
  $
  which contains no tight Hamiltonian cycle.
\end{theorem}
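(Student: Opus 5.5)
The plan is to follow the construction announced before the statement: colour the pairs of an $n$-vertex set $V$ at random with red/blue, take as hyperedges the monochromatic triangles, and adjust this so that the codegree reaches $(1/3-\gamma)n$ while the structure still excludes a tight Hamiltonian cycle. The argument then has three parts, carried out in this order. First, the \emph{structural obstruction}: if $xyz$ and $yzw$ are consecutive edges of a tight walk, they share the pair $yz$, so both triangles carry the colour of $yz$. Hence every tight walk, and in particular every tight cycle, lies inside one colour class. A tight Hamiltonian cycle $v_1v_2\dots v_n$ would therefore need all pairs $v_iv_{i+1}$ and $v_iv_{i+2}$ in a single colour, that is, a monochromatic square of a Hamiltonian cycle.

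Second, I need to kill every such monochromatic square. A purely random colouring does not do this, since a random red graph of constant density contains the square of a Hamiltonian cycle. So I would plant a \emph{space-type barrier} inside each colour class. Fix a set $S\subseteq V$ with $|S|=(1/3-\gamma)n$. Any set meeting all edges of a tight Hamiltonian cycle has size at least $n/3$, because each vertex lies in only three of the $n$ edges. The colouring should therefore be arranged so that in each colour class every monochromatic triangle meets $S$, or meets a set of a different colour class of size less than $n/3$. Concretely, I would try two options. The first makes pairs inside $V\setminus S$ red and blue at random but forbids monochromatic triangles there. The second, if that loses too much density, uses several random colour classes indexed by parts, with hyperedges the triangles that are monochromatic in a suitable auxiliary colouring. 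In either case the transversal argument shows that no single colour class supports a tight Hamiltonian cycle.

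Third, the two quantitative checks, done with Chernoff bounds and a union bound over pairs and over triples of sets. For the codegree, a pair $uv$ of colour $c$ should have about $(1/3-\gamma)n$ common neighbours. This requires tuning the colour probabilities: for instance, a biased colour with probability $p$ gives codegree $p^2 n$ on its pairs, and the neighbours contributed through $S$ come on top of that. For uniform density, each $e_H(X_1,X_2,X_3)$ equals its expectation $d|X_1||X_2||X_3|$ up to $o(n^3)$ with probability $1-e^{-\Omega(n^2)}$. Since there are only $2^{3n}$ choices of sets, taking $n_0$ large makes the error below $\rho n^3$ for some $d=d(\gamma)>0$.

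I expect the main obstacle to be the second step, together with its interaction with the codegree: reconciling a transversal of size below $n/3$ (needed to block Hamiltonicity) with pseudorandomness (which forbids any set from spanning no edges) and with codegree close to $n/3$ for \emph{every} pair. The first thing I would try is to keep every colour class random on $V\setminus S$, so the density is positive everywhere, while forcing the tight components of $H$ to be confined to colour classes whose edges must cross $S$. The codegree bound $1/3-\gamma$ would then come from choosing the number of colours and their probabilities so that each pair sees roughly $n/3$ monochromatic completions.
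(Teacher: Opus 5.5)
There is a genuine gap in your Step~2: the obstruction you propose cannot coexist with uniform density, and no working construction is given. Your Step~1 (every tight cycle in a $3$-graph of monochromatic triangles is monochromatic) is exactly the fact the paper uses. Your Step~3 concentration plan also matches the paper's bounded-differences lemma.

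\emph{Your options fail.} Your first option leaves $V\setminus S$, a set of $(2/3+\gamma)n$ vertices, spanning no edge. Then $e_H(V\setminus S,V\setminus S,V\setminus S)=0$ violates $(\rho,d)$-density once $\rho<d(2/3)^3$. The refined version does not help with two colours. Suppose $S_R$ and $S_B$ have size less than $n/3$ and meet every red, respectively blue, edge. Then $U=V\setminus(S_R\cup S_B)$ has more than $n/3$ vertices and spans no edge at all. Hence $e_H(U,U,U)=0<d|U|^3-\rho n^3$ once $\rho<d/27$. (If $d$ were allowed to be at most $\rho$, the density condition would be vacuous. The real content, and what you aim for with $d=d(\gamma)$, is that $d$ does not shrink with $\rho$.) Your several-colour option is not specified, and neither its codegree nor its density is checked. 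So the heart of the proof, the construction itself, is missing.

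\emph{The missing idea.} Use a \emph{connectivity} obstruction rather than a space obstruction; it can be planted at a single vertex and so costs nothing in density. The paper's construction is as follows.
\begin{itemize}
\item Take $V=X\cup Y\cup Z\cup\{v\}$ with $|X|,|Y|,|Z|\approx n/3$.
\item Colour all pairs at $v$ red and all pairs between different parts blue.
\item Colour pairs inside a part red with probability $1-\varepsilon$ and blue with probability $\varepsilon$.
\item Let $H$ be all monochromatic triangles.
\end{itemize}
A blue tight cycle misses $v$, since $v$ is isolated in blue. For a red tight Hamiltonian cycle, its cyclic vertex order would be a Hamiltonian cycle of the red graph. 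But $v$ is a cut vertex of the red graph, because no red pair joins different parts.

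For the codegrees:
\begin{itemize}
\item a cross pair is completed by the whole third part, which is where $1/3$ comes from;
\item a red inner pair is completed by about $(1-\varepsilon)^2n/3$ vertices;
\item a blue inner pair is completed by about $2n/3$ vertices;
\item a pair $vx$ is completed by about $(1-\varepsilon)n/3$ vertices.
\end{itemize}
Every triple avoiding $v$ is an edge with probability at least $\varepsilon^3$, so $d=\varepsilon^3$ depends only on $\gamma$.

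A smaller point on Step~3: what you need is a uniform lower bound on the probability that each triple (away from $O(1)$ vertices) is an edge. The expectation of $e_H(X_1,X_2,X_3)$ is not $d|X_1||X_2||X_3|$ in general.
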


Recall that, by a result of Reiher, R\"odl, Ruci\'nski, Schacht and Szemer\'edi~\cite{RRRSZ}, for general $3$-graphs the asymptotic vertex-degree threshold for tight Hamiltonian cycles is $\frac{5}{9}\binom{n}{2}$.
In view of \cref{thm:main_codegree}, it is natural to ask whether uniform density lowers the vertex-degree threshold as well.
The following result shows that it does not.

\begin{theorem}\label{thm:vertex_degree_lower_bound}
  For every $\gamma,\rho>0$, there exist $d>0$ and $n_0\in\mathbb{N}$ such that, for every $n\ge n_0$, there is an $n$-vertex $(\rho,d)$-dense $3$-graph $H$ with
  $
  \delta_1(H)\ge \left(\frac{5}{9}-\gamma\right)\binom{n}{2}
  $
  which contains no tight Hamiltonian cycle.
\end{theorem}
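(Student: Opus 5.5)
The plan is to start from the extremal construction of Reiher, R\"odl, Ruci\'nski, Schacht and Szemer\'edi~\cite{RRRSZ} for the vertex-degree threshold $\frac59\binom n2$, and to make it $(\rho,d)$-dense by adding a sparse, random-like family of edges of the forbidden type. The hope is that the obstruction to tight Hamiltonicity survives this addition. I have not checked the details of the base construction or of the added gadget; the argument below is a plan, and its weakest point is Step~3.

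\proofstep{1}{The base construction}
Take a partition $V=A\cup B$ with $|A|\approx n/3$; the exact size is to be tuned to the divisibility and space argument of~\cite{RRRSZ}. Let $H_0$ consist of all triples $e$ with $|e\cap A|\neq 2$.
\begin{itemize}
\item A vertex of $A$ has degree about $\binom{|B|}{2}+\binom{|A|}{2}\approx(\frac49+\frac19)\binom n2=\frac59\binom n2$.
\item A vertex of $B$ has degree about $\frac89\binom n2$.
\end{itemize}
So $\delta_1(H_0)\ge(\frac59-o(1))\binom n2$. The first task is to recall, or reprove, the argument that $H_0$ has no tight Hamiltonian cycle. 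This is a counting argument along the cyclic sequence of vertices. Because consecutive edges share a pair, the $A$-vertices must occur in blocks whose sizes and spacings are constrained: for instance, two $A$-vertices at distance at most $2$ force every edge through a suitable window to contain at least three, or exactly one, $A$-vertex in a consistent way. With $|A|$ chosen appropriately, this constraint is violated.

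\proofstep{2}{Adding density}
$H_0$ is not uniformly dense, because it has no edges of type $AAB$. To fix this, I would borrow the gadget behind \cref{thm:codegree_lower_bound}. Take a uniformly random $2$-colouring $\chi$ of the pairs, and add to $H_0$ a family $H_1$ of $AAB$-triples. One option is the triples $aa'b$ that are monochromatic under $\chi$. A stronger option is to require in addition some colour pattern that is incompatible with the tight-cycle structure.

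Set $H=H_0\cup H_1$. Since every type $AAA$, $AAB$, $ABB$, $BBB$ now carries a constant fraction of edges spread quasirandomly, a standard concentration argument should show that $H$ is $(\rho,d)$-dense for some $d=d(\gamma)>0$. The argument is Chernoff bounds for the colour counts over all $X_1,X_2,X_3$, combined with a regularity-free discrepancy bound for random colourings. Vertex degrees only increase, so $\delta_1(H)\ge(\frac59-\gamma)\binom n2$.

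\proofstep{3}{Main obstacle: the obstruction survives}
The main obstacle is showing that $H$ still has no tight Hamiltonian cycle, since a single $AAB$ edge could break the rigid local pattern used in Step~1.

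My first attempt would be to ensure that edges of $H_1$ can never be used consecutively in the way the space argument needs to avoid. Concretely, I would mirror the monochromatic-triangle argument: along a tight walk, consecutive edges share a pair and hence a colour. Under this rule the colour classes propagate along the cycle, and the cycle is forced to live in an auxiliary structure in which the counting of Step~1 still applies, possibly with a slightly adjusted $|A|$ to absorb $o(n)$ slack.

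If this fails, the fallback is to make $H_1$ sparser. I would keep only those $AAB$-triples lying inside a random bipartition of $A$ (or a random subset of $B$), chosen so that any tight walk using them is trapped in a part too small to contribute. This still gives positive density, albeit with smaller $d$.
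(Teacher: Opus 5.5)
\textbf{The gap is fatal, not just a missing detail.} Once $H\supseteq H_0$, any two disjoint $AAB$-edges destroy the obstruction, however $H_1$ is chosen.

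Your Step~1 itself is fine with $|A|=\lfloor n/3\rfloor+1$. The $AAA$-edges form their own tight components, and every other edge of $H_0$ has at most one vertex in $A$. So a tight Hamiltonian cycle would need $3|A|\le n$.

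An added edge $aa'b$ shares $aa'$ with $AAA$-edges and $a'b$ with $ABB$-edges, so it merges these components. Two disjoint such edges already give a tight Hamiltonian cycle. Label $A=\{a_1,\dots,a_s\}$ and $B=\{b_1,\dots,b_t\}$ so that the added edges are $a_{s-1}a_sb_1$ and $b_ta_1a_2$. In the cyclic order $a_1a_2\cdots a_sb_1b_2\cdots b_t$, every other consecutive triple has type $AAA$, $ABB$ or $BBB$, hence lies in $H_0$.

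Uniform density applied to $(A,A,B)$ forces $\Omega(n^3)$ edges of type $AAB$, and your $H_1$ has that many anyway. So two disjoint ones always exist. This is also why both remedies in Step~3 fail:
\begin{itemize}
\item Only the $H_1$-edges carry colour constraints. A tight walk can leave an $H_1$-edge straight into uncoloured $H_0$-edges, so no colour propagates.
\item Confining the $AA$-pairs of $H_1$ to one side $A_1$ of a bipartition of $A$ gives $e_H(A_2,A_2,B)=0$, which contradicts density. It also still leaves two disjoint $AAB$-edges.
\end{itemize}
The monotone strategy (``vertex degrees only increase'') is the real problem: some edges of the extremal configuration must be removed.

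\textbf{The missing idea.} The paper makes \emph{every} edge a monochromatic triangle of a random $2$-colouring. Consecutive edges of a tight cycle share a pair, so every tight cycle is monochromatic, and one can place a separate obstruction in each colour. The construction is:
\begin{itemize}
\item $|X|=\lfloor 2n/3\rfloor+1$ and $|Y|=\lceil n/3\rceil-1$;
\item pairs inside $X$ are red with probability $1-\varepsilon$, pairs inside $Y$ are blue with probability $1-\varepsilon$, and all $X\times Y$ pairs are blue;
\item $H$ consists of all monochromatic triangles except the blue triangles inside $X$.
\end{itemize}
The red graph has no $X$--$Y$ pairs, so it is disconnected and cannot carry a spanning cycle. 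Every blue edge meets $Y$, so a blue tight Hamiltonian cycle would force $3|Y|\ge n$.

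For degrees, a vertex of $X$ gets about $\frac49\binom n2$ from red $XXX$-triangles and $\frac19\binom n2$ from blue $XYY$-triangles. A vertex of $Y$ gets about $\frac49\binom n2$ from $XYY$ and $\frac19\binom n2$ from $YYY$. Every triple is an edge with probability at least $\varepsilon$; type $XXY$ arises via a blue $XX$-pair. So $d=\varepsilon$ works for every $\rho$.

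Note that the underlying extremal configuration is the space barrier, not your $|e\cap A|\ne 2$ configuration. Its sparse filler type $XXY$ still meets $Y$, so it does not hurt the counting. Colouring your $H_0$ instead would force the filler $AAB$-edges to be blue alongside the $ABB$-edges, and blue edges with two $A$-vertices break the count $3|A|\le n$.

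Keep $d$ independent of $\rho$, as you intended. For $d\le\rho$ every $3$-graph is trivially $(\rho,d)$-dense, so that is where the content of the theorem lies.
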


We close the introduction with a related question of Ara{\'u}jo, Piga and Schacht~\cite{AraujoPigaSchacht_cite}.
They asked whether every $(\rho,1/4)$-dense $3$-graph with minimum codegree at least $n/4$ must contain a tight Hamiltonian cycle.
Set $d_\star \coloneqq 2-\sqrt{3}$.
The following theorem constructs counterexamples already at density $d_\star$, which is strictly larger than $1/4$.

\begin{theorem}\label{thm:araujo_piga_schacht}
  For every $\gamma,\rho>0$, there exists $n_0\in\mathbb{N}$ such that, for every $n\ge n_0$, there is an $n$-vertex $(\rho,d_\star)$-dense $3$-graph $H$ with $\delta_2(H)\ge (d_\star-\gamma)n$ which contains no tight Hamiltonian cycle.
\end{theorem}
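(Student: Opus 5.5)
The plan is to build $H$ as a random "coloured-triangle" hypergraph, in the spirit of the monochromatic-triangle example behind \cref{thm:codegree_lower_bound}, and to add a small structured part that blocks tight Hamiltonicity. The key principle is that consecutive edges of a tight cycle share a pair. So if every edge of $H$ determines a "type" of its pairs (for example a colour of the underlying graph), then the types of consecutive pairs along a tight cycle are constrained, and a global counting or divisibility argument can then forbid a spanning tight cycle. Concretely, I would take a vertex partition $V=A\cup B$ with $|A|=an$ and a random $2$-colouring of the pairs, with colour probabilities $p$ and $1-p$. The hyperedges are triples whose three pairs satisfy a prescribed colour pattern. On triples touching $A$ the pattern is arranged so that $A$-vertices can only be entered and left along one colour class, and no edge contains two vertices of $A$.

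\textbf{Step 1: no tight Hamiltonian cycle.} In a tight cycle $v_1v_2\dots v_n$ every vertex of $A$ lies in three consecutive edges. Since no edge contains two vertices of $A$, any two vertices of $A$ are at cyclic distance at least $3$, so $|A|\le n/3$ is forced. The colour pattern will be chosen so that the pairs $v_iv_{i+1}$ and $v_iv_{i+2}$ force a propagation rule, for instance that the colour of $v_iv_{i+1}$ is determined by that of $v_{i-1}v_i$. This should produce a required number of $A$-vertices on the cycle incompatible with $|A|=an$, either exceeding it or violating a parity condition. This step is deterministic once the pattern is fixed.

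\textbf{Step 2: uniform density and codegree.} For codegree, a pair $xy$ of a given colour has about $c(p,a)n$ common neighbours, where $c$ is a polynomial in $p$ and $a$. Concentration (Chernoff plus a union bound over pairs) gives $\delta_2(H)\ge (\min c-\gamma)n$. For uniform density, the random colouring is quasirandom with high probability. A standard counting-lemma or discrepancy argument over all triples of sets $X_1,X_2,X_3$ then shows that $e_H(X_1,X_2,X_3)$ is at least the minimum over "local" edge densities, times $|X_1||X_2||X_3|$, minus $\rho n^3$. The worst case arises when all $X_i\subseteq B$ or when they straddle $A$ and $B$. So the density $d$ equals the minimum of a few explicit polynomials in $p$ and $a$.

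\textbf{Step 3: optimisation, the main obstacle.} I then choose $p$ and $a$ to make the codegree bound and the density bound both equal to $d_\star$. I expect the balance equations to reduce to $x^2-4x+1=0$, whose root in $(0,1)$ is $2-\sqrt3$. The difficulty is designing the colour pattern so that three things hold at once. First, the Step 1 obstruction must survive; this is delicate because tight cycles may wander between colour classes. Second, no triple $(X_1,X_2,X_3)$ may be sparser than $d_\star$, which is a case analysis over where the $X_i$ sit. Third, the codegree of the worst pair type must match. I would first try the simplest pattern, in which $B$ is monochromatic-triangle-like with biased probabilities and $A$ is attached only via one colour. I would then adjust which mixed patterns are allowed until the two extremal quantities coincide at $2-\sqrt3$.
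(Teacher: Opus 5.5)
There is a genuine gap, and it is in where you put the obstruction. First, the proposal never fixes a construction. The colour pattern, the propagation rule of Step~1, and the values of $p$ and $a$ are all left to a later search, so none of the three required properties is actually verified.

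Second, the one structural feature you do commit to cannot work. Suppose no edge of $H$ contains two vertices of $A$.
\begin{itemize}
\item Every pair $\{a,a'\}\subseteq A$ then has codegree $0$, so $\delta_2(H)=0$ as soon as $|A|\ge 2$. This holds whatever count Step~1 eventually produces, including a parity count.
\item Also $e_H(A,A,V(H))=0$. But $(\rho,d_\star)$-density demands at least $d_\star a^2n^3-\rho n^3>0$ once $\rho<d_\star a^2$. The theorem must allow this, since $\rho$ is arbitrary and your $a$ is a constant fixed by the optimisation.
\end{itemize}
The bound $|A|\le n/3$ you derive is an obstruction only when $|A|>n/3$, which makes both failures worse. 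Your Step~2 claim that the sparsest triples of sets lie in $B$ or straddle $A$ and $B$ also overlooks $X_1=X_2=A$. More generally, uniform density forbids making any linear-size set sparse. The obstruction therefore has to be carried by a sublinear set of vertices, whose triples are absorbed by the $\rho n^3$ error term.

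The missing idea is to use your own starting principle in its strongest form. If the edges are monochromatic triangles of a $2$-colouring of $K_n$, consecutive edges of a tight cycle share a pair, so every tight cycle is monochromatic. It then suffices to make each colour fail for a reason concentrated at a single vertex. The paper's construction is as follows.
\begin{itemize}
\item Take $V=X\cup Y\cup\{v\}$ with $|X|,|Y|$ as equal as possible.
\item Colour each pair inside $X$ or inside $Y$ red with probability $p=\sqrt3-1$ and blue with probability $q=2-\sqrt3$, independently.
\item Make all $X$--$Y$ pairs blue and all pairs at $v$ red, and let $H$ consist of all monochromatic triangles.
\end{itemize}
A blue tight cycle misses $v$, which is isolated in blue. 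A red tight Hamiltonian cycle would give a Hamiltonian cycle of the red graph, in which $v$ is a cut vertex.

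Your guessed quadratic is right. With $p=1-q$, the relation $p^2=2q$ is exactly $q^2-4q+1=0$. It makes the codegree of red pairs inside a part, about $p^2|X|\approx p^2n/2$, equal to $qn$. This matches the cross-pair codegree $q|X|+q|Y|\approx qn$ and the probability $q$ that a triple meeting both parts is an edge. The remaining codegrees are comfortable: blue pairs inside a part have codegree at least $|Y|$, and pairs at $v$ have about $pn/2>qn$. Every triple avoiding $v$ is an edge with probability at least $\min\{q,p^3+q^3\}=q$. A bounded-differences argument then gives $(\rho,q)$-density, since the single vertex $v$ affects only $O(n^2)$ triples.
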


\medskip

This work is organized as follows.
In \cref{pre}, we introduce notation and collect preliminary tools.
In \cref{section:regularity}, we review the regularity tools used later.
In \cref{sec:robusttight}, we define robust tight connectedness and derive the main structural case distinction.
In \cref{sec:rcc-absorption}, we state a common absorption lemma for
robust component classes using an RCC path-cover condition.
The switcher and non-switcher cases are proved in \cref{switcher-case} and \cref{non-switcher-case}, respectively.
The short final reduction to these cases is given at the end of \cref{non-switcher-case}.
In \cref{const}, we give the lower-bound constructions for \cref{thm:codegree_lower_bound,thm:vertex_degree_lower_bound,thm:araujo_piga_schacht}.
We finish with open problems in \cref{sec:concludingremark}.

\subsection*{Proof sketch of \texorpdfstring{\cref{thm:main_codegree}}{the main theorem}}

The proof of \cref{thm:main_codegree} occupies the main part of this work.
We now outline the proof strategy.
The argument combines a partition--index absorbing method with a robust connecting argument.
The absorbing part reduces the problem to finding an almost-spanning tight cycle with a controlled leftover set.
We first build a small absorbing structure, then cover almost all remaining vertices by a tight cycle, and finally absorb the leftover set, after arranging that its size is divisible by $3$.
Thus the main point of the proof is to construct the almost-spanning tight cycle in a way that leaves its endpoints easy to connect.
Let $H$ be a $(\rho,d)$-dense $3$-uniform hypergraph on $n$ vertices with $\delta_2(H)\ge \left(\frac{1}{3}+\alpha\right)n$.
The proof is organized into the following four steps.

\sketchstep{1}{Robust tight connectedness}
We first reserve a uniformly random set $A\subseteq V(H)$ of linear size.
By the minimum codegree condition and \cref{lem:chernoff}, with high probability every pair has codegree at least $(1/3+\alpha/2)|A|$ into $A$.
This reservoir will later contain the internal vertices of the short paths used to connect longer paths.
Since these connections are made one after another, some vertices of $A$ may already have been used when a later connection is needed.

For this purpose, we introduce \emph{robust tight connectedness} for ordered pairs.
Roughly speaking, two ordered pairs are robustly tightly connected with respect to $A$ if, after deleting any small forbidden subset of $A$, they can still be joined by a short tight path whose internal vertices lie in the remaining part of $A$.
This leaves enough connectors even after earlier connections have been made, and allows us to postpone the actual choice of connecting vertices until the end of the argument.
The formal definition is \cref{def:robustconnected}, and the main lemma producing robust connection components is \cref{equivalentclass}.

\sketchstep{2}{Connection classes from the regular slice lemma}
The endpoints of a tight path are ordered pairs.
We therefore organize the ordered shadow of $H$ according to which ordered pairs can be robustly connected through the reservoir.
We use the regular slice lemma, stated as \cref{thm:strengthened_regularity}, inside the reservoir to pass to a reduced $3$-graph.
The minimum codegree assumption forces this reduced $3$-graph to have at most two tight components, by \cref{twocom}.
Lifting the reduced connections back to $H$ in \cref{equivalentclass}, we obtain a partition of the ordered vertex pairs into at most two robust connection components, denoted by $\mathcal{C}_1$ and $\mathcal{C}_2$.
Any two vertex-disjoint ordered pairs in the same class can be robustly connected through $A$.
Thus the only possible difficulty is that two endpoints may lie in different classes.

According to these two classes, \cref{dividecase} distinguishes the following alternatives.

\begin{itemize}
  \item[\textbf{(A1)}] \emph{Switcher pairs.}
    There exist two disjoint unordered pairs $\{u_1,v_1\}$ and $\{u_2,v_2\}$ such that
    \begin{align*}
      \left\{ (u_1,v_1),~(u_2,v_2) \right\} \subseteq \mathcal{C}_1
      \quad\text{and}\quad
      \left\{ (v_1,u_1),~(v_2,u_2) \right\} \subseteq \mathcal{C}_2.
    \end{align*}

  \item[\textbf{(A2)}] \emph{Switcher edges.}
    There exist two disjoint edges $x_1y_1z_1$ and $x_2y_2z_2$ in $E(H)$ such that
    \begin{align*}
      \left\{ (x_1,y_1),~(x_2,y_2) \right\} \subseteq \mathcal{C}_1
      \quad\text{and}\quad
      \left\{ (z_1,y_1),~(z_2,y_2) \right\} \subseteq \mathcal{C}_2.
    \end{align*}
    These are the two attachment pairs of the $3$-vertex paths $x_jy_jz_j$.

  \item[\textbf{(A3)}] \emph{Non-switcher case.}
    Neither (A1) nor (A2) occurs.
\end{itemize}

The first two alternatives may overlap, so in the formal proof we treat them as switcher cases and reserve (A3) for the case when no switcher of either type exists.

\sketchstep{3}{The switcher cases}
In the switcher cases, the obstruction caused by the two connection classes can be routed around.
These cases are proved in \cref{maincase1}.
The path-cover step relies on \cref{coverall}, the covering lemma of Han~\cite{Han22}, which gives two vertex-disjoint tight paths covering almost all vertices under minimum codegree strictly greater than $n/3$.
The common absorbing tool is the indexed absorber, stated as
\cref{lem:indexed-absorber}.  Roughly speaking, given a bounded partition
into closed parts and an RCC path-cover property, it constructs at most $r$
short disjoint absorbing paths.  This path family can absorb every small
leftover whose distribution among the parts is a nonnegative integer
combination of frequently occurring edge distributions.  In the switcher
cases, the $3$-uniform specialization \cref{absorber1} gives at most two
short paths that absorb every sufficiently small leftover set whose size is
divisible by $3$.  The replaceable segments given by
\cref{tightcycleexist} allow the leftover size to be adjusted modulo $3$.

We first set aside the reservoir set $A$, the absorbing paths, and the vertices used by the chosen switchers.
In the remaining $3$-graph, a path-cover argument gives two vertex-disjoint tight paths covering almost all available vertices.
The four path ends must then be connected in pairs in order to form a cycle.  The ordered pair used by a connector is the attachment pair, namely the reverse of the corresponding end pair.
Whenever two attachment pairs lie in the same robust connection class, we connect them directly \textit{through} $A$.
When two attachment pairs lie in different classes, a switcher pair or switcher edge changes the class of one path end.
After this joining step, all required endpoint pairs can be placed in the same classes, and the robust connections through $A$ merge the two long paths into an almost-spanning tight cycle.

\sketchstep{4}{The non-switcher case}
The non-switcher case is where uniform density becomes essential.
This case is proved in \cref{nonswitchercase}.
If no switcher exists, then the two robust connection components cannot mix freely.
By \cref{propcolor1,prop:coloritem}, after deleting a constant-sized exceptional set, every edge of $H$ has all three boundary pairs in the same class.
This gives an $A$-connected $2$-coloring of the shadow graph: a pair $xy$ receives color $i$ if both ordered pairs $(x,y)$ and $(y,x)$ belong to $\mathcal{C}_i$.
With respect to this coloring, the shadow of every remaining hyperedge is monochromatic.

This restriction gives useful structure.
By \cref{lem:monosetexist}, one of the two colors has a linear-sized one-sided vertex set: for some color $i\in\{1,2\}$, there is a set $V_0$ of size $\Omega(n)$ such that, outside the exceptional set, every pair incident to a vertex of $V_0$ has color $i$.
We then focus on this color.
Using $V_0$ as an anchor, \cref{lem:shadowlocaldense} shows that the uniform density of $H$ forces the color-$i$ shadow graph to be uniformly dense.
Indeed, if there were too few color-$i$ pairs between two large sets, then applying the uniform density condition with $V_0$ as the third set would give too few hyperedges.

Now consider the $3$-graph formed by the color-$i$ monochromatic triangles, namely the hyperedges of $H$ whose three shadow pairs all have color $i$.
\Cref{lem:longpath} applies weak regularity to this color-$i$ $3$-graph.
Using the density of the color-$i$ shadow graph from \cref{lem:shadowlocaldense} and the original codegree condition, it obtains an almost-perfect matching in the corresponding weak reduced graph.
Each matched triple of clusters contains long tight paths, and these paths cover almost all remaining vertices.

All end pairs of these paths have color $i$, hence they belong to the same robust connection class.
We can therefore connect the paths using only a small number of vertices from the reservoir $A$.
\Cref{absorber2}, the one-class application of
\cref{lem:indexed-absorber}, gives an absorbing path whose end pairs have
color $i$.  It absorbs every sufficiently small leftover whose size is
divisible by $3$ and which, when there are two closed parts, is balanced
between them.  Meanwhile, \cref{lem:monochromatic_c10} gives two color-$i$
replaceable segments for the modulo-$3$ adjustment.  After the long paths
have been connected, these tools absorb the remaining vertices and complete
a tight Hamiltonian cycle.

\section{Preliminaries}\label{pre}

We collect here the notation, definitions, and probabilistic tools used throughout the paper.

Let $H=(V,E)$ be a $3$-graph with vertex set $V$ and edge set $E\subseteq \binom{V}{3}$.
We write $|H|$ for the number of vertices of $H$, and $e(H)$ for the number of hyperedges of $H$.
For a vertex $v\in V$, we denote by $\deg_H(v)$ the \emph{vertex degree} of $v$, namely the number of edges of $H$ containing $v$.
For distinct vertices $u,v\in V$, we denote by $\deg_H(u,v)$ the \emph{codegree} of the pair $\{u,v\}$, namely the number of edges of $H$ containing both $u$ and $v$.
We write $\delta_1(H)\coloneqq\min_{v\in V}\deg_H(v)$ and
$\delta_2(H)\coloneqq\min_{\{u,v\}\in\binom{V}{2}}\deg_H(u,v)$ for the
\emph{minimum vertex degree} and the \emph{minimum codegree} of $H$.

We also use degrees relative to a vertex set.
For $A\subseteq V$ and $\{u,v\}\in\binom{V}{2}$, set
$\deg_A(u,v)\coloneqq|\{w\in A\colon uvw\in E\}|$.  Similarly, for
$v\in V$, let $\deg_A(v)$ be the number of pairs
$\{x,y\}\in\binom{A}{2}$ with $vxy\in E$.
Equivalently, $\deg_A(v)$ is the number of edges in the link graph of $v$ induced by $A$.

The \emph{shadow} of a $3$-graph $H=(V,E)$ is the graph
$\partial H\coloneqq\{\{u,v\}\in\tbinom{V}{2}\colon\deg_H(u,v)\ge1\}$.
When orientations matter, $\overrightarrow{\partial}_2(H)$ denotes the set of ordered pairs $(u,v)$ such that $\{u,v\}\in\partial H$.
Thus both $(u,v)$ and $(v,u)$ are contained in $\overrightarrow{\partial}_2(H)$ and are treated as different ordered pairs.
For $U\subseteq V(H)$, we write $H[U]$ for the induced subhypergraph of $H$ on $U$; that is, its edge set is $E(H)\cap\binom{U}{3}$.
For $A\subseteq V(H)$, we write $H\setminus A$ for the induced subhypergraph obtained by deleting the vertices of $A$ (and deleting all edges containing any vertices from $A$).

A \emph{tight path} $P$ in a $3$-graph $H$ is a sequence of distinct vertices
$v_1,\dots,v_m$, where $m\ge2$, such that
$v_iv_{i+1}v_{i+2}\in E(H)$ for every $1\le i\le m-2$.
We explicitly allow the degenerate $2$-vertex tight path; it has no hyperedges.
The \emph{end pairs} of $P$ are $(v_2,v_1)$ and $(v_{m-1},v_m)$.
For an ordered pair $p=(x,y)$, write $p^{\mathrm{rev}}\coloneqq(y,x)$.
If $p$ is an end pair of $P$, then $p^{\mathrm{rev}}$ is called the
\emph{attachment pair} at that end.  Thus a connector attached at that end
has $p^{\mathrm{rev}}$ as one of its ordered end pairs.
The \emph{internal vertices} of $P$ are the vertices of $P$ not contained in
its two end pairs; in particular, a tight path on at most four vertices has no
internal vertex outside its end pairs.
A \emph{tight cycle} of length $m$ is a cyclic ordering of distinct vertices $v_1,v_2,\dots,v_m$, with $m\ge 4$, such that $v_iv_{i+1}v_{i+2}\in E(H)$ for every $i\in\mathbb{Z}/m\mathbb{Z}$.
A tight cycle is a \emph{tight Hamiltonian cycle} if it spans all vertices of $H$.
We write $C_m^{(3)}$ for the $3$-uniform tight cycle of length $m$; thus $C_{10}^{(3)}$ has vertices $v_1,\dots,v_{10}$ and edges $v_iv_{i+1}v_{i+2}$ for $i\in\mathbb Z/10\mathbb Z$.
For positive integers $a,b,c$, let $K_{a,b,c}^{(3)}$ denote the complete
$3$-partite $3$-graph with vertex-class sizes $a,b,c$.
For a fixed $3$-graph $F$ and an integer $s\ge 1$, the \emph{$s$-blow-up} $F[s]$ is obtained by replacing each vertex $x\in V(F)$ by a set $X_x$ of $s$ new vertices and replacing each edge $xyz\in E(F)$ by all triples with one vertex in each of $X_x,X_y,X_z$.
In particular, $C_{10}^{(3)}[2]$ is the $2$-blow-up of $C_{10}^{(3)}$.
Two edges $e$ and $f$ of a $3$-graph are \emph{tightly connected} if there is a sequence of edges $e=e_1,e_2,\dots,e_s=f$ such that $|e_j\cap e_{j+1}|=2$ for every $j\in[s-1]$.
Such a sequence is called a \emph{pseudo-path} from $e$ to $f$.
The equivalence classes of this relation on $E(H)$ are called the \emph{tight components} of $H$.

For not necessarily disjoint sets $X_1,X_2,X_3\subseteq V(H)$, let $e_H(X_1,X_2,X_3)$ denote the number of ordered triples $(x_1,x_2,x_3)\in X_1\times X_2\times X_3$ such that $x_1,x_2,x_3$ are distinct and $x_1x_2x_3\in E(H)$.

\begin{defn}[$(\rho,d)$-dense]\label{def:dense}
  Let $\rho>0$ and $d\in(0,1]$.
  An $n$-vertex $3$-graph $H$ is $(\rho,d)$-dense if, for all vertex
  sets $X_1,X_2,X_3\subseteq V(H)$,
  $e_H(X_1,X_2,X_3)\ge d|X_1||X_2||X_3|-\rho n^3$.
\end{defn}

For a $2$-graph $G$ and vertex sets $X,Y\subseteq V(G)$, let $e_G(X,Y)$ denote the number of ordered pairs $(x,y)\in X\times Y$ with $xy\in E(G)$.

\begin{defn}[$(\rho,d)$-dense $2$-graph]\label{def:dense2graph}
  Let $\rho>0$ and $d\in(0,1]$.
  An $n$-vertex $2$-graph $G$ is called $(\rho,d)$-dense if, for every
  two vertex sets $X,Y\subseteq V(G)$,
  $e_G(X,Y)\ge d|X||Y|-\rho n^2$.
\end{defn}
\medskip

We use the following Chernoff bound; see, for example, \cite[pp.~326--327]{MR3524748}.

\begin{lemma}[Chernoff bound]\label{lem:chernoff}
  Let $X \sim \mathrm{B}(n,p)$ be a binomial random variable with parameters $n$ and $p$, and let $x>0$.
  Then
  \[
    \mathbb{P}[X\ge np+x]
    \le \exp\left(-\frac{x^2}{2(np+x/3)}\right)
    \quad\text{and}\quad
    \mathbb{P}[X\le np-x]
    \le \exp\left(-\frac{x^2}{2np}\right).
  \]
\end{lemma}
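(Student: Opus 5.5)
The statement to prove is the Chernoff bound \cref{lem:chernoff}. This is a standard fact, and in the paper we would simply cite it. Still, here is how I would derive it, via the exponential moment method.

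Write $X=\sum_{i=1}^n X_i$ with independent Bernoulli$(p)$ variables $X_i$, and set $\mu=np$. For the upper tail, fix $\lambda>0$ and apply Markov's inequality to $e^{\lambda X}$:
\[
  \mathbb{P}[X\ge \mu+x]\le e^{-\lambda(\mu+x)}\,\mathbb{E}e^{\lambda X}.
\]
By independence and $1+y\le e^y$,
\[
  \mathbb{E}e^{\lambda X}=(1+p(e^\lambda-1))^n\le \exp\bigl(\mu(e^\lambda-1)\bigr).
\]
Choosing $\lambda=\log(1+x/\mu)$ gives
\[
  \mathbb{P}[X\ge \mu+x]\le \exp(-\mu\,\varphi(x/\mu)),
  \qquad \varphi(t)=(1+t)\log(1+t)-t .
\]
The lower tail is handled symmetrically: apply the same argument to $e^{-\lambda X}$ with $\lambda=-\log(1-x/\mu)$ (for $0<x<\mu$; the case $x\ge\mu$ is trivial or follows by monotonicity). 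This yields the bound $\exp(-\mu\,\varphi(-x/\mu))$.

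It remains to convert these into the stated forms via two elementary inequalities.

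\textbf{Lower tail.} We need $\varphi(-t)\ge t^2/2$ for $t\in[0,1]$. Both sides vanish at $t=0$. Comparing derivatives, $\frac{d}{dt}\varphi(-t)=-\log(1-t)\ge t$, so the inequality holds on the whole interval. This gives the bound $\exp(-x^2/(2\mu))$.

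\textbf{Upper tail.} We need the Bernstein-type inequality
\[
  \varphi(t)\ge \frac{t^2}{2(1+t/3)}\qquad (t\ge 0),
\]
which yields $\exp\bigl(-x^2/(2(\mu+x/3))\bigr)$ after substituting $t=x/\mu$. This step is the only mildly delicate calculation. I would prove it by setting $g(t)=\varphi(t)-\frac{t^2}{2(1+t/3)}$ and noting $g(0)=g'(0)=0$. It then suffices to show $g''\ge0$. Since $\varphi''(t)=1/(1+t)$, a direct computation gives
\[
  \frac{d^2}{dt^2}\frac{t^2}{2(1+t/3)}=\frac{27}{(3+t)^3},
\]
so the claim reduces to $(3+t)^3\ge 27(1+t)$. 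Expanding, this is $9t^2+t^3\ge0$, which holds for all $t\ge 0$.
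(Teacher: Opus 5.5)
Your derivation is correct. The paper gives no proof of this lemma and only cites a textbook, and your argument is the standard proof found in such references: the exponential-moment bound with $\varphi(t)=(1+t)\log(1+t)-t$, then $\varphi(-t)\ge t^2/2$ for the lower tail and the Bernstein-type estimate via $g''\ge 0$ for the upper tail (the degenerate cases $p=0$ and $x\ge np$ are trivial, as you note).
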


We also use the following bounded differences inequality of McDiarmid~\cite{MR1036755}.

\begin{lemma}[Bounded differences inequality]\label{lem:bounded_differences}
  Let $X_1,\dots,X_N$ be independent random variables, and let $f=f(X_1,\dots,X_N)$ be a real-valued function.
  Suppose that there are constants $c_1,\dots,c_N$ such that, for every $i\in[N]$ and all vectors $\mathbf{x}=(x_1,\dots,x_N)$ and $\mathbf{x}'=(x_1',\dots,x_N')$ with $x_j=x_j'$ for every $j\ne i$, it holds that $|f(\mathbf{x})-f(\mathbf{x}')|\le c_i$.
  Then, for every $t>0$,
  \[
    \mathbb{P}\left[|f-\mathbb{E}[f]|\ge t\right]
    \le
    2\exp\left(-\frac{2t^2}{\sum_{i=1}^N c_i^2}\right).
  \]
\end{lemma}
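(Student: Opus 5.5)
The statement is McDiarmid's bounded differences inequality. I would prove it by the standard martingale route: a Doob martingale, Hoeffding's lemma for each increment, an exponential moment bound, and Markov's inequality.

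\emph{Setup.} Define the Doob martingale $Z_i\coloneqq \mathbb{E}[f\mid X_1,\dots,X_i]$ for $0\le i\le N$. Then $Z_0=\mathbb{E}[f]$ and $Z_N=f$. Write $D_i\coloneqq Z_i-Z_{i-1}$, so $f-\mathbb{E}[f]=\sum_{i=1}^N D_i$ and $\mathbb{E}[D_i\mid X_1,\dots,X_{i-1}]=0$.

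\emph{Key step: each increment lies in a short interval.} Fix $x_1,\dots,x_{i-1}$. By independence of the $X_j$,
\[
Z_i=g_i(X_1,\dots,X_i)\coloneqq \mathbb{E}_{X_{i+1},\dots,X_N}\bigl[f(X_1,\dots,X_i,X_{i+1},\dots,X_N)\bigr].
\]
Changing only the $i$-th argument changes $f$ pointwise by at most $c_i$, so it changes $g_i$ by at most $c_i$. Hence, conditionally on $X_1,\dots,X_{i-1}$, the variable $D_i$ takes values in an interval $[L_i,L_i+c_i]$, where
\[
L_i=\inf_x g_i(x_1,\dots,x_{i-1},x)-Z_{i-1}
\]
is determined by the conditioning. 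This is the only place where independence is really used, and it is the step needing the most care. One must use the interval of length $c_i$, not the cruder $[-c_i,c_i]$ of length $2c_i$. Otherwise the final constant degrades from $2$ to $1/2$ in the exponent.

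\emph{Hoeffding's lemma.} If $Y$ satisfies $\mathbb{E}Y=0$ and $Y\in[a,a+c]$, then $\mathbb{E}e^{\lambda Y}\le e^{\lambda^2c^2/8}$. I would prove this by convexity: bound $e^{\lambda y}$ by the chord through the endpoints of the interval. This gives $\mathbb{E}e^{\lambda Y}\le e^{\phi(u)}$ with $u=\lambda c$, where
\[
\phi(u)=-pu+\log\bigl(1-p+pe^{u}\bigr),\qquad p=-a/c\in[0,1].
\]
One checks $\phi(0)=\phi'(0)=0$ and $\phi''(u)\le 1/4$ for all $u$, so Taylor's theorem gives $\phi(u)\le u^2/8$.

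\emph{Conclusion.} Apply the lemma conditionally, for $i=N,N-1,\dots,1$, taking conditional expectations successively. This yields
\[
\mathbb{E}\, e^{\lambda(f-\mathbb{E}f)}\le \exp\Bigl(\frac{\lambda^2}{8}\sum_{i=1}^N c_i^2\Bigr)\qquad\text{for all }\lambda>0.
\]
By Markov's inequality,
\[
\mathbb{P}[f-\mathbb{E}f\ge t]\le \exp\Bigl(-\lambda t+\frac{\lambda^2}{8}\sum_i c_i^2\Bigr).
\]
Choosing $\lambda=4t/\sum_i c_i^2$ gives the bound $\exp\bigl(-2t^2/\sum_i c_i^2\bigr)$. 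Applying the same argument to $-f$ gives the same bound for the lower tail, and a union bound over the two tails produces the factor $2$.
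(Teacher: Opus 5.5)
The paper gives no proof of this lemma and simply cites McDiarmid; your Doob-martingale argument is the standard proof from that source and is correct. This includes the key point that each increment $D_i$ is conditionally confined to an interval of length $c_i$ rather than $2c_i$, which is what gives the constant $2$ in the exponent; the Hoeffding-lemma computation ($\phi(0)=\phi'(0)=0$, $\phi''\le 1/4$) and the choice $\lambda=4t/\sum_i c_i^2$ also check out.
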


\section{Hypergraph regularity and regular slices} \label{section:regularity}
This section collects the regularity tools used to pass between $H$ and suitable reduced hypergraphs.
The regular slice lemma provides the reduced $3$-graphs that underlie the robust tight connectedness argument in \cref{sec:robusttight}.
The extension and connector lemmas then allow us to lift short connections found at the reduced level back to the original hypergraph.

\subsection{Regular complexes}

Let $\cP$ be a partition of a vertex set $V$ into vertex classes $V_1,\dots,V_s$.
A set $S\subseteq V$ is \emph{$\cP$-partite} if $|S\cap V_i|\le 1$ for every $i\in[s]$.
A hypergraph is \emph{$\cP$-partite} if all its edges are $\cP$-partite, and it is \emph{$s$-partite} if it is $\cP$-partite for some partition $\cP$ with $|\cP|=s$.

A hypergraph $\h$ is a \emph{complex} if, whenever $e\in E(\h)$ and $\emptyset\ne e'\subseteq e$, we have $e'\in E(\h)$.
We assume throughout that complexes have no isolated vertices.
Equivalently, every vertex of a complex appears as a $1$-edge.
A complex $\h$ is a \emph{$3$-complex} if all edges of $\h$ have size at most $3$.
The edges of size $i$ are called the $i$-edges of $\h$, and we write $\h_i$ for the underlying $i$-graph of $\h$.
For $U\subseteq V(\h)$, we write $\h[U]$ for the induced subcomplex of $\h$ on $U$, obtained by keeping all edges of $\h$ contained in $U$.
Given $s\ge 3$, a \emph{$(3,s)$-complex} is an $s$-partite $3$-complex.
More generally, for $i\le j$, an \emph{$(i,j)$-graph} is a $j$-partite $i$-graph.

Let $\h$ be a $\cP$-partite $3$-complex.
For a set $X$ of vertex classes, write $\bigcup X$ for the union of the classes in $X$.
Let $\binom{\cP}{i}$ denote the collection of all $i$-element subsets of the partition $\cP$, that is, all choices of $i$ distinct vertex classes.
If $i\le 3$ and $X\in\binom{\cP}{i}$, then $\h_X$ denotes the $i$-uniform part of $\h$ on these classes; that is, $\h_X$ is the subgraph of $\h_i$ induced by $\bigcup X$.
Thus $\h_X$ is an $(i,i)$-graph.
We also need the lower-dimensional part of $\h$ supported on the same classes.
We denote it by $\h_X^{<}$; its vertex set is $\bigcup X$, and its edges are all edges of $\h$ that lie in a proper subcollection of the classes in $X$:
\[
  E(\h_X^{<})\coloneqq \bigcup_{X'\subsetneq X} E(\h_{X'}).
\]
For example, if $X\in\binom{\cP}{3}$, then $\h_X^{<}$ consists of the vertices and pairs on the three classes, and is a $(2,3)$-complex.

Roughly speaking, relative density is the proportion of the $i$-sets supported by $\h_{i-1}$ that are edges of $\h_i$.  Regularity says that
this proportion changes little when it is measured on any sufficiently large collection of supported $i$-sets.

Given $i\ge 2$, let $\h_i$ be an $(i,i)$-graph and let $\h_{i-1}$ be an $(i-1,i)$-graph on the same vertex set, both $i$-partite with respect to the same partition.
We write $\K_i(\h_{i-1})$ for the family of all partite $i$-sets which form a copy of the complete $(i-1)$-graph on $i$ vertices in $\h_{i-1}$.
The \emph{density of $\h_i$ with respect to $\h_{i-1}$} is
\[
  d(\h_i\mid \h_{i-1})
  \coloneqq
  \begin{cases}
    \displaystyle \frac{|\K_i(\h_{i-1})\cap E(\h_i)|}{|\K_i(\h_{i-1})|}, & \text{if } |\K_i(\h_{i-1})|>0,\\[1ex]
    0, & \text{otherwise}.
  \end{cases}
\]
More generally, if ${\bf Q}=(Q_1,\dots,Q_r)$ is a collection of $r$ subhypergraphs of $\h_{i-1}$, we set
\[
  \K_i({\bf Q})\coloneqq \bigcup_{j\in[r]} \K_i(Q_j)
\]
and define
\[
  d(\h_i\mid {\bf Q})
  \coloneqq
  \begin{cases}
    \displaystyle \frac{|\K_i({\bf Q})\cap E(\h_i)|}{|\K_i({\bf Q})|}, & \text{if } |\K_i({\bf Q})|>0,\\[1ex]
    0, & \text{otherwise}.
  \end{cases}
\]

We say that $\h_i$ is \emph{$(d_i,\e,r)$-regular with respect to $\h_{i-1}$} if for every $r$-tuple ${\bf Q}$ with $|\K_i({\bf Q})|>\e|\K_i(\h_{i-1})|$, we have
\[
  d(\h_i\mid {\bf Q})=d_i\pm \e.
\]
Instead of $(d_i,\e,1)$-regularity, we simply write \emph{$(d_i,\e)$-regularity}.
We also say that $\h_i$ is \emph{$(\e,r)$-regular with respect to $\h_{i-1}$} if there exists $d_i\ge0$ such that $\h_i$ is $(d_i,\e,r)$-regular with respect to $\h_{i-1}$.
Given an $i$-graph $G$ with $V(G)\supseteq V(\h_{i-1})$, we say that $G$ is \emph{$(d_i,\e,r)$-regular with respect to $\h_{i-1}$} if the $i$-partite subgraph of $G$ induced by the vertex classes of $\h_{i-1}$ is $(d_i,\e,r)$-regular with respect to $\h_{i-1}$.

Finally, let $s\ge 2$ and let $\h$ be a $(2,s)$-complex with vertex partition $\cP$.
We say that $\h$ is \emph{$(d_2,\e,r)$-regular} if, for every $A\in\binom{\cP}{2}$, the graph $\h_A$ is $(d_2,\e)$-regular with respect to $(\h_A^{<})_1$.
If $s\ge 3$ and $\h$ is a $(3,s)$-complex with vertex partition $\cP$, we say that $\h$ is \emph{$(d,d_2,\e_3,\e,r)$-regular} if both of the following conditions hold.
\begin{enumerate}[label={\rm(\roman*)}]
  \item For every $A\in\binom{\cP}{2}$, either $\h_A$ is $(d_2,\e)$-regular with respect to $(\h_A^{<})_1$, or $d(\h_A\mid (\h_A^{<})_1)=0$.

  \item For every $A\in\binom{\cP}{3}$, either $\h_A$ is $(d,\e_3,r)$-regular with respect to $(\h_A^{<})_2$, or $d(\h_A\mid (\h_A^{<})_2)=0$.
\end{enumerate}

\medskip

We need the following restriction lemma, which says that regular complexes remain regular after restricting each vertex class to a sufficiently large subset.

\begin{lemma}[Restriction Lemma, {\cite[Lemma~4.1]{KRD}}]
  \label{lem:reg_res}
  Let $s,r,m$ be positive integers and let $\alpha,d_2,d,\e$, $\e_3>0$ be real numbers satisfying
  \[
    1/m \ll \min\{1/r,~\e\} \le \max\{1/r,~\e\} \ll \min\{\e_3,d_2\}
    \le \e_3 \ll \alpha \ll \min\{ d,~1/s \}.
  \]
  Let $\h$ be a $(d,d_2,\e_3,\e,r)$-regular $(3,s)$-complex with vertex classes $V_1,\dots,V_s$, each of size $m$.
  For each $i\in[s]$, let $V_i'\subseteq V_i$ satisfy $|V_i'|\ge \alpha m$.
  Then the induced subcomplex $\h'=\h[V_1'\cup\cdots\cup V_s']$ is $(d,d_2,\sqrt{\e_3},\sqrt{\e},r)$-regular.
\end{lemma}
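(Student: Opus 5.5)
The statement immediately above is the Restriction Lemma (\cref{lem:reg_res}), which the paper quotes from \cite[Lemma~4.1]{KRD}. The natural proof is a direct verification, one level at a time, that regularity passes to induced subcomplexes whose classes keep at least an $\alpha$-fraction of their vertices. The idea is that any witness of irregularity in the restricted complex $\h'$ is a large sub-configuration of $\h'$. Since $\h'$ is itself a large portion of $\h$, that witness is also large relative to $\h$, so regularity of $\h$ applies to it directly. The only real work is to compare the normalising quantities $|\K_i(\cdot)|$ before and after restriction.

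\textbf{Step 1: the $2$-level.} Fix $A=\{V_i,V_j\}$ with $\h_A$ being $(d_2,\e)$-regular; the pairs of density zero are trivial. Here $\K_2((\h_A^{<})_1)=V_i\times V_j$, and restricting gives $V_i'\times V_j'$, which has size at least $\alpha^2 m^2$. Take any $Q\subseteq V_i'\times V_j'$ with $|Q|>\sqrt{\e}\,|V_i'||V_j'|$. Then $|Q|\ge \sqrt{\e}\,\alpha^2 m^2>\e m^2$, because $\e\ll\alpha$. Regularity of $\h$ gives $d(\h_A\mid Q)=d_2\pm\e$. This shows $\h'_A$ is $(d_2,\sqrt{\e})$-regular.

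\textbf{Step 2: counting triangles in the restricted $2$-complex.} Fix a triple $A$ with $\h_A$ being $(d,\e_3,r)$-regular. Since $\h_A^{<}$ is a regular triad of density $d_2\gg\e$, the graph counting lemma gives $|\K_3((\h_A^{<})_2)|=(1\pm\e^{1/3})d_2^3m^3$. Step 1 shows that the restricted triad is $\sqrt{\e}$-regular with density $d_2\pm\e$, and its classes have size at least $\alpha m$. Applying the counting lemma again, now with $\sqrt{\e}\ll d_2$, gives $|\K_3((\h_A'^{<})_2)|\ge \tfrac12 d_2^3\prod|V_i'|\ge \tfrac12 d_2^3\alpha^3m^3$.

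\textbf{Step 3: the $3$-level.} Let ${\bf Q}=(Q_1,\dots,Q_r)$ be subgraphs of $(\h'^{<}_A)_2\subseteq(\h_A^{<})_2$ with $|\K_3({\bf Q})|>\sqrt{\e_3}\,|\K_3((\h_A'^{<})_2)|$. By Step 2 this quantity is at least $\sqrt{\e_3}\cdot\tfrac12 d_2^3\alpha^3 m^3$. We want this to exceed $\e_3|\K_3((\h_A^{<})_2)|\approx\e_3 d_2^3m^3$, and it does provided $\sqrt{\e_3}\alpha^3/2>2\e_3$, that is, $\e_3\ll\alpha$. This is exactly the hierarchy assumed in the lemma. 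Regularity of $\h_A$ then gives $d(\h_A\mid{\bf Q})=d\pm\e_3$, which lies within $d\pm\sqrt{\e_3}$. Moreover, $E(\h'_A)\cap\K_3({\bf Q})=E(\h_A)\cap\K_3({\bf Q})$, since every triangle of ${\bf Q}$ lies inside the restricted classes. The densities of zero-density triples stay zero after restriction.

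\textbf{Main obstacle.} The delicate point is Step 2. It needs a two-sided triangle count both for the original triads and for the restricted triads, and that requires $\e\ll d_2$ together with the graph counting lemma applied at regularity $\sqrt{\e}$. A further subtlety is that the $d_2$ in the definition must be the same for all pairs, but the restriction only changes the density by $\pm\e$, which is absorbed into the $\sqrt{\e}$ tolerance. If exact bookkeeping became awkward, I would instead cite the standard dense counting lemma for $(2,3)$-complexes, with the factor $\alpha^3$ tracked explicitly.
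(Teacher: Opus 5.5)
Your proof is correct. The paper gives no argument for this lemma and simply imports it from \cite[Lemma~4.1]{KRD}. Your level-by-level verification is the standard proof behind that citation: a witness of irregularity in $\h'$ is already large enough to be a witness in $\h$, and the triangle counting lemma is applied to both the original and the restricted triads so that the factor $d_2^3$ cancels. You also correctly saw why the two-sided count is needed. The hierarchy permits $d_2\ll\e_3$, so the trivial bound $|\K_3((\h_A^<)_2)|\le m^3$ would not suffice.

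Two small points of precision.

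In Step~1 the admissible test objects are not arbitrary sets $Q\subseteq V_i'\times V_j'$. The $2$-level regularity is measured with respect to the $1$-graph, so $\mathbf Q$ consists of one subgraph of the $1$-graph on $V_i'\cup V_j'$. That is a vertex set $X_i\cup X_j$ with $X_i\subseteq V_i'$ and $X_j\subseteq V_j'$, and hence $\K_2(\mathbf Q)=X_i\times X_j$. For arbitrary sets of pairs the density claim would be false, though your inequality chain is unchanged for these rectangles.

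In Step~2 you should also dispose of triples containing a pair of density zero. There $\K_3$ is empty both before and after restriction, and condition~(ii) holds through its density-zero alternative.
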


\subsection{The regular slice lemma}

We use the version of the regularity lemma due to Allen, B\"ottcher, Cooley and Mycroft~\cite{ABCM}, called the \emph{regular slice lemma}.
The notation in this subsection follows~\cite{ABCM}, with simplifications since we only need $3$-graphs.
A similar form was previously used by Haxell, {\L}uczak, Peng, R\"odl, Ruci\'nski and Skokan~\cite{HLOORRS}.

Let $t_0,t_1\in\mathbb{N}$ and $\e>0$.
A $2$-complex $\J$ is \emph{$(t_0,t_1,\e)$-equitable} if the following hold.
\begin{enumerate}[label={\rm(\roman*)}]
  \item There is a partition $\cP$ of $V(\J)$ into $t$ parts of equal size, for some $t_0\le t\le t_1$, such that $\J$ is $\cP$-partite.
    We call $\cP$ the \emph{ground partition} of $\J$, and its parts the \emph{clusters} of $\J$.

  \item There exists $d_2\ge 1/t_1$ with $1/d_2\in\mathbb{N}$ such that $\J$ is $(d_2,\e,1)$-regular.
\end{enumerate}

Let $X\in\binom{\cP}{3}$.
We write $\widehat{\J}_X$ for the $(2,3)$-graph $(\J_X^{<})_2$.
A $3$-graph $G$ on $V(\J)$ is \emph{$(\e_3,r)$-regular with respect to $\widehat{\J}_X$} if there exists $d\ge 0$ such that $G$ is $(d,\e_3,r)$-regular with respect to $\widehat{\J}_X$.
We write $d^\ast_{\J,G}(X)$ for this density, and simply write $d^\ast(X)$ when $\J$ and $G$ are clear from the context.
Thus $d^\ast(X)$ measures the density of the edges of $G$ supported by the triad $\widehat{\J}_X$; equivalently, it records how densely $G$ fills the three clusters in $X$ relative to the pairs in the slice $\J$.

Roughly speaking, a regular slice is an equitable collection of regular pairs for which almost every triple of clusters also behaves regularly in the $3$-graph.

\begin{defn}[Regular slice {\cite{ABCM}}]
  Given $\e,\e_3>0$, $r,t_0,t_1\in\mathbb{N}$, a $3$-graph $G$, and a $2$-complex $\J$ on $V(G)$, we call $\J$ a \emph{$(t_0,t_1,\e,\e_3,r)$-regular slice for $G$} if $\J$ is $(t_0,t_1,\e)$-equitable and $G$ is $(\e_3,r)$-regular with respect to all but at most $\e_3\binom{t}{3}$ triples of clusters of $\J$, where $t$ is the number of clusters of $\J$.
\end{defn}

Given a regular slice $\J$ for a $3$-graph $G$, we summarize these densities in a weighted reduced graph.
In this graph, the vertices are not the original vertices of $G$, but the clusters of the ground partition of $\J$.
For each triple of clusters $X$, the weight assigned to $X$ is the relative density $d^\ast(X)$ of $G$ on the corresponding triad.
Thus the reduced graph forgets the individual vertices and remembers only the
edge density on each triple of clusters.

\begin{defn}[Weighted reduced $3$-graph {\cite{ABCM}}]
  Given a $3$-graph $G$ and a $(t_0,t_1,\e)$-equitable $2$-complex $\J$ on $V(G)$ with ground partition $\cP$, we define $R_{\J}(G)$ to be the complete weighted $3$-graph whose vertex set is $\cP$.
  Each triple of clusters $X\in\binom{\cP}{3}$ is assigned the weight $d^\ast_{\J,G}(X)$.
  When $\J$ is clear from the context, we write $R(G)$ instead of $R_{\J}(G)$.
\end{defn}

The regular slice lemma gives a slice $\J$ for which $R(G)$ reflects the codegree structure of $G$.
Let $G$ be an unweighted $3$-graph on $n$ vertices.
For a pair $S\in\binom{V(G)}{2}$, define its \emph{relative degree} in $G$ by
\[
  \overline{\deg}(S;G)\coloneqq \frac{\deg_G(S)}{n-2}.
\]
If $G$ is a weighted $3$-graph with weight function $d^\ast$, then
\begin{align*}
  \overline{\deg}(S;G) = \frac{1}{n-2} \sum_{\substack{e\in E(G),~S\subseteq e}} d^\ast(e).
\end{align*}
For a collection $\mathcal S$ of pairs, we write $\overline{\deg}(\mathcal S;G) \coloneqq \frac{1}{|\mathcal S|} \sum_{S\in\mathcal S}\overline{\deg}(S;G)$.

Let $G$ be a $3$-graph with distinct root vertices $v_1,\dots,v_\ell$, and let $H$ be a $3$-graph with specified distinct root vertices $x_1,\dots,x_\ell$.
Let $n_H(G;v_1,\dots,v_\ell)$ be the number of injective maps $\psi:V(H)\to V(G)$ which embed $H$ into $G$ and satisfy $\psi(x_j)=v_j$ for each $j\in[\ell]$.
Define
\[
  d_H(G;v_1,\dots,v_\ell)
  \coloneqq
  \frac{n_H(G;v_1,\dots,v_\ell)}
  {\binom{v(G)-\ell}{v(H)-\ell}(v(H)-\ell)!}.
\]
Let $H^{\mathrm{skel}}$ be the $2$-complex obtained from the down-closure of $H$ by deleting the roots $x_1,\dots,x_\ell$ and deleting all edges of size $3$.
Given a $(t_0,t_1,\e)$-equitable $2$-complex $\J$ on $V(G)$, let $n_H(G;v_1,\dots,v_\ell,\J)$ be the number of labeled rooted copies of $H$ in $G$ such that each vertex of $H^{\mathrm{skel}}$ lies in a distinct cluster of $\J$ and the image of $H^{\mathrm{skel}}$ is contained in $\J$.
Let $n'_{H^{\mathrm{skel}}}(\J)$ be the number of labeled copies of $H^{\mathrm{skel}}$ in $\J$ with each vertex embedded in a distinct cluster.
Set
\[
  d_H(G;v_1,\dots,v_\ell,\J)
  \coloneqq
  \frac{n_H(G;v_1,\dots,v_\ell,\J)}
  {n'_{H^{\mathrm{skel}}}(\J)}.
\]

If $\J'$ is a complex on a subset $U\subseteq V(G)$, we use the notation
$d_H(G;v_1,\dots,v_\ell,\J')$ for the corresponding \emph{local} rooted
copy density: all non-root vertices are required to lie in $U$, and the
normalizing denominator is the number of labeled copies of
$H^{\mathrm{skel}}$ supported by $\J'$.

We shall use the following consequence of the Regular Slice Lemma and its
strengthened form due to Allen, B\"ottcher, Cooley and
Mycroft~\cite[Lemmas~6 and~22]{ABCM}.

\begin{theorem}[Local regular slice with a prescribed reservoir]
  \label{thm:strengthened_regularity}
  For all $q,t_0\in\mathbb N$, $\nu,\e_3>0$, and all functions
  $r:\mathbb N\to\mathbb N$ and $\e:\mathbb N\to(0,1]$, there exist
  $T_0,T_1,n_1\in\mathbb N$ such that the following holds for every
  $n\ge n_1$ divisible by $T_1!$.

  Let $G$ be a $3$-graph on a vertex set $V$ of size $n$, let $\mathcal Q$
  be a partition of $V$ into at most $q$ equal-sized parts, and let $A_0$ be
  a union of parts of $\mathcal Q$ with $|A_0|\ge\nu n$.
  Then there is a regular slice $\mathcal J$ for $G$ whose ground partition
  refines $\mathcal Q$ and which has the following properties.

  Let $\mathcal J_A=\mathcal J[A_0]$, let $\mathcal P_A$ be its ground
  partition, and put $t_A=|\mathcal P_A|$.  Let $\mathcal K_A$ be the
  complete $2$-complex on $A_0$ with ground partition $\mathcal P_A$.
  Then $t_0\le t_A\le T_1$, and:
  \begin{enumerate}[label={\rm(\roman*)}]
    \item \textbf{Regularity.}
      The complex $\mathcal J$ is a
      $(T_0,T_1,\e(T_1),\e_3,r(T_1))$-regular slice for $G$.
      Moreover, $\mathcal J_A$ is a
      $(t_0,T_1,\e(T_1),\e_3,r(T_1))$-regular slice for $G[A_0]$.

    \item \textbf{Local codegree.}
      For every pair $Y$ of clusters of $\mathcal J_A$,
      \[
        \overline{\deg}\bigl(Y;R_{\mathcal J_A}(G[A_0])\bigr)
        =
        \overline{\deg}\bigl((\mathcal J_A)_Y;G[A_0]\bigr)
        \pm \e_3.
      \]

    \item \textbf{Local rooted counting.}
      Let $P=x_1x_2x_3x_4$ be the $4$-vertex tight path with roots
      $x_1,x_2$.  For every two distinct vertices $u,v\in V$,
      \[
        \left|
        d_P(G;v,u,\mathcal J_A)
        -d_P(G;v,u,\mathcal K_A)
        \right|<\e_3.
      \]
      The roots may lie outside $A_0$, whereas the two non-root vertices
      are required to lie in $A_0$.
  \end{enumerate}
\end{theorem}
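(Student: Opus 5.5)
The statement is a bookkeeping consequence of the Regular Slice Lemma of Allen, B\"ottcher, Cooley and Mycroft~\cite[Lemma~6]{ABCM} and its strengthened form~\cite[Lemma~22]{ABCM}, so the plan is to derive it from these rather than prove regularity from scratch. The idea is to apply the strengthened lemma once, to a suitable family of input $3$-graphs and with $\mathcal Q$ as the initial partition, with error parameters that are much smaller than the target $\e_3$ by a factor depending on $\nu$. Then every desired property of $\mathcal J_A=\mathcal J[A_0]$ is read off from a global property of $\mathcal J$.

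\textbf{Input and parameters.} I would feed~\cite[Lemma~22]{ABCM} the following data.
\begin{enumerate}[label={\rm(\alph*)}]
  \item The two $3$-graphs $G$ and $G_A\coloneqq G[A_0]$, viewed as $3$-graphs on $V$. The lemma permits a bounded number of input graphs, all regular with respect to the same slice.
  \item The initial partition $\mathcal Q$.
  \item The degree-representation requirement for collections of pairs of clusters.
  \item The rooted-density requirement for the rooted $4$-vertex tight path $P$ and for all root pairs $(v,u)\in V^2$.
\end{enumerate}
The lemma is invoked with the error parameter $\e_3'\coloneqq \e_3\nu^3/10$ instead of $\e_3$, and with $t_0'\coloneqq\lceil t_0/\nu\rceil$ instead of $t_0$. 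It returns $T_0,T_1$ and a slice $\mathcal J$ whose ground partition refines $\mathcal Q$. Each part of $\mathcal Q$ is therefore a union of equally many clusters. Since $A_0$ is a union of parts of $\mathcal Q$ and $|A_0|\ge\nu n$, the number $t_A$ of clusters inside $A_0$ satisfies $t_A\ge \nu t\ge t_0$ and $t_A\le T_1$.

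\textbf{Regularity of the restriction.} The pair-level conditions are inherited immediately. Every pair of clusters of $\mathcal J_A$ is a pair of clusters of $\mathcal J$, so $\mathcal J_A$ is $(t_0,T_1,\e(T_1))$-equitable with the same $d_2$. For a triple $X$ of clusters inside $A_0$, the triads $\widehat{\mathcal J}_X$ and $\widehat{(\mathcal J_A)}_X$ coincide, and $G$ and $G[A_0]$ agree on $\bigcup X$. Hence $G[A_0]$ is $(\e_3',r)$-regular with respect to $\widehat{(\mathcal J_A)}_X$ whenever $G$ is. The number of irregular triples inside $A_0$ is then at most
\[
\e_3'\binom{t}{3}\le \e_3'\nu^{-3}\binom{t_A}{3}\cdot 2\le \e_3\binom{t_A}{3},
\]
which proves (i).

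\textbf{Codegree and rooted counting.} For (ii), I apply the degree-representation conclusion of~\cite[Lemma~22]{ABCM} to the input graph $G_A$. Relative degrees in $R_{\mathcal J}(G_A)$ and in $R_{\mathcal J_A}(G[A_0])$ differ only by the normalisations $t$ versus $t_A$ and $n$ versus $|A_0|$. Both ratios are fixed by $\mathcal Q$, so rescaling costs at most a factor $\nu^{-1}$, which is absorbed by the choice of $\e_3'$.

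For (iii), I use the rooted-counting conclusion: $d_P(G;v,u,\mathcal J)$ is within $\e_3'$ of $d_P(G;v,u)$. The plan is to apply this to the auxiliary $3$-graph $G^{(A)}$ consisting of those edges of $G$ that meet $A_0$ in at least the position forced for the non-root vertices. Because $\mathcal J$ has no pairs leaving the cluster structure of $A_0$ in a way that matters, a rooted copy of $P$ supported on $\mathcal J$ with both non-root vertices in $A_0$ is exactly a copy supported on $\mathcal J_A$. On the other side, $d_P(G;v,u,\mathcal K_A)$ equals the plain rooted density over $A_0$ up to an error $O(1/t_A)$, coming from the copies with $x_3,x_4$ in the same cluster. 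Comparing both quantities with the common plain density gives the bound $\e_3$.

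\textbf{Main obstacle.} I expect (iii) to be the main difficulty. The roots $u,v$ may lie outside $A_0$, and the normalising denominators $n'_{P^{\mathrm{skel}}}(\mathcal J_A)$ and $n'_{P^{\mathrm{skel}}}(\mathcal K_A)$ differ by the density factor $d_2$ of the single edge $x_3x_4$. So one must check carefully that ABCM's rooted statement is normalised by skeleton copies in the same way, and that it holds uniformly over all $n^2$ root pairs rather than almost all of them.

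If the published Lemma~22 only controls rooted densities on the whole slice, I would first try the following fix: apply it to $G$ with an initial partition that separates $A_0$ from $V\setminus A_0$, and use the counting lemma for the single regular pair $x_3x_4$ in $\mathcal J_A$ to convert the global statement into the local one. That regular pair contributes a factor $d_2\pm\e(T_1)$, which matches the denominator.
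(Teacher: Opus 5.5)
Your treatment of (i) is the same bookkeeping as the paper's. Your route to (ii) is a workable variant of the paper's direct appeal to the degree-preserving conclusion: you add $G[A_0]$ as a second input graph and rescale by $(t_A-2)/(t-2)$ and $(|A_0|-2)/(n-2)$. This needs the cluster lower bound to also be large compared with $1/(\nu\e_3)$, so that the $O(1/t_A)$ rescaling error is absorbed.

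The gap is in (iii). The global rooted-counting conclusion controls $d_P(G';v,u,\mathcal J)$ for a fixed input graph $G'$, which is a count summed over \emph{all} cluster pairs of $\mathcal J$. To localise it, you would need one root-independent $G'$ whose rooted copies at $(v,u)$ are exactly the copies of $P$ in $G$ with $x_3,x_4\in A_0$, for every root pair at once.
\begin{itemize}
\item Your $G^{(A)}$ is not well defined, since an unlabelled edge has no ``positions''.
\item No subgraph of $G$ can play this role. Take $u,v\notin A_0$ and a bad copy with $x_3\in A_0$, $x_4\notin A_0$. Its edge $\{v,u,x_3\}$ also lies in good copies at the same roots, so the copy can only be destroyed by deleting $\{u,x_3,x_4\}$. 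But $\{u,x_3,x_4\}$ is the first edge of good copies rooted at $(x_4,u)$.
\item Your fallback does not help. Regularity of the pair $x_3x_4$ in $\mathcal J_A$ only gives the denominator, $n'_{P^{\mathrm{skel}}}(\mathcal J_A)\approx d_2|A_0|^2$. It says nothing about how the numerator splits between cluster pairs inside $A_0$ and the rest.
\item Contrary to your remark, $\mathcal J$ does contain many pairs between $A_0$ and $V\setminus A_0$.
\end{itemize}

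The paper does not use the rooted-counting conclusion as a black box. It reruns ABCM's proof of that conclusion, which uses the strengthened regularity lemma to average rooted densities over coarse cells and then over random fine slices. It observes that confining the non-root vertices to $A_0$ changes nothing, because $A_0$ is a union of parts of the initial partition and hence of coarse and fine cells. Since the local denominator is $\Omega_\nu(n^2)$, taking $\e_3'\ll\nu^4\e_3$ gives the local error.

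If you want a black-box argument, inclusion--exclusion works, using the root-independent graphs $G_S=\{e\in E(G):|e\cap A_0|\in S\}$ chosen according to which roots lie in $A_0$. The desired count is:
\begin{itemize}
\item $N(G_{\{1,2\}})-N(G_{\{1\}})$ if $u,v\notin A_0$;
\item $N(G_{\{2\}})$ if $v\in A_0$ and $u\notin A_0$;
\item $N(G_{\{2,3\}})-N(G_{\{2\}})$ if $u\in A_0$ and $v\notin A_0$;
\item $N(G_{\{3\}})$ if $u,v\in A_0$.
\end{itemize}
These identities hold both for $\mathcal J$-supported and for unrestricted copies. This route needs rooted counting from a single slice for this bounded family of overlapping graphs. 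You then rescale the normalisations, which costs a factor $O(\nu^{-2})$ in the error.
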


\begin{remark}
  Apply the Regular Slice Lemma~\cite[Lemma~6]{ABCM} with the prescribed
  initial partition $\mathcal Q$, with a global error parameter
  $\e_3'\ll\nu^4\e_3$, and with a global lower bound
  $T_0\ge 2t_0/\nu$.
  Its ground partition refines $\mathcal Q$, so $A_0$ is a union of
  clusters.  Since all clusters have the same size, at least a $\nu$
  proportion of them lie in $A_0$; hence $t_A\ge\nu T_0\ge t_0$.

  The restriction of the equitable $2$-complex to these clusters remains
  equitable.  At most $\e_3'\binom{t}{3}$ cluster triples are irregular in
  the global slice, where $t$ is the total number of clusters.  Since
  $t_A\ge\nu t$, our choice of $\e_3'$ implies that at most
  $\e_3\binom{t_A}{3}$ triples inside $A_0$ are irregular.  This proves
  part~(i), after weakening the displayed error parameters if necessary.

  Part~(ii) is the degree-preserving conclusion in
  \cite[Lemma~6(b)]{ABCM}, applied with the set of clusters contained in
  $A_0$; this set is the union of parts of the initial partition.

  For part~(iii), use the strengthened regularity lemma
  \cite[Lemma~22]{ABCM} in the proof of the rooted-counting conclusion
  \cite[Lemma~6(c)]{ABCM}.  That proof averages the rooted copy density over
  coarse cells and then over random fine slices.  If the non-root vertices
  are restricted to $A_0$, the same argument is unchanged because $A_0$ is
  a union of parts of the initial partition and hence a union of coarse and
  fine clusters.  The normalizing denominator is
  $\Omega_\nu(n^2)$ for the present rooted path, so choosing the global
  error $\e_3'$ sufficiently small in terms of $\nu$ and $\e_3$ gives the
  stated local error.  This is exactly the displayed formula in~(iii).
\end{remark}

\subsection{The \texorpdfstring{$d$}{d}-reduced
\texorpdfstring{$3$}{3}-graph and the extension lemma}
\label{subsection:reducedgraph}

The weighted reduced graph $R_{\J}(G)$ records the relative densities of $G$ on all triples of clusters of a regular slice.
In the applications below, however, we only use triples whose triad is
regular and whose relative density is at least a fixed threshold $d$.
We therefore pass to the unweighted $d$-reduced $3$-graph, and then use the extension lemma to lift structures found at the cluster level back to the original hypergraph.

\begin{defn}[The $d$-reduced $3$-graph {\cite{ABCM}}]
  Let $G$ be a $3$-graph and let $\J$ be a $(t_0,t_1,\e,\e_3,r)$-regular slice for $G$.
  For $d>0$, the \emph{$d$-reduced $3$-graph} $R_d(G)$ is the $3$-graph whose vertices are the clusters of $\J$ and whose edges are all triples of clusters $X$ such that $G$ is $(\e_3,r)$-regular with respect to $\widehat{\J}_X$ and $d^\ast(X)\ge d$.
  The graph $R_d(G)$ depends on the slice $\J$, which will always be clear from the context.
\end{defn}

We need two reduced-graph consequences of the codegree assumption.  To avoid
confusion with the $(\rho,d)$-density of the original hypergraph, we do not
introduce a second density terminology.  Instead, we state explicitly that all
but a small exceptional family of cluster pairs have large codegree.

For a nonempty $3$-graph $H$, its \emph{minimum positive codegree} is
\[
  \delta_2^+(H)
  \coloneqq
  \min\left\{\deg_H(S):S\in\binom{V(H)}2,\ \deg_H(S)>0\right\}
  =
  \min\left\{\deg_H(S):S\in\partial H\right\}.
\]
Thus $\delta_2^+(H)$ is the minimum codegree among the pairs that lie in at
least one edge of $H$.

The next lemma says that, apart from a small exceptional family, cluster pairs
inherit the codegree condition of the original hypergraph.

\begin{lemma}[{\cite[Lemma~8.7]{HLS}}]\label{lem:Rdeg}
  Let $1/n\ll 1/t_1\le 1/t_0\ll 1$, let $r\in\mathbb{N}$, and let $\mu,d,\e,\e_3>0$.
  Suppose that $G$ is a $3$-graph on $n$ vertices with $\delta_2(G)\ge \mu n$.
  Let $\J$ be a $(t_0,t_1,\e,\e_3,r)$-regular slice for $G$ such that, for all pairs $Y$ of clusters of $\J$,
  \[
    \overline{\deg}(Y;R(G))
    =
    \overline{\deg}(\J_Y;G)\pm\e_3.
  \]
  If $t$ is the number of clusters of $\J$, then all but at most
  $3\sqrt{\e_3}\binom t2$ pairs $Y$ of clusters satisfy
  \[
    \deg_{R_d(G)}(Y)
    \ge
    \bigl(\mu-d-\e_3-\sqrt{\e_3}\bigr)(t-2).
  \]
\end{lemma}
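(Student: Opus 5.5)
We argue by double counting the weighted degree of a cluster pair, splitting the triples that contain it into three types. Throughout, $t$ is the number of clusters of $\J$.

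\textbf{Step 1: lower bound on the weighted degree.} For a pair $Y$ of clusters, $(\J_Y)$ is a nonempty set of vertex pairs, since $\J$ is $(d_2,\e,1)$-regular with $d_2\ge 1/t_1>0$. Each pair $S\in\J_Y$ satisfies
\[
  \overline{\deg}(S;G)=\frac{\deg_G(S)}{n-2}\ge \frac{\mu n}{n-2}\ge\mu .
\]
Averaging over $S\in\J_Y$ gives $\overline{\deg}(\J_Y;G)\ge\mu$. By the hypothesis of the lemma,
\[
  \frac{1}{t-2}\sum_{X\supseteq Y} d^\ast(X)=\overline{\deg}(Y;R(G))\ge \mu-\e_3 ,
\]
where the sum runs over the $t-2$ triples of clusters $X$ containing $Y$.

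\textbf{Step 2: split the sum.} We classify each triple $X\supseteq Y$ as one of three kinds.
\begin{enumerate}[label={\rm(\alph*)}]
  \item $X\in E(R_d(G))$: then $d^\ast(X)\le 1$, since it is a relative density.
  \item $G$ is $(\e_3,r)$-regular with respect to $\widehat{\J}_X$ but $d^\ast(X)<d$: such $X$ contribute less than $d$ each, so at most $d(t-2)$ in total.
  \item $G$ is irregular on $X$: we bound the weight trivially by $1$.
\end{enumerate}
Let $i(Y)$ be the number of irregular triples containing $Y$. Combining with Step 1,
\[
  (\mu-\e_3)(t-2)\le \deg_{R_d(G)}(Y)+d(t-2)+i(Y).
\]
Hence every $Y$ with $i(Y)\le\sqrt{\e_3}(t-2)$ satisfies
\[
  \deg_{R_d(G)}(Y)\ge(\mu-d-\e_3-\sqrt{\e_3})(t-2),
\]
which is the desired bound.

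\textbf{Step 3: few bad pairs.} Since $\J$ is a regular slice, at most $\e_3\binom t3$ triples are irregular. Each triple contains three pairs, so
\[
  \sum_Y i(Y)\le 3\e_3\binom t3=\e_3\binom t2(t-2).
\]
By Markov's inequality, the number of pairs $Y$ with $i(Y)>\sqrt{\e_3}(t-2)$ is at most $\sqrt{\e_3}\binom t2\le 3\sqrt{\e_3}\binom t2$.

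The only delicate point is notational: for irregular triples the weight $d^\ast(X)$ may not be canonically defined. Whatever value is assigned lies in $[0,1]$, so the trivial bound in~(c) remains valid.
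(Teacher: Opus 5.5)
Your argument is correct and is the standard averaging argument behind the cited result (the paper gives no proof of its own and simply invokes \cite[Lemma~8.7]{HLS}). You lower-bound the weighted degree of $Y$ by $\mu-\varepsilon_3$, charge at most $d$ to each regular low-density triple and at most $1$ to each irregular one, and discard the pairs lying in more than $\sqrt{\varepsilon_3}(t-2)$ irregular triples; your Markov count even shows there are at most $\sqrt{\varepsilon_3}\binom t2$ such pairs. The only quibble is that nonemptiness of $\mathcal J_Y$ does not follow from $d_2>0$ alone. It follows from $\varepsilon<d_2$, or simply from the hypothesis, which presupposes that $\overline{\deg}(\mathcal J_Y;G)$ is defined.
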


The following lemma passes from this almost-everywhere codegree
condition to a minimum positive codegree condition.

\begin{lemma}[{\cite[Lemma~8.8]{HLS}}]
  \label{lem:codegree-cleaning}
  Let $n\ge 6$ and $0<\mu,\theta<1$.  Suppose that $H$ is an $n$-vertex
  $3$-graph in which all but at most $\theta\binom n2$ pairs $S$ satisfy
  \(
    \deg_H(S)\ge \mu(n-2).
  \)
  Then $H$ contains a spanning subgraph $H'$ such that
  \(
    \delta_2^+(H')
    \ge
    \bigl(\mu-8\theta^{1/4}\bigr)(n-2)
  \)
  and
  \(
    \left|\binom{V(H')}{2}\setminus\partial H'\right|
    \le
    \bigl(\theta+\theta^{1/4}\bigr)\binom n2.
  \)
\end{lemma}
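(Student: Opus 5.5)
The plan is to delete a small, structured set of "forbidden" pairs all at once, rather than cleaning iteratively. Let $B\subseteq\binom{V}{2}$ be the family of pairs with $\deg_H(S)<\mu(n-2)$, so $|B|\le\theta\binom n2$. View $B$ as a graph on $V$. Fix a threshold $\beta n$, with $\beta$ a suitable power of $\theta$ (I would first try $\beta=\theta^{1/2}$ and adjust the exponent and constants at the end). Let $W$ be the set of vertices whose $B$-degree is at least $\beta n$. Double counting gives $|W|\beta n\le 2|B|\le\theta n^2$, so $|W|\le(\theta/\beta)n$.

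Define the forbidden family
\[
  B^\ast\coloneqq B\cup\Bigl\{S\in\tbinom V2: S\cap W\neq\emptyset\Bigr\}.
\]
Let $H'$ be the spanning subgraph whose edges are exactly the edges of $H$ none of whose three pairs lies in $B^\ast$. By construction every pair of $B^\ast$ has codegree $0$ in $H'$, so $\binom{V}{2}\setminus\partial H'\subseteq B^\ast\cup\{S\notin B^\ast:\deg_{H'}(S)=0\}$. The second set will turn out to be empty by the degree bound below. Hence
\[
  \Bigl|\tbinom V2\setminus\partial H'\Bigr|\le |B|+|W|n\le \theta\tbinom n2+O(\theta/\beta)\tbinom n2 .
\]
With $\beta=\theta^{1/2}$ the second term is $O(\theta^{1/2})\binom n2$, which is at most $\theta^{1/4}\binom n2$ once $\theta$ is below an absolute constant.

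For the codegree bound, take a pair $uv\notin B^\ast$. Then $uv\notin B$, so $\deg_H(uv)\ge\mu(n-2)$, and $u,v\notin W$. An edge $uvw$ of $H$ fails to be in $H'$ only if $w\in W$, or $uw\in B$, or $vw\in B$. There are at most $|W|\le(\theta/\beta)n$ choices of the first kind, and at most $2\beta n$ of the other two kinds since $u,v\notin W$. Therefore
\[
  \deg_{H'}(uv)\ge \mu(n-2)-(\theta/\beta+2\beta)n\ge\mu(n-2)-3\theta^{1/2}n,
\]
which is at least $(\mu-8\theta^{1/4})(n-2)$ because $3\theta^{1/2}n\le 8\theta^{1/4}(n-2)$ for $n\ge6$. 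In particular this bound is positive whenever it is meaningful. So every pair of $\partial H'$ lies outside $B^\ast$ and satisfies the required minimum positive codegree bound. This also shows that pairs outside $B^\ast$ with $\deg_{H'}=0$ can occur only in the degenerate regime treated next.

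The only real obstacle is bookkeeping the constants so that the stated bounds $8\theta^{1/4}$ and $\theta+\theta^{1/4}$ hold exactly, including for $\theta$ not small. For large $\theta$ (say $\theta^{1/4}\ge1/8$) the codegree bound $(\mu-8\theta^{1/4})(n-2)$ is nonpositive, since $\mu<1$, so the codegree condition is vacuous. In that range I would handle the non-shadow bound separately: either take $H'=H$ when $\theta+\theta^{1/4}\ge1$, or rechoose $\beta$, for example $\beta=\theta^{3/4}$ so that $|W|n\le\theta^{1/4}n^2$, and tune the constants accordingly. An iterative cleaning procedure is tempting but its naive charging argument loses a factor, so I would avoid it in favour of the one-shot construction above.
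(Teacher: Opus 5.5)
\textbf{Verdict.} The paper quotes this lemma without proof, so your argument is judged on its own. The one-shot cleaning is correct when $\mu>8\theta^{1/4}$, but your treatment of the remaining case has a genuine gap. It is easy to close.

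\textbf{The regime where your argument works.} Delete every edge that contains a pair of $B$ or a vertex of high $B$-degree, with $\beta=\theta^{1/2}$. This proves the lemma whenever $\mu>8\theta^{1/4}$. That inequality together with $\mu<1$ forces $\theta^{1/4}<1/8$, so $|W|(n-1)\le\theta^{1/2}(n-1)^2\le\theta^{1/4}\binom n2$. Every pair outside $B^\ast$ keeps codegree at least $\mu(n-2)-3\theta^{1/2}n\ge(\mu-8\theta^{1/4})(n-2)>0$ in $H'$. Pairs of $B^\ast$ have codegree $0$, so $\binom V2\setminus\partial H'\subseteq B^\ast$, and both bounds follow.

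\textbf{The gap.} The complementary regime is $\mu\le 8\theta^{1/4}$, not $\theta^{1/4}\ge 1/8$. When $\mu$ is small it contains arbitrarily small $\theta$. There the codegree conclusion is vacuous, but the non-shadow bound is not, and your construction can violate it badly. Here is a concrete instance:
\begin{itemize}
\item Let the bad pairs be exactly the pairs between $W_0$ and $U_0$, where $|U_0|=\theta^{1/2}n$ and $|W_0|\approx\theta^{1/2}n/2$. Put $R=V\setminus(W_0\cup U_0)$.
\item Let the edges be all triples $wxy$, $ww'x$ and $uu'x$ with $w,w'\in W_0$, $u,u'\in U_0$ and $x,y\in R$.
\item With $\mu=\theta^{1/2}/4$ and $n$ large, every other pair is good.
\end{itemize}
Your $W$ is then $W_0$, and $H'$ keeps only the triples $uu'x$. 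All $\binom{|R|}2$ pairs inside $R$ lie outside $B^\ast$ yet have codegree $0$ in $H'$. Changing $\beta$ does not rescue this: for example, $\beta=\theta^{3/4}$ also puts $U_0$ into $W$ and deletes every edge.

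\textbf{The fix.} In this regime no cleaning is needed: take $H'=H$.
\begin{itemize}
\item Non-shadow bound: since $\mu>0$, every pair with $\deg_H=0$ has $\deg_H<\mu(n-2)$ and so lies in $B$. Hence $\bigl|\binom V2\setminus\partial H\bigr|\le\theta\binom n2$.
\item Codegree bound: $\delta_2^+(H)\ge 1>0\ge(\mu-8\theta^{1/4})(n-2)$. Note $H$ is nonempty because $\theta<1$.
\end{itemize}
With the case split made on $\mu$ versus $8\theta^{1/4}$ instead of on $\theta$ alone, your proof is complete.
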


Suppose that $G$ is a $(3,\ell)$-complex with vertex classes $V_1,\dots,V_\ell$, and that $H$ is a $(3,\ell)$-complex with vertex classes $X_1,\dots,X_\ell$.
We say that $G$ \emph{respects the partition of $H$} if, whenever $H$ contains an $i$-edge with vertices in $X_{j_1},\dots,X_{j_i}$, the complex $G$ contains an $i$-edge with vertices in $V_{j_1},\dots,V_{j_i}$.
A labeled copy of $H$ in $G$ is \emph{partition-respecting} if each vertex in $X_i$ is embedded into $V_i$.
We write $|H|_G$ for the number of labeled partition-respecting copies of $H$ in $G$.

The Extension Lemma says that if $G$ is regular and $H$ is an induced subcomplex of $H'$, then almost all labeled copies of $H$ in $G$ extend to many labeled copies of $H'$.

\begin{lemma}[Extension Lemma, {\cite[Lemma~25]{ABCM}}]\label{lem:ext}
  Let $\ell,r,t,t',n_0$ be positive integers with $t<t'$, and let $c,\beta,d_2,d,\e,\e_3>0$ be constants such that $1/d_2,1/d\in\mathbb{N}$ and
  \[
    1/n_0 \ll \min\{1/r,\e\} \le   \max\{1/r,\e\} \ll c \ll \min\{\e_3,d_2\}
    \le \e_3 \ll \min\{\beta,d,1/\ell,1/t'\}.
  \]
  Then the following holds for all integers $n\ge n_0$.
  Let $H'$ be a $(3,\ell)$-complex on $t'$ vertices with vertex classes $Y_1,\dots,Y_\ell$, and let $H$ be an induced subcomplex of $H'$ on $t$ vertices.
  Suppose that $G$ is a $(3,\ell)$-complex with vertex classes $V_1,\dots,V_\ell$, all of size $n$, which respects the partition of $H'$.
  Suppose further that the underlying $(2,\ell)$-complex of $G$ is $(d_2,\e,1)$-regular, and that for each $3$-edge $e$ of $H'$ with index $A\in\binom{[\ell]}{3}$, the $(3,3)$-graph $G_A$ is $(d',\e_3,r)$-regular with respect to $(G_A^{<})_2$ for some $d'\ge d$.
  Then all but at most $\beta |H|_G$ labeled partition-respecting copies of $H$ in $G$ can be extended to at least $c n^{t'-t}$ labeled partition-respecting copies of $H'$ in $G$.
\end{lemma}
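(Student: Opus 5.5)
The plan is to derive the Extension Lemma from the counting lemma for regular $3$-complexes (in the form of Kohayakawa--R\"odl--Skokan, or the version of Cooley, Fountoulakis, K\"uhn and Osthus), via a second-moment argument. Write $d_A$ for the relative density of $G_A$ on a triple $A$, which is at least $d$ whenever $A$ indexes a $3$-edge of $H'$. For a $(3,\ell)$-complex $F$ respected by $G$, the counting lemma, applied under the hierarchy $\max\{1/r,\e\}\ll\min\{\e_3,d_2\}\le\e_3\ll\min\{d,1/\ell,1/t'\}$, gives
\[
  |F|_G=(1\pm\gamma)\,d_2^{e_2(F)}\prod_{e\in F_3} d_{A(e)}\; n^{|F|},
\]
where $\gamma$ can be made small in terms of $\e_3$. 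Only the $3$-edges of $F$ matter; $F$ may place several vertices in the same class, since copies are partition-respecting and injectivity costs only a factor $1-O(1/n)$.

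For a partition-respecting copy $\phi$ of $H$ in $G$, let $X(\phi)$ be the number of extensions of $\phi$ to copies of $H'$. I will apply the counting lemma to three complexes.
\begin{enumerate}[label={\rm(\alph*)}]
  \item To $H$, which gives $|H|_G$.
  \item To $H'$, which gives $\sum_\phi X(\phi)=|H'|_G$.
  \item To the complex $H''$ obtained by gluing two copies of $H'$ along $H$, with the duplicated vertices placed in the same classes as the originals. This gives $\sum_\phi X(\phi)^2=|H''|_G+O(n^{2t'-t-1})$, where the error term accounts for pairs of extensions that share a vertex outside $H$.
\end{enumerate}
Since $H$ is \emph{induced} in $H'$, every edge of $H''$ lies in one of the two copies of $H'$. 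Hence the edge set of $H''$ is the disjoint union of the edges of $H$ and two copies of the edges of $H'$ not contained in $V(H)$. The density products therefore satisfy $D_{H''}D_H=D_{H'}^2$, where $D_F$ denotes the product $d_2^{e_2(F)}\prod_{e\in F_3}d_{A(e)}$.

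Let $\mu=|H'|_G/|H|_G=(1\pm3\gamma)\,(D_{H'}/D_H)\,n^{t'-t}$. The three counts then give
\[
  \frac{1}{|H|_G}\sum_\phi\bigl(X(\phi)-\mu\bigr)^2=O(\gamma)\,\mu^2 .
\]
By Chebyshev's inequality, all but $O(\gamma)|H|_G\le\beta|H|_G$ copies satisfy $X(\phi)\ge\mu/2$. Finally, $\mu/2\ge \tfrac14 d_2^{\binom{t'}{2}} d^{\binom{t'}{3}} n^{t'-t}$, and this is at least $cn^{t'-t}$ because $c\ll d_2$ and $c\ll\e_3\ll d$. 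Here $\gamma$ is chosen small enough in terms of $\beta$, which is permitted since $\e_3\ll\beta$.

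The main obstacle is the counting lemma itself. The hypergraph counting lemma for $(d',\e_3,r)$-regular triads over a $(d_2,\e,1)$-regular graph layer, with $\e\ll d_2$ but $\e_3$ not small relative to $d_2$, is deep, and I would cite it rather than reprove it. A secondary point is to check that it applies to $H''$. $H''$ has at most $2t'$ vertices, so I need $\e_3\ll 1/t'$ in the hierarchy. Every $3$-edge of $H''$ lies on an index triple that is regular with density at least $d$, since it comes from a $3$-edge of $H'$. Pairs of vertices in the same class span no edges. Pairs of $H''$ that are not edges impose no condition. So the hypotheses transfer directly.
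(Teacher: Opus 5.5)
Your argument is correct. Note that the paper gives no proof of this lemma; it is quoted from ABCM as a black box. Your route replaces that black box with the hypergraph counting lemma. The reduction you give is the standard derivation of an extension lemma from a counting lemma: count $H$, $H'$ and the doubled complex $H''$, use inducedness of $H$ to get $D_{H''}D_H=D_{H'}^2$, and conclude with Chebyshev. What it buys is transparency about why $H$ must be induced in $H'$, and why regularity is needed only on triples carrying $3$-edges of $H'$. The deep input is relocated to the counting lemma, which you still have to cite.

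The parameter bookkeeping fits the stated hierarchy. Your $c$ only needs to lie below $\tfrac14 d_2^{\binom{t'}{2}}d^{\binom{t'}{3}}$, which is allowed since $d_2,d,t'$ sit to the right of $c$. The counting lemma is applied to complexes on at most $2t'$ vertices, which $\e_3\ll 1/t'$ covers. You need $\e_3\ll\gamma\ll\beta$, which $\e_3\ll\beta$ permits.

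Two small points should be made explicit.
\begin{enumerate}[label=\rm(\roman*)]
  \item You need the version of the counting lemma that allows different top-level densities $d_A\ge d$ on different index triples. This matches the ``for some $d'\ge d$'' in the hypothesis.
  \item Your counting lemma may be stated for complexes in which every triple of classes is regular or empty. If so, first delete the $3$-edges of $G$ on index triples that carry no $3$-edge of $H'$. This is permitted by the ``regular or density $0$'' convention, and it changes none of the three counts $|H|_G$, $|H'|_G$, $|H''|_G$.
\end{enumerate}
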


We shall use the Extension Lemma through the following connector statement, which turns a pseudo-path in the reduced graph into short tight paths between almost all prescribed terminal pairs.

\begin{lemma}[Connector, {\cite[Lemma~5.1]{Han22}}]\label{lem:connector}
  Let $\ell,r,n_0$ be positive integers, and let $c,\beta,d_2,d,\e,\e_3>0$ be constants such that $1/d_2,1/d\in\mathbb{N}$, $\beta\le d_2/18$, and
  \[
    1/n_0 \ll \min\{1/r,\e\} \le \max\{1/r,\e\} \ll c \ll \min\{\e_3,d_2\}
    \le \e_3 \ll \min\{\beta,d\}.
  \]
  Then the following holds for all integers $n\ge n_0$.
  Suppose that $G$ is a $(3,\ell)$-complex with vertex partition $\mathcal P=\{V_1,\dots,V_\ell\}$, each of size $n$, such that the underlying $(2,\ell)$-complex of $G$ is $(d_2,\e,1)$-regular.
  Let $R$ be a $3$-graph on $\mathcal P$ such that, for each triple $T\in E(R)$, the graph $G_T$ is $(d',\e_3,r)$-regular with respect to $(G_T^{<})_2$ for some $d'\ge d$.

  Let $S_1,S_0\in\binom{\mathcal P}{2}$ and $X_1,X_0\in\binom{\mathcal P}{3}$ satisfy $S_i\subseteq X_i\in E(R)$ for $i=0,1$.
  Suppose that there is a pseudo-path in $R$ connecting $X_1$ and $X_0$, and that $X_0$ is contained in a copy of $K_4^-$ in $R$.
  Then, for all but at most $\beta n^4$ choices of labeled $2$-edges $(v_1,v_2)\in G_{S_1}$ and $(v_3,v_4)\in G_{S_0}$, there exists a tight path $P'$ with at most $15+\ell^3$ vertices whose ordered end pairs are $(v_2,v_1)$ and $(v_3,v_4)$.
\end{lemma}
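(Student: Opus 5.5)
The plan is to reduce the statement to a single application of the Extension Lemma (\cref{lem:ext}). First I will construct, at the cluster level, a \emph{tight walk} in $R$: a sequence of clusters $W_1,W_2,\dots,W_m$ (repetitions allowed) such that every three consecutive clusters form an edge of $R$. Its first two entries should be the two clusters of $S_1$ in the order determined by $(v_2,v_1)$, and its last two entries the clusters of $S_0$ in the order of $(v_3,v_4)$. Then I will let $H'$ be the down-closure of a tight path $y_1\dots y_m$, with $y_j$ assigned to class $W_j$. This is a legitimate $(3,\ell)$-complex, since each $3$-edge $y_jy_{j+1}y_{j+2}$ lies in three distinct clusters forming an edge of $R$. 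Let $H$ be the induced subcomplex on $\{y_1,y_2,y_{m-1},y_m\}$.

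To build the walk, take a pseudo-path $X_1=e_1,\dots,e_s=X_0$ in $R$. After removing repetitions, I may assume $s\le\binom{\ell}{3}$. Consecutive edges $e_j,e_{j+1}$ share a pair, so I can pass from a walk ending in an ordered pair inside $e_j$ to one ending in an ordered pair inside $e_{j+1}$. Each such step costs a bounded number of extra clusters: within a single edge $\{a,b,c\}$, the walk $a,b,c,a,b,\dots$ realises any cyclic rotation of its ordered pairs. This gives a walk of length $O(\ell^3)$ from $S_1$ (in the required order) to some ordered pair of $X_0$.

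The remaining issue is orientation. Inside a single edge, cycling through $a,b,c$ only reaches the ordered pairs that are cyclic rotations, so the target pair may come out reversed. Here I use that $X_0$ lies in a copy of $K_4^-$ in $R$: a walk around the three edges of the $K_4^-$ reverses the orientation of a pair of $X_0$ at bounded cost. In all, $m\le 15+\ell^3$ can be achieved. I will also pad the walk, by going around an edge a few more times, so that $m\ge 7$. Then $y_1,y_2$ and $y_{m-1},y_m$ are not in a common edge of $H'$, and $H$ is exactly the two disjoint $2$-edges $y_1y_2$ and $y_{m-1}y_m$ together with their vertices, so $H$ is induced.

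Next I apply \cref{lem:ext} with this $H\subseteq H'$, where $t=4$ and $t'=m$. Its hypotheses are the regularity assumptions on the $2$-complex and on $G_T$ for $T\in E(R)$, and $G$ respects the partition of $H'$ because every edge of $H'$ maps onto an edge of $R$ or a pair inside one. The lemma says that all but $\beta|H|_G\le\beta n^4$ labeled partition-respecting copies of $H$ extend to at least $cn^{m-4}>0$ labeled copies of $H'$. A copy of $H$ is precisely a choice of labeled $2$-edges $(v_1,v_2)\in G_{S_1}$ and $(v_3,v_4)\in G_{S_0}$ in the prescribed cluster order, and the injectivity built into labeled copies guarantees distinct vertices. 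Each extension is a tight path with end pairs $(v_2,v_1)$ and $(v_3,v_4)$, which is the desired $P'$.

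I expect the main obstacle to be the combinatorial bookkeeping in the walk construction. The delicate points are matching the required orientations at both ends and checking that the $K_4^-$ gadget really reverses a pair. The constant $15$ has to absorb the cost of the gadget, the padding, and the transitions at the two ends. Everything else is a direct invocation of the Extension Lemma.
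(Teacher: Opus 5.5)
The paper does not prove this lemma; it quotes it from \cite{Han22}, so your argument has to stand on its own. Several parts of it are right.
\begin{itemize}
\item The walk construction is sound. In particular the $K_4^-$ gadget works: with apex $a$ and edges $abc,abd,acd$, the walk $a,b,c,a,d,b,a$ turns $(a,b)$ into $(b,a)$.
\item Your choice of $H\subseteq H'$ is correct: the two end $2$-edges form an induced subcomplex once $m\ge 6$.
\end{itemize}

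The gap is the single application of \cref{lem:ext}. That lemma requires $\e_3\ll\min\{\beta,d,1/\ell,1/t'\}$. You apply it with $\ell$ classes and $t'=m$ of order $\ell^3$. The connector lemma, however, assumes only $\e_3\ll\min\{\beta,d\}$ and leaves $\ell$ completely free.

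You cannot reduce the number of classes, because the pseudo-path from $X_1$ to $X_0$ may run through all $\ell$ clusters. This is also exactly the regime the paper needs. In the proof of \cref{equivalentclass} the lemma is applied with $\ell=t\le T_1$, and $T_1$ is produced by \cref{thm:strengthened_regularity} only after $\e_3$ has been fixed. So $\e_3\ll 1/\ell$ can never be arranged there.

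As written, you have proved only the weaker statement with $1/\ell$ and $1/(15+\ell^3)$ added to the hierarchy. The hypothesis $\beta\le d_2/18$ plays no role in your argument, which is a symptom of this.

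To remove the dependence on $\ell$, apply \cref{lem:ext} only to windows of bounded size and chain them.
\begin{itemize}
\item \emph{Windows.} Pad the walk with loops $c,a,b$ of length $3$ so that it splits into windows of six consecutive positions. Consecutive windows share the two positions at their junction.
\item \emph{Local extension.} For each window, apply \cref{lem:ext} inside the subcomplex on its at most six clusters. Take $H'$ to be the $6$-vertex tight path, $H$ its two end $2$-edges, and error parameter $\beta'=\beta^2/100$. Now both the number of classes and $t'$ are at most $6$, so the hierarchy is satisfied.
\item \emph{Good $2$-edges.} Call an ordered $2$-edge at a junction forward-good if it extends to at least $cn^2$ window paths with all but at most $\sqrt{\beta'}n^2$ ordered $2$-edges at the next junction. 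Define backward-good symmetrically using the previous junction. By Markov's inequality, all but $\sqrt{\beta'}n^2+O(n)$ ordered $2$-edges at each junction are good.
\item \emph{Chaining.} Fix a forward-good start and a backward-good end. Choose the junction $2$-edges one at a time from those linked to the previous choice and forward-good; at the last junction, require instead that the choice be linked to the end. Then choose the two internal vertices of each window. Always avoid the $O(\ell^3)$ vertices already used; this needs $n$ large compared with $\ell^3$, which holds in the application.
\item \emph{Why the candidate sets are nonempty.} Each candidate set has size at least $(d_2-\e-2\sqrt{\beta'})n^2-O(\ell^3 n)>0$, and this is where $\beta\le d_2/18$ is used.
\item \emph{Why errors do not accumulate.} The good sets are defined window by window, independently of the start, so nothing builds up over the $\Theta(\ell^3)$ windows.
\item \emph{Exceptional count.} Summing over the four orientation patterns, the exceptional pairs number at most $8\sqrt{\beta'}n^4+O(n^3)\le\beta n^4$.
\end{itemize}
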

\section{Robust tight connectedness}\label{sec:robusttight}

In this section we describe the type of connection that will be used repeatedly in the absorption argument.
The goal is to decide when two ordered end pairs can be joined by a short tight path using only vertices from a fixed reservoir set $A$, even after a small forbidden subset of $A$ has been removed.
Recall that for a $3$-graph $H$,
$
\overrightarrow{\partial}_2(H) \coloneqq \{(x,y)\colon x\ne y \text{ and } \{x,y\}\in \partial H\}.
$

Roughly speaking, two ordered pairs are robustly tightly connected if a short path joins them through the reservoir even after any small forbidden set has been removed.

\begin{defn}\label{def:robustconnected}
  Let $n\in\mathbb N$, let $\zeta\ge0$, let $G$ be an $n$-vertex
  $3$-graph, and let $A\subseteq V(G)$.
  Let $p=(u,v)$ and $q=(u',v')$ be ordered pairs with
  $\{u,v\}\cap\{u',v'\}=\emptyset$.
  We say that $p$ and $q$ are
  \emph{$(A,\zeta,t)$-robustly tightly connected} if, for every set
  \[
    F\subseteq A\setminus\{u,v,u',v'\}
    \quad\text{with}\quad |F|\le\zeta n,
  \]
  there is a tight path in $G$ with at most $t$ vertices, with ordered end
  pairs $p$ and $q$, and with all internal vertices in $A\setminus F$.
\end{defn}

The reversal in the next elementary gluing statement is essential.

\begin{lemma}[Gluing and closing lemma]\label{lem:gluing}
  Let $G$ be a $n$-vertex $3$-graph. Then the following holds.

  \begin{enumerate}[label={\rm(\roman*)}]
    \item Let $P$ and $Q$ be vertex-disjoint tight paths, and choose one
      end pair $p$ of $P$ and one end pair $q$ of $Q$.  Suppose that a tight
      path $C$ has ordered end pairs $p^{\mathrm{rev}}$ and
      $q^{\mathrm{rev}}$, and that its internal vertices are disjoint from
      $V(P)\cup V(Q)$.  Then $P\cup C\cup Q$ is a tight path whose end pairs
      are the two unchosen end pairs of $P$ and $Q$.

    \item Let $P$ be a tight path with two vertex-disjoint end pairs
      $p,q$.  If a tight path $C$ has ordered end pairs
      $p^{\mathrm{rev}},q^{\mathrm{rev}}$ and has internal vertices disjoint
      from $V(P)$, then $P\cup C$ is a tight cycle.
  \end{enumerate}
\end{lemma}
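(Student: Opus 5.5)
We verify both parts by writing the vertex sequences explicitly and checking that every window of three consecutive vertices is an edge and that all vertices are distinct. The only real work is keeping track of orientations: an end pair of a path and the corresponding attachment pair are reverses of each other.

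\emph{Part (i).} Orient $P$ so that its chosen end pair $p$ is the last one. Write $P=a_1\dots a_m$ with $p=(a_{m-1},a_m)$, so that $p^{\mathrm{rev}}=(a_m,a_{m-1})$. By the definition of end pairs, a path $C=c_1\dots c_k$ with $p^{\mathrm{rev}}$ as an end pair satisfies either $(c_2,c_1)=(a_m,a_{m-1})$ or $(c_{k-1},c_k)=(a_m,a_{m-1})$. Reversing $C$ if necessary, we may assume $c_1=a_{m-1}$ and $c_2=a_m$, so $C$ starts with $a_{m-1}a_m$. Its other end pair is then $(c_{k-1},c_k)=q^{\mathrm{rev}}$.

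Orient $Q=b_1\dots b_l$ so that $q$ is its first end pair, $q=(b_2,b_1)$. Then $q^{\mathrm{rev}}=(b_1,b_2)=(c_{k-1},c_k)$, so $C$ ends with $b_1b_2$. Concatenating, with the shared pairs identified, gives
\[
a_1\dots a_{m-2}\,a_{m-1}a_m\,c_3\dots c_{k-2}\,b_1b_2\,b_3\dots b_l .
\]
Every window of three consecutive vertices lies entirely within $P$, within $C$, or within $Q$, so every such window is an edge.

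For distinctness: $P$ and $Q$ are disjoint, and the internal vertices of $C$ avoid $V(P)\cup V(Q)$. The remaining vertices of $C$ lie in its end pairs and are identified with vertices of $P$ and $Q$. If $k\le 4$, the end pairs of $C$ already cover all of $C$ and the path simply concatenates $P$ and $Q$; this case is consistent because $p$ and $q$ are disjoint. The new end pairs are $(a_2,a_1)$ and $(b_{l-1},b_l)$, which are exactly the unchosen end pairs of $P$ and $Q$.

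\emph{Part (ii).} As in part (i), write $P=a_1\dots a_m$ with $q=(a_{m-1},a_m)$ and $p=(a_2,a_1)$, and orient $C$ so that it starts with $a_{m-1}a_m$ and ends with $(c_{k-1},c_k)=p^{\mathrm{rev}}=(a_1,a_2)$. The cyclic sequence
\[
a_1a_2\dots a_m\,c_3\dots c_{k-2}
\]
then has all of its consecutive triples inside $P$ or inside $C$, including the wrap-around triples $c_{k-2}a_1a_2$ and the ones through $a_{m-1}a_m c_3$. Its vertices are distinct since the internal vertices of $C$ avoid $V(P)$, and its length is at least $4$ because $p$ and $q$ are disjoint. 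Hence $P\cup C$ is a tight cycle.

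The main point requiring care is the orientation bookkeeping. The statement is phrased with reversed pairs precisely so that $C$ overlaps $P$ and $Q$ in the correct order, and the argument above makes that overlap explicit.
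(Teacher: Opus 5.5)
Your proposal is correct and follows the paper's own argument: traverse $P$ toward the chosen end, then $C$, then $Q$ away from its chosen end, with the reversed attachment pairs making the two length-$2$ overlaps line up; for (ii), close the path cyclically through $C$. You write out explicitly the index bookkeeping and the small cases that the paper leaves implicit. These are that $k\ge 4$ is forced by the disjointness of $p^{\mathrm{rev}}$ and $q^{\mathrm{rev}}$, and that $P$ may be degenerate.
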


\begin{proof}
  For~(i), traverse $P$ towards the chosen end, then traverse $C$, and
  finally traverse $Q$ away from the chosen end.  The reversal of the
  attachment pairs makes the two overlaps occur in the correct order, so
  every three consecutive vertices form an edge.  Part~(ii) is the same
  argument with the two ends of one path identified cyclically through
  $C$.
\end{proof}

The proof of the connection-class lemma uses the following
positive-codegree formulation of Han's tight-component
lemma.

\begin{lemma}[{\cite[Lemma~5.2]{Han22}}]\label{twocom}
  Let $\theta\in (0,1)$. Let $H$ be an $n$-vertex $3$-graph with $|\partial_2(H)|\ge \theta\binom{n}{2}$ and
  \(
    \delta_2^+(H)\ge \frac n3.
  \)
  Then $H$ has at most two tight components.
\end{lemma}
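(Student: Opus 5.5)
The plan is to transfer the global question to link graphs of single vertices, where the codegree condition becomes a minimum-degree condition, and then to lift the local information back to $H$. The first observation is that every pair $S\in\partial H$ determines a unique tight component: any two edges containing $S$ meet in $S$ and are therefore tightly adjacent. So we may colour each shadow pair by the tight component of the edges through it.

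\emph{Step 1 (local structure).} For a vertex $v$, let $L_v$ be its link graph on $V(H)\setminus\{v\}$. A vertex $a$ is non-isolated in $L_v$ exactly when $va\in\partial H$. In that case $\deg_{L_v}(a)=\deg_H(v,a)\ge \delta_2^+(H)\ge n/3$.

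Two edges $ab,ac$ of $L_v$ correspond to $vab,vac$, which share the pair $va$. Hence every connected component of $L_v$ lies inside a single tight component of $H$. Each component of $L_v$ has minimum degree at least $n/3$, so it has more than $n/3$ vertices. Since $L_v$ has only $n-1$ vertices, it has at most two non-trivial components. Consequently every vertex $v$ lies in edges of at most two tight components.

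For a tight component $C_i$, write $N_i(v)$ for the set of vertices $a$ with $va$ coloured $i$, and let $V_i=V(C_i)$. Step 1 also gives $|N_i(v)|>n/3$ whenever $v\in V_i$, and $N_i(v)\subseteq V_i$.

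\emph{Step 2 (global contradiction).} Suppose $H$ has three tight components $C_1,C_2,C_3$. By Step 1 no vertex lies in all three of $V_1,V_2,V_3$, so $|V_1|+|V_2|+|V_3|\le 2n$. I would then derive a contradiction by proving a lower bound of the form $|V_i|>2n/3$ for some, and ideally every, $i$.

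To get it, take an edge $xyz\in C_i$. The sets $N(x,y)$, $N(y,z)$ and $N(x,z)$ are each contained in $V_i$ and each has size at least $n/3$. I would show they cannot overlap heavily, using that $N_i(x)$ is a component of $L_x$ of size more than $n/3$ that avoids the second colour class at $x$. Combining this with a double count of the pairs coloured $i$ between $V_i$ and its complement should give the bound. Here the hypothesis $|\partial_2(H)|\ge\theta\binom n2$ is used to guarantee that the shadow is non-degenerate, so that the link components are genuinely large.

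\emph{Main obstacle.} Step 1 is routine. The real difficulty is the passage in Step 2 from the statement ``each vertex sees at most two components'' to ``there are at most two components overall.'' The sets $V_i$ need not be disjoint, so the overlap structure has to be controlled. If the direct counting fails, I would fall back on Han's argument from~\cite{Han22}: fix a vertex $v$ seeing two components and analyse how a third component $C_3$ could attach to the two large link components of $v$. The aim there is to show that any $C_3$-edge shares a shadow pair with an edge through one of these link components, contradicting the uniqueness of colours on shadow pairs.
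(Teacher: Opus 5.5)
Your Step~1 is correct, but Step~2 contains no argument, and the route you sketch for it cannot work. The target inequality $|V_i|>2n/3$ does not hold for a single component on its own. Two disjoint complete $3$-graphs on $n/2$ vertices each satisfy every hypothesis, yet they have $|V_i|=n/2$. In that example the three codegree neighbourhoods of any edge overlap in all but three vertices, and every vertex sees only one colour. So overlaps of $N(x,y),N(y,z),N(x,z)$, the link structure at $x$, and a double count of colour-$i$ pairs cannot give the bound. It can only come from how three components interact, which is exactly what you leave open.

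Two further points. A bound for only \emph{some} $i$ would not beat $\sum_i|V_i|\le 2n$. Also, the hypothesis $|\partial_2(H)|\ge\theta\binom n2$ is not what makes link components large: that already follows from $\delta_2^+(H)\ge n/3$, and the $\theta$-hypothesis is not needed. The paper only cites this lemma from Han, so the argument has to be supplied in full.

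The missing idea is to exclude rainbow triangles and to anchor the argument at a vertex seeing two colours. Every edge through a colour-$j$ pair lies in $C_j$, so $N(u,w)\subseteq N_j(u)\cap N_j(w)$ whenever $uw$ has colour $j$.

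\emph{Rainbow triangles are impossible.} Suppose $ab,bc,ca$ had colours $1,2,3$. Then $N(a,b)$, $N(b,c)$, $N(c,a)$ would be pairwise disjoint; for instance $N(a,b)\cap N(b,c)\subseteq N_1(b)\cap N_2(b)=\emptyset$. They would also avoid $\{a,b,c\}$: if $c\in N(a,b)$, then $ab$ and $bc$ lie in a common edge and have the same colour. This gives at least $n+3$ vertices.

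\emph{Main argument.} If the $V_i$ are pairwise disjoint, then $\sum_i|V_i|\ge 3(n/3+2)>n$. Otherwise pick $v\in V_1\cap V_2$.
\begin{itemize}
\item The set $R=V\setminus(\{v\}\cup N_1(v)\cup N_2(v))$ has at most $n/3-3$ vertices, and $v\notin V_3$.
\item Since $|V_3|\ge n/3+2>|R|$, swapping colours $1$ and $2$ if necessary gives a vertex $a\in V_3\cap N_1(v)$.
\item By the rainbow exclusion, $N_3(a)$ misses $N_2(v)$.
\item It also misses $N_1(a)\supseteq\{v\}\cup N(a,v)$, where $N(a,v)\subseteq N_1(v)$ has at least $n/3$ elements.
\end{itemize}
Therefore $|N_3(a)|\le(|N_1(v)|-1-n/3)+|R|=2n/3-2-|N_2(v)|\le n/3-3$, contradicting $|N_3(a)|\ge n/3+1$.

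Your fallback points in this direction. However, the contradiction comes from this counting, not from two colours clashing on one shadow pair. The case with no two-coloured vertex also has to be handled separately.
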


The next lemma divides the ordered shadow into at most two
classes such that any two disjoint pairs in the same class remain connectable through $A$ after a small forbidden set is removed.

\begin{lemma}\label{equivalentclass}
  Given $\alpha,\eta>0$, there exists $\zeta_0>0$ such that, for every
  $0<\zeta\le\zeta_0$, there exist
  $n_0=n_0(\alpha,\eta,\zeta)$ and
  $L_{\rm RCC}=L_{\rm RCC}(\alpha,\eta,\zeta)\in\mathbb N$ with the
  following property.
  Let $H$ be an $n$-vertex $3$-graph with $n\ge n_0$, and let
  $A\subseteq V(H)$ satisfy $|A|\ge\eta n$ and
  \[
    \deg_A(u,v)\ge\left(\frac13+\alpha\right)|A|
    \qquad\text{for every }\{u,v\}\in\binom{V(H)}2.
  \]
  Then there is a map
  \[
    \kappa:\overrightarrow{\partial}_2(H)\longrightarrow\{1,2\}
  \]
  such that any two vertex-disjoint ordered pairs $p,q$ with
  $\kappa(p)=\kappa(q)$ are
  $(A,\zeta,L_{\rm RCC})$-robustly tightly connected.
  Consequently, among any three pairwise vertex-disjoint ordered pairs,
  two are $(A,\zeta,L_{\rm RCC})$-robustly tightly connected.
  We may moreover assume that $L_{\rm RCC}\le\zeta^2n$.
  We call each nonempty $\mathcal C_i=\kappa^{-1}(i)$ a
  \emph{robust connection component}, abbreviated as an \emph{RCC}.
  Whenever a surjective codomain is required, we discard the empty
  fibres and relabel the nonempty fibres as $[r]$, where
  $r\in\{1,2\}$; we continue to write $\kappa$ and $\mathcal C_i$ for
  the resulting map and its fibres.
\end{lemma}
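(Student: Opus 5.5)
We apply \cref{thm:strengthened_regularity} inside the reservoir, pass to a cleaned reduced $3$-graph with at most two tight components, and lift its components to two classes of ordered pairs via \cref{lem:connector}.

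\textbf{Setup and reduced graph.} Fix constants
\[
  1/n\ll \e,1/r\ll \e_3\ll 1/t_0\ll d,\beta\ll\zeta_0\ll\alpha,\eta ,
\]
with $d\le\alpha/10$. After discarding at most $T_1!$ vertices (harmless for robustness), take $\mathcal Q$ to be a partition with $A_0\approx A$ as a union of parts. Apply \cref{thm:strengthened_regularity} to $G=H$ to obtain a regular slice $\J_A$ on $A$ with $t=t_A$ clusters. Since $\deg_A(u,v)\ge(1/3+\alpha)|A|$, every pair inside $A$ has relative codegree at least $1/3+\alpha$ in $H[A]$. By \cref{lem:Rdeg}, all but $3\sqrt{\e_3}\binom t2$ cluster pairs have codegree at least $(1/3+\alpha/2)(t-2)$ in $R=R_d(H[A])$. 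By \cref{lem:codegree-cleaning}, there is a spanning $R'\subseteq R$ with $\delta_2^+(R')\ge t/3$ and $|\partial R'|\ge\binom t2/2$. Hence \cref{twocom} shows that $R'$ has at most two tight components $\mathcal T_1,\mathcal T_2$. Each pair in $\partial R'$ has large positive codegree in $R'$, so every edge of $R'$ lies in a $K_4^-$ inside its component; this is needed to apply \cref{lem:connector}.

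\textbf{Defining $\kappa$.} For an ordered pair $p=(u,v)\in\overrightarrow{\partial}_2(H)$, consider $4$-vertex tight paths $v\,u\,x\,y$ (roots $v,u$) with $x,y\in A$, where $\{x,y\}$ lies in $\J_A$ on a cluster pair $S$ contained in an edge $X$ of $\mathcal T_i$. The codegree condition gives $\Omega(|A|^2)$ such paths in $\K_A$. By \cref{thm:strengthened_regularity}(iii) this transfers to a positive density supported on $\J_A$, and almost all such supported pairs lie in regular triads of $R'$. Let $\kappa(p)=i$ if a $\gamma$-fraction (with $\gamma\gg\beta$) of these extensions land in $\mathcal T_i$; if both components qualify, choose either. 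Since there are only two components and the total extension density is bounded below, some $i$ always qualifies.

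\textbf{Robust connection.} Let $p,q$ be disjoint with $\kappa(p)=\kappa(q)=i$, and let $F\subseteq A$ with $|F|\le\zeta n$. Delete $F$: by \cref{lem:reg_res} the slice remains regular on the clusters minus $F$, since $\zeta\ll$ cluster size proportion, and after trimming clusters to equal size this costs only an $O(\zeta t)$ fraction. The $\Omega(n^2)$ good extensions of $p$ and of $q$ lose only $O(\zeta n^2)$ members.

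Now apply \cref{lem:connector} inside $\mathcal T_i$ with the pseudo-path between the relevant edges $X_1,X_0$. It fails for at most $\beta n^4$ choices of terminal $2$-edges. Since $\gamma\gg\beta,\zeta$, we can pick extensions $vuxy$ of $p$ and $u'v'x'y'$ of $q$, avoiding $F$ and each other, whose terminal pairs are connected by a path of at most $15+t^3$ vertices in $A\setminus F$. Concatenating the pieces as in \cref{lem:gluing}(i), with the orientation bookkeeping handled there, gives a path of length at most $L_{\rm RCC}=20+T_1^3$. This is a constant, so $L_{\rm RCC}\le\zeta^2 n$ for $n$ large.

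The "consequently" statement is pigeonhole on the two colours.

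\textbf{Main obstacle.} The delicate step is making \cref{thm:strengthened_regularity}(iii) and the connector estimates survive the removal of $F$. The rooted counts must be uniform over all root pairs $(u,v)$, which may lie outside $A$. The connector's exceptional set of $\beta n^4$ terminal pairs must also be beaten by the $\gamma$-fraction of good extensions for every $p$ simultaneously. I would handle this by requiring $\gamma\ge\sqrt\beta$ in the definition of $\kappa$, and by applying the connector to the restricted complex obtained from \cref{lem:reg_res}.
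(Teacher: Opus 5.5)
Your overall route is the paper's. You take a local regular slice inside the reservoir, use \cref{lem:Rdeg}, \cref{lem:codegree-cleaning} and \cref{twocom} to get at most two tight components, make the $K_4^-$ observation, and classify ordered pairs through rooted $4$-vertex paths. Robustness then comes from \cref{lem:reg_res} and \cref{lem:connector}.

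The genuine gap is the order of constants. In your hierarchy $1/t_0\ll d,\beta\ll\zeta_0$, so $\zeta_0$ is fixed before $t_0$ and hence before $T_1$. But every cluster of $\J_A$ has at most $|A|/t\le n/t_0\ll\zeta_0 n$ vertices. So for $\zeta$ near $\zeta_0$ the forbidden set $F$ can swallow entire clusters, for instance the clusters of the terminal pair $S_1$, or clusters along the pseudo-path from $X_1$ to $X_0$. Then \cref{lem:reg_res} fails, since it needs every class to keep a constant fraction of its vertices. \cref{lem:connector} fails too, since it needs all classes of the complex, including those on the pseudo-path, to survive with equal sizes. Your claims that ``$\zeta\ll$ cluster size proportion'' and that trimming ``costs only an $O(\zeta t)$ fraction'' are false under your own hierarchy. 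The global count (``$\Omega(n^2)$ good extensions lose $O(\zeta n^2)$'') does not rescue this: you need a fixed cluster pair to keep many supported extensions, and all clusters to survive. Moreover, the supported extensions on one cluster pair number only $\Theta(d_2m_*^2)$, where $m_*$ is the cluster size and $d_2$ may be as small as $1/T_1$. So $\zeta$ must also be small compared with $d_2$, which again forces $\zeta_0$ to depend on $T_1$.

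The fix is what the paper does, and it is legitimate because the statement only asks for $\zeta_0=\zeta_0(\alpha,\eta)$:
\begin{enumerate}
  \item Choose $t_0,\e_3,d,\beta,r(\cdot),\e(\cdot)$ depending on $\alpha,\eta$ alone.
  \item Obtain $T_0,T_1$ from \cref{thm:strengthened_regularity}.
  \item Only then take $\zeta_0$ with $\sqrt{\zeta_0}\ll\min\{\eta/T_1,\,d_2\}$; this is possible because $d_2\ge1/T_1$.
\end{enumerate}
With this choice, $|F|\le\zeta n$ removes at most a $\sqrt{\zeta}$-fraction of every cluster. Each terminal family on a fixed cluster pair then loses only $O(\sqrt{\zeta}m_*^2)\ll d_2m_*^2$ members, and after trimming clusters to a common size, \cref{lem:reg_res} and \cref{lem:connector} apply.

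Two smaller points.
\begin{itemize}
  \item Your threshold $\gamma\ge\sqrt{\beta}$ ignores the $d_2$ factors. After averaging over the $\binom t2$ cluster pairs, the two families you feed to the connector have size only $\Omega(\gamma d_2m_*^2)$. Their product must beat the exceptional set of $\beta m_*^4$ terminal pairs, which requires $\beta\ll(\gamma d_2)^2$, not merely $\beta\le\gamma^2$.
  \item Discarding vertices to achieve divisibility removes potential roots. For ordered pairs containing a discarded vertex, \cref{thm:strengthened_regularity}(iii) is unavailable, yet $\kappa$ must be defined on all of $\overrightarrow{\partial}_2(H)$. Padding with isolated vertices outside $A$, as the paper does, keeps every root covered.
\end{itemize}
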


\begin{proof}
  Put $m=|A|$.  Choose integer constants $t_0,q$, positive constants
  $\mu,c,\e_3,d_2,\beta_{\rm con},d$, and functions $r,\e$. Here $r \colon \mathbb N\to\mathbb N$ and $\e \colon \mathbb N\to(0,1]$.
  We choose these functions with uniform bounds in the hierarchy below.
  The relation $\mu\ll\min\{1/r,\e\}$ means that $\mu$ is sufficiently small compared with both $1/r(s)$ and $\e(s)$ for every value of $s$ used below, in particular for the value $s=T_1$ given by \cref{thm:strengthened_regularity}. These parameters are chosen so that
  \begin{multline*}
    1/t_0\ll1/q\ll\mu\ll\min\{1/r(\cdot),\e(\cdot)\}
    \le\max\{1/r(\cdot),\e(\cdot)\}\ll c\ll\e_3\\
    \ll\beta_{\rm con}\ll\min\{d_2,d\}
    \ll\min\{\alpha,\eta\}.
  \end{multline*}
  Here the inequalities involving $r$ and $\e$ are required uniformly at
  every argument used below.

  Apply \cref{thm:strengthened_regularity} with $\nu=\eta/3$ and let
  $T_0,T_1,n_{\rm reg}$ be the constants given there.  Fix
  $\xi=1/50$, and then choose $\zeta_0>0$ so small that
  \[
    \sqrt{\zeta_0}\ll
    \min\left\{\frac{\eta}{T_1},\,\xi d_2\right\}.
  \]
  For a given $0<\zeta\le\zeta_0$, take $n_0$ sufficiently large that all
  applications below are valid and
  $25+T_1^3\le\zeta^2n$.
  Set $L_{\rm RCC}=25+T_1^3$.

  We make the divisibility adjustment explicitly.  Put
  \[
    L=T_1!\left\lceil\frac{n}{qT_1!}\right\rceil,
    \qquad \widetilde n=qL,
    \qquad a=\left\lfloor\frac{|A|}{L}\right\rfloor.
  \]
  Add $\widetilde n-n<qT_1!$ isolated vertices, all outside $A$, choose
  $A_0\subseteq A$ with $|A_0|=aL$, and partition the enlarged vertex set
  $\widetilde V$ into $q$ parts of size $L$, with exactly $a$ parts forming
  $A_0$.  Since $1/q\ll\mu\ll\eta$ and $n$ is sufficiently large, we have
  $|A\setminus A_0|<L\le\mu n$ and $\widetilde n-n\le\mu n$.
  Moreover $\widetilde n$ is divisible by $T_1!$, and the resulting partition
  $\mathcal Q$ has every part contained in $A_0$ or in
  $\widetilde V\setminus A_0$.
  Let $\widetilde H$ be the enlarged $3$-graph, and put $m_0=|A_0|$.
  Then $m_0\ge(\eta/3)|\widetilde V|$, and
  \[
    \deg_{A_0}(x,y)\ge
    \left(\frac13+\frac{3\alpha}{4}\right)m_0
    \qquad\text{for every }\{x,y\}\in\binom{V(H)}2.
  \]

  Apply \cref{thm:strengthened_regularity} to $\widetilde H$ with initial
  partition $\mathcal Q$.  Let $\mathcal J_A=\mathcal J[A_0]$, let
  $\mathcal P$ be its ground partition, put $t=|\mathcal P|$, and let
  $m_*=m_0/t$ be the cluster size.  Thus $t_0\le t\le T_1$.
  Let $R_d$ be the $d$-reduced graph of $H[A_0]$ with respect to
  $\mathcal J_A$.

  By the local codegree conclusion and \cref{lem:Rdeg}, all but at most
  $3\sqrt{\e_3}\binom t2$ pairs of clusters have degree at least
  \(
    \left(\frac13+\frac{2\alpha}{3}\right)(t-2)
  \)
  in $R_d$, where we used the hierarchy to absorb the regularity and
  density losses.  Apply \cref{lem:codegree-cleaning} with
  \(
    \mu=\frac13+\frac{2\alpha}{3}
    ~\text{and}~
    \theta=3\sqrt{\e_3}.
  \)
  We obtain a spanning subgraph $R\subseteq R_d$ such that
  \(
    \delta_2^+(R)
    \ge
    \left(\frac13+\frac\alpha2\right)(t-2)
  \)
  and
  \(
    \left|\binom{\mathcal P}{2}\setminus\partial R\right|
    \le \Theta\binom t2,
    ~
    \Theta=3\sqrt{\e_3}+(3\sqrt{\e_3})^{1/4}.
  \)
  Since $1/t_0\ll\alpha$ and $t\ge t_0$, the positive codegree bound is at
  least $t/3$.  Hence \cref{twocom} implies that $R$ has at most two tight
  components, denoted by $C_1$ and $C_2$.

  We next associate an edge of $R$ with every ordered pair
  $p=(u,v)\in\overrightarrow{\partial}_2(H)$.
  Let $P=x_1x_2x_3x_4$ be the $4$-vertex tight path, rooted at $x_1,x_2$,
  and root it at $x_1=v,x_2=u$.
  The codegree assumption gives at least
  $(1/3+\alpha/2)^2m_0^2$ labeled choices of $x_3,x_4\in A_0$ for which
  $vux_3x_4$ is a tight path, up to a negligible $O(m_0)$ error.
  Let $\mathcal K_A$ be the complete local $2$-complex on $A_0$ with ground
  partition $\mathcal P$.  Since $t\ge t_0$, choices with $x_3,x_4$ in the
  same cluster are negligible, and hence
  \[
    d_P(\widetilde H;v,u,\mathcal K_A)
    \ge\frac19+\frac\alpha4.
  \]
  The local rooted-counting conclusion of
  \cref{thm:strengthened_regularity} gives
  \[
    d_P(\widetilde H;v,u,\mathcal J_A)
    \ge\frac19+\frac\alpha5.
  \]
  It follows that at least
  $(1/9-\e)t(t-1)d_2m_*^2$ such rooted paths have their terminal pair in a
  $2$-edge of $\mathcal J_A$.

  All but at most $\Theta\binom t2$ pairs of clusters have positive degree
  in $R$.  The rooted paths terminating in the remaining cluster pairs
  number at most
  $\Theta\binom t2(d_2+\e)m_*^2$, which is less than half the preceding
  lower bound.  Therefore there is a cluster pair $S(p)$ of positive degree
  in $R$ supporting at least $\xi d_2m_*^2$ rooted paths starting at $p$.
  Let $\mathcal T(p)$ be the family of their ordered terminal pairs:

  \[
    \mathcal T(p)
    \coloneqq
    \bigl\{(x_3,x_4):vux_3x_4\text{ is one of these rooted paths
    and }x_3x_4\text{ is supported on }S(p)\bigr\}.
  \]
  Fix an edge $X(p)\in E(R)$ containing $S(p)$.

  Every edge of $R$ is contained in a copy of $K_4^-$ in $R$.  Indeed, if
  $X=abc\in E(R)$, then each of $ab,ac,bc$ has degree at least
  $(1/3+\alpha/2)(t-2)$. So some outside vertex belongs to at least two of them. Together with $abc$, this forms a copy of $K_4^-$.

  \begin{claim}
    Let $p=(u,v)$ and $q=(u',v')$ be vertex-disjoint ordered pairs such
    that $X(p)$ and $X(q)$ lie in the same tight component of $R$. Then $p$
    and $q$ are $(A,\zeta,L_{\rm RCC})$-robustly tightly connected.
  \end{claim}

  \begin{proofclaim}
    Let $F\subseteq A\setminus\{u,v,u',v'\}$ satisfy $|F|\le\zeta n$, and put
    \[
      F^\ast=F\cup\bigl(A_0\cap\{u,v,u',v'\}\bigr).
    \]
    Remove from each terminal family the rooted paths containing a root of
    the other ordered pair; explicitly, put
    \[
      \mathcal T_p
      =\bigl\{(a,b)\in\mathcal T(p):
      \{a,b\}\cap\{u',v'\}=\emptyset\bigr\},
      \qquad
      \mathcal T_q
      =\bigl\{(c,d)\in\mathcal T(q):
      \{c,d\}\cap\{u,v\}=\emptyset\bigr\}.
    \]
    This removes
    only $O(m_*)$ members from either family.

    Since $m_*\ge\eta n/(4T_1)$ and
    $\sqrt\zeta\ll\eta/T_1$, every cluster retains at least
    $(1-\sqrt\zeta)m_*$ vertices after deleting $F^\ast$, for sufficiently
    large $n$.  For each of $p$ and $q$, at most
    $2\sqrt\zeta m_*^2+O(m_*)$ further supported rooted paths are destroyed,
    so at least $(\xi/2)d_2m_*^2$ remain.

    In every cluster choose uniformly a subset of the common size
    $m_\circ=\lfloor(1-\sqrt\zeta)m_*\rfloor$ from the vertices outside
    $F^\ast$.  For either terminal family the expected retained size is a fixed
    positive proportion of its original size.  The bounded-differences
    inequality for hypergeometric sampling shows that each retained count is
    at least half its expectation with probability
    $1-\exp(-\Omega(m_*))$.  A union bound for the two terminal families
    therefore gives one common choice for which both have size at least
    $(\xi/4)d_2m_\circ^2$.

    By \cref{lem:reg_res}, the relevant complexes restricted to these equal
    subclusters remain regular.  The two terminal families yield at least
    $(\xi d_2/4)^2m_\circ^4$ ordered pairs of terminal edges.  After deleting
    the $O(m_\circ^3)$ choices with intersecting terminal edges, this is still
    more than $2\beta_{\rm con}m_\circ^4$.
    The edges $X(p)$ and $X(q)$ are joined by a pseudo-path in $R$, and
    $X(q)$ lies in a copy of $K_4^-$.  Thus \cref{lem:connector} gives a
    connector between a suitable pair of terminal edges, entirely inside the
    selected subclusters and hence avoiding $F^\ast$.

    To make the orientation explicit, write the two selected terminal end
    pairs as $r_p=(a,b)$ and $r_q=(c,d)$.  Apply \cref{lem:connector} to
    the first terminal edge labeled $(a,b)$ and the second labeled $(d,c)$.
    The connector then has ordered end pairs
    $(b,a)=r_p^{\mathrm{rev}}$ and $(d,c)=r_q^{\mathrm{rev}}$.
    Concatenating the rooted path from $p$ to $r_p$, this connector, and the
    reverse of the rooted path from $q$ to $r_q$ gives a tight path with
    ordered end pairs $p,q$, all internal vertices in $A_0\setminus F$, and
    at most $25+t^3\le L_{\rm RCC}$ vertices.
  \end{proofclaim}
  Define $\kappa(p)=i$ when $X(p)$ lies in the tight component $C_i$ of
  $R$; if $R$ has only one tight component, take $\mathcal C_2=\emptyset$.
  The claim proves the asserted robust connectedness within every
  nonempty fibre $\mathcal C_i=\kappa^{-1}(i)$.
  Finally, the second assertion follows from the pigeonhole
  principle applied to three pairwise vertex-disjoint ordered pairs.
\end{proof}

The robust connection components given by \cref{equivalentclass} lead to the following case distinction.
Informally, either two disjoint reversed pairs switch from one RCC to the
other, two disjoint edges provide such switches, or no switch of either type
exists.

\begin{proposition}\label{dividecase}
  Given $\alpha,\eta>0$, there exists $\zeta_0>0$ such that, for every
  $0<\zeta\le\zeta_0$, there exists $n_0=n_0(\alpha,\eta,\zeta)$ with the
  following property.
  Let $H$ be an $n$-vertex $3$-graph with $n\ge n_0$ and
  $\delta_2(H)\ge(1/3+\alpha)n$.
  Let $A\subseteq V(H)$ satisfy $\eta n\le |A|\le 2\eta n$ and
  \[
    \deg_A(u,v)\ge\left(\frac13+\frac\alpha2\right)|A|
    \qquad\text{for every }\{u,v\}\in\binom{V(H)}2.
  \]
  Let $\mathcal C_1,\mathcal C_2$ be the RCCs given by
  \cref{equivalentclass} with codegree slack $\alpha/2$ and connection
  parameter $\zeta$.  Then one of the
  following alternatives occurs.
  \begin{enumerate}[label=\rm(A\arabic*)]
    \item\label{case1} \textbf{Switcher-pair case.}
      There exist two vertex-disjoint unordered pairs
      $\{u_1,v_1\}$ and $\{u_2,v_2\}$ such that
      $(u_1,v_1),(u_2,v_2)\in\mathcal C_1$ and
      $(v_1,u_1),(v_2,u_2)\in\mathcal C_2$.

    \item\label{case2} \textbf{Switcher-edge case.}
      There exist two vertex-disjoint edges $x_1y_1z_1$ and $x_2y_2z_2$
      such that
      $(x_1,y_1),(x_2,y_2)\in\mathcal C_1$ and
      $(z_1,y_1),(z_2,y_2)\in\mathcal C_2$.

    \item\label{case3} \textbf{Non-switcher case.}
      Neither \textup{(A1)} nor \textup{(A2)} occurs. \qed
  \end{enumerate}
\end{proposition}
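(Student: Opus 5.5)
The proposition is essentially a tautology once the objects in it are known to exist, so the plan is to check that they are well defined and then observe that the three alternatives exhaust all possibilities.

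\emph{Choice of constants.} Given $\alpha,\eta>0$, apply \cref{equivalentclass} with codegree slack $\alpha/2$ in place of $\alpha$ and with the same $\eta$. Let $\zeta_0$ be the constant it returns. For $0<\zeta\le\zeta_0$, take $n_0(\alpha,\eta,\zeta)$ to be the threshold $n_0(\alpha/2,\eta,\zeta)$ from \cref{equivalentclass}.

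\emph{The map $\kappa$ exists.} The hypotheses $|A|\ge\eta n$ and $\deg_A(u,v)\ge(1/3+\alpha/2)|A|$ for every pair are exactly those of \cref{equivalentclass}. Hence we obtain a map $\kappa:\overrightarrow{\partial}_2(H)\to\{1,2\}$ with fibres $\mathcal C_1,\mathcal C_2$. The upper bound $|A|\le2\eta n$ and the global codegree bound $\delta_2(H)\ge(1/3+\alpha)n$ are not needed here; they are recorded only for later use. Note also that $\mathcal C_2$ may be empty. In that case neither (A1) nor (A2) can hold, and we are in (A3).

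\emph{The trichotomy.} Once $\mathcal C_1,\mathcal C_2$ are fixed, either configuration (A1) exists, or configuration (A2) exists, or neither does. The last possibility is exactly (A3), by its definition. So one of the alternatives always occurs; they may overlap, which is harmless because the proposition only claims that at least one holds.

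There is no real obstacle. The only point to watch is that the relabelling convention of \cref{equivalentclass} (discarding empty fibres) is applied consistently, so that $\mathcal C_1$ and $\mathcal C_2$ mean the same thing in (A1)--(A3). Swapping the roles of $\mathcal C_1$ and $\mathcal C_2$ is also harmless: reading (A1) with $u_j$ and $v_j$ interchanged gives the symmetric version, and (A2) with $x_j$ and $z_j$ interchanged does the same, so either labelling yields an equivalent statement.
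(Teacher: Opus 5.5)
Your proposal is correct and matches the paper, which records this proposition with a bare \qed: once $\zeta_0$ and $n_0$ are inherited from \cref{equivalentclass} applied with slack $\alpha/2$, the RCCs are well defined, and (A3) is by definition the negation of (A1) and (A2), so the trichotomy is immediate. Your extra remarks are accurate but not needed; these are that $\mathcal C_2=\emptyset$ forces (A3), and that (A1) and (A2) are invariant under swapping the two classes.
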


The common absorption argument for all three alternatives is given in
\cref{sec:rcc-absorption}.
The switcher-pair and switcher-edge cases are handled in \cref{switcher-case}.
The non-switcher case requires additional structural information from the absence of both types of switchers, and is handled in \cref{non-switcher-case}.

\section{Absorption for robust component classes}
\label{sec:rcc-absorption}

In a typical absorption argument for Hamiltonian cycles, one constructs a single absorbing path, covers almost all remaining vertices by a bounded collection of paths, and then joins these paths using suitable connectors. Such an approach usually relies on a sufficiently rich global tight-connectivity structure. In our setting, however, the hypergraph may have several robust connection components, so the exposed ends of a single absorbing path need not be compatible with all later connections. We therefore retain several absorbing paths and keep track of the RCC containing each of their ordered end pairs; these paths are then assembled with the path cover using robust connectors and, when necessary, switchers.


Recall that
a tight path is a sequence of distinct vertices
\(v_1\dots v_\ell\), where
\(\ell\ge k-1\), such that every \(k\) consecutive vertices form an edge;
paths of order \(k-1\) are allowed.  Put
\(\partial_{k-1}H=\{S\in\binom{V(H)}{k-1}:S\subseteq e\text{ for some }
e\in E(H)\}\), and let \(\overrightarrow{\partial}_{k-1}(H)\) contain all
orderings of the members of $\partial_{k-1}H$.  For \(p=(x_1,\dots,x_{k-1})\), put
\(p^{\rm rev}=(x_{k-1},\dots,x_1)\).  We call a set
\(\Omega\subseteq\overrightarrow{\partial}_{k-1}(H)\)
\emph{reversal-closed} if \(p^{\rm rev}\in\Omega\) for every
\(p\in\Omega\).  The ordered ends of
\(v_1\dots v_\ell\) are \((v_{k-1},\dots,v_1)\) and
\((v_{\ell-k+2},\dots,v_\ell)\); its internal vertices are those outside
the two ends.  A tight \(pq\)-path has ordered ends \(p,q\), and a path is
\(\Omega\)-ended if both ends belong to \(\Omega\).

We first define reservoir connections within an RCC.
Roughly speaking, any two disjoint ordered $(k-1)$-sets in the same class can
be joined by a short tight path whose internal vertices lie in $A\setminus F$,
where $F$ is any forbidden set of size $o(n)$.

\begin{defn}
  \label{def:rcc-connectable}
  Let \(A\subseteq V(H)\), let
  \(\Omega\subseteq\overrightarrow{\partial}_{k-1}(H)\) be
  reversal-closed, and let \(\kappa:\Omega\to[r]\) be surjective.
  For \(\chi>0\) and \(L\in\mathbb N\), we call
  \((\Omega,\kappa)\) \emph{\((A,\chi,L)\)-RCC-connectable} if,
  whenever \(p,q\in\Omega\) are vertex-disjoint with
  \(\kappa(p)=\kappa(q)\), every set
  \(F\subseteq A\setminus(V(p)\cup V(q))\) of size at most \(\chi n\)
  leaves a tight \(pq\)-path on at most \(L\) vertices whose internal
  vertices lie in \(A\setminus F\).
\end{defn}

Notice that reversal-closedness is required only for
\(\Omega\); the individual fibres of \(\kappa\) need not be
reversal-closed, and it is permitted that
\(\kappa(p)\ne\kappa(p^{\rm rev})\).
The next cover property says that any small family of disjoint
$\Omega$-ended paths can be extended to at most $r$ larger paths while avoiding some vertices.

\begin{defn}[RCC path-system cover]
  \label{def:rcc-cover}
  Let \(A,Z\subseteq V(H)\), let
  \(\Omega\subseteq\overrightarrow{\partial}_{k-1}(H)\) be
  reversal-closed, and let \(\kappa:\Omega\to[r]\) be surjective.  For
  \(\theta,\varepsilon>0\) and \(\omega\ge0\), we say that
  \((\Omega,\kappa)\) has the
  \emph{\((A,Z;\theta,\varepsilon,\omega)\)-RCC cover property}
  if every nonempty family \(\mathcal Q\) of vertex-disjoint
  \(\Omega\)-ended tight paths in \(H-(A\cup Z)\) with
  \(|V(\mathcal Q)|\le\theta n\) can be extended in \(H-Z\) to a family
  \(\mathcal P\) of \(1\le|\mathcal P|\le r\) vertex-disjoint tight paths
  such that each member of \(\mathcal Q\) is a subpath of exactly
  one member of \(\mathcal P\), every final end is from
  \(\mathcal Q\), \(|V(\mathcal P)|\le\varepsilon n\), and
  \(|A\cap V(\mathcal P)|\le\omega n\).
\end{defn}

Next, we state the definition of absorbers.

\begin{defn}
  \label{defn:Sabsorb}
  For \(S\subseteq V(H)\), an \emph{\(S\)-absorber} is a
  vertex-disjoint tight-path family \(\mathcal Q\), disjoint from \(S\),
  which can be replaced by a vertex-disjoint tight-path family
  \(\mathcal Q'\) with the same respective ordered ends and
  \(V(\mathcal Q')=V(\mathcal Q)\cup S\).

  Reachability for two vertices means that the same witness set can be completed with either
  vertex to path families having the same ordered ends.
  For \(A\subseteq V(H)\), \(\beta>0\), \(i\in\mathbb N\), and
  reversal-closed
  \(\Omega\subseteq\overrightarrow{\partial}_{k-1}(H)\), distinct vertices
  \(u,v\) are
  \emph{\((\beta,i;\Omega)\)-reachable in \(H-A\)} if there are at least
  \(\beta n^{(2k-1)i-1}\) sets
  \(T\subseteq V(H)\setminus(A\cup\{u,v\})\) of size
  \((2k-1)i-1\) such that both \(T\cup\{u\}\) and \(T\cup\{v\}\) decompose
  into \(i\) vertex-disjoint \(k\)-edge, \(\Omega\)-ended tight paths,
  paired with the same ordered ends.  A set is
  \emph{\((\beta,i;\Omega)\)-closed} if each pair of its vertices is
  reachable.  We omit \(\Omega\) when it is the full ordered shadow, and
  write \(\widetilde N_{A,\beta,i}(v)\) for the vertices
  \((\beta,i)\)-reachable to \(v\) in \(H-A\).

  The index vector records how a set is distributed among the parts of a
  partition.  For a partition \(\mathcal P=\{X_1,\dots,X_m\}\) and \(\mu>0\), put
  \(\mathbf i_{\mathcal P}(S)=(|S\cap X_1|,\dots,|S\cap X_m|)\).
  A subgraph \(J\subseteq H\) is \emph{\(\Omega\)-supported} if every
  ordering of every member of \(\partial_{k-1}J\) belongs to \(\Omega\).
  Let \(\mathcal I_\mu(J,\mathcal P)\) be the set of nonnegative
  \(m\)-vectors of coordinate sum \(k\) represented by at least
  \(\mu n^k\) edges of \(J\).  For
  \(\mathcal I\subseteq\mathbb Z_{\ge0}^m\), define
  \(
    \mathsf M(\mathcal I)
    \coloneqq
    \left\{\sum_{\mathbf v\in\mathcal I}a_{\mathbf v}\mathbf v:
    a_{\mathbf v}\in\mathbb Z_{\ge0}\right\}
  \)
  to be the set of all nonnegative integer combinations of the vectors in
  \(\mathcal I\).
\end{defn}

\begin{lemma}
  \label{lem:indexed-absorber}
  Fix \(k,m,r\in\mathbb N\) with \(k\ge3\), and let
  \(1/n\ll\nu\ll\gamma\ll
  \min\{\theta,\mu,\lambda,\varepsilon,1/k,1/m,1/r\}\).
  Let \(H\) be an \(n\)-vertex \(k\)-graph, let \(\omega\ge0\), and let
  \(A,Z\subseteq V(H)\) with \(|Z|\le\nu n\).
  Put \(H_0=H-(A\cup Z)\), and let
  \(\kappa:\Omega\to[r]\) be a surjection on a reversal-closed
  \(\Omega\subseteq\overrightarrow{\partial}_{k-1}(H)\).
  Suppose that \((\Omega,\kappa)\) has the
  \((A,Z;\theta,\varepsilon,\omega)\)-RCC cover property.
  Let \(\mathcal P=\{X_1,\dots,X_m\}\) be a partition of \(V(H)\) into
  \((\lambda,2;\Omega)\)-closed parts in \(H-A\), let
  \(J\subseteq H_0\) be
  \(\Omega\)-supported, and let
  \(\emptyset\ne\mathcal I\subseteq\mathcal I_\mu(J,\mathcal P)\).
  Then there are \(1\le t\le r\) vertex-disjoint \(\Omega\)-ended tight
  paths \(P_1,\dots,P_t\) in \(H-Z\) such that
  \begin{enumerate}[label=\rm(\roman*)]
    \item \(|V(P_1\cup\cdots\cup P_t)|\le\varepsilon n\) and
      \(|A\cap V(P_1\cup\cdots\cup P_t)|\le\omega n\);
    \item if
      \(L\subseteq V(H)\setminus V(P_1\cup\cdots\cup P_t)\) satisfies
      \(|L|\le\gamma n\) and
      \(\mathbf i_{\mathcal P}(L)\in\mathsf M(\mathcal I)\), the family $\{P_1,\dots P_t\}$ is an $L$-absorber.
  \end{enumerate}
\end{lemma}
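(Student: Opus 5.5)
This is a lattice-based absorbing argument in the style of Han's partition/index-vector method, adapted so that the absorbing gadgets are glued into at most $r$ paths by the RCC cover property. The plan has three steps: build absorbing gadgets for single vertices and for index-vector shifts, select a random sparse family of them, and join that family into $t\le r$ paths.

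\textbf{Step 1: gadgets.} Two kinds of local absorber are used.

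First, fix two vertices $u,v$ in the same part $X_j$. By $(\lambda,2;\Omega)$-closedness there are at least $\lambda n^{2(2k-1)-1}$ witness sets $T$ avoiding $A$. Each such $T$ lets us trade $u$ for $v$: it is a family of two $k$-edge $\Omega$-ended paths with fixed ordered ends, whose vertex set can be either $T\cup\{u\}$ or $T\cup\{v\}$.

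Second, fix a vector $\mathbf v\in\mathcal I$. There are at least $\mu n^k$ edges $e$ of $J$ with $\mathbf i_{\mathcal P}(e)=\mathbf v$. Given a $k$-set $S$ with $\mathbf i_{\mathcal P}(S)=\mathbf v$, match each vertex of $S$ to the vertex of $e$ in the same part. Then combine $e$ (viewed as a $k$-vertex tight path, which is $\Omega$-ended because $J$ is $\Omega$-supported) with $k$ disjoint reachability gadgets, one for each matched pair. The result is a bounded-size family of $\Omega$-ended paths, with fixed ends, that can absorb exactly $S$: swap $S$ in for $e$ vertex by vertex and leave $e$'s vertices inside the gadgets.

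A standard count shows that every such $S$ has $\Omega(n^{b})$ absorbing families of this shape in $H_0$, where $b$ is the fixed gadget size.

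\textbf{Step 2: random selection.} Choose each gadget copy independently with probability about $\nu' n^{1-b}$. Use \cref{lem:chernoff} and \cref{lem:bounded_differences} to delete overlapping pairs and the at most $\nu n$ copies meeting $Z$. With positive probability this leaves a vertex-disjoint family $\mathcal F$ with $|V(\mathcal F)|\le\theta n$ such that every admissible $k$-set $S$ has at least $\gamma' n$ disjoint absorbing gadgets in $\mathcal F$, with $\gamma\ll\gamma'$.

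\textbf{Step 3: joining into at most $r$ paths.} Apply the $(A,Z;\theta,\varepsilon,\omega)$-RCC cover property to $\mathcal Q=\mathcal F$. This gives $1\le t\le r$ disjoint paths $P_1,\dots,P_t$ in $H-Z$ that contain every gadget path as a subpath, have all final ends from $\mathcal F$ (hence $\Omega$-ended), satisfy $|V(\mathcal P)|\le\varepsilon n$, and use at most $\omega n$ vertices of $A$. This gives (i). Because each gadget path is a subpath and its replacement has the same ordered ends, any replacement can be made inside $P_1,\dots,P_t$ while keeping their ends.

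\textbf{Absorbing a leftover set.} Take $L$ with $|L|\le\gamma n$ and $\mathbf i_{\mathcal P}(L)\in\mathsf M(\mathcal I)$. Write $\mathbf i_{\mathcal P}(L)=\sum a_{\mathbf v}\mathbf v$. Since the sum of the coefficients is $|L|/k$, the set $L$ can be partitioned greedily into $k$-sets $S_1,\dots,S_{|L|/k}$ with each $\mathbf i_{\mathcal P}(S_\ell)\in\mathcal I$. Then absorb the $S_\ell$ one at a time using distinct gadgets. This is possible because $|L|/k\le\gamma n<\gamma' n$, which gives (ii).

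\textbf{Main obstacle.} The delicate point is the vector gadget. Its path structure must have fixed ordered ends in both states (absorbing or not absorbing $S$), and those ends must lie in $\Omega$. The $\Omega$-ended requirement in the definition of reachability, together with $J$ being $\Omega$-supported, is what makes this work, so the gadget should be written out carefully. The other point needing care is checking that the cover property's rule \emph{every final end is from $\mathcal Q$} really keeps the ends in $\Omega$.
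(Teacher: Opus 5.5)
There is a genuine gap in the vector gadget of Step 1, at exactly the point you flag as delicate: as described, it is not an $S$-absorber. An $S$-absorber needs the two families to have the same respective ordered ends, and hence the same number of paths.

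Your gadget has vertex set $W=V(e)\cup\bigcup_j T_j$. In the non-absorbing state each vertex $e_j$ must complete the witness $T_j$. A witness has $4k-3$ vertices, while two $k$-edge paths need $4k-2$, so $T_j$ cannot be covered without $e_j$. Hence $e$ is not a path in that state. In the absorbing state the sets $T_j\cup\{v_j\}$ are spanned by the reachability paths, so the freed edge $e$ must be covered separately. It can only appear as an extra $k$-vertex path with new ends, so the two families do not match.

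Keeping $e$ as a path in both states does not help either. The two ordered ends $(e_{k-1},\dots,e_1)$ and $(e_2,\dots,e_k)$ of $e_1\cdots e_k$ together contain all of $V(e)$, so nothing can be traded away. What you have built is the Lo--Markstr\"om matching absorber: $W$ and $W\cup S$ both admit perfect matchings. That does not give path absorbers with fixed ends, and your Step 2, Step 3 and the final greedy absorption all inherit this failure.

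The missing ingredient is a host path into which a $k$-set of index $\mathbf v$ can be inserted without changing its ends. The paper obtains it by supersaturation. Since $J$ has at least $\mu n^k$ edges of index $\mathbf v$, it contains $\Omega(n^{3k-2})$ copies of $F_k=K^{(k)}_{3,\dots,3,2,2}$ built from such edges. Label the classes $C_i=\{u_i,u_{k+i},w_{i+1}\}$ for $i\le k-2$, $C_{k-1}=\{u_{k-1},w_k\}$ and $C_k=\{u_k,w_1\}$. This gives tight paths $Q=u_1\cdots u_{2k-2}$ and $Q^+=u_1\cdots u_{k-1}w_1\cdots w_k u_k\cdots u_{2k-2}$ with the same ordered ends. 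Both are $\Omega$-ended because $J$ is $\Omega$-supported.

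Comparing transversal edges shows that each class lies in a single part. So $\{w_1,\dots,w_k\}$ has index $\mathbf v$ and can be matched partwise with $S$, and the reachability witnesses $T_j$ trade $w_j$ for $v_j$. The non-absorbing family is $\{Q\}\cup\{R^w_{j,h}\}$ and the absorbing family is $\{Q^+\}\cup\{R^v_{j,h}\}$. These have equal path counts and ends and differ in vertex set by exactly $S$, with $b=4k^2-2$. With this gadget in place, your random selection, RCC-cover step and greedy absorption of $L$ go through essentially as in the paper.
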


\begin{remark}[Comparison with Han--Shu--Wang]
  \label{rem:hsw-absorption-comparison}
  The proof follows Han, Shu and Wang~\cite[Lemma~3.1] {HSW26}: count
  bounded local absorbers, choose a sparse subfamily, connect their ordinary
  forms, and then replace disjoint absorbers as needed.
  Here \((k-1)\)-tuple
  ends are indexed by
  \(r\) RCCs, the path family has at most \(r\) paths, and the condition of being
  a nonnegative integer combination of valid index vectors describes the
  admissible leftovers; the local counting condition is kept separate from
  the path-cover condition.
\end{remark}

The auxiliary lemmas and the proof of \cref{lem:indexed-absorber} are given
in Appendix~\ref{app:indexed-absorber}.
We now specialize to the \(3\)-uniform case by the above lemma.
\begin{coro}[The \(3\)-uniform specialization]
  \label{absorber1}
  Let \(1/n\ll\rho\ll\zeta\ll\eta\ll\nu\ll\gamma\ll\beta
  \ll\min\{\alpha,d\}\).  Let \(H\) be an \(n\)-vertex
  \((\rho,d)\)-dense \(3\)-graph with
  \(\delta_2(H)\ge(1/3+\alpha)n\).  Let \(A\subseteq V(H)\) satisfy
  \(\eta n\le|A|\le2\eta n\) and
  \(\deg_A(u,v)\ge(1/3+\alpha/2)|A|\) for every pair \(uv\).
  Let \(\kappa:\overrightarrow{\partial}_2(H)\to[r]\), where
  \(r\in\{1,2\}\), be the surjective RCC map from
  \cref{equivalentclass}.  If \(Z\subseteq V(H)\)
  satisfies \(|Z|\le\nu n\) and \(|A\cap Z|\le\zeta n/2\), then there are
  \(1\le t\le r\)
  vertex-disjoint tight paths \(P_1,\dots,P_t\) in \(H-Z\), using at most
  \(\beta n\) vertices in total and at most \(\zeta^2n\) vertices of
  \(A\); when \(r=2\), they lie in \(H-(A\cup Z)\).
  Moreover, for every
  \(S\subseteq V(H)\setminus V(P_1\cup\cdots\cup P_t)\) with
  \(|S|\le\gamma n\) and \(|S|\in3\mathbb N\), the family
  \(\{P_1,\dots,P_t\}\) is an \(S\)-absorber.
\end{coro}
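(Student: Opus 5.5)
We derive \cref{absorber1} from \cref{lem:indexed-absorber} with $k=3$, $\Omega=\overrightarrow{\partial}_2(H)$ (trivially reversal-closed), the RCC map $\kappa$ from \cref{equivalentclass}, the trivial partition $\mathcal P=\{V(H)\}$ (so $m=1$), and $J=H_0=H-(A\cup Z)$. With one part, every index vector is the scalar $|S|$, and $J$ has at least $\mu n^3$ edges, because $\delta_2(H)\ge(1/3+\alpha)n$ and $|A\cup Z|$ is small. Hence $\mathcal I=\{3\}$ lies in $\mathcal I_\mu(J,\mathcal P)$ and $\mathsf M(\mathcal I)=3\mathbb N$, which is exactly the admissible-leftover condition $|S|\in3\mathbb N$. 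The constants are matched by taking the lemma's $\nu,\gamma$ as ours, its $\varepsilon$ as $\beta$, and $\omega=\zeta^2$. If $r=2$ we take $\omega=0$, so that, together with the cover property being built inside $H-A$, all paths lie in $H-(A\cup Z)$. Two hypotheses of the lemma remain to be checked.

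\textbf{Closedness.} We must show that $V(H)$ is $(\lambda,2)$-closed in $H-A$, i.e.\ every two vertices $u,v$ have $\lambda n^{9}$ ten-sets $T$ such that both $T\cup\{u\}$ and $T\cup\{v\}$ split into two $3$-edge tight paths (on five vertices) with the same ordered ends. This is where uniform density is used. First we find many vertices $w$ such that $u$ and $w$ are $(\beta',1)$-reachable. The links of $u$ and $w$ in $H-A$ each have about $(1/3+\alpha)\binom n2$ edges, and $(\rho,d)$-density (applied with $X_1=\{u\}$-neighbourhood-type sets, or directly to count $C_5$-type configurations $x_1x_2\,\cdot\,x_4x_5$ with the middle slot filled by $u$ or by $w$) gives $\Omega(n^4)$ common $4$-tuples. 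This yields $|\widetilde N_{A,\beta',1}(u)|\ge c n$ for all $u$. Next we chain two such reachabilities, doubling $i$ from $1$ to $2$ as in the standard ``reachable via a common neighbour'' lemma. Overlapping witness sets only cost $O(n^{8})$, so it suffices to show that any two vertices share $\Omega(n)$ reachable neighbours. For this we would argue by contradiction: if the sets $\widetilde N(u)$ formed several disjoint blocks, then counting edges across the blocks via $(\rho,d)$-density and the codegree bound $>n/3$ would force a reachability between blocks. If full closedness turns out to be too hard, the fallback is a partition into at most $1/c$ closed parts via the usual merging procedure. In that case the lemma still applies provided we can show that $\mathsf M(\mathcal I)$ contains every vector of sum divisible by $3$; uniform density gives every index vector of sum $3$ on $\mu n^3$ edges, since $e_H(X_a,X_b,X_c)\ge d|X_a||X_b||X_c|-\rho n^3$ with linear part sizes. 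I expect this closedness step to be the main obstacle.

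\textbf{RCC cover property.} Given vertex-disjoint $\Omega$-ended paths $\mathcal Q$ in $H-(A\cup Z)$ with $|V(\mathcal Q)|\le\theta n$, we join them into at most $r$ paths using robust connections. By \cref{equivalentclass}, among the current path ends, any two attachment pairs with the same $\kappa$-value are $(A,\zeta,L_{\rm RCC})$-robustly tightly connected. We therefore greedily join ends of equal class through $A$. The forbidden set $F$ consists of the $A$-vertices already used together with $A\cap Z$. Since each connection uses at most $L_{\rm RCC}\le\zeta^2 n$ vertices and we make at most $|\mathcal Q|$ of them, $|F|\le\zeta n$ holds as long as $\theta$ is small compared with $\zeta/L_{\rm RCC}$; here we use $|A\cap Z|\le\zeta n/2$. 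By \cref{lem:gluing}, each join produces a path. By pigeonhole over the at most two classes, we end with at most $r$ paths. This uses at most $\zeta^2 n$ vertices of $A$ once the parameters are tuned, and when $r=1$ it yields a single path.
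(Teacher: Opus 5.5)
\textbf{The RCC cover step fails.} In \cref{lem:indexed-absorber} the cover property is needed with $\theta\gg\gamma$, since its hierarchy is $\nu\ll\gamma\ll\theta$. In the corollary you also have $\gamma\gg\nu\gg\eta\gg\zeta$. So your requirement that $\theta$ be small compared with $\zeta/L_{\rm RCC}$ cannot be met, and this is not a bookkeeping artefact. To absorb every leftover of size up to $\gamma n$, the family $\mathcal F$ sampled in the proof of \cref{lem:indexed-absorber} consists of $\Theta(\xi n)$ disjoint configurations with $\xi\gg\gamma$, hence $\Theta(\xi n)$ separate paths. Against this, $|A|\le 2\eta n\ll\xi n$, robust connectedness tolerates forbidden sets of size at most $\zeta n$, and the conclusion allows only $\zeta^2 n$ reservoir vertices in total. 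Greedy joining through $A$ therefore stalls after about $\zeta n/L_{\rm RCC}$ joins. Your treatment of $r=2$ is also inconsistent: you set $\omega=0$, which forbids any reservoir vertex, yet every join in your cover goes through $A$.

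The missing idea is to merge \emph{outside} the reservoir, using only the codegree condition $\delta_2(H_0)\ge(1/3+\alpha/2)|V(H_0)|$ for $H_0=H-(A\cup Z)$. Han's connecting lemma (\cref{connecter}) merges all paths of $\mathcal Q$ inside $H_0$ into at most two paths on at most $\sqrt{2\theta}\,|V(H_0)|$ vertices, with ends inherited from $\mathcal Q$. For $r=2$ you stop there; this uses no reservoir vertices, which is why the paths lie in $H-(A\cup Z)$. For $r=1$, one robust connection through $A$, avoiding $A\cap Z$, joins the two paths at a cost of at most $L_{\rm RCC}\le\zeta^2 n$ reservoir vertices. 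This is \cref{prop:3graph-rcc-cover}.

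\textbf{The closedness step also has a gap.} Your main route needs all of $V(H)$ to be $(\lambda,2)$-closed, and nothing you write establishes this. Chaining two $1$-reachabilities through a common vertex $w$ needs $\widetilde N(u)\cap\widetilde N(v)$ to be linear for every pair $u,v$. Your claim that cross edges between blocks ``force a reachability'' is unsupported: in the lattice method, robust edges of types $(2,1)$ and $(1,2)$ typically merge two blocks only at a larger reachability index, whereas \cref{lem:indexed-absorber} requires index exactly $2$.

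Your fallback is the right route, and it is the paper's, but the bound $|\widetilde N_{A,\beta',1}(u)|\ge cn$ with small $c$ is too weak. With that bound, \cref{lem:partition} only gives $2^{\lfloor 1/c\rfloor-1}$-closed parts. What you need is $|\widetilde N_{A,\beta,1}(v)|\ge(1/3+\alpha/2)n$. This comes from codegree, not density, via the double count
$(1/3+\alpha)n\,|E(L'(v))|\le\sum_{S\in E(L'(v))}\deg_H(S)\le|\widetilde N_{A,\beta,1}(v)|\,|E(L'(v))|+\gamma n^3$
over the link $L'(v)$ of $v$ in $H-A$ (\cref{prop:absorbernumber}). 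Since $\delta=1/3+\alpha/2>1/3$, \cref{lem:partition} then yields at most two parts, each of size at least $(1/3+\alpha/4)n$ and $(\lambda,2)$-closed. Uniform density, applied to the parts with $A\cup Z$ removed, puts every index vector of coordinate sum $3$ into $\mathcal I_\mu(J,\mathcal P)$. Hence $\mathsf M(\mathcal I)$ contains the index vector of every $S$ with $|S|\in 3\mathbb N$.
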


The proof of \cref{absorber1} is also given in
Appendix~\ref{app:indexed-absorber}.

\section{The switcher cases}\label{switcher-case}

The main result of this section handles both switcher alternatives from
\cref{dividecase}.

\begin{lemma}\label{maincase1}
  Let
  $1/n\ll\rho\ll\zeta\ll\eta\ll\nu\ll\gamma\ll\beta
  \ll\min\{\alpha,d\}$.
  Let $H$ be an $n$-vertex $(\rho,d)$-dense $3$-graph with
  $\delta_2(H)\ge(1/3+\alpha)n$.
  Let $A\subseteq V(H)$ satisfy $\eta n\le |A|\le 2\eta n$ and
  $\deg_A(u,v)\ge(1/3+\alpha/2)|A|$ for every pair $uv$.
  Let $\mathcal C_1,\mathcal C_2$ be the RCCs with
  parameter $\zeta$, and suppose that Case~\ref{case1} or
  Case~\ref{case2} occurs.
  Then $H$ contains a tight Hamiltonian cycle.
\end{lemma}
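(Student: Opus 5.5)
We build an almost-spanning tight cycle through $A$ and then absorb the leftover, following the sketch in Step~3 of the introduction.

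\textbf{Setup.} Let $Z_0$ be the vertex set of the two chosen switchers: the pairs $\{u_j,v_j\}$ in Case~\ref{case1}, or the edges $x_jy_jz_j$ in Case~\ref{case2}. It has at most six vertices. By \cref{tightcycleexist}, fix a bounded number of disjoint replaceable segments in $H-(A\cup Z_0)$. Each such segment can be traversed in two ways whose vertex counts differ by one modulo~$3$ but which have the same ordered end pairs. Let $Z$ be the union of $Z_0$ and these segments, so $|Z|=O(1)\le\nu n$.

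Apply \cref{absorber1} with this $Z$. This gives $t\le r$ disjoint absorbing paths $P_1,\dots,P_t$ on at most $\beta n$ vertices. They use at most $\zeta^2n$ vertices of $A$, and when $r=2$ they avoid $A$. If $r=1$, every pair of disjoint ordered pairs is robustly connectable, and the switchers are unnecessary. Since the switcher case is assumed, we have $r=2$ in the main line.

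\textbf{Covering step.} Let $H'$ be $H$ minus $A$, $Z$ and $\bigcup V(P_i)$. All pair codegrees in $H'$ remain at least $(1/3+\alpha/2)|H'|$, since we removed at most $O(\eta+\beta)n$ vertices. Apply \cref{coverall} to obtain two vertex-disjoint tight paths $Q_1,Q_2$ covering all but $o(n)$ vertices of $H'$.

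We now have at most six objects to join into one cycle: $P_1,P_2$, $Q_1,Q_2$, the replaceable segments, and the switchers. The switchers are viewed as a degenerate $2$-vertex path $u_jv_j$ or a $3$-vertex path $x_jy_jz_j$. Each object has two attachment pairs, each lying in $\mathcal C_1$ or $\mathcal C_2$.

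The joining is done sequentially. Two attachment pairs in the same RCC are connected by a path of at most $L_{\rm RCC}$ vertices through $A$, using \cref{lem:gluing}(i). The set $F$ of already-used vertices of $A$ has size at most $\zeta^2n+O(L_{\rm RCC})\le\zeta n$, so \cref{equivalentclass} applies at every step.

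\textbf{Main obstacle: the parity bookkeeping of classes.} We must find a cyclic order and orientation of the objects in which every junction joins two attachment pairs of the same class. Equivalently, build an auxiliary multigraph on the objects, whose ends are labelled by classes, and find an Eulerian-type pairing of same-class ends forming a single cycle.

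A switcher pair exposes attachment pairs $(v_j,u_j)\in\mathcal C_2$ and $(u_j,v_j)\in\mathcal C_1$ at its two ends. Inserting it between a $\mathcal C_1$ end and a $\mathcal C_2$ end therefore converts one class into the other. The attachment pairs of a switcher edge are $(x_j,y_j)$ and $(z_j,y_j)$; up to the reversal convention, these are one $\mathcal C_1$ end and one $\mathcal C_2$ end, so it also acts as a class converter.

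Among the four ends of $Q_1,Q_2$ and the ends of the $P_i$, count the $\mathcal C_1$-ends and $\mathcal C_2$-ends. A cycle can be closed whenever the number of mismatched junctions can be fixed by inserting at most two converters, and at least zero of them. Since ends are paired, the mismatches come in even number; with few objects at most two are needed. If fewer than two switchers are needed, the unused switchers are left out of the cycle and put into the leftover set $S$, as they are just a few vertices. One must also check that the cycle-versus-multiple-cycles issue can be resolved by reordering. With a bounded number of objects this is a finite case check, and it is the delicate part, especially the orientation conventions of \cref{lem:gluing}.

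\textbf{Finishing.} Close the cycle by \cref{lem:gluing}(ii). The leftover $S$ consists of the uncovered vertices of $H'$, the unused vertices of $A$, and the unused switcher vertices, so $|S|\le\gamma n$ given $\zeta,\eta\ll\gamma$. Choose the traversal of the replaceable segments so that $|S|\equiv0\pmod 3$; this is possible because $n$ is fixed and each segment changes the count by one modulo~$3$. Since $S$ is disjoint from the $P_i$, \cref{absorber1} absorbs $S$ into $P_1,\dots,P_t$ while keeping their ordered ends. Hence the junctions are unchanged, and we obtain a tight Hamiltonian cycle.
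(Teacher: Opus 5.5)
Your architecture matches the paper's proof: reserve two copies of $C_{10}^{(3)}[2]$ with segments $R_j,R_j^+$, apply \cref{absorber1} with $Z$ equal to the switcher vertices plus these copies, cover the rest outside $A$ by two paths via \cref{coverall}, join through $A$, fix the residue modulo $3$, and absorb. The gap is the closing step, which is the real content of this lemma and which you defer as ``a finite case check''. You never show that your pieces ($P_1,P_2,Q_1,Q_2$, the segments, and the switchers, i.e.\ up to eight pieces, not six) can be arranged into a \emph{single} cycle in which every junction joins two attachment pairs of the same class. The heuristic you offer is also wrong. Going once around any cyclic arrangement, the class changes an even number of times, either inside a piece of signature $\{1,2\}$ or at a mismatched junction. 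Hence the number of mismatched junctions has the parity of the number of mixed pieces, and it need not be even. Parity also cannot detect the case where every piece is pure, some with both attachment pairs in $\mathcal C_1$ and some with both in $\mathcal C_2$. There all counts are even, yet two switchers are needed just to make the arrangement one cycle.

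The missing idea is the second assertion of \cref{equivalentclass}: among any three pairwise vertex-disjoint ordered pairs, two lie in the same class. While at least three non-switcher paths remain, take one end of each of three of them and reverse it. Then robustly join two same-class attachment pairs through $A$ using \cref{lem:gluing}(i). This merges everything into two paths $Q_1,Q_2$ with inherited ends. With signatures $\sigma(Q)=\{\kappa(p^{\rm rev}),\kappa(q^{\rm rev})\}$, three cases finish the proof. Equal signatures are closed by two cross-connections, which give one cycle, not two. Signatures $\{1,1\}$ and $\{2,2\}$ use both switchers in the cyclic order $Q_1,S_1,Q_2,S_2$. Otherwise, after renaming, the signatures are $\{1,1\}$ and $\{1,2\}$, and one switcher suffices. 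Equivalently, regard each piece as an edge of a multigraph on vertex set $\{1,2\}$ and take an Euler circuit after adding zero, one or two switcher edges.

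Finally, every robust connection must also forbid the switcher vertices lying in $A$, other than the two current attachment pairs. Otherwise an earlier connector may consume a switcher you need later.
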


\subsection{Path-cover and divisibility adjustment}

We use the following two-path cover lemma of Han~\cite{Han22}.

\begin{lemma}[{\cite[Lemma~3.2]{Han22}}]\label{coverall}
  Given $\alpha,\gamma>0$, there exists $n_0\in\mathbb{N}$ such that the following holds for all $n\ge n_0$.
  Let $H$ be an $n$-vertex $3$-graph with $\delta_2(H)\ge \left(\frac{1}{3}+\alpha\right)n$.
  Then $H$ contains two vertex-disjoint tight paths covering at least $(1-\gamma)n$ vertices.
\end{lemma}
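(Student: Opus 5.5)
The plan is the standard regularity-based path cover. We pass to a reduced $3$-graph, find an almost-perfect fractional matching there, and convert the matched triples into long tight paths. The paths inside each tight component are joined into one, so that the number of paths equals the number of tight components, which is at most two by \cref{twocom}.

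\textbf{Setup.} Set aside a small random reservoir $A'$ of size $\eta' n$, with $\gamma\gg\eta'\gg$ all regularity constants. By \cref{lem:chernoff}, every pair has at least $(1/3+\alpha/2)|A'|$ neighbours in $A'$. Apply \cref{thm:strengthened_regularity} to $H-A'$. By \cref{lem:Rdeg} and \cref{lem:codegree-cleaning}, we obtain a reduced $3$-graph $R$ on $t$ clusters with $\delta_2^+(R)\ge(1/3+\alpha/3)t$ and all but $\Theta\binom t2$ cluster pairs in $\partial R$. By \cref{twocom}, $R$ has at most two tight components $C_1,C_2$.

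\textbf{Fractional matching.} The key reduced-graph step is to show that $R$ has a fractional matching of total weight at least $(1-\gamma/4)t/3$. I would argue by LP duality. A fractional vertex cover $w$ of small total weight would give, by averaging, a pair $xy\in\partial R$ with $w(x)+w(y)$ small. Each of the more than $t/3$ edges $xyz$ forces $w(z)\ge 1-w(x)-w(y)$, which makes the total weight at least about $t/3$.

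Because of the exceptional non-shadow pairs, I would run this argument on the subgraph spanned by the typical clusters. Each fractional matching edge lies in a unique tight component, so it splits as $\phi=\phi_1+\phi_2$ with $\phi_i$ supported on $C_i$.

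\textbf{From fractional matchings to paths.} For each matched triple $X\in E(C_i)$ with weight $\phi(X)$, split the three clusters proportionally. Inside the regular triad on $X$, embed a few long tight paths covering almost a $\phi(X)$-fraction of each cluster. This is the usual greedy embedding in a regular triple via \cref{lem:ext}, alternating through the three clusters.

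All path ends are typical, so their end pairs extend into edges of $C_i$. By \cref{lem:connector}, any two path ends belonging to the same component $C_i$ (through pseudo-paths in $C_i$, using the $K_4^-$ property already shown in the proof of \cref{equivalentclass}) can be joined by a connector of bounded length. The connector's internal vertices are taken in the reservoir $A'$ rather than in the clusters, as in the proof of \cref{equivalentclass}, and a small forbidden set is updated as we go.

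Joining all paths of component $C_i$ one after another gives a single tight path $Q_i$. The two paths $Q_1,Q_2$ are vertex-disjoint and miss only the uncovered cluster parts, the exceptional clusters, and $A'$, which is at most $\gamma n$ vertices.

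\textbf{Main obstacle.} I expect the fractional matching step to be the hardest: obtaining weight essentially $t/3$ from a positive-codegree condition when a $\Theta$-fraction of pairs is missing from $\partial R$. If the duality averaging fails there, I would first try greedily deleting the few clusters lying in many non-shadow pairs, so that the remaining $R$ has genuine minimum codegree above $t/3$. Then I would apply the known fact that minimum codegree at least $t/3$ yields a perfect fractional matching.

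A secondary technical point is that the number of connections is bounded in terms of $T_1$, so reservoir usage is $O(T_1^4)$. This is negligible, so the robustness against the forbidden set is easy.
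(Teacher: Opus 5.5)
The paper does not prove this lemma; it cites it from Han~\cite{Han22}. Judged on its own, your plan has two steps that fail as written, although both can be repaired with tools already in the paper.

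\textbf{The fractional matching step.} Averaging only gives a pair $xy\in\partial R$ with $w(x)+w(y)\le 2W/t$, and this can be close to $2/3$. The neighbours $z$ of $xy$ are then only forced to satisfy $w(z)\ge 1-2W/t$. The resulting inequality $W\ge\frac{t}{3}\bigl(1-\frac{2W}{t}\bigr)$ gives $W\ge t/5$, not $t/3$.

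Your fallback does not help either. Deleting the clusters of large non-shadow degree leaves each remaining cluster in few non-shadow pairs, but those pairs still have codegree $0$. The graph of non-shadow pairs can have up to $\Theta\binom t2$ edges and a vertex cover of size linear in $t$, so no deletion of few clusters produces a genuine minimum codegree.

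A correct duality argument has to use all vertices:
\begin{itemize}
\item Let $a$ be the minimum of $w(x)+w(y)$ over pairs in $\partial R$.
\item The more than $t/3$ neighbours of a minimizing pair all have weight at least $1-a$. Hence so do the $\lceil t/3\rceil$ heaviest clusters.
\item After discarding clusters of large non-shadow degree, the remaining clusters can be perfectly matched by pairs in $\partial R$, by Dirac's theorem. Each such pair has weight at least $a$.
\item Together these give $W\ge t/3-o(t)$.
\end{itemize}
Simpler still, \cref{matchingexist} is stated exactly under an ``all but $\eps n^2$ pairs'' codegree hypothesis and gives an integral almost perfect matching. This also avoids triads of tiny fractional weight, to which the restriction lemma does not apply.

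\textbf{The connection step.} You choose $A'$ before regularizing $H-A'$, so neither the clusters nor the components $C_1,C_2$ of $R$ carry any information about $A'$. \cref{lem:connector} builds connectors inside the clusters along a pseudo-path of $R$; it cannot place internal vertices in $A'$. In \cref{equivalentclass} the slice is built on the reservoir itself, and the classes are the tight components of the \emph{reservoir's} reduced graph. Your argument never relates these to $C_1,C_2$. So the claim that two ends coming from the same $C_i$ can be joined through $A'$, and hence that each $C_i$ yields a single path, is unsupported.

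The repair is to drop the per-component bookkeeping altogether:
\begin{itemize}
\item Apply \cref{equivalentclass} with reservoir $A'$.
\item While at least three paths remain, take one attachment pair from each of three of them. Two lie in the same class, so join them robustly through $A'$ and glue with \cref{lem:gluing}. This is exactly Step~3 of the proof of \cref{maincase1}.
\item Each path has length linear in the cluster size, so there are $O(1)$ paths and the reservoir usage is $O(1)$.
\end{itemize}
With this merging, \cref{twocom} for $R$ is not needed at all. The covering part can then be done as in \cref{lem:longpath}, without colours: weak regularity on $H-A'$, then \cref{matchingexist}, then \cref{pairvertexdegreelongpathexist}. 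Alternatively, reserve random portions of every cluster after regularizing and connect inside the slice, as Allen, B\"ottcher, Cooley and Mycroft do.
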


The absorber from \cref{absorber1} can absorb only leftover sets whose sizes are divisible by $3$.
For the later divisibility adjustment, we use the following consequence of the
active-shadow blow-up lemma proved in \cref{app:monochromatic-c10}.

\begin{theorem}\label{tightcycleexist}
  Given any $\gamma>0$, there exists $n_0$ such that for every $n\ge n_0$ and every $n$-vertex $3$-graph $H$ with $\delta_2(H)\ge (1/3+\gamma)n$, the graph $H$ contains a copy of $C_{10}^{(3)}[2]$, that is, the $2$-blow-up of the tight cycle of length $10$.
\end{theorem}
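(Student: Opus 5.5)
We will prove \cref{tightcycleexist} by a regularity argument. The plan is to find a single tight component in a reduced $3$-graph containing a "homomorphic image" of $C_{10}^{(3)}$, and then lift it to a $2$-blow-up. The $2$-blow-up lifting is routine once we have a homomorphic copy of $C_{10}^{(3)}$ in a reduced graph $R$ whose edges are regular, dense triads. The reason is that $C_{10}^{(3)}[2]$ has bounded size and is $3$-partite-compatible with any homomorphism into $R$. By \cref{lem:ext} (equivalently the counting lemma for regular complexes), such a homomorphism yields $\Omega(n^{20})$ labeled copies of the blow-up, provided the homomorphism sends each edge of $C_{10}^{(3)}$ to an edge of $R$ with three distinct clusters. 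We therefore reduce the theorem to a statement about $R$.

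\textbf{Step 1 (reduction).} Apply \cref{thm:strengthened_regularity} with $A_0=V(H)$ to obtain a regular slice. Then use \cref{lem:Rdeg} and \cref{lem:codegree-cleaning} to get a subgraph $R\subseteq R_d(H)$ on $t$ clusters with $\delta_2^+(R)\ge(1/3+\gamma/2)(t-2)$ and almost all pairs in $\partial R$.

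\textbf{Step 2 (closed walk of length $10$).} It then suffices to find a closed tight walk $w_1w_2\cdots w_{10}$ in $R$ (indices mod $10$) in which every three consecutive $w_i$ are distinct and form an edge of $R$. Repetitions of clusters at distance at least $3$ are permitted, since the lifted copy places the blown-up vertices into distinct vertices inside a cluster, and the extension lemma handles embeddings with several vertices in one cluster once cluster sizes are large.

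The key combinatorial input for Step 2 is a short tight closed walk of a length $\ell$ with $\ell\equiv 10$ suitably. Concretely:
\begin{itemize}
\item[(a)] A $K_4^-$ with edges $abc,abd,acd$ supports closed tight walks of certain lengths, for example $abcd\,acb\dots$.
\item[(b)] A tight $K_4^{(3)}$ gives closed tight walks of every length divisible by $4$.
\item[(c)] A copy of $C_5^{(3)}$ or $C_7$-type configurations give lengths $\equiv 1,2\pmod 3$.
\end{itemize}
Since every tight closed walk length $\ell$ combines with the trivial walk $abcabc\cdots$ of length $3$, concatenating walks sharing an edge, we get closed walks of length $\ell+3j$. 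So we only need closed walks whose lengths are not all divisible by $3$, within one tight component, with total $10$ reachable as a nonnegative combination. For instance, a walk of length $4$ (from a tight $K_4^{(3)}$) together with the length-$3$ walk gives $10=4+3+3$. A walk of length $5$ would need $10=5+5$.

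\textbf{Step 3 (main obstacle: the length condition).} The hard step is showing that $R$ contains a closed tight walk of length $\not\equiv 0\pmod 3$ that can be spliced (within one tight component) to total length $10$. My first attempt uses the codegree bound $>t/3$. Take an edge $abc$. The neighbourhoods $N(ab),N(bc),N(ac)$ each have size $>t/3$, so some pair of them intersects, giving a $K_4^-$. Iterating with the pair $\{a,d\}$ and the fact that three sets of size $>t/3$ cannot be pairwise disjoint, I would try to force a vertex $x$ lying in three neighbourhoods at once, i.e.\ a $K_4^{(3)}$-like configuration or a tight $C_5$, whichever yields a non-multiple-of-$3$ walk. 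If this fails, the residual extremal structure must be handled directly: every $K_4^-$ avoids completion, which under codegree $>t/3$ suggests a near-$3$-partite structure. In that case I would exhibit an explicit closed walk of length $4$ or $5$ using a vertex in two of the three neighbourhoods together with a second $K_4^-$. This "parity" argument (more precisely, mod-$3$ argument) is where I expect the real work, and is presumably the content of the active-shadow blow-up lemma in \cref{app:monochromatic-c10}.

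Once such walks exist in a common tight component, splicing along shared edges (tight connectivity) and padding with length-$3$ rotations gives a closed walk of length exactly $10$. One point must be checked: any splicing detour is traversed back and forth, adding even length that is a multiple of $3$ per segment when chosen as pseudo-path out-and-back. If it is not a multiple of $3$, I would adjust using both the $4$- and $3$-walks. Lifting via \cref{lem:ext} then completes the proof.
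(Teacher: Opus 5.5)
Your Step 1 and the lifting step are fine in principle: a homomorphic copy of $C_{10}^{(3)}$ in a cleaned reduced graph whose edges are regular dense triads does lift to $C_{10}^{(3)}[2]$. The paper avoids regularity here and works directly in $H$, where the shadow is complete, passing from a homomorphic $C_{10}^{(3)}$ to $C_{10}^{(3)}[2]$ by removal plus supersaturation. However, the whole content of the theorem is your Step 3, the existence of a closed tight walk of length \emph{exactly} $10$, and there the proposal has a genuine gap.

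\textbf{The ingredients you list do not work.}
\begin{enumerate}
  \item A single $K_4^{(3)-}$ with apex $a$ supports \emph{only} closed tight walks of length divisible by $3$. Every window of three consecutive vertices is an edge containing $a$ exactly once, so $a$ recurs with period exactly $3$. Your example $abcd\ldots$ already fails, because $bcd$ is not an edge.
  \item $K_4^{(3)}$ is not forced by codegree slightly above $t/3$: there are $K_4^{(3)}$-free $3$-graphs with codegree close to $t/2$. So a vertex lying in all three of $N(ab),N(bc),N(ac)$ need not exist.
  \item The $C_5^{(3)}$ and ``$C_7$-type'' configurations are asserted, not derived.
\end{enumerate}

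\textbf{The splicing plan cannot repair this.} The target length $10$ is fixed, and both padding by $abc$-rotations and detours only increase length. What you actually need is closed walks through a common ordered pair, of total length at most $10$ and congruent to $1$ modulo $3$. Long pseudo-paths inside a tight component are therefore useless. Moreover, going ``out and back'' along a pseudo-path is not a tight walk unless you can reverse an ordered pair, and that is precisely the nontrivial issue.

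\textbf{The missing idea is the argument of \cref{lem:active-shadow-pss}.} Assume there is no homomorphic $C_{10}^{(3)}$.
\begin{enumerate}
  \item Codegree $>m/3$ puts every edge $xyz$ into a $K_4^{(3)-}$ whose apex lies in $\{x,y,z\}$.
  \item Two copies of $K_4^{(3)-}$ sharing a pair in conflicting roles already give explicit homomorphic copies of $C_{10}^{(3)}$. One conflict is that the pair is an apex pair in one copy and a base pair in the other; this gives the walk $v,u,a,b,v,a,u,v,x,y$. The other is that each endpoint of the pair is the apex of one copy; this gives $x,a,y,x,z,a,x,b,a,c$.
  \item Hence every shadow pair is exactly one of: an arc of the apex digraph $D$ in one direction, an arc in the other direction, or a base pair (an edge of the graph $B$ of base pairs).
  \item The codegree condition then shows that each of $N_D^+(v)$, $N_D^-(v)$ and $N_B(v)$ has size greater than $m/3$ as soon as it is nonempty. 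These three sets are pairwise disjoint.
  \item Producing one vertex for which all three sets are nonempty, which is where the density of the shadow is used, gives a contradiction.
\end{enumerate}

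Without this apex/base dichotomy or an equivalent argument, your Step 3 remains a plan rather than a proof.
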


\begin{proof}
  Apply \cref{lem:active-shadow-c10-blowup} with $G=H$, $R=K_n$, the
  given value of $\gamma$, and any fixed $\tau\in(0,1)$.  Since
  $\delta_2(H)>0$, we have $\partial H=K_n$; moreover $K_n$ is
  $(\rho,\tau)$-dense for every $\rho>0$.  The conclusion of that lemma is a copy of
  $C_{10}^{(3)}[2]$ in $H$.
\end{proof}

\subsection[Proof of the switcher case lemma]{Proof of \cref{maincase1}}

\begin{proof}[Proof of \cref{maincase1}]
  Let $\kappa(p)=j$ when $p\in\mathcal C_j$.
  Let $W_{\rm sw}$ be the set of switcher vertices, so $|W_{\rm sw}|\le6$.

  \proofstep{1}{Reserving replaceable tight-cycle segments}
  Since $\eta\ll\alpha$, two applications of \cref{tightcycleexist} give
  vertex-disjoint copies $C_1,C_2$ of $C_{10}^{(3)}[2]$ outside
  $A\cup W_{\rm sw}$.
  For $j\in[2]$, write
  \[
    V(C_j)=\{x_1^j,\dots,x_{10}^j,y_1^j,\dots,y_{10}^j\},
  \]
  and set
  \[
    R_j=x_1^jx_2^jx_3^jx_4^jx_5^j,
    \qquad
    R_j^+=x_1^jx_2^jy_3^j\cdots y_{10}^jy_1^jy_2^jx_3^jx_4^jx_5^j.
  \]
  These paths have the same ordered end pairs and
  $|V(R_j^+)|-|V(R_j)|=10$.

  \proofstep{2}{Constructing the absorber and the path cover}
  Apply \cref{absorber1} with reservoir $A$, RCC map $\kappa$, and
  \[
    Z=W_{\rm sw}\cup V(C_1\cup C_2).
  \]
  We obtain a family
  $\mathcal P_{\rm abs}=\{P_1,\dots,P_t\}$ of absorbing paths, where
  $t\le2$, $|V(\mathcal P_{\rm abs})|\le\beta n$, and
  $V(\mathcal P_{\rm abs})\cap A=\emptyset$.
  Delete
  \[
    A\cup W_{\rm sw}\cup V(C_1\cup C_2)\cup V(\mathcal P_{\rm abs})
  \]
  and call the remaining induced graph $H'$.  The hierarchy gives
  $\delta_2(H')\ge(1/3+\alpha/3)|V(H')|$, so \cref{coverall} gives two
  disjoint tight paths $B_1,B_2$ covering all but at most $\gamma n/10$
  vertices of $H'$.

  \proofstep{3}{Merging the path family into two paths}
  Start with the at most six disjoint paths
  \[
    \mathcal P_{\rm abs}\cup\{B_1,B_2,R_1,R_2\}.
  \]
  Throughout the connecting procedure, the forbidden set contains every
  reservoir vertex already used, except the vertices belonging to the two
  attachment pairs selected for the current connection.  It also contains
  every vertex of $W_{\rm sw}\cap A$ that is not one of those attachment
  vertices.  In particular, the switcher vertices are not used as internal
  vertices before the cycle is closed.

  While at least three paths remain, choose one end pair from each of any
  three current paths and consider the three attachment pairs obtained by
  reversing them.  These ordered pairs are pairwise vertex-disjoint, so
  \cref{equivalentclass} gives two in the same connection class.
  Robustly connect those two attachment pairs through unused vertices of
  $A$ and apply \cref{lem:gluing}.  After at most four such operations,
  two vertex-disjoint tight paths $Q_1,Q_2$ remain.  They contain all the
  original path segments as subpaths and have used at most
  $4\zeta^2n$ vertices of $A$.

  Roughly speaking, the attachment signature records the connection class of
  each available end without choosing an orientation.
  For a tight path $Q$ with end pairs $p,q$, define its \emph{attachment
  signature} to be the multiset
  \[
    \sigma(Q)=\{\kappa(p^{\rm rev}),\kappa(q^{\rm rev})\}.
  \]

  \proofstep{4}{Closing the cycle}
  If $\sigma(Q_1)=\sigma(Q_2)$, then orient the two paths so that corresponding
  attachment classes agree, and use two cross-connections between $Q_1$
  and $Q_2$.  By \cref{lem:gluing}, these connections form one tight cycle,
  not two separate cycles.

  Otherwise we use the reserved switchers.
  In Case~\ref{case1}, for $j\in[2]$, regard $u_jv_j$ as a $2$-vertex
  tight path $S_j$.  Its two attachment pairs are $(u_j,v_j)$ and
  $(v_j,u_j)$, so $\sigma(S_j)=\{1,2\}$.
  In Case~\ref{case2}, for $j\in[2]$, let $S_j=x_jy_jz_j$.  Its attachment
  pairs are $(x_j,y_j)$ and $(z_j,y_j)$, and again
  $\sigma(S_j)=\{1,2\}$.

  If the signatures of $Q_1,Q_2$ are $\{1,1\}$ and $\{2,2\}$, use both
  $S_1,S_2$ and connect the four segments cyclically in the order
  \[
    Q_1,S_1,Q_2,S_2,Q_1,
  \]
  matching class $1$ at the two incidences with $Q_1$ and class $2$ at the
  two incidences with $Q_2$.
  In every other unequal-signature case, after interchanging the class
  names if necessary, the signatures are $\{1,1\}$ and $\{1,2\}$.
  Use one switcher $S_1$: connect the two class-$1$ ends of the first path
  to the class-$1$ ends of the other two segments, and connect their two
  class-$2$ ends to each other.  This gives a three-segment cycle.

  Thus the closing step uses at most four further robust connections.
  When a connection is incident with a switcher, its two attachment
  vertices are omitted from the forbidden set, while all other reservoir
  vertices already present in the partial construction remain forbidden.
  Altogether at most eight connecting paths are used, so the set avoided at
  any step has size at most
  $8\zeta^2n+|W_{\rm sw}|\le\zeta n$.
  Hence robust connectedness can be used at every step.
  Denote the resulting tight cycle by $\Gamma$.

  \proofstep{5}{Divisibility adjustment and absorption}
  Let $L_0=V(H)\setminus V(\Gamma)$.  It is contained in the path-cover
  leftover, the unused vertices of $A$, the vertices of
  $C_1\cup C_2$ outside $R_1\cup R_2$, and any unused switcher vertices.
  Hence $|L_0|\le\gamma n/2$ for sufficiently large $n$.

  Replacing $R_j$ by $R_j^+$ decreases the leftover size by $10\equiv1
  \pmod3$.  Choose $I\subseteq [2]$ with $|I|\equiv|L_0|\pmod3$ and perform
  these replacements for $j\in I$.  The resulting cycle $\Gamma'$ has a
  leftover set $L$ with $|L|\in3\mathbb N$ and $|L|\le\gamma n$.
  Finally use the absorbing property of $\mathcal P_{\rm abs}$ to absorb
  $L$.  Since every member of $\mathcal P_{\rm abs}$ is a subpath of
  $\Gamma$ and the absorption step preserves all ordered end pairs, the
  resulting cycle is a tight Hamiltonian cycle of $H$.
\end{proof}

\section{The non-switcher case}\label{non-switcher-case}

This section handles the remaining case from \cref{dividecase}, where neither type of switcher exists.
The goal is the following non-switcher lemma.

\begin{lemma}\label{nonswitchercase}
  Let $1/n\ll\rho\ll\zeta\ll\eta\ll\min\{\alpha,d\}$.
  Let $H$ be an $n$-vertex $(\rho,d)$-dense $3$-graph with $\delta_2(H) \ge \left(\frac{1}{3}+\alpha\right)n$.
  Let $A\subseteq V(H)$ satisfy $\eta n\le |A|\le 2\eta n$ and, for every pair $uv\in\binom{V(H)}{2}$,
  $
  \deg_A(uv)\ge \left(\frac{1}{3}+\frac{\alpha}{2}\right)|A|.
  $
  Suppose that Case~\ref{case3} occurs with respect to $A$.
  Then $H$ contains a tight Hamiltonian cycle.
\end{lemma}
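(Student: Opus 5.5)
The proof follows the outline of Step~4 in the proof sketch, with the lemmas referred to there established as intermediate steps. If $r=1$ (only one RCC), every end pair can be connected to every other. The argument of \cref{maincase1} then goes through verbatim without switchers: use \cref{absorber1}, \cref{coverall}, the segments from \cref{tightcycleexist}, and robust connections in a single class. So assume $r=2$. I would carry out the following steps in order.

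\textbf{Step 1: structure from the absence of switchers.} Since neither (A1) nor (A2) holds, every pair $\{u,v\}$ with $\kappa(u,v)\ne\kappa(v,u)$ is a ``one-sided switcher pair''. Switcher pairs of a fixed orientation type then form a graph with no two disjoint edges, so they are covered by a bounded set of vertices. Similarly, edges $xyz$ with $\kappa(x,y)\ne\kappa(z,y)$ have no two disjoint members of a fixed type, and so are covered by $O(1)$ vertices. Deleting a constant-size exceptional set $W$, every remaining pair is symmetric, giving a $2$-colouring $c$ of $\partial(H-W)$. Moreover, every edge of $H-W$ has a monochromatic shadow, since $\kappa(x,y)=\kappa(z,y)$ for all labelings. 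The vertices of $W$ I would later cover inside the absorber by short paths chosen through $A$. Vertices of bounded size are handled using the codegree: each $w\in W$ lies in many edges $wab$ with $ab$ of a fixed colour.

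\textbf{Step 2: a one-sided colour class and density of its shadow.} Pick any vertex $x$. Its link edges $yz$ satisfy $c(xy)=c(xz)=c(yz)$, so the link of $x$ splits into vertex-disjoint pieces on $N_1(x)$ and $N_2(x)$ (the colour-$i$ neighbourhoods). Codegree $>n/3$ forces one of these neighbourhoods to be large. I would prove \cref{lem:monosetexist} by counting: if both colours appeared at linearly many vertices on both sides, then uniform density, applied to $X_1=N_1(x)$, $X_2=N_2(x)$, $X_3=V$, would give $dn^3$-many edges with a bichromatic shadow, which is a contradiction. The outcome is a set $V_0$ with $|V_0|=\Omega(n)$ all of whose pairs have colour $i$. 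Then, for \cref{lem:shadowlocaldense}, take $X,Y$ large: $e_H(X,Y,V_0)\ge d|X||Y||V_0|-\rho n^3$. Each such edge has its $XY$-pair of colour $i$ (its pairs to $V_0$ have colour $i$, hence all of its pairs do). So the colour-$i$ graph $G_i$ is $(\rho',d')$-dense.

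\textbf{Step 3: almost cover by colour-$i$ paths, then connect and absorb.} Let $H_i$ be the $3$-graph of edges whose shadow has colour $i$. By Step~1, for any pair $uv$ of colour $i$, every edge of $H$ containing $uv$ lies in $H_i$, so $\deg_{H_i}(uv)\ge(1/3+\alpha)n$. Apply weak regularity to $H_i$. In the reduced graph, cluster pairs that are dense in $G_i$ have codegree above $t/3$ there, and $G_i$ is uniformly dense. I would find an almost-perfect fractional/integer matching of the reduced $3$-graph by a fractional argument: the codegree $>1/3$ on a dense graph plus uniform density excludes the space-barrier obstruction. Regular triples then give $O(1)$ long tight paths in $H_i$ covering almost all vertices outside $A$, the absorber and the reserved segments. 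All end pairs have colour $i$, so they lie in one RCC and can be chained robustly through $A$, exactly as in Step~3 of \cref{maincase1}. \cref{absorber2} supplies an absorbing path with colour-$i$ ends, and \cref{lem:monochromatic_c10} supplies two colour-$i$ copies of $C_{10}^{(3)}[2]$ for the modulo-$3$ correction. The balance condition between two closed parts can be fixed by choosing which long paths to shorten, or by discarding a few vertices into the leftover.

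\textbf{Main obstacle.} The matching step is the main obstacle: showing that the reduced colour-$i$ graph has an almost-perfect matching using only codegree $1/3+\alpha$ relative to a uniformly dense shadow. I would first try a LP-duality argument on fractional matchings, using the density of $G_i$ to rule out a small vertex cover.
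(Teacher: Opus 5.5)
Your overall architecture matches the paper's. You delete an exceptional set $W$ of at most six vertices, so that every pair is symmetric and every edge is monochromatic, and find a linear set $V_0$ with a colour-$i$ star. You then derive density of the colour-$i$ shadow from $e_H(X,Y,V_0)$, cover by colour-$i$ paths via weak regularity, connect through $A$, and finish with \cref{absorber2} and \cref{lem:monochromatic_c10}.

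\textbf{The gap: your derivation of $V_0$ does not work.} The edges counted by $e_H(N_1(x),N_2(x),V)$ do not contain $x$. So nothing forces their shadows to be bichromatic, and no contradiction arises. In fact no argument of this shape can work. The colouring in the proof of \cref{thm:codegree_lower_bound} gives a uniformly dense $3$-graph with all edges monochromatic and codegree $(1/3-\gamma)n$, yet every vertex other than $v$ sees both colours. Your reasoning uses the codegree only qualitatively (``one neighbourhood is large''), which holds there as well, so it would wrongly produce $V_0$ in that example too. The existence of $V_0$ needs the excess over $n/3$ quantitatively; uniform density plays no role in it.

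The correct argument is the one in \cref{lem:monosetexist}, a Goodman-type count. Suppose fewer than $\beta n$ vertices have a monochromatic star, and let $B'$ be the remaining vertices. A single colour-$j$ pair at $x$ already forces about $(1/3+\alpha/4)n$ colour-$j$ neighbours of $x$, by codegree and monochromaticity. Hence every vertex of $B'$ has both colour degrees at least $(1/3+\alpha/8)|B'|$. By \cref{LEMMA:counting-monotriangles}, there are then at most $(1/18-\Omega(\alpha))n^3$ monochromatic triangles. But the edges of $H[B']$ are all monochromatic and number at least $(1/18+\Omega(\alpha))n^3$. Since this lemma is already available you may simply cite it, but your own proof of it must be replaced.

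\textbf{Two smaller points.}

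First, the matching step you single out as the main obstacle is routine. Because $G_i$ is $(\rho',d)$-dense, every cluster pair has colour-$i$ density at least $d/2$. Counting the $(1/3+\alpha/8)n$ colour-$i$ completions of each colour-$i$ pair then shows that all but $O(\sqrt{\eps}\,t^2)$ cluster pairs have reduced codegree at least $(1/3+\alpha/16)t$. So \cref{matchingexist} applies directly, with no LP duality and no separate exclusion of space barriers.

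Second, for the balance condition, shortening paths gives no control over which part the released vertices lie in. Instead, before the path cover, reserve a set $A_{\rm bal}$ of size about $\gamma_{\rm abs}n/3$ with balanced part ratios, as the paper does. This set dominates the rest of the leftover: unused reservoir vertices, $W$, and $L_{\rm cov}$. Putting $W$ into this leftover is also the simplest way to handle it, since \cref{prop:absorbernumbernonswitcher} gives colour-$i$ reachability for every vertex, so $P_{\rm abs}$ absorbs $W$.
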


\subsection{Auxiliary coloring}

The absence of switchers imposes two elementary structural restrictions on the RCCs.

\begin{proposition}\label{propcolor1}
  Let $1/n \ll \zeta \ll \eta \ll \alpha$.
  Let $H$ be an $n$-vertex $3$-graph with $\delta_2(H) \ge \left(\frac{1}{3}+\alpha\right)n$.
  Let $A\subseteq V(H)$ satisfy $\eta n\le |A|\le 2\eta n$ and, for every pair $uv\in\binom{V(H)}{2}$,
  $
  \deg_A(uv)\ge \left(\frac{1}{3}+\frac{\alpha}{2}\right)|A|.
  $
  Let $\mathcal C_1$ and $\mathcal C_2$ be the RCCs given by \cref{equivalentclass} with codegree slack $\alpha/2$, and suppose that Case~\ref{case3} occurs with respect to $A$.
  Then the following statements hold.
  \begin{enumerate}[label=\rm(B\arabic*)]
    \item\label{propcoloritem1} There exists a set $W_1\subseteq V(H)$ with $|W_1|\le 3$ such that, for any distinct $u,v\in V(H)\setminus W_1$, the two ordered pairs $(u,v)$ and $(v,u)$ belong to the same class $\mathcal C_i$ for some $i\in[2]$.

    \item\label{propcoloritem2} There exists a set $W_2\subseteq V(H)\setminus W_1$ with $|W_2|\le 3$ such that, for every edge $xyz\in E(H[V(H)\setminus (W_1\cup W_2)])$, the ordered pairs $(x,y),(y,z)$ and $(z,x)$ belong to the same class $\mathcal C_i$ for some $i\in[2]$.
  \end{enumerate}
\end{proposition}

\begin{proof}
  We first prove~\ref{propcoloritem1}.
  Let $\mathcal S$ be the set of unordered pairs $\{u,v\}$ such that $(u,v)$ and $(v,u)$ belong to different RCCs.
  Since Case~\ref{case1} does not occur, $\mathcal S$ contains no two disjoint pairs.
  Hence $\mathcal S$ is either empty, a star, or a triangle.
  Let $W_1$ be empty in the first case, the center of the star in the second case, and the vertex set of the triangle in the third case.
  Then $|W_1|\le 3$, and every pair outside $W_1$ has its two orientations in the same class.

  Next, we prove~\ref{propcoloritem2}.
  Let $\mathcal E$ be the set of edges $xyz\in E(H-W_1)$ for which, for some cyclic ordering of its vertices, the ordered pairs $(x,y),(y,z),(z,x)$ are not all in the same class.

  We claim that $\mathcal E$ contains no two disjoint edges.
  Indeed, suppose that $e_1,e_2\in\mathcal E$ are disjoint.
  For each $j\in[2]$, since $e_j\subseteq V(H)\setminus W_1$, every pair of vertices in $e_j$ has its two orientations in the same class by~\ref{propcoloritem1}.
  Since $e_j\in\mathcal E$, the three boundary pairs of $e_j$ are not all in the same class.
  Hence, after possibly interchanging the names of $\mathcal C_1$ and $\mathcal C_2$, and after relabeling the vertices of $e_j$, we may write $e_j=x_jy_jz_j$ so that $(x_j,y_j)\in\mathcal C_1$ and $(z_j,y_j)\in\mathcal C_2$.
  Thus the two disjoint edges $x_1y_1z_1$ and $x_2y_2z_2$ give Case~\ref{case2}, a contradiction.
  Therefore $\mathcal E$ contains no two disjoint edges.

  If $\mathcal E=\emptyset$, set $W_2=\emptyset$.
  Otherwise choose one edge $e\in\mathcal E$ and set $W_2=e$.
  Then $W_2\subseteq V(H)\setminus W_1$ and $|W_2|\le 3$.
  Since $\mathcal E$ contains no two disjoint edges, every edge of $\mathcal E$ intersects $W_2$.
  Therefore no edge of $H[V(H)\setminus (W_1\cup W_2)]$ lies in $\mathcal E$.

  Consequently, for every edge $xyz\in E(H[V(H)\setminus (W_1\cup W_2)])$, the ordered pairs $(x,y),(y,z)$ and $(z,x)$ all belong to the same robust connection class.
\end{proof}

We will use an edge colouring to label edges of $H$. Roughly speaking, color $i$ marks a pair whose two orientations lie in
$\mathcal C_i$, whereas color $0$ marks a pair whose orientations lie in
different classes.  An exceptional set removes all color-$0$ pairs and makes
every remaining hyperedge monochromatic.

\begin{defn}[$A$-connected coloring]\label{defn:color}
  Let $H$ be a $3$-graph and let $A\subseteq V(H)$ be a reservoir set.
  Let $\mathcal C_1$ and $\mathcal C_2$ be the RCCs given by
  \cref{equivalentclass}.

  We define the \emph{$A$-connected coloring} as a function $\phi:\partial H\rightarrow \{0,1,2\}$ on unordered pairs in the shadow.
  For $xy\in\partial H$, define $\phi(xy)$ as follows:
  \begin{enumerate}[label=\rm(\roman*)]
    \item $\phi(xy)\coloneqq 0$ if the two ordered pairs $(x,y)$ and $(y,x)$ belong to different RCCs;
    \item $\phi(xy)\coloneqq i$ for $i\in\{1,2\}$ if both $(x,y)$ and $(y,x)$ belong to $\mathcal C_i$.
  \end{enumerate}
  We say that an ordered pair $(x,y)$ has color $\phi(xy)$.

  A set $W\subseteq V(H)$ is called an \emph{exceptional set} if both of the following properties hold:
  \begin{enumerate}[label=\rm(\roman*)]
    \item for every pair $xy\in\partial H[V(H)\setminus W]$, we have $\phi(xy)\ne 0$;
    \item for every edge $xyz\in E(H[V(H)\setminus W])$, the three boundary pairs are monochromatic under $\phi$, that is, $\phi(xy)=\phi(yz)=\phi(xz)$.
  \end{enumerate}
\end{defn}

After this coloring is defined, the previous proposition gives the basic properties used throughout the non-switcher argument.

\begin{lemma}\label{prop:coloritem}
  Let $1/n \ll \zeta \ll \eta \ll \alpha$.
  Let $H$ be an $n$-vertex $3$-graph with $\delta_2(H) \ge \left(\frac{1}{3}+\alpha\right)n$.
  Let $A\subseteq V(H)$ be a reservoir set satisfying $\eta n\le |A|\le 2\eta n$ and, for every pair $uv\in\binom{V(H)}{2}$, $\deg_A(uv)\ge \left(\frac{1}{3}+\frac{\alpha}{2}\right)|A|$.
  Let $\mathcal C_1,\mathcal C_2$ be the RCCs given by \cref{equivalentclass} with codegree slack $\alpha/2$, and suppose that Case~\ref{case3} holds with respect to $A$.
  Let $\phi$ be the $A$-connected coloring determine by $\C_1$, $\C_2$.
  Then there exists an exceptional set $W\subseteq V(H)$ such that:
  \begin{enumerate}[label=\rm(C\arabic*)]
    \item\label{coloritem0} $|W|\le 6$.

    \item\label{coloritem1} For every pair $xy\in\binom{V(H)\setminus W}{2}$, we have $\phi(xy)\ne 0$.

    \item\label{coloritem2} For every edge $xyz\in E(H-W)$, we have $\phi(xy)=\phi(yz)=\phi(xz)$.
      Moreover, for every pair $xy\subseteq V(H)\setminus W$, there are at least $\left(\frac{1}{3}+\frac{\alpha}{4}\right)n$ vertices $z\in V(H)\setminus W$ such that $xyz\in E(H)$ and $\phi(xy)=\phi(yz)=\phi(xz)$.

    \item\label{coloritem3} There exists a color $i\in\{1,2\}$ which spans $V(H)\setminus W$, meaning that every vertex of $V(H)\setminus W$ is incident to at least one pair of color $i$.
  \end{enumerate}
\end{lemma}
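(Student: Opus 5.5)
Take $W=W_1\cup W_2$ from \cref{propcolor1}; then (C1) holds since $|W_1|,|W_2|\le 3$. First check that $\phi$ is defined on all relevant pairs: since $\delta_2(H)>0$, every pair lies in $\partial H$ and $\phi$ is defined on all of $\binom{V(H)}2$. Property~\ref{propcoloritem1} says that for distinct $u,v\notin W_1$ both orientations lie in the same class. Hence $\phi(uv)\neq 0$ for all pairs outside $W\supseteq W_1$, which gives (C2) and the first exceptional-set condition.

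For (C3), take an edge $xyz$ of $H-W$. By~\ref{propcoloritem2}, the cyclic pairs $(x,y),(y,z),(z,x)$ lie in one class $\mathcal C_i$. By (C2), each unordered pair has both orientations in the same class, so $\phi(xy)=\phi(yz)=\phi(zx)=i$. This gives the monochromatic condition, and so $W$ is an exceptional set. For the "moreover" part, fix $xy$ outside $W$. It has at least $(1/3+\alpha)n$ neighbours $z$, of which at most $|W|\le 6$ lie in $W$. Every remaining $z$ gives an edge of $H-W$, which is monochromatic by what we just showed. Hence there are at least $(1/3+\alpha)n-6\ge(1/3+\alpha/4)n$ such $z$ for large $n$.

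For (C4), I argue by contradiction. Suppose that for each $i\in\{1,2\}$ some vertex $v_i\notin W$ has no incident pair of color $i$. By (C2), every pair at $v_1$ outside $W$ then has color $2$, and every pair at $v_2$ has color $1$. If $v_1=v_2$, then the pairs at this vertex have neither color, contradicting (C2), since $n-|W|>1$. If $v_1\ne v_2$, the pair $v_1v_2$ would need color $2$ (it is at $v_1$) and color $1$ (it is at $v_2$), which is impossible. So some color $i$ is incident to every vertex of $V(H)\setminus W$.

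Note that (C4) uses neither the codegree hypothesis nor the hyperedges. The only step needing any care is (C3), namely passing from the cyclic-orientation statement of~\ref{propcoloritem2} to equality of $\phi$ on unordered pairs. This is exactly where (C2) is used, and I expect no real obstacle beyond this bookkeeping.
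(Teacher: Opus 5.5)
Your proof is correct and follows the paper's argument essentially step for step. You take $W=W_1\cup W_2$ from \cref{propcolor1}, read off (C2) and (C3) from (B1) and (B2), and get the ``moreover'' count by losing at most $|W|\le 6$ neighbours. You then prove (C4) by looking at the pair joining a vertex that misses color~1 and a vertex that misses color~2.

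Your two extra remarks are small details the paper leaves implicit: that $\phi$ is defined on every pair because $\delta_2(H)>0$, and the separate treatment of the case $v_1=v_2$.
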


\begin{proof}
  By \cref{propcolor1}, choose sets $W_1,W_2\subseteq V(H)$ satisfying~\ref{propcoloritem1} and~\ref{propcoloritem2}, and let $W \coloneqq W_1\cup W_2$.
  Then $|W|\le 6$.

  Let $B \coloneqq V(H)\setminus W$.
  By \cref{propcolor1}~\ref{propcoloritem1}, the two orientations of every pair in $B$ belong to the same class.
  Hence every pair in $B$ has color $1$ or color $2$, proving~\ref{coloritem1}.
  By \cref{propcolor1}~\ref{propcoloritem2}, every edge of $H[B]$ has all three boundary pairs in the same class, and therefore in the same color under $\phi$.
  This proves the first part of~\ref{coloritem2}.

  For the moreover part, fix a pair $xy\subseteq B$.
  Since $|W|\le 6$ and $\delta_2(H)\ge(1/3+\alpha)n$, we have $\deg_B(xy)\ge (1/3+\alpha)n-6\ge (1/3+\alpha/4)n$ for all sufficiently large $n$.
  Every edge $xyz$ with $z\in B$ is monochromatic under $\phi$, so all such vertices $z$ satisfy the required color condition.

  It remains to prove~\ref{coloritem3}.
  Suppose that neither color spans $B$.
  Then there is a vertex $u\in B$ incident to no pair of color $1$, and a vertex $v\in B$ incident to no pair of color $2$.
  Since every pair in $B$ has color $1$ or color $2$, all pairs incident to $u$ have color $2$, while all pairs incident to $v$ have color $1$.
  This is impossible for the pair $uv$.
  Hence at least one of the two colors spans $B$.
\end{proof}

We also need a simple counting fact for $2$-colorings of complete graphs.

\begin{lemma}\label{LEMMA:counting-monotriangles}
  Let $\phi:E(K_n)\to[2]$ be a $2$-edge-coloring of the complete graph $K_n$.
  Let $G$ be the spanning subgraph consisting of all edges of color $1$.
  Suppose that, for every vertex $v\in V(K_n)$,
  $
  m\le d_G(v)\le n-1-m,
  $
  where $m\le (n-1)/2$.
  Then the number of monochromatic triangles in $K_n$ under $\phi$ is at most
  $
  \binom{n}{3}-\frac{mn(n-1-m)}{2}.
  $
\end{lemma}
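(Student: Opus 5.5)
We count the non-monochromatic triangles and show there are at least $\frac{mn(n-1-m)}{2}$ of them; subtracting from $\binom n3$ gives the claimed bound on monochromatic triangles.

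The tool is the classical Goodman-type identity. Every non-monochromatic triangle has exactly two vertices at which its two incident edges receive different colors. Every monochromatic triangle has no such vertex. Call a pair of edges at a vertex $v$ with different colors a \emph{bichromatic angle} at $v$.

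Each bichromatic angle at $v$, with edges $vx$ and $vy$, determines the triangle $vxy$, and that triangle is non-monochromatic. Conversely, each non-monochromatic triangle arises from exactly two bichromatic angles, one at each of its two "switching" vertices. Vertex $v$ has $d_G(v)$ incident edges of color $1$ and $n-1-d_G(v)$ of color $2$, so it has $d_G(v)(n-1-d_G(v))$ bichromatic angles. Hence
\[
  \#\{\text{non-monochromatic triangles}\}
  =\frac12\sum_{v\in V(K_n)} d_G(v)\bigl(n-1-d_G(v)\bigr).
\]

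It remains to bound each term from below. The function $f(x)=x(n-1-x)$ is a concave parabola, symmetric about $x=(n-1)/2$. On the interval $[m,n-1-m]$, which is nonempty because $m\le (n-1)/2$, its minimum is therefore attained at the endpoints, where it equals $m(n-1-m)$. The hypothesis says each $d_G(v)$ lies in this interval, so $d_G(v)(n-1-d_G(v))\ge m(n-1-m)$ for every $v$.

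Summing over the $n$ vertices, the number of non-monochromatic triangles is at least $\frac{mn(n-1-m)}{2}$. Since every triangle of $K_n$ is either monochromatic or not, the number of monochromatic triangles is at most $\binom n3-\frac{mn(n-1-m)}{2}$.

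The only point needing care is the double-counting claim that each non-monochromatic triangle has exactly two switching vertices. This holds because a non-monochromatic triangle has color pattern $1,1,2$ or $2,2,1$. The vertex where the two equal-colored edges meet is not switching, and the other two vertices are.
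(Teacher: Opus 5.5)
Your proposal is correct and follows the paper's proof exactly: both double-count mixed-color angles to get $T_{\mathrm{bi}}=\frac12\sum_v d_G(v)\bigl(n-1-d_G(v)\bigr)$ and then bound each term below by $m(n-1-m)$. Your concavity argument and your justification that each non-monochromatic triangle has exactly two switching vertices are the details the paper leaves implicit.
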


\begin{proof}
  For each vertex $v$, write $r_v=d_G(v)$ and $b_v=n-1-r_v$.
  Let $T_{\mathrm{mono}}$ and $T_{\mathrm{bi}}$ be the numbers of monochromatic and non-monochromatic triangles, respectively.
  Every non-monochromatic triangle contains exactly two mixed-color $2$-edge stars.
  Hence $T_{\mathrm{bi}} = \frac{1}{2}\sum_{v\in V(K_n)} r_vb_v$.
  Since $m\le r_v\le n-1-m$, we have $r_vb_v\ge m(n-1-m)$ for every $v$.
  Thus $T_{\mathrm{bi}}\ge \frac{mn(n-1-m)}{2}$.
  Since $T_{\mathrm{mono}}+T_{\mathrm{bi}}=\binom{n}{3}$, the claim follows.
\end{proof}

Combining the coloring structure with the counting lemma, we find a linear-sized set of vertices whose incident pairs all have the same color.

\begin{lemma}\label{lem:monosetexist}
  Let $1/n \ll \zeta \ll \eta \ll \beta \ll \alpha \ll 1$.
  Let $H$ be an $n$-vertex $3$-graph with $\delta_2(H) \ge \left(\frac{1}{3}+\alpha\right)n$.
  Let $A\subseteq V(H)$ be a reservoir set satisfying $\eta n\le |A|\le 2\eta n$ and, for every pair $uv\in\binom{V(H)}{2}$,
  $
  \deg_A(uv)\ge \left(\frac{1}{3}+\frac{\alpha}{2}\right)|A|.
  $
  Let $\mathcal C_1,\mathcal C_2$ be the RCCs given by \cref{equivalentclass} with codegree slack $\alpha/2$, and suppose that Case~\ref{case3} holds with respect to $A$.
  Let $\phi$ be the $A$-connected coloring determine by $\mathcal C_1, \mathcal C_2$, and let $W$ be the exceptional set from \cref{prop:coloritem}.

  Let $i\in\{1,2\}$ be a spanning color on $V(H)\setminus W$.
  Then there exists a vertex set $V_0\subseteq V(H)\setminus W$ with $|V_0|\ge \beta n$ such that every vertex in $V_0$ is incident only to pairs of color $i$ in the complete graph on $V(H)\setminus W$ colored by $\phi$.
  Moreover, there exists a unique spanning color on $V(H)\setminus W$.
\end{lemma}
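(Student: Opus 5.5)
The plan is to work in the complete graph on $B\coloneqq V(H)\setminus W$. By \ref{coloritem1} it carries a genuine $2$-coloring $\phi$, and by \ref{coloritem2} every edge of $H[B]$ is a monochromatic triangle. We first get local structure at each vertex from the codegree condition, and then a global contradiction from \cref{LEMMA:counting-monotriangles}. Throughout put $n'=|B|\ge n-6$.

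\emph{Step 1 (local dichotomy).} Fix $x\in B$. Let $N_1(x),N_2(x)$ be the sets of $y\in B$ with $\phi(xy)=1$ and $\phi(xy)=2$, respectively. Take $y\in N_j(x)$. By the moreover part of \ref{coloritem2}, there are at least $(1/3+\alpha/4)n$ vertices $z\in B$ with $xyz\in E(H)$ and $\phi(xy)=\phi(xz)=\phi(yz)$. Every such $z$ lies in $N_j(x)$. Hence each color class at $x$ is either empty or has size at least $(1/3+\alpha/4)n$. So every $x\in B$ is of one of three types:
\begin{itemize}
\item it lies in $V^{(1)}$, meaning all its pairs have color $1$;
\item it lies in $V^{(2)}$, meaning all its pairs have color $2$;
\item it is \emph{mixed}, meaning both color degrees lie in $[(1/3+\alpha/4)n,\ n'-1-(1/3+\alpha/4)n]$.
\end{itemize}
Since $i$ is a spanning color, every vertex of $B$ meets a color-$i$ pair, so $V^{(3-i)}=\emptyset$. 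The candidate set is therefore $V_0\coloneqq V^{(i)}$.

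\emph{Step 2 (counting).} Suppose for contradiction that $|V^{(i)}|<\beta n$, so all but fewer than $\beta n$ vertices of $B$ are mixed. We repeat the argument of \cref{LEMMA:counting-monotriangles}, summing only over mixed vertices. The number of non-monochromatic triangles is at least
\[
\tfrac12\sum_{v\text{ mixed}} r_vb_v \;\ge\; \tfrac12(1-\beta)n\cdot\tfrac n3\cdot\Bigl(\tfrac{2}{3}-\tfrac{\alpha}{4}\Bigr)n-O(n^2).
\]
This is roughly $(1-\beta)\bigl(1-\tfrac{3\alpha}{8}\bigr)n^3/9$. Hence the number of monochromatic triangles is at most about $n^3/18+(\beta+\alpha/2)n^3/9$.

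On the other hand, each pair in $B$ lies in at least $(1/3+\alpha/4)n$ monochromatic hyperedges. Therefore
\[
e(H[B])\ge \tfrac13\binom{n'}{2}\Bigl(\tfrac13+\tfrac\alpha4\Bigr)n\approx \frac{n^3}{18}+\frac{\alpha n^3}{24}.
\]
All these hyperedges are monochromatic triangles. So the lower bound must not exceed the upper bound, which requires $\alpha/24\le \alpha/18+\beta/9$ in the leading terms. This is the main obstacle: with the crude estimates the two bounds do not visibly conflict, so the constants must be chosen with care. I would not bound $b_v\ge n/3$ crudely. Instead I would use that the product $r_vb_v$ is minimized at an endpoint of the allowed interval, so $r_vb_v\ge(1/3+\alpha/4)(2/3-\alpha/4)n^2$. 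I would also improve the count of monochromatic triangles by counting them through the pairs: each mixed vertex $v$ contributes at least $r_v(1/3+\alpha/4)n/2+b_v(1/3+\alpha/4)n/2$ monochromatic triangles at $v$, giving an exact link-based count.

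If the endpoint calculation still does not close, the fallback is a sharper structural observation. The color-$j$ neighbourhood of a mixed vertex induces a graph of minimum degree greater than $n/3$ in color $j$, and the hyperedges through $x$ number at most $\binom{r_x}{2}+\binom{b_x}{2}$. Comparing this with $\deg(x)\ge \tfrac12(n-1)(1/3+\alpha)n$ should force $\min(r_x,b_x)$ to be small, contradicting mixedness for most vertices. I expect this per-vertex link comparison to be the cleanest route: $\binom{r}{2}+\binom{b}{2}$ must be at least about $n^2/6$, which forbids $r,b$ both close to $n/2$ but not both above $n/3$. So some combination of the link count with the global triangle count, with $\beta\ll\alpha$, is needed, and this is where the real work lies.

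\emph{Step 3 (uniqueness).} Once $V_0\ne\emptyset$ is found, any $x\in V_0$ meets no pair of color $3-i$. Hence $3-i$ does not span $B$, and the spanning color is unique.
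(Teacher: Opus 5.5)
Your Steps~1 and~3 are correct, and Step~2 follows the paper's strategy. The paper deletes $V_0$, notes that every vertex of $B'=B\setminus V_0$ has both colour degrees at least $(1/3+\alpha/8)N$ there, and applies \cref{LEMMA:counting-monotriangles}. The proof of that lemma is exactly your endpoint bound $r_vb_v\ge m(N-1-m)$; summing only over the mixed vertices of $B$ instead of deleting $V_0$ is an equivalent variant.

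The one thing missing is that you never checked whether the endpoint refinement closes the argument. It does, with room to spare. Put $m_0=(1/3+\alpha/4)n$. Every mixed $v$ has $m_0\le r_v\le n'-1-m_0$, so
\[
r_vb_v\ge m_0(n'-1-m_0)=\Bigl(\frac29+\frac{\alpha}{12}-\frac{\alpha^2}{16}\Bigr)n^2-O(n).
\]
There are at least $(1-\beta)n-6$ mixed vertices, hence
\[
T_{\mathrm{mono}}\le\binom{n'}{3}-\frac12\sum_{v\text{ mixed}}r_vb_v\le\Bigl(\frac1{18}-\frac{\alpha}{24}+\frac{\alpha^2}{32}+\beta\Bigr)n^3+O(n^2).
\]
Your own lower bound gives $T_{\mathrm{mono}}\ge e(H[B])\ge(1/18+\alpha/24)n^3-O(n^2)$. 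The two bounds contradict each other once $\alpha/12>\alpha^2/32+\beta$, which the hierarchy $\beta\ll\alpha\ll1$ guarantees. Your crude estimate failed only because replacing $r_v$ by $n/3$ turns the $+\alpha/12$ in the product into $-\alpha/12$. That flips the sign of the linear $\alpha$-term in the upper bound on $T_{\mathrm{mono}}$.

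Discard the fallback; it cannot work. Since $r_x+b_x=n'-1$, convexity gives $\binom{r_x}{2}+\binom{b_x}{2}\ge n^2/4-O(n)$ for every $x$. This already exceeds the lower bound $\deg_H(x)\ge(1/6+\alpha/2)n^2-O(n)$. So comparing a single vertex's link with its degree puts no restriction on $\min\{r_x,b_x\}$, and in particular does not exclude $r_x\approx b_x\approx n/2$.

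The obstruction is global: it comes from summing the bichromatic cherries over all vertices, and the computation above is all that is needed. Likewise, your link-based recount of monochromatic triangles at each vertex only reproduces the bound on $e(H[B])$ and adds nothing.
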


\begin{proof}
  Let $B=V(H)\setminus W$.
  By \cref{prop:coloritem}~\ref{coloritem3}, some color spans $B$.
  By relabeling the colors, we may assume that $i=1$ and that color $1$ spans $B$.
  Let $V_0$ be the set of vertices in $B$ incident only to pairs of color $1$.
  We show that $|V_0|\ge \beta n$; the proof will also imply the
  \emph{moreover} part of the lemma.

  Suppose for contradiction that $|V_0|<\beta n$, and put $B'\coloneqq B\setminus V_0$ and $N\coloneqq |B'|$.
  Let $T_{\mathrm{mono}}$ be the number of monochromatic triangles in the complete graph on $B'$ under the coloring $\phi$.

  By \cref{prop:coloritem}~\ref{coloritem2}, every edge of $H[B']$ gives a monochromatic triangle under $\phi$.
  Moreover, for every pair $xy\subseteq B'$,
  \[
    \deg_{B'}(xy)
    \ge
    \left(\frac{1}{3}+\frac{\alpha}{4}\right)n-|V_0|
    \ge
    \left(\frac{1}{3}+\frac{\alpha}{5}\right)n,
  \]
  by the hierarchy $\beta\ll\alpha$.
  Hence
  \[
    T_{\mathrm{mono}}
    \ge |E(H[B'])|
    =
    \frac{1}{3}\sum_{xy\in\binom{B'}{2}}\deg_{B'}(xy)
    \ge
    \frac{1}{3}\binom{N}{2}
    \left(\frac{1}{3}+\frac{\alpha}{5}\right)n.
  \]
  Since $N\ge n-|W|-|V_0|\ge (1-\beta)n-6$, and $\beta\ll\alpha$, this gives
  \[
    T_{\mathrm{mono}}
    \ge \left(\frac{1}{18}+\frac{\alpha}{40}\right)n^3
  \]
  for all sufficiently large $n$.

  We now upper-bound $T_{\mathrm{mono}}$.
  Let $G$ be the graph of color-$1$ pairs on $B'$.
  Fix $x\in B'$.
  Since color $1$ spans $B$, the vertex $x$ is incident to a pair of color $1$ in $B$.
  By \cref{prop:coloritem}~\ref{coloritem2}, this implies that $x$ has at least $(1/3+\alpha/4)n-|V_0|$ color-$1$ neighbors in $B'$.
  Since $x\notin V_0$, it is also incident to a pair of color $2$, and the same argument gives at least $(1/3+\alpha/4)n-|V_0|$ color-$2$ neighbors in $B'$.
  Therefore, for all sufficiently large $n$,
  \[
    \left(\frac{1}{3}+\frac{\alpha}{8}\right)N
    \le d_G(x)\le
    N-1-\left(\frac{1}{3}+\frac{\alpha}{8}\right)N.
  \]
  Applying \cref{LEMMA:counting-monotriangles} to the complete graph on
  $B'$ with $m=(1/3+\alpha/8)N$, we obtain
  \begin{align*}
    T_{\mathrm{mono}}
    &\le
    \binom{N}{3}-\frac{N}{2}m(N-1-m)\\
    &\le
    \left(\frac{1}{18}-\frac{\alpha}{48}
    +\frac{\alpha^2}{128}\right)N^3+O(N^2)\\
    &\le
    \left(\frac{1}{18}-\frac{\alpha}{64}\right)n^3,
  \end{align*}
  where the last inequality uses $\alpha\le10^{-3}$ and $n$ sufficiently
  large.
  This contradicts the lower bound above.
  Hence $|V_0|\ge \beta n$.  To prove uniqueness, choose $v\in V_0$.
  All pairs incident with $v$ have color $i$.
  Hence the other color has no edge incident with $v$, and therefore cannot
  span $V(H)\setminus W$.  Thus $i$ is the unique spanning color on
  $V(H)\setminus W$.
\end{proof}

Uniform density now turns the one-sided set $V_0$ into density of the color-$i$ shadow graph.

\begin{lemma}\label{lem:shadowlocaldense}
  Let $1/n \ll \rho \ll \rho' \ll \zeta \ll \eta \ll \min\{\alpha,d\} \le \max\{\alpha,d\} \ll 1$.
  Let $H$ be an $n$-vertex $(\rho,d)$-dense $3$-graph with $\delta_2(H) \ge \left(\frac{1}{3}+\alpha\right)n$.
  Let $A\subseteq V(H)$ be a reservoir set satisfying $\eta n\le |A|\le 2\eta n$ and, for every pair $uv\in\binom{V(H)}{2}$,
  $
  \deg_A(uv)\ge \left(\frac{1}{3}+\frac{\alpha}{2}\right)|A|.
  $
  Let $\mathcal C_1,\mathcal C_2$ be the RCCs given by \cref{equivalentclass} with codegree slack $\alpha/2$, and suppose that Case~\ref{case3} holds with respect to $A$.
  Let $\phi$ be the $A$-connected coloring determine by $\mathcal C_1, \mathcal C_2$, and let $W$ be the exceptional set from \cref{prop:coloritem}.

  Let $i\in\{1,2\}$ be the spanning color on $V(H)\setminus W$.
  Then the graph $\partial_2^i(H\setminus W)$ consisting of color-$i$ pairs on
  $V(H)\setminus W$ is $(\rho',d)$-dense, that is, for all
  $X,Y\subseteq V(H)\setminus W$,
  \[
    e_{\partial_2^i(H\setminus W)}(X,Y)
    \ge d|X||Y|-\rho'n^2.
  \]
\end{lemma}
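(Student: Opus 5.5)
The plan is to apply the $(\rho,d)$-density of $H$ to the three sets $X$, $Y$ and $V_0$, where $V_0$ is the linear-sized one-sided set from \cref{lem:monosetexist}. The key point is that edges of $H$ inside $B=V(H)\setminus W$ are monochromatic (\cref{prop:coloritem}~\ref{coloritem2}). Every pair incident with a vertex of $V_0$ has color $i$. Hence every edge $xyz$ of $H-W$ with $z\in V_0$ has all three boundary pairs of color $i$; in particular $xy$ has color $i$. So hyperedges through $V_0$ certify color-$i$ pairs, and a lower bound on hyperedges through $V_0$ becomes a lower bound on color-$i$ pairs.

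Concretely, fix $X,Y\subseteq B$ and write $Z=V_0$, with $|Z|\ge\beta n$ for some $\beta$ with $\eta\ll\beta\ll\alpha$, as allowed by the hierarchy of \cref{lem:monosetexist}. By \cref{def:dense},
\[
  e_H(X,Y,Z)\ge d|X||Y||Z|-\rho n^3 .
\]
For the upper bound, each ordered triple $(x,y,z)$ counted on the left has $x,y,z\in B$ distinct and $xyz\in E(H)$. Then $xy$ has color $i$ by the observation above. Therefore
\[
  e_H(X,Y,Z)\le \sum_{(x,y)\in X\times Y,\ \phi(xy)=i} \deg_Z(xy)\le |Z|\, e_{\partial_2^i(H\setminus W)}(X,Y).
\]
Here I use $\deg_Z(xy)\le|Z|$, and pairs with $x=y$ contribute nothing. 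Dividing by $|Z|$ gives
\[
  e_{\partial_2^i}(X,Y)\ge d|X||Y|-\frac{\rho n^3}{|Z|}\ge d|X||Y|-\frac{\rho}{\beta}n^2 .
\]
It then suffices to choose $\rho\ll\rho'$ relative to $\beta$, so that $\rho/\beta\le\rho'$.

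The only delicate step is the bookkeeping of constants. \Cref{lem:monosetexist} is stated with $\beta$ between $\eta$ and $\alpha$, whereas here the hierarchy has $\rho\ll\rho'\ll\zeta\ll\eta$, so $\beta$ is much larger than $\rho'$. Thus $\rho/\beta\le\rho/\eta\le\rho'$ holds automatically from $\rho\ll\rho'\ll\eta$. I also need the identification of $V_0$ as incident only to color-$i$ pairs for the unique spanning color $i$; that is exactly the moreover part of \cref{lem:monosetexist}. I should check that pairs $xy$ not in the shadow cause no trouble: they contribute no hyperedges, and every pair in $B$ lies in the shadow anyway by the codegree condition. No further obstacle is expected.
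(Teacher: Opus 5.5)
Your proposal is correct and matches the paper's proof. Both apply $(\rho,d)$-density to $(X,Y,V_0)$, use that every edge of $H-W$ through $z\in V_0$ is monochromatic with $xz,yz$ of color $i$ to force $\phi(xy)=i$, and divide by $|V_0|\ge\beta n$ with $\rho\le\rho'\eta\le\rho'\beta$. (A minor point: the existence of $V_0$ for the spanning color is the main conclusion of \cref{lem:monosetexist}; its ``moreover'' part only gives uniqueness of that color.)
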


\begin{proof}
  Choose $\beta$ such that $\eta\ll\beta\ll\alpha$, and assume $\rho\ll \rho'\beta$.
  By \cref{lem:monosetexist}, there exists a set $V_0\subseteq V(H)\setminus W$ with $|V_0|\ge\beta n$ such that every vertex of $V_0$ is incident only to pairs of color $i$ in $V(H)\setminus W$.

  Let $X,Y\subseteq V(H)\setminus W$.
  Then by the hierarchy $\rho\ll\rho'\beta$, we have
  \[
    e_H(X,Y,V_0)
    \ge d|X||Y||V_0|-\rho n^3
    \ge d|X||Y||V_0|-\rho'\beta n^3.
  \]
  Consider a triple counted by $e_H(X,Y,V_0)$, say with vertices $x\in X$, $y\in Y$, and $z\in V_0$.
  Since $z\in V_0$, the pairs $xz$ and $yz$ have color $i$.
  By \cref{prop:coloritem}~\ref{coloritem2}, the edge $xyz$ is monochromatic under $\phi$, so $xy$ also has color $i$.
  Thus every such hyperedge shows that $xy$ is a color-$i$ pair between $X$ and~$Y$.

  Each pair $xy\in X\times Y$ can arise from at most $|V_0|$ choices of $z\in V_0$.
  Hence
  \[
    e_{\partial_2^i(H\setminus W)}(X,Y)
    \ge
    \frac{e_H(X,Y,V_0)}{|V_0|}
    \ge
    d|X||Y|-\frac{\rho'\beta n^3}{|V_0|}
    \ge
    d|X||Y|-\rho'n^2.
  \]
  This proves the lemma.
\end{proof}
\subsection{Path cover lemma}

This subsection constructs a small collection of tight paths covering almost all vertices outside the reservoir.

\begin{lemma}\label{lem:longpath}
  Let $1/n\ll \rho\ll \zeta\ll \eta\ll \gamma\ll \beta\ll \min\{\alpha,d\}$.
  Let $H$ be an $n$-vertex $(\rho,d)$-dense $3$-graph with $\delta_2(H)\ge \left(\frac{1}{3}+\alpha\right)n$.
  Let $A\subseteq V(H)$ be a reservoir set satisfying $\eta n\le |A|\le 2\eta n$ and for every pair $uv\in\binom{V(H)}{2}$, $\deg_A(uv)\ge \left(\frac{1}{3}+\frac{\alpha}{2}\right)|A|$.
  Let $\mathcal C_1,\mathcal C_2$ be the RCCs given by \cref{equivalentclass} with codegree slack $\alpha/2$, and suppose that Case~\ref{case3} holds with respect to $A$.
  Let $\phi$ be the $A$-connected coloring determine by $\mathcal C_1, \mathcal C_2$, and let $W$ be the exceptional set from \cref{prop:coloritem}.
  Let $B_{\rm used}\subseteq V(H)$ be a set of size at most $\beta n$ such
  that $|A\cap B_{\rm used}|\le\zeta n/2$.
  Let $i\in\{1,2\}$ be the spanning color on $V(H)\setminus W$.

  Then $H-(B_{\rm used}\cup W)$ contains two vertex-disjoint tight paths
  $Q_1,Q_2$ with $|V(Q_1\cup Q_2)\cap A|\le \zeta |A|/2$ whose end pairs
  all have color $i$, covering all vertices of
  $V(H)\setminus(A\cup B_{\rm used}\cup W)$ except for at most $\gamma n$
  vertices.
\end{lemma}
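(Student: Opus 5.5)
Following the sketch in the introduction, I will work in the color-$i$ subhypergraph, apply weak regularity, take an almost-perfect matching in the reduced graph, cover each matched cluster triple by long tight paths, and join everything with robust connections through $A$.

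Put $U=V(H)\setminus(A\cup B_{\rm used}\cup W)$ and let $H_i$ be the $3$-graph on $U$ whose edges are the hyperedges of $H[U]$ with all three shadow pairs of color $i$. By \cref{prop:coloritem}\ref{coloritem2}, every hyperedge of $H-W$ is monochromatic. Hence for every color-$i$ pair $xy\subseteq U$,
\[
\deg_{H_i}(xy)\ge(1/3+\alpha/4)n-|A|-|B_{\rm used}|\ge(1/3+\alpha/8)|U|,
\]
and a color-$i$ pair of $U$ spans no edge outside $H_i$. By \cref{lem:shadowlocaldense}, the color-$i$ graph $G_i$ on $U$ is $(\rho',d)$-dense, so $H_i$ has positive codegree at least $(1/3+\alpha/8)|U|$ on a shadow that is uniformly dense.

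Apply the weak hypergraph regularity lemma to $H_i$ with a partition $U=V_1\cup\dots\cup V_t$, together with an $\varepsilon$-regular partition of $G_i$ (a common refinement). Let $R$ be the reduced $3$-graph whose edges are the regular triples of density at least a small $d'$. The density of $G_i$ shows that almost every cluster pair carries color-$i$ density about at least $d$. For such a pair $V_aV_b$, the codegree bound and a counting argument give $\deg_R(ab)\ge(1/3+\alpha/16)t$. So, after deleting a few bad clusters, $R$ has positive codegree above $t/3$ on almost all pairs.

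The key step is to find a matching in $R$ covering all but $\gamma t/2$ clusters. I would take a maximum matching $M$ and, if it leaves more than $\gamma t/2$ clusters uncovered, pick two uncovered clusters $a,b$ forming a good pair. Each has more than $t/3$ neighbors $c$, which must all lie in $V(M)$ by maximality. Averaging over the $|M|\le t/3$ matching edges yields an edge $e\in M$ that receives many such triples, and a standard swap (replacing $e$ by two edges) enlarges the matching. I expect this to be the main obstacle: $t/3$ is exactly the threshold at which the averaging just barely works, so the swap argument needs a fractional or LP version, or else an appeal to the argument behind \cref{coverall}. If this becomes cumbersome, I would instead apply the matching part of \cref{coverall}'s proof to $H_i$ restricted to good pairs.

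Inside each matched triple $(V_a,V_b,V_c)$, regularity lets us greedily build a few long tight paths of the form $V_a\to V_b\to V_c\to V_a\to\cdots$ covering all but $\gamma|V_a|$ vertices of each cluster. There are $O(t)$ such paths in total, and all of their edges lie in $H_i$, so all their end pairs have color $i$. Since color-$i$ ordered pairs in both orientations belong to $\mathcal C_i$, any two end pairs lie in the same RCC. We then join the paths one after another by \cref{equivalentclass}, each time forbidding the reservoir vertices already used together with $A\cap B_{\rm used}$. The forbidden set has size at most $O(t)L_{\rm RCC}+\zeta n/2\le\zeta n$, and the joins use at most $O(t)L_{\rm RCC}\le\zeta|A|/2$ vertices of $A$, since $t$ is bounded and $L_{\rm RCC}\le\zeta^2n$. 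By \cref{lem:gluing} this produces a single path. Splitting it or leaving it as is gives $Q_1,Q_2$ whose end pairs are original path ends, hence of color $i$, and which miss at most $\gamma n$ vertices of $U$.
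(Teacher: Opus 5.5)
Your plan follows the paper's proof step for step. You pass to the colour-$i$ triangle hypergraph $H_i$ on $V(H)\setminus(A\cup B_{\rm used}\cup W)$ and apply weak regularity. The same counting, using the $(\rho',d)$-density of $G_i$ from \cref{lem:shadowlocaldense}, gives codegree $(1/3+\alpha/16)t$ for almost all cluster pairs. You then take an almost perfect matching in the reduced graph $R$, cover matched triples by long paths, and join them through $A$ via \cref{equivalentclass} and \cref{lem:gluing}.

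The step you flag as the main obstacle is not delicate, but your sketch of it does not work as written. With a single uncovered good pair $ab$, an edge $e\in M$ containing several neighbours of $ab$ yields only one new edge $abx$, so swapping does not enlarge $M$. Use several pairs instead:
\begin{itemize}
\item If more than $\gamma t/8$ clusters are uncovered, they contain three pairwise disjoint good pairs $a_jb_j$, since the $O(\sqrt{\varepsilon}t^2)$ bad pairs are far fewer than $\gamma^2t^2$.
\item By maximality of $M$, every $R$-neighbour of each $a_jb_j$ lies in $V(M)$. Hence $\sum_{e\in M}\sum_{j=1}^{3}|N_R(a_jb_j)\cap e|\ge 3(1/3+\alpha/16)t>t\ge 3|M|$, and some $e\in M$ carries at least four incidences.
\item The bipartite incidence graph between $\{1,2,3\}$ and the three vertices of $e$ then has no vertex cover of size one. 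By K\"onig's theorem it has a matching of size two: indices $j\ne j'$ and distinct $x,y\in e$ with $a_jb_jx,\,a_{j'}b_{j'}y\in E(R)$.
\item Replacing $e$ by these two edges enlarges $M$, a contradiction.
\end{itemize}
The slack $\alpha/16$ makes this work without any fractional or LP argument. The paper simply quotes the resulting statement as \cref{matchingexist}.

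A few smaller corrections.
\begin{itemize}
\item The graph-regularity refinement of $G_i$ is unnecessary. $(\rho',d)$-density with $\rho'T^2\ll d$ already gives $e_{G_i}(V_a,V_b)\ge dr^2/2$ for \emph{every} cluster pair, and a common refinement of two regular partitions is not automatically regular.
\item Inside a weakly regular triple, naive greedy extension can get stuck, because weak regularity says nothing about individual pairs. The right tool is \cref{pairvertexdegreelongpathexist}, which repeatedly extracts a long path from the still-dense leftover.
\item In the joining step the forbidden set must also contain $A\cap W$, since $Q_1,Q_2$ have to avoid $W$.
\item Stop joining when exactly two paths remain (there are at least two, as $M$ has at least two edges) rather than splitting a single path. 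Then all four end pairs are original colour-$i$ ends.
\end{itemize}
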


The proof combines weak hypergraph regularity, an almost-perfect matching in the weak reduced graph, and the density of the color-$i$ shadow graph from \cref{lem:shadowlocaldense}.

\begin{defn}[$\eps$-regularity for $3$-graphs]
  Let $H=(V,E)$ be a $3$-graph, and let $V_1,V_2,V_3$ be disjoint subsets of $V$.
  We say that the triple $(V_1,V_2,V_3)$ is \emph{$\eps$-regular} if for all $A_i\subseteq V_i$ with $|A_i|\ge \eps |V_i|$ for $i\in[3]$, we have
  \[
    |d_H(A_1,A_2,A_3)-d_H(V_1,V_2,V_3)|\le \eps,
  \]
  where $d_H(U_1,U_2,U_3)\coloneqq e_H(U_1,U_2,U_3)/(|U_1||U_2||U_3|)$.
\end{defn}

We use the following weak regularity lemma to pass from the color-$i$ hypergraph to a reduced graph.

\begin{lemma}[Weak hypergraph regularity lemma~\cite{chung1990quasi}]
  \label{weakregular}
  For every $\eps>0$ and every positive integer $t_0$, there exists an integer $T=T(\eps,t_0)$ such that for every $3$-graph $H=(V,E)$ with $|V|\ge T$, there exists a partition $V=V_{\rm exc}\cup V_1\cup\dots\cup V_t$ such that:
  \begin{enumerate}[label=\rm(\roman*)]
    \item $t_0\le t\le T$;
    \item $|V_{\rm exc}|\le \eps |V|$ and $|V_1|=\dots=|V_t|$;
    \item all but at most $\eps\binom{t}{3}$ triples $\{i_1,i_2,i_3\}\subseteq[t]$ are $\eps$-regular.
  \end{enumerate}
\end{lemma}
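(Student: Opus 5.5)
We prove \cref{weakregular} by the standard energy-increment argument, adapted from Szemer\'edi's graph regularity lemma to the weak (vertex-set) notion of $\eps$-regularity for triples.

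\emph{Index.} For a partition $\mathcal V=\{V_{\rm exc},V_1,\dots,V_t\}$ of $V$ with $|V|=N$, define
\[
  q(\mathcal V)\coloneqq\sum_{\{i_1,i_2,i_3\}\subseteq[t]}\frac{|V_{i_1}||V_{i_2}||V_{i_3}|}{N^3}\,d_H(V_{i_1},V_{i_2},V_{i_3})^2 .
\]
Since densities lie in $[0,1]$ and the weights sum to at most $1$, we have $0\le q\le 1$.

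\emph{Refinement is monotone.} By the Cauchy--Schwarz inequality (convexity of $x\mapsto x^2$), refining the parts never decreases $q$. The same holds when each piece of a refinement inherits its weight from the part containing it.

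\emph{Irregular triples give a fixed increment.} Let $(V_{i_1},V_{i_2},V_{i_3})$ be a triple that is not $\eps$-regular. Then there are witnesses $A_j\subseteq V_{i_j}$ with $|A_j|\ge\eps|V_{i_j}|$ and $|d(A_1,A_2,A_3)-d(V_{i_1},V_{i_2},V_{i_3})|>\eps$. Splitting each $V_{i_j}$ into $A_j$ and $V_{i_j}\setminus A_j$, the defect form of Cauchy--Schwarz shows that the contribution of this triple to the index increases by at least
\[
  \eps^2\cdot\eps^3\cdot\frac{|V_{i_1}||V_{i_2}||V_{i_3}|}{N^3}.
\]

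\emph{Iteration.} Start from an arbitrary equipartition into $t_0$ parts, with a leftover of fewer than $t_0$ vertices placed in $V_{\rm exc}$. Suppose more than $\eps\binom t3$ triples are irregular. Take, for every part, the common refinement (Venn diagram) of all witness sets lying in that part; each part is cut into at most $2^{\binom{t}{2}}$ pieces. Summing the increments over the irregular triples and using monotonicity for the remaining triples gives
\[
  q(\mathcal V')\ge q(\mathcal V)+c\,\eps^{6}
\]
for some absolute $c>0$, since each part has size about $N/t$.

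\emph{Restoring equal part sizes.} The refinement is not an equipartition. Cut every piece into sub-pieces of size $\lfloor N/(t'\,\cdot\,\text{huge})\rfloor$, and move the remainders into $V_{\rm exc}$. If we choose the new part size as a tiny fraction $\eps^{7}/(\text{number of pieces})$ of the old size, then at each step $V_{\rm exc}$ grows by at most $\eps^{7}N$ vertices. The index therefore drops by at most $O(\eps^{7})$, which is absorbed by the increment $c\,\eps^6$ provided $\eps$ is small; if necessary we rescale $\eps$ at the start.

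\emph{Termination.} Since $q\le1$, the iteration stops after at most $O(\eps^{-6})$ steps. Throughout, the total exceptional set stays below
\[
  \eps N\quad\text{(with margin)},
\]
because it gains at most $\eps^{7}N$ per step over $O(\eps^{-6})$ steps, that is, $O(\eps N)$ in total; choosing the per-step loss as $\eps^{8}$ leaves room. The number of parts grows by a tower-type function at each step, and $T(\eps,t_0)$ is this iterated bound. The condition $|V|\ge T$ ensures that all parts are nonempty and that the rounding is harmless.

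The main obstacle is the bookkeeping in the equalization step. We must check simultaneously that moving vertices into $V_{\rm exc}$ and rounding to equal sizes costs less in the index than the $\eps^6$ gain, and that the cumulative exceptional set stays below $\eps|V|$. I would handle this by fixing in advance the per-step loss budget $\eps^{8}$ and deriving the new part size from it. Everything else is routine Cauchy--Schwarz.
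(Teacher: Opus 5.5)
Your energy-increment argument is correct. The following steps all check out: the $\eps^2\cdot\eps^3$ gain per irregular triple, the resulting $c\,\eps^6$ gain per round, the index loss of at most $|X|/N$ when a remainder set $X$ is moved into $V_{\rm exc}$, and the $\eps^8N$ per-round budget that keeps $|V_{\rm exc}|\le\eps N$ over $O(\eps^{-6})$ rounds. The index-loss bound is the one step you assert without proof. It follows by treating the remainders as extra parts of a refinement, which cannot lower the index, and then discarding the terms that contain them, whose total weight is at most $|X|/N$.

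The paper does not prove this lemma; it quotes it from Chung. The standard proof of the weak hypergraph regularity lemma is this Szemer\'edi-type index-increment argument, so your route is the expected one.
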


\begin{defn}[Weak reduced graph]
  Let $H$ be a $3$-graph with a partition $V=V_{\rm exc}\cup V_1\cup\dots\cup V_t$.
  For $d,\eps>0$, the \emph{weak $d$-reduced $3$-graph} $R=R^w_{d,\eps}(H)$ is the $3$-graph on vertex set $[t]$, where $ijk\in E(R)$ if and only if $(V_i,V_j,V_k)$ is $\eps$-regular and has density at least $d$ in $H$.
\end{defn}

Two standard tools are then used on the reduced side: one finds an almost-perfect matching, and the other turns each regular triple in the matching into long tight paths.

\begin{lemma}[Almost perfect matching, {\cite[Lemma~4.3]{GH}}]
  \label{matchingexist}
  Let $0<1/n\ll \eps\ll \gamma\ll \alpha$.
  Let $H$ be a $3$-graph on $n$ vertices such that all but at most $\eps n^2$ pairs $S\subseteq V(H)$ satisfy $\deg_H(S)\ge \left(\frac{1}{3}+\alpha\right)n$.
  Then $H$ contains a matching that covers all but at most $\gamma n$ vertices.
\end{lemma}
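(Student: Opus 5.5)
The plan is a maximum-matching argument with a single swap. Let $M$ be a maximum matching in $H$ and let $U=V(H)\setminus V(M)$. Suppose for contradiction that $|U|>\gamma n$. Call a pair $S$ \emph{good} if $\deg_H(S)\ge(\frac13+\alpha)n$. All other pairs are bad, and there are at most $\eps n^2$ of them.

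\emph{Step 1: many disjoint good pairs inside $U$.} Greedily choose a family $\mathcal S$ of pairwise disjoint good pairs inside $U$, maximal under inclusion. If $|\mathcal S|<\gamma n/4$, then $U'=U\setminus V(\mathcal S)$ has size at least $\gamma n/2$, and by maximality every pair in $\binom{U'}{2}$ is bad. That gives roughly $\gamma^2n^2/8>\eps n^2$ bad pairs, contradicting $\eps\ll\gamma$. Hence $|\mathcal S|\ge\gamma n/4$.

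\emph{Step 2: neighbours lie in $V(M)$.} For $S\in\mathcal S$, any $z\in U\setminus S$ with $S\cup\{z\}\in E(H)$ would let us add the edge $S\cup\{z\}$ to $M$, contradicting maximality. So all but at most $|U|$ neighbours of $S$ lie in $V(M)$; in fact all of them do. Summing over $\mathcal S$, the number of incidences $(S,v)$ with $v\in V(M)$ and $S\cup\{v\}\in E(H)$ is at least $|\mathcal S|(\frac13+\alpha)n$.

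\emph{Step 3: an edge of $M$ receiving too many incidences.} Since $|M|\le n/3$, averaging gives an edge $e=abc\in M$ that receives at least $|\mathcal S|(1+3\alpha)$ incidences. Consider the bipartite graph $B_e$ between $\mathcal S$ and $\{a,b,c\}$ formed by these incidences. If $B_e$ has no matching of size $2$, then by K\"onig's theorem it has a vertex cover of size $1$. Such a cover is either a single vertex of $e$, giving at most $|\mathcal S|$ edges, or a single pair $S$, giving at most $3$ edges. Both contradict $e(B_e)\ge(1+3\alpha)|\mathcal S|>|\mathcal S|+3$, which holds because $|\mathcal S|\ge\gamma n/4$ is large. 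So there are distinct pairs $S_1,S_2\in\mathcal S$ and distinct vertices, say $a,b$, with $S_1\cup\{a\},S_2\cup\{b\}\in E(H)$.

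\emph{Step 4: swap.} Replace $e$ in $M$ by the two disjoint edges $S_1\cup\{a\}$ and $S_2\cup\{b\}$. These avoid $V(M)\setminus e$ because $S_1,S_2\subseteq U$. The result is a matching larger than $M$, a contradiction. Hence $|U|\le\gamma n$.

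The only delicate point is Step 1: all the codegree information must come from good pairs, and the counting must produce enough disjoint good pairs inside $U$. This is exactly where $\eps\ll\gamma$ is used. The remaining steps are a routine averaging argument followed by the 2-matching observation.
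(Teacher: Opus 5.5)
Your proof is correct. The paper does not prove this lemma itself; it imports it from \cite[Lemma~4.3]{GH}, so there is no in-paper argument to compare with. What you give is the standard maximum-matching augmentation argument, and it is a complete, self-contained proof.

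All four steps check out.
\begin{itemize}
\item The greedy family has $|\mathcal S|\ge\gamma n/4$ as soon as, say, $\eps<\gamma^2/10$ and $n$ is large.
\item Maximality of $M$ forces every neighbour of each $S\in\mathcal S$ into $V(M)$. This also shows $M\neq\emptyset$, which your averaging in Step~3 tacitly uses.
\item A bipartite graph with no two disjoint edges is a star, so it has at most $\max\{|\mathcal S|,3\}$ edges.
\item The swap is a genuine augmentation, because $S_1,S_2\subseteq U$ are disjoint and $a\neq b$ come from the discarded edge $e$.
\end{itemize}

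You only use $\eps\ll\gamma$ and $|\mathcal S|>1/\alpha$; the hypothesis $\gamma\ll\alpha$ is never needed. In fact, since $|M|\le(n-|U|)/3<n/3$, the same averaging would still give an edge $e$ with $e(B_e)\ge|\mathcal S|+1$ if the codegree bound for good pairs were only $n/3$.
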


\begin{lemma}[{\cite[Lemma~6.2]{schulke}}]
  \label{pairvertexdegreelongpathexist}
  For $\gamma>0$, $d>0$, $\eps>0$, and $n\in\mathbb N$ with $\frac{1}{2}(d\gamma^3-\eps)n\ge 1$, the following holds.
  Let $H=(U\cup V\cup W,E)$ be a $3$-graph with $|U|=|V|=|W|=n$, and suppose that $(U,V,W)$ is $\eps$-regular with density at least $d$ in $H$.
  Then at least $(1-\gamma)3n$ vertices of $H$ can be covered by vertex-disjoint tight paths, each with at least $\frac{1}{2}(d\gamma^3-\eps)n-2$ vertices.
\end{lemma}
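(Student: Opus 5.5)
The plan is a greedy argument that repeatedly extracts one long tight path from the uncovered part of the regular triple. Let $U',V',W'$ denote the sets of currently uncovered vertices in $U,V,W$. As long as each of $U',V',W'$ still has at least $\gamma n$ vertices, I will find a new tight path of the required length inside $H[U'\cup V'\cup W']$ and remove its vertices. The process stops once some part has fewer than $\gamma n$ uncovered vertices.

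\emph{Step 1 (many edges).} Since $|U'|,|V'|,|W'|\ge\gamma n\ge\eps n$, the $\eps$-regularity of $(U,V,W)$ together with $d_H(U,V,W)\ge d$ gives $d_H(U',V',W')\ge d-\eps$. Hence $H[U',V',W']$ has at least
\[
(d-\eps)\gamma^3n^3\ge(d\gamma^3-\eps)n^3
\]
edges.

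\emph{Step 2 (pruning to large positive codegree).} Every edge is crossing, so its three $2$-subsets lie in $U'\times V'$, $V'\times W'$ or $U'\times W'$. There are at most $3n^2$ such pairs. Put $k\coloneqq\frac16(d\gamma^3-\eps)n$. Repeatedly delete all edges through any pair whose current codegree is positive but less than $k$. Each pair triggers a deletion at most once and destroys fewer than $k$ edges, so fewer than $3n^2k=\frac12(d\gamma^3-\eps)n^3$ edges are removed in total. The surviving subgraph $H^\ast$ is therefore nonempty, and every pair in $\partial H^\ast$ has codegree at least $k$ in $H^\ast$.

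\emph{Step 3 (greedy path in $H^\ast$).} Start from any edge of $H^\ast$ as a tight path $v_1v_2v_3$. Given a path $v_1\dots v_\ell$, its last pair $v_{\ell-1}v_\ell$ lies in an edge of $H^\ast$, so it has at least $k$ neighbours in $H^\ast$. All of these neighbours lie in the single part not containing $v_{\ell-1}$ or $v_\ell$. Because consecutive vertices of a crossing tight path cycle through the three parts, the path uses at most $\lceil\ell/3\rceil$ vertices of that part. So we can extend the path whenever $\lceil\ell/3\rceil<k$, and the new last pair is again in $\partial H^\ast$. This yields a path with at least about $3k-2=\frac12(d\gamma^3-\eps)n-2$ vertices. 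The assumption $\frac12(d\gamma^3-\eps)n\ge1$ just ensures this bound is meaningful.

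\emph{Step 4 (balance and termination).} This is the step where the exact constant $(1-\gamma)3n$ needs care. Each extracted path takes vertices cyclically from $U,V,W$, so a single path changes the differences between the uncovered part sizes by at most $1$. Every path has length $\Omega(n)$, so there are only $O(1/(d\gamma^3))$ paths. Hence when the process stops, all three uncovered parts have size below $\gamma n+O(1)$.

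The main obstacle is that this bound only gives at least $3(1-\gamma)n-O(1)$ covered vertices. To absorb the $O(1)$ error, I will either run the procedure with a slightly smaller threshold such as $\gamma n-C$, noting that the regularity step still applies since $\gamma n-C\ge\eps n$, or trim the last path so that it ends in the correct part. If the strict constants still fail to match, I would fall back on the slack in the $-2$ term of the length bound to make the final count exact.
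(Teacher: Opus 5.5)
The paper gives no proof of this lemma; it only cites Schülke, so there is no in-paper argument to compare with. Your Steps 1--3 are the standard argument and are sound. Note that the hypothesis $d\gamma^3>\eps$ gives $d>\eps$ and $\gamma>\eps$, which you use tacitly. The gap is Step 4. As written it yields only $3(1-\gamma)n-O(P)$ covered vertices, where $P$ is the number of extracted paths, and none of your three remedies closes this.

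\emph{Lower threshold.} Running with threshold $\gamma n-C$ requires the loss $(d-\eps)\bigl((\gamma n)^3-(\gamma n-C)^3\bigr)=\Theta(n^2)$ in Step~1 to be absorbed by the slack $\eps(1-\gamma^3)n^3$. It also requires $\gamma n-C\ge\eps n$. So it only works for sufficiently large $n$, whereas the lemma is asserted for every $n$ with $\frac12(d\gamma^3-\eps)n\ge1$.

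\emph{Trimming the last path.} This changes each difference between uncovered part sizes by at most one. The imbalance, however, can accumulate over all $P$ paths.

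\emph{The $-2$ slack.} It is no longer available, because Step~3 already spends it through the lossy bound $\lceil\ell/3\rceil$. Truncating a path with $3k-2$ vertices to a multiple of $3$ can leave only $3k-4$.

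The missing idea is to keep the leftover exactly balanced by making every extracted path have order divisible by $3$. The vertices of a crossing tight path run through $U,V,W$ periodically, so such a path uses equally many vertices of each part. Then $|U'|=|V'|=|W'|$ holds throughout the process. The process stops only when this common size drops below $\gamma n$, leaving fewer than $3\gamma n$ uncovered vertices, and this works for every admissible $n$.

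To afford the divisibility, count exactly in Step~3. The path $v_1\dots v_\ell$ contains precisely $\lfloor\ell/3\rfloor$ vertices of the part of $v_{\ell-2}$. Since the last pair has codegree at least $\lceil k\rceil$ in $H^\ast$, an extension exists whenever $\lfloor\ell/3\rfloor<\lceil k\rceil$. You can therefore stop at exactly $\ell=3\lceil k\rceil\ge3k=\frac12(d\gamma^3-\eps)n$. Equivalently, any path with at least $\frac12(d\gamma^3-\eps)n$ vertices can be truncated to a multiple of $3$ at a cost of at most two vertices, which is exactly the loss the $-2$ in the statement allows.
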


We now prove the path cover lemma.

\begin{proof}[Proof of \cref{lem:longpath}]
  Choose the weak-regularity parameters $t_0,\eps,d'$ so that
  \[
    1/t_0\ll \eps\ll d'\gamma^3\ll d'
    \ll \min\{d,\alpha,\gamma\},
  \]
  and let
  $T=T(\eps,t_0)$ be given by \cref{weakregular}.  Put
  \[
    c_0=\frac14\bigl(d'(\gamma/4)^3-\eps\bigr)>0
    \quad\text{and}\quad
    M=\left\lceil\frac{2T}{c_0}\right\rceil.
  \]
  In addition to the displayed hierarchy, take $\zeta$ sufficiently small
  that $M\zeta\ll\eta$, and choose $\rho'>0$ such that
  $\rho'T^2\ll d$ and $\rho\ll\rho'$.

  By \cref{lem:shadowlocaldense}, the color-$i$ shadow graph $G_i\coloneqq \partial_2^i(H\setminus W)$ is $(\rho',d)$-dense as a $2$-graph.

  Let $V_{\rm work}=V(H)\setminus(A\cup B_{\rm used}\cup W)$.
  Let $H_i$ be the $3$-graph on $V_{\rm work}$ whose edges are the edges $xyz\in E(H[V_{\rm work}])$ such that $\phi(xy)=\phi(xz)=\phi(yz)=i$.
  Apply \cref{weakregular} to $H_i$ with parameters $\eps,t_0$, obtaining a partition $V_{\rm work}=V_{\rm exc}\cup V_1\cup\dots\cup V_t$, where $t_0\le t\le T$.
  Let $R=R^w_{d',\eps}(H_i)$ be the corresponding weak reduced graph, and write $r=|V_1|=\cdots=|V_t|$.

  The key reduced-graph point is that almost every pair of clusters has large degree in $R$.

  \begin{claim}
    All but at most $2\sqrt{\eps}t^2$ pairs $S\in\binom{[t]}{2}$ satisfy $\deg_R(S)\ge \left(\frac{1}{3}+\frac{\alpha}{16}\right)t$.
  \end{claim}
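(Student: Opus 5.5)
The approach is to average the codegree of $H_i$ over a pair of clusters. The point that needs care is that $H_i$ has codegree zero on pairs whose colour is not $i$. This is handled by the density of the colour-$i$ shadow graph $G_i$ from \cref{lem:shadowlocaldense}, together with the observation that a triple's density is capped by the colour-$i$ density of the pair.

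\emph{Codegree in $H_i$.} Let $x,y\in V_{\rm work}$ with $\phi(xy)=i$. By \ref{coloritem2} there are at least $(1/3+\alpha/4)n$ vertices $z\notin W$ with $xyz\in E(H)$, and every such edge is monochromatic, hence of colour $i$. Removing $z\in A\cup B_{\rm used}\cup V_{\rm exc}$ costs at most $(2\eta+\beta+\eps)n$. Since $\eta,\beta,\eps\ll\alpha$, we get
\[
  \deg_{H_i}(xy;\,V_1\cup\dots\cup V_t)\ge\left(\frac13+\frac\alpha8\right)n\ge\left(\frac13+\frac\alpha8\right)tr .
\]

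\emph{Pair density and averaging.} Fix $a\ne b$ and write $\delta_{ab}=e_{G_i}(V_a,V_b)/r^2$. Since $r\ge(1-\beta-3\eta)n/T$ and $\rho' T^2\ll d$, \cref{lem:shadowlocaldense} gives $\delta_{ab}\ge d-\rho'n^2/r^2\ge d/2$. Let $D_c=d_{H_i}(V_a,V_b,V_c)$. Summing the codegree bound over the colour-$i$ pairs $xy\in V_a\times V_b$, and discarding the at most $2r$ choices of $z$ inside $V_a\cup V_b$, gives
\[
  \sum_{c\ne a,b}D_c\,r^3\ge \delta_{ab}r^2\left(\left(\frac13+\frac\alpha8\right)t-2\right)r .
\]
On the other hand, every edge of $H_i$ has a colour-$i$ pair in $V_a\times V_b$, so $D_c\le\delta_{ab}$. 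Let $N_{ab}$ be the number of $c$ with $D_c\ge d'$. Then
\[
  \delta_{ab}N_{ab}+d't\ge\delta_{ab}\left(\frac13+\frac\alpha9\right)t .
\]
Using $\delta_{ab}\ge d/2$ and $d'\ll\min\{d,\alpha\}$, this gives $N_{ab}\ge(1/3+\alpha/10)t$. This capping step is the only real subtlety. Without it, low colour-$i$ density between $V_a$ and $V_b$ could spread the weight thinly over many triples, each below the threshold $d'$.

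\emph{Irregular triples.} At most $\eps\binom t3$ triples are not $\eps$-regular. Call a pair bad if it lies in more than $\sqrt\eps\,t$ irregular triples. Each irregular triple contains three pairs, so double counting bounds the number of bad pairs by $3\eps\binom t3/(\sqrt\eps\,t)\le\sqrt\eps\,t^2\le 2\sqrt\eps\,t^2$. For every other pair $S=\{a,b\}$, at least $N_{ab}-\sqrt\eps\,t\ge(1/3+\alpha/16)t$ of the indices $c$ give an $\eps$-regular triple of density at least $d'$, that is, an edge of $R$. Hence $\deg_R(S)\ge(1/3+\alpha/16)t$ for all but at most $2\sqrt\eps\,t^2$ pairs $S$, which proves the claim.
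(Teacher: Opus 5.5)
Your proof is correct and follows the paper's argument essentially step by step. The steps match: the lower bound $d/2$ on the colour-$i$ pair density from \cref{lem:shadowlocaldense}; summing the colour-$i$ codegree from \ref{coloritem2} over the colour-$i$ pairs in $V_a\times V_b$; capping each triple density by that pair density to count the clusters $c$ with density at least $d'$; and discarding the pairs that lie in more than $\sqrt{\varepsilon}\,t$ irregular triples. The only slip is that your lower bound on $r$ should also subtract $|V_{\rm exc}|\le\varepsilon n$ and $|W|\le 6$, which is harmless since only $r\ge n/(2T)$ is needed.
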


  \begin{proof}[Proof of Claim]
    Since $G_i$ is $(\rho',d)$-dense as a $2$-graph, for every pair of clusters $V_a,V_b$ we have
    \[
      e_{G_i}(V_a,V_b)\ge dr^2-\rho'n^2\ge dr^2/2,
    \]
    where the last inequality follows from $t\le T$ and $\rho'T^2\ll d$.
    Put $g_{ab}=e_{G_i}(V_a,V_b)/r^2$, so $g_{ab}\ge d/2$.

    Fix a pair of clusters $(V_a,V_b)$.
    For every color-$i$ pair $xy$ with $x\in V_a$ and $y\in V_b$, \cref{prop:coloritem}~\ref{coloritem2} gives at least $(1/3+\alpha/4)n$ vertices $z\in V(H)\setminus W$ such that $xyz\in E(H)$ and all three boundary pairs have color $i$.
    After excluding $A\cup B_{\rm used}\cup W\cup V_{\rm exc}\cup V_a\cup V_b$, at least $(1/3+\alpha/8)n$ such vertices remain, by the hierarchy $\max\{\eta,\beta,\eps,1/t_0\}\ll\alpha$.
    Therefore
    \[
      e_{H_i}\Big(V_a,V_b,\bigcup_{k\in[t]\setminus\{a,b\}}V_k\Big)
      \ge
      \left(\frac{1}{3}+\frac{\alpha}{8}\right)n\,g_{ab}r^2.
    \]

    Let $t_{ab}$ be the number of indices $k\in[t]\setminus\{a,b\}$ such that $d_{H_i}(V_a,V_b,V_k)\ge d'$.
    Since every edge of $H_i$ with one vertex in each of $V_a,V_b,V_k$ uses a color-$i$ pair between $V_a$ and $V_b$, we have
    \[
      e_{H_i}\Big(V_a,V_b,\bigcup_{k\in[t]\setminus\{a,b\}}V_k\Big)
      \le
      t_{ab}g_{ab}r^3+td'r^3.
    \]
    Combining the two bounds, using $n\ge tr$, $g_{ab}\ge d/2$, and $d'\ll \alpha d$, gives $t_{ab}\ge \left(\frac{1}{3}+\frac{\alpha}{10}\right)t$.

    By \cref{weakregular}, all but at most $\sqrt{\eps}t^2$ pairs of clusters are contained in at most $\sqrt{\eps}t$ irregular triples.
    Indeed, if more than $\sqrt{\eps}t^2$ pairs were contained in more than $\sqrt{\eps}t$ irregular triples, then the total number of incidences between pairs and irregular triples would exceed $3\eps\binom{t}{3}$ for sufficiently large $t$, contradicting \cref{weakregular}.
    For every non-exceptional pair, at least $\left(\frac{1}{3}+\frac{\alpha}{10}\right)t-\sqrt{\eps}t \ge \left(\frac{1}{3}+\frac{\alpha}{16}\right)t$ clusters give regular triples of density at least $d'$.
    Enlarging the exceptional set to $2\sqrt{\eps}t^2$ proves the claim.
  \end{proof}

  By \cref{matchingexist}, applied to $R$ with parameters $2\sqrt{\eps}$ and $\alpha/16$, the reduced graph $R$ contains a matching $M$ covering all but at most $\gamma t/8$ clusters.
  For each edge of $M$, \cref{pairvertexdegreelongpathexist}, applied with parameter $\gamma/4$, gives a collection of vertex-disjoint tight paths in the corresponding regular triple covering all but at most a $\gamma/4$ proportion of its vertices.
  Let $\mathcal P$ be the union of all these paths.
  Since all edges of these paths lie in $H_i$, every end pair of every path
  in $\mathcal P$ has color $i$.  By taking $t_0$ sufficiently large, the
  matching $M$ has at least two edges; each matched regular triple contributes
  at least one path.  Thus $|\mathcal P|\ge2$.

  Each path obtained from \cref{pairvertexdegreelongpathexist} has at least
  $\frac{1}{2}\left(d'(\gamma/4)^3-\eps\right)r-2$ vertices.  For all
  sufficiently large $n$, this is at least $c_0r$.  Since the paths in
  $\mathcal P$ are vertex-disjoint and $r\ge n/(2T)$, we obtain
  \[
    |\mathcal P|\le \frac{n}{c_0r}\le \frac{2T}{c_0}\le M.
  \]

  We now connect the paths in $\mathcal P$ using the reservoir $A$.
  Since every end pair has color $i$, its reverse, the attachment pair used
  for gluing, also lies in $\mathcal C_i$.  Choose ends on two distinct
  current paths, connect their attachment pairs, and apply
  \cref{lem:gluing}.  Repeating this operation leaves two paths.
  At each step, we use at most $\zeta^2n$ new vertices from $A$.  The
  forbidden set contains $A\cap(B_{\rm used}\cup W)$ and every previously
  used reservoir vertex, except the vertices of the two attachment pairs
  selected in the current connection.
  Since $|\mathcal P|\le M$, $|A|\ge\eta n$, and $M\zeta\ll\eta$,
  the total number of reservoir vertices used during this process is at most
  $M\zeta^2n\le \zeta |A|/2$.  Together with
  $|A\cap B_{\rm used}|\le\zeta n/2$ and $|W|=O(1)$, this is at most
  $\zeta n$ by the hierarchy.
  Hence the robust tight connectedness condition can be applied throughout.

  Let the resulting two paths be $Q_1,Q_2$.
  Their end pairs are inherited from paths in $\mathcal P$, so all of them have color $i$.
  The vertices of $V_{\rm work}$ not covered by $Q_1\cup Q_2$ consist of $V_{\rm exc}$, vertices in clusters not covered by the matching $M$, and vertices missed inside the matched triples.
  Hence the number of uncovered vertices is at most
  \[
    |V_{\rm exc}|+\frac{\gamma}{8}n+\frac{\gamma}{4}n
    \le \eps n+\frac{\gamma}{2}n\le \gamma n.
  \]
  This proves the lemma.
\end{proof}

\subsection{Monochromatic absorption}
\label{subsec:monochromatic-absorption}

Fix a $3$-graph $H$, a reservoir set $A\subseteq V(H)$, an $A$-connected
coloring $\phi$, and a color $i\in\{1,2\}$.
Roughly speaking, $\Omega_i$ consists of the ordered ends of color $i$, which
are the only ends used below.  Formally,
$
\Omega_i
\coloneqq
\{(x,y)\in\overrightarrow{\partial}_2(H):\phi(xy)=i\}.
$
The set $\Omega_i$ is reversal-closed.  We use
\emph{$(\beta,j,i)$-reachable} and \emph{$(\beta,j,i)$-closed} as shorthand
for $(\beta,j;\Omega_i)$-reachable and $(\beta,j;\Omega_i)$-closed,
respectively.  The notation $\tilde N_{A,\beta,j,i}(v)$ has the analogous
meaning.

To apply the RCC-class absorption lemma, we prove the required reachability
and index-vector conditions for these color-$i$ ends.  The
colored analogue of \cref{prop:absorbernumber} is the following.

\begin{proposition}\label{prop:absorbernumbernonswitcher}
  Let $1/n \ll \rho \ll \zeta \ll \eta \ll \beta \ll \min\{\alpha,d\} \le \max\{\alpha,d\} \ll 1$.
  Let $H$ be an $n$-vertex $(\rho,d)$-dense $3$-graph with $\delta_2(H) \ge (1/3+\alpha)n$.
  Let $A \subseteq V(H)$ be a reservoir set satisfying $\eta n\le |A|\le 2\eta n$ and, for every pair $uv \in \binom{V(H)}{2}$, we have $\deg_A(uv) \ge (1/3+\alpha/2)|A|$.
  Let $\mathcal C_1,\mathcal C_2$ be the RCCs given by \cref{equivalentclass} with codegree slack $\alpha/2$, and suppose that Case~\ref{case3} holds with respect to $A$.
  Let $\phi$ be the $A$-connected coloring determine by $\mathcal C_1, \mathcal C_2$.
  Let $W$ be the exceptional vertex set provided by \cref{prop:coloritem}, and let $i\in\{1,2\}$ be the spanning color on $V(H)\setminus W$.
  Then, for every $v\in V(H)$, $|\tilde N_{A,\beta,1,i}(v)|\ge (1/3+\alpha/8)n$.
\end{proposition}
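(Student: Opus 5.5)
The plan is to exhibit, for every $u$ in a set of size at least $(1/3+\alpha/8)n$, at least $\beta n^4$ witness sets $T=\{a,b,c,d\}\subseteq V(H)\setminus(A\cup\{u,v\})$. For $k=3$ and $i=1$ a witness has size $(2k-1)\cdot1-1=4$. Both $abucd$ and $abvcd$ should be tight paths, i.e.
\[
abu,\ buc,\ ucd,\ abv,\ bvc,\ vcd\in E(H),
\]
with the same ordered ends $(b,a)$ and $(c,d)$, both in $\Omega_i$. Choosing $a,d$ in the one-sided set $V_0$ of \cref{lem:monosetexist} makes the colour condition automatic: every pair meeting $V_0$ inside $V(H)\setminus W$ has colour $i$. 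So we require $a,d\in V_0\setminus A$ and $b,c\notin W\cup A$. Since $|A|\le2\eta n$, $|W|\le6$ and $\eta\ll\beta'$ (where $|V_0|\ge\beta' n$), the set $V_0\setminus A$ still has linear size.

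\emph{Step 1: the middle pair.} Fix $v$. The condition $bcv\in E$ holds for at least $(1/3+\alpha)n\cdot n/2$ ordered pairs $(b,c)$ outside $A\cup W$, since $v$ has such a pair through every $b$. For such a pair, every $u\in N(bc)$ satisfies $buc\in E$, and $|N(bc)|\ge(1/3+\alpha)n$.

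\emph{Step 2: the outer vertices.} We then need many $a\in V_0$ with $abu,abv\in E$, and many $d\in V_0$ with $ucd,vcd\in E$. Pointwise this is the bad step: the codegree condition alone gives two neighbourhoods of size just above $n/3$, which need not intersect, let alone inside $V_0$. I would therefore argue on average, using uniform density.

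Apply $(\rho,d)$-density to triples such as $(V_0,\,N_b,\,\{\cdot\})$, where $N_b=N(bv)$, after averaging over $b$ to avoid singleton sets. This shows that for all but $o(n^2)$ pairs $(b,v')$, a linear proportion ($\ge d/2$) of $V_0\times N(bv)$ consists of edges. Combining this with counting over $u$, one obtains the following. For a positive proportion of pairs $(b,c)$ in Step 1, all but $(\alpha/2)n$ vertices $u\in N(bc)$ have $\Omega(n)$ choices of $a\in V_0\cap N(bu)\cap N(bv)$ and $\Omega(n)$ choices of $d\in V_0\cap N(cu)\cap N(cv)$. Concretely, I would double count quadruples $(u,a,b,c)$ with $abu,abv,bcu,bcv\in E$ and $a\in V_0$. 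The lower bound comes from density applied with $X_1=V_0$, $X_2=\{b\}$-neighbourhoods aggregated, and $X_3=N(bv)$. Then Cauchy–Schwarz over $b$ converts this to a count of pairs $(u,v)$ sharing $a$.

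\emph{Step 3: averaging.} Summing over the $\Omega(n^2)$ good middle pairs gives $\Omega(n^2)\cdot(1/3+\alpha/2)n\cdot\Omega(n^2)$ incidences $(u,T)$. Suppose fewer than $(1/3+\alpha/8)n$ vertices $u$ had at least $\beta n^4$ witnesses. Each $u$ contributes at most $O(n^4)$ witnesses, and the $u$'s lie in $N(bc)$ of size $\ge(1/3+\alpha)n$ for each middle pair. A standard averaging with $\beta$ small relative to the constants from Step 2 then gives a contradiction. Witnesses containing $u$ or $v$ number only $O(n^3)$ and are discarded.

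The main obstacle is Step 2: guaranteeing $N(bu)\cap N(bv)\cap V_0$ is large for many $u\in N(bc)$ rather than only for an averaged $u$. I would first try to show that for fixed $b$ the bipartite incidence between $V_0$ and the link of $b$ is quasi-dense, using $(\rho,d)$-density with $X_1=V_0$. Then two vertices $u,v$ whose links at $b$ are each of size $\ge n/3$ share $\Omega(n)$ common $a\in V_0$ for typical $b$. If that fails, I would fall back on counting over all $b$ simultaneously.
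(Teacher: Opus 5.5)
\textbf{There is a genuine gap.} Your witness shape is the right one: a $4$-set $\{a,b,c,d\}$ such that $abucd$ and $abvcd$ are tight paths with colour-$i$ ends is exactly what the paper uses. The problem is Step~2. The whole proof rests on it, it is never established, and the tool you propose cannot establish it.

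\textbf{Why uniform density does not help.} The $(\rho,d)$-density condition bounds $e_H(X_1,X_2,X_3)$ only for product sets, and only up to an error of $\rho n^3$. So it says nothing about the fixed vertex $v$. It also says nothing about counts in which $(b,y)$ ranges over the link of $v$, which is a graph and not a product set; averaging over $b$ does not turn $(V_0,N(bv),\cdot)$ into a product triple. Being a one-sided lower bound, it also gives no quasirandomness of links.

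\textbf{Your intermediate claim is false in general.} Outside $W$ every edge is monochromatic, and every pair meeting $V_0$ has colour $i$. Hence an edge $aby$ with $a\in V_0$ forces $\phi(by)=i$. On the other hand, if $\phi(bv')\ne i$, then every $y\in N(bv')$ has $\phi(by)\ne i$. So the link of $b$ has no edges between $V_0$ and $N(bv')$ for every colour-$(3-i)$ pair $bv'$, and nothing in the hypotheses makes such pairs $o(n^2)$.

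\textbf{The more basic obstruction.} You need a single $u$ to satisfy three conditions at once: $u\in N(bc)$, $|N(bu)\cap N(bv)\cap V_0|=\Omega(n)$, and $|N(cu)\cap N(cv)\cap V_0|=\Omega(n)$. Codegree counting guarantees each of these for only about $(1/3+\alpha)n$ vertices. Three sets of size just above $n/3$ need not share a vertex, so the claim ``all but $(\alpha/2)n$ vertices of $N(bc)$'' is unsupported. Without a pointwise statement of this kind, Step~3 cannot reach the threshold either. A global count of $\Theta(n^5)$ incidences with at most $O(n^4)$ witnesses per $u$ only shows that $\Omega(n)$ vertices are reachable, not $(1/3+\alpha/8)n$.

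\textbf{The missing idea.} Arrange that each condition on $u$ is a single codegree condition, and get the path for free from supersaturation. The paper proceeds in three steps.
\begin{enumerate}
  \item Let $L_v$ be the set of pairs $\{x,y\}\subseteq V(H)\setminus(A\cup W)$ with $vxy\in E(H)$ and $\phi(xy)=i$. If $|L_u\cap L_v|\ge\gamma n^2$, then this graph contains $\Omega(n^4)$ labelled paths $abxy$. Each gives tight paths $abuxy$ and $abvxy$ with colour-$i$ ends, so all edge conditions hold automatically.
  \item $|L_v|\ge\xi n^2/10$: each $x\in V_0\setminus(A\cup\{v\})$ has about $(1/3+\alpha)n$ partners $y$ with $vxy\in E(H)$, and $xy$ automatically has colour $i$. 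This is the only use of $V_0$, and the ends need not lie in $V_0$. Insisting on $a,d\in V_0$ would actually block this route: if the shared pairs were $V_0$-incident and formed a bipartite graph between $V_0$ and its complement, every $3$-edge path would have an endpoint outside $V_0$.
  \item Double count $\sum_{u\in B_0}|L_u\cap L_v|=\sum_{S\in L_v}\deg_{B_0}(S)$, where $B_0$ is the set of vertices outside $W\cup\{v\}$ that are not reachable from $v$. If fewer than $(1/3+\alpha/8)n$ vertices were reachable, each $S\in L_v$ would still have at least $\alpha n/2$ neighbours in $B_0$. The sum would then be at least $\alpha\xi n^3/20$, contradicting the upper bound $\gamma n^3$ from the first step.
\end{enumerate}
Uniform density plays no role in this proposition at all.
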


\begin{proof}
  Choose constants $\xi$ and $\gamma$ such that $\beta\ll \gamma\ll \xi\ll \alpha$, and let $B=V(H)\setminus W$.
  For each vertex $v$, its color-$i$ link shadow records the pairs that form an
  edge with $v$ and can serve as color-$i$ ends.  Formally, define
  \[
    L_v\coloneqq
    \left\{\{x,y\}\in\tbinom{B\setminus A}{2}\colon
    vxy\in E(H)\text{ and }\phi(xy)=i\right\}.
  \]
  We first show that a large common color-$i$ link shadow gives one-step color-$i$ reachability.
  If $u\ne v$ and $|L_u\cap L_v|\ge \gamma n^2$, then $u$ and $v$ are $(\beta,1,i)$-reachable.
  Indeed, let $G$ be the graph on $(B\setminus A)\setminus\{u,v\}$ with edge set $(L_u\cap L_v)\cap\binom{(B\setminus A)\setminus\{u,v\}}2$.
  For sufficiently large $n$, we have $e(G)\ge \gamma n^2/2$.
  By supersaturation, $G$ contains at least $c n^4$ labeled paths $abxy$, where $c=c(\gamma)>0$.
  For each such path, the sequences $a,b,u,x,y$ and $a,b,v,x,y$ are tight paths in $H$.
  They have the same ordered end pairs $(b,a)$ and $(x,y)$, and these end pairs have color $i$.
  Since each $4$-set gives at most a constant number of labeled paths, choosing $\beta\ll c$ gives at least $\beta n^4$ distinct witness sets.
  Hence $u$ and $v$ are $(\beta,1,i)$-reachable.

  Now fix $v\in V(H)$.
  By \cref{lem:monosetexist}, applied with $\xi$ in place of $\beta$, there exists a set $V_0\subseteq B$ with $|V_0|\ge \xi n$ such that every vertex in $V_0$ is incident only to pairs of color $i$ inside $B$.
  Let $V_0'=V_0\setminus(A\cup\{v\})$.
  Since $\eta\ll\xi$, we have $|V_0'|\ge \xi n/2$.

  For every $x\in V_0'$, at least $(1/3+\alpha)n-|A|-|W|-1$ vertices $y\in B\setminus A$ satisfy $vxy\in E(H)$.
  For each such $y$, the pair $xy$ has color $i$ because $x\in V_0$.
  Thus $xy\in L_v$.
  Each unordered pair is counted at most twice in this way, and therefore
  \[
    |L_v|\ge
    \frac12 |V_0'|\left((1/3+\alpha)n-|A|-|W|-1\right)
    \ge \frac{\xi}{10}n^2.
  \]
  Let $B_0=V(H)\setminus(\tilde N_{A,\beta,1,i}(v)\cup W\cup\{v\})$.
  By the first paragraph, every $u\in B_0$ satisfies $|L_u\cap L_v|<\gamma n^2$.
  Hence $\sum_{u\in B_0}|L_u\cap L_v|<\gamma n^3$.
  On the other hand,
  \[
    \sum_{u\in B_0}|L_u\cap L_v|
    =
    \sum_{S\in L_v}\deg^*_{B_0}(S),
  \]
  where $\deg^*_{B_0}(S)$ denotes the number of vertices $u\in B_0$ such that $uS\in E(H)$.

  Suppose, for a contradiction, that $|\tilde N_{A,\beta,1,i}(v)|<(1/3+\alpha/8)n$.
  For every $S\in L_v$, the minimum codegree condition gives
  \[
    \deg^*_{B_0}(S)
    \ge
    (1/3+\alpha)n-(1/3+\alpha/8)n-|W|-1
    \ge \alpha n/2.
  \]
  Consequently,
  \[
    \sum_{u\in B_0}|L_u\cap L_v|
    \ge |L_v|\cdot \frac{\alpha}{2}n
    \ge \frac{\alpha\xi}{20}n^3,
  \]
  contradicting $\gamma\ll \alpha\xi$.
  Therefore $|\tilde N_{A,\beta,1,i}(v)|\ge(1/3+\alpha/8)n$, as desired.
\end{proof}

We next turn colored reachability into the partition needed for absorption.
This is the usual reachability partition argument applied to the color-$i$ reachability relation.

\begin{lemma}\label{lem:partitionnonswitcher}
  Let
  $1/n\ll\rho\ll\zeta\ll\eta\ll\beta\ll\beta_0
  \ll\min\{\alpha,d\}$.
  Under the hypotheses of \cref{prop:absorbernumbernonswitcher}, there is a
  partition $\mathcal P_0$ of $V(H)$ into one or two parts such that every
  part has size at least $n/3$ and is $(\beta,2,i)$-closed in $H-A$.
\end{lemma}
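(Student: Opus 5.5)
We follow the standard reachability-partition argument (as in Han's lattice-based absorption), now for the colour-$i$ relation. The input is \cref{prop:absorbernumbernonswitcher}: every $v\in V(H)$ has $|\tilde N_{A,\beta',1,i}(v)|\ge(1/3+\alpha/8)n$. We apply it with some $\beta'$ satisfying $\beta\ll\beta'\ll\beta_0$, and note that the proposition holds for any $\beta'$ in its admissible range. Two elementary facts drive the argument.

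\emph{Transitivity with loss.} If $u$ and $w$ are $(\beta_1,j_1,i)$-reachable and $w$ and $v$ are $(\beta_2,j_2,i)$-reachable, then $u$ and $v$ are $(\beta_3,j_1+j_2,i)$-reachable with $\beta_3=\Omega(\beta_1\beta_2)$. To see this, take a witness $T_1$ for $(u,w)$ and a witness $T_2$ for $(w,v)$ that is disjoint from $T_1\cup\{u,v\}$. Then $T_1\cup\{w\}\cup T_2$ is a witness for $(u,v)$. Indeed, $T_1\cup\{u\}$ splits into colour-$i$ ended $3$-edge paths, and $T_2\cup\{w\}$ splits with ends matching those of $T_2\cup\{v\}$. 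The ordered ends are preserved because the two path families are paired end by end. Each witness of size $5(j_1+j_2)-1$ arises from $O(1)$ such pairs, so counting gives the density.

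\emph{Monotonicity.} A $(\beta,1,i)$-witness can be padded to a $(\beta',2,i)$-witness by adding a disjoint colour-$i$ ended $3$-edge path that is common to both sides. There are $\Omega(n^5)$ such paths, because $V_0$ from \cref{lem:monosetexist} together with the codegree condition gives many colour-$i$ tight paths $abcde$. Hence reachability at step $1$ implies reachability at step $2$ with a slightly smaller constant.

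\emph{Constructing the partition.} We build it by the usual pigeonhole on neighbourhoods of size $>n/3$. Any three vertices $v_1,v_2,v_3$ have neighbourhoods $N(v_j)=\tilde N_{A,\beta',1,i}(v_j)$ of total size $>n+3\alpha n/8$. So two of them overlap in at least $\alpha n/8$ vertices, and by transitivity those two vertices are $(\Omega(\alpha\beta'^2),2,i)$-reachable. Thus no three vertices are pairwise non-$2$-reachable.

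Now take a maximal set $\{v_1,\dots,v_s\}$ of pairwise non-$(\beta_0',2,i)$-reachable vertices, so $s\le2$. Define the parts as $U_j=\{u:u$ is $2$-reachable to $v_j\}$, and resolve overlaps arbitrarily.

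To see that each part is closed, take $x,y$ in the same part. The sets $N(x)$ and $N(y)$ both meet $N(v_j)$ substantially, which we arrange by a further pigeonhole over the vertices $v_1,v_2$. Transitivity then gives $(\beta,4,i)$-reachability. To get down to step $2$, we instead run the argument at the level of $1$-step neighbourhoods: $x$ and $y$ are $2$-reachable whenever $|N(x)\cap N(y)|\ge\alpha n/16$. The key observation is that the relation ``$|N(x)\cap N(y)|$ large'' already behaves like a $2$-class relation, since every $N(\cdot)$ has size $>n/3$. Defining the parts by shared $1$-neighbourhoods with $v_1$ or $v_2$ therefore yields $(\beta,2,i)$-closed parts, with the constants degrading along the chain $\beta\ll\beta'\ll\beta_0$.

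\emph{Part sizes.} Each part contains $N(v_j)$ up to $o(n)$ vertices, so it has size at least $(1/3+\alpha/16)n\ge n/3$. Vertices of $W$ and leftover vertices not $2$-reachable to either $v_j$ must be handled separately. Any vertex $u$ satisfies $|N(u)|>n/3$, and with two parts each of size $<2n/3$, $N(u)$ must meet one of $N(v_1),N(v_2)$ substantially. Hence $u$ is $2$-reachable to some $v_j$, and every vertex is placed.

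\emph{Main obstacle.} The difficulty is keeping the step index at exactly $2$ rather than letting it grow to $4$ through chained transitivity. We handle this by defining closedness through large intersections of $1$-step neighbourhoods, together with the fact that merging two such classes is impossible without creating a third disjoint neighbourhood, which would exceed $n$ in total size.
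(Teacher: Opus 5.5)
Your proposal has a genuine gap, at the step you flag yourself: showing that each part is closed at step $2$.

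\emph{Why the step fails.} Your pigeonhole shows only that among any three vertices, two have $1$-step neighbourhoods overlapping in $\Omega(\alpha n)$ vertices. Equivalently, the graph of pairs with small overlap is triangle-free. Two classes with pairwise large overlaps would need that graph to be essentially bipartite, and this does not follow from neighbourhood sizes plus overlap-transitivity. Nothing you use rules out the following pattern: $1$-step reachability looks like a blow-up of $C_7$ with blobs $P_0,\dots,P_6$ of size $n/7$, and a vertex of $P_a$ is reachable exactly to $P_{a-1}\cup P_a\cup P_{a+1}$.
\begin{itemize}
\item Every neighbourhood has size about $3n/7>n/3$, and no three neighbourhoods are pairwise disjoint.
\item Yet neighbourhoods from $P_a$ and $P_b$ overlap only when $a,b$ are at cyclic distance at most $2$.
\item So a class with pairwise overlaps meets at most three consecutive blobs, and any two classes miss at least $n/7$ vertices.
\end{itemize}
Hence defining the parts by shared $1$-neighbourhoods with $v_1$ or $v_2$ does not give pairwise overlaps inside a part. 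Your closing remark, that failure would produce a third disjoint neighbourhood, does not hold either.

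\emph{A related error.} Your parts $U_j=\{u:u\text{ is }2\text{-reachable to }v_j\}$ are not closed by transitivity. Routing through the single middle vertex $v_j$ supplies only an $O(1/n)$ fraction of the witness sets required. Your transitivity fact has the same defect unless there are $\Omega(n)$ middle vertices, which you only supply later.

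\emph{What the paper does.} The paper does not re-derive the partition. It applies the reachability-partition lemma \cref{lem:partition} to the colour-$i$ relation with $\delta=1/3+\alpha/8$. Then $\lfloor 1/\delta\rfloor=2$ gives at most two parts, each of size at least $n/3$ and closed at step $2^{\lfloor1/\delta\rfloor-1}=2$. The only thing checked is that concatenation and padding keep every exposed end of colour $i$, which is what your first two facts verify.

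\emph{What a self-contained proof would need.} You need an input stronger than neighbourhood sizes: the one-step form of your pigeonhole, namely that among any three vertices $u,v,w$, two are $(\beta',1,i)$-reachable. This does hold here.
\begin{itemize}
\item Take $x\in V_0\setminus A$, where $|V_0|\ge\xi n$, and put $B=V(H)\setminus W$.
\item For $z\in\{u,v,w\}$, the three sets $\{y\in B\setminus A: zxy\in E(H)\}$ each have size at least $(1/3+\alpha/2)n$, so two of them share at least $\alpha n/2$ vertices $y$.
\item Each such pair $xy$ has colour $i$ because $x\in V_0$, so it lies in both colour-$i$ links $L_z$ from the proof of \cref{prop:absorbernumbernonswitcher}.
\item Pigeonholing over $x$ gives two of $u,v,w$ whose links share $\Omega(\alpha\xi n^2)$ pairs. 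By the first paragraph of that proof, these two vertices are $1$-step reachable.
\end{itemize}
With this property, suppose $v_1,v_2$ are not $2$-reachable. Any two vertices $x,y$ not $1$-reachable to $v_2$ are then $1$-reachable to each other, by applying the property to $\{x,y,v_2\}$. This three-vertex condition (with $c=2$) is the extra hypothesis under which Han's original reachability-partition lemma yields $(\beta,2^{c-1})=(\beta,2)$-closed parts.
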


\begin{proof}
  Put $\delta=1/3+\alpha/8$.
  By \cref{prop:absorbernumbernonswitcher}, every vertex has at least
  $\delta n$ vertices which are $(\beta_0,1,i)$-reachable from it.
  The proof of the reachability partition lemma \cref{lem:partition}
  applies  to the color-$i$ reachability relation: concatenation
  and inflation preserve the requirement that every exposed ordered end
  pair has color $i$.  We therefore obtain at most $1/\delta<3$ parts, each
  of size at least $(\delta-o(1))n\ge n/3$, and each
  $(\beta,2,i)$-closed.
\end{proof}

We now prove the absorbing lemma in the form needed for the final step.
The additional forbidden set $F$ will later contain vertices that must be kept away from the absorbers, including the exceptional set and the reserved monochromatic cycles.

\begin{lemma}\label{absorber2}
  Let $1/n \ll \rho \ll \zeta \ll \eta \ll \nu \ll \gamma
  \ll \beta \ll \beta_0 \ll \min\{\alpha,d\}
  \le \max\{\alpha,d\} \ll 1$.
  Let $H$ be an $n$-vertex $(\rho,d)$-dense $3$-graph with $\delta_2(H)\ge(1/3+\alpha)n$.
  Let $A\subseteq V(H)$ be a reservoir set satisfying $\eta n\le |A|\le 2\eta n$ and $\deg_A(uv)\ge (1/3+\alpha/2)|A|$ for every pair $uv\in\binom{V(H)}2$.
  Suppose that Case~\ref{case3} holds with respect to $A$.
  Let $\phi$ be the $A$-connected coloring determine by $\mathcal C_1, \mathcal C_2$, let $W$ be the exceptional set from \cref{prop:coloritem}, and let $i\in\{1,2\}$ be the spanning color on $V(H)\setminus W$.
  Let $F\subseteq V(H)$ satisfy $|F|\le \nu n$ and
  $|A\cap(F\cup W)|\le\zeta n/2$.
  Then one can find a partition $\mathcal P_0$ of $V(H)$ into either one
  or two parts and a tight path $P_{\rm abs}$ in $H-(F\cup W)$ such that
  the following hold.
  \begin{enumerate}[label=\rm(\roman*)]
    \item Every part of $\mathcal P_0$ has size at least $n/3$.

    \item $|V(P_{\rm abs})|\le \beta n$ and
      $|V(P_{\rm abs})\cap A|\le\zeta^2n$.

    \item Both ordered end pairs of $P_{\rm abs}$ have color $i$.

    \item For every set $L\subseteq V(H)\setminus V(P_{\rm abs})$ with $|L|\le \gamma n$ and $|L|\in3\mathbb N$ which, when $\mathcal P_0=\{X,Y\}$ and $L\ne\emptyset$, also satisfies the balance condition
      \[
        \frac{|L\cap X|}{|L|},\frac{|L\cap Y|}{|L|}
        \in
        \left[
          \frac13+\frac\alpha4,\,
          \frac23-\frac\alpha4
        \right],
      \]
      there exists a tight path $P_{\rm abs}'$ with the same ordered end
      pairs as $P_{\rm abs}$ and
      $V(P_{\rm abs}')=V(P_{\rm abs})\cup L$.
  \end{enumerate}
\end{lemma}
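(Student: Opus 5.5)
We obtain \cref{absorber2} as the one-class ($r=1$) specialization of \cref{lem:indexed-absorber}, with $k=3$, $\Omega=\Omega_i$, $\kappa\equiv1$ and the partition $\mathcal P_0$ supplied by \cref{lem:partitionnonswitcher}. We apply \cref{lem:indexed-absorber} with forbidden set $Z=F\cup W$, which has size at most $\nu n+6$. \cref{lem:partitionnonswitcher} provides one or two parts, each of size at least $n/3$ and $(\beta,2,i)$-closed in $H-A$; this gives~(i). The parameter $\lambda$ in \cref{lem:indexed-absorber} is played by $\beta$. In that lemma $\gamma$ (and $\nu$) must be small compared with $\lambda$, and this is consistent with the hierarchy $\nu\ll\gamma\ll\beta$. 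The lemma then returns a single $\Omega_i$-ended path $P_{\rm abs}$ in $H-(F\cup W)$. Its ends have color $i$, giving~(iii), and its size bounds give~(ii) once we take $\varepsilon=\beta$ and $\omega=\zeta^2$.

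\textbf{RCC cover property.} Consider any family $\mathcal Q$ of at most $\theta n$ vertices of disjoint color-$i$-ended paths in $H-(A\cup Z)$. Every ordered end of such a path lies in $\mathcal C_i$, because color-$i$ pairs have both orientations in $\mathcal C_i$. We therefore join the paths one by one through $A$, exactly as in the proof of \cref{lem:longpath}, and apply \cref{lem:gluing} at each step. At every step the forbidden set is $(A\cap(F\cup W))$ together with the reservoir vertices already used, which has size at most $\zeta n/2+|\mathcal Q|L_{\rm RCC}$. To keep this below $\zeta n$, we first make the number of paths bounded: choose $\theta$ small, and note that in the application of \cref{lem:indexed-absorber} (the appendix proof) the family consists of boundedly many or $O(\nu n)$ short absorber gadgets. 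If there are $O(\nu n)$ gadgets we need $\nu L_{\rm RCC}\ll\zeta$, which conflicts with $\zeta\ll\nu$. This is the main obstacle. I would resolve it by connecting gadgets through $H-(A\cup Z)$ itself rather than through $A$: short color-$i$ connections outside $A$ exist by the reachability/degree structure (\cref{prop:absorbernumbernonswitcher}, or the density of the color-$i$ shadow from \cref{lem:shadowlocaldense}). The reservoir would then be used only $O(1)$ times, so its usage is at most $\zeta^2 n$.

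\textbf{Index vectors.} Let $J$ be the color-$i$ monochromatic-triangle subgraph of $H_0=H-(A\cup Z)$. It is $\Omega_i$-supported by \cref{prop:coloritem}. With one part, $\mathsf M(\mathcal I)$ consists of all multiples of $3$, since $J$ has $\Omega(n^3)$ edges by the codegree condition~\ref{coloritem2}. With two parts $X,Y$, we must show that the vectors $(2,1)$ and $(1,2)$ both lie in $\mathcal I_\mu(J,\mathcal P_0)$. Every nonnegative integer vector $(a,b)$ with $a+b\in3\mathbb N$ and $a,b\ge (a+b)/3$ is a combination of these two vectors, so this covers every $L$ satisfying the balance condition in~(iv). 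To get $\Omega(n^3)$ edges of type $(2,1)$, pick $\Omega(n^2)$ color-$i$ pairs inside $X$; these exist because $V_0$ forces color-$i$ pairs in $X$, by the density from \cref{lem:shadowlocaldense} applied with $X$ and $V_0\cap X$, or with a large part. Each such pair has at least $(1/3+\alpha/4)n$ monochromatic extensions by~\ref{coloritem2}. Since $|X|\le 2n/3$, a positive fraction of these extensions lies in $Y$ unless $|X|$ is nearly $2n/3$. I expect this to need a short separate argument; in particular, if $(2,1)$ fails, then $X$ is closed under color-$i$ extensions, contradicting the closedness/minimum-size structure. The type $(1,2)$ is symmetric. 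Given these, condition~(ii) of \cref{lem:indexed-absorber} yields~(iv).
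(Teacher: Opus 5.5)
Your reduction is the same as the paper's. It is the one-class case of \cref{lem:indexed-absorber} with $k=3$, $\Omega=\Omega_i$, constant $\kappa$, $Z=F\cup W$, $J$ the color-$i$ triangles of $H-(A\cup Z)$, and the partition of \cref{lem:partitionnonswitcher}. You also correctly compute that the balance condition places $\mathbf i_{\mathcal P_0}(L)$ in $\mathsf M(\{(2,1),(1,2)\})$. However, two hypotheses of \cref{lem:indexed-absorber} are left unproved, and the arguments you sketch for them do not work.

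\textbf{Mixed index vectors.} This is the essential gap. Codegree cannot give both $(2,1)$ and $(1,2)$. A color-$i$ pair has only $(1/3+\alpha/4)n$ guaranteed extensions, while a part may have up to $2n/3$ vertices, so all extensions can lie in one part. Closedness of the parts says nothing about crossing edges either.

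Concretely, take $|X|=|Y|=n/2$ and the $3$-graph of all triples meeting $X$ in $0$ or $2$ vertices. It has every codegree about $n/2$ and complete shadow, and $X$ and $Y$ are each closed, since two vertices in the same part have almost identical links. Yet it has no edge of type $(1,2)$. So neither your codegree argument nor the shadow density of \cref{lem:shadowlocaldense} can rule this out. What excludes it is the $(\rho,d)$-density of $H$ itself, which your proposal never uses at this step.

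The paper anchors at $V_0$ from \cref{lem:monosetexist}. If $|V_0\cap X|\ge\beta n/2$, then $e_H(V_0\cap X,X\cap B,Y\cap B)$ and $e_H(V_0\cap X,Y\cap B,Y\cap B)$ are both $\Omega(d\beta n^3)$; otherwise use $(V_0\cap Y,X\cap B,X\cap B)$ and $(V_0\cap Y,X\cap B,Y\cap B)$. Each such edge lies in $H_i$, because it is monochromatic by \cref{prop:coloritem} and its two pairs through $V_0$ have color $i$.

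\textbf{RCC cover property.} You rightly note that greedy joining through $A$ fails: a family $\mathcal Q$ as in \cref{def:rcc-cover} may have $\Theta(\theta n)$ paths, and $\zeta\ll\theta$. But your fix does not go through. \cref{prop:absorbernumbernonswitcher} and \cref{lem:shadowlocaldense} concern vertex reachability and pair density and produce no connecting paths. The only pair-to-pair connections available, from \cref{equivalentclass}, run through $A$.

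The missing tool is Han's path-cover lemma \cref{connecter}, applied in $H_0=H-(A\cup F\cup W)$, which still satisfies $\delta_2(H_0)\ge(1/3+\alpha/2)|V(H_0)|$. It merges $\mathcal Q$ into at most two paths on at most $\sqrt{2\theta}|V(H_0)|$ vertices, whose end pairs are end pairs of members of $\mathcal Q$. The connectors may have arbitrary color; only the final ends need color $i$. Those ends have color $i$, so their reversals lie in $\mathcal C_i$. A single connection through $A$, avoiding $A\cap(F\cup W)$ and using at most $L_{\rm RCC}\le\zeta^2n$ reservoir vertices, then leaves one path. This is \cref{prop:3graph-rcc-cover}, and it supplies the $(A,F\cup W;\theta,\varepsilon,\zeta^2)$-RCC cover property behind~(ii).
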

\begin{proof}
  Choose auxiliary constants so that
  \[
    \max\{\rho,\zeta,\eta,\nu,\gamma\}
    \ll\mu\ll\lambda\ll\theta\ll\varepsilon^2
    \ll\varepsilon\ll\beta\ll\beta_0\ll\min\{\alpha,d\}.
  \]
  Let \(B=V(H)\setminus W\), and let \(H_i\) be the \(3\)-graph on
  \(B\) whose edges are those \(xyz\in E(H[B])\) for which
  \(\phi(xy)=\phi(xz)=\phi(yz)=i\).

  Apply \cref{lem:partitionnonswitcher}, with \(\lambda\) as its closure
  parameter, to obtain a partition
  \(\mathcal P_0\) of \(V(H)\) into one or two parts, each of size at least
  \(n/3\), and each \((\lambda,2,i)\)-closed in \(H-A\).
  We shall use only those edge distributions that occur many times.
  We call an index vector \(\mathbf v\) valid if \(H_i\) has at least
  \(\mu n^3\) edges \(e\) with
  \(\mathbf i_{\mathcal P_0}(e)=\mathbf v\).

  \begin{claim}\label{claim:nonswitcher-valid-index-vectors}
    If \(\mathcal P_0\) has one part, then \((3)\) is valid.  If
    \(\mathcal P_0=\{X,Y\}\), then both \((2,1)\) and \((1,2)\) are
    valid.
  \end{claim}

  \begin{proof}
    Let \(V_0\subseteq B\) be the set given by
    \cref{lem:monosetexist}; thus \(|V_0|\ge\beta n\), and every pair in
    \(B\) incident with a vertex of \(V_0\) has color \(i\).

    If \(\mathcal P_0\) has one part, then
    \[
      e_H(V_0,B,B)
      \ge d|V_0||B|^2-\rho n^3
      \ge10\mu n^3.
    \]
    Since each edge is counted at most six times, \(H[B]\) has at least
    \(\mu n^3\) edges containing a vertex of \(V_0\).  Every such edge
    belongs to \(H_i\), because it is monochromatic and its two pairs
    incident with \(V_0\) have color \(i\).  Thus \((3)\) is valid.

    Suppose now that \(\mathcal P_0=\{X,Y\}\).  Put
    \(X_B=X\cap B\) and \(Y_B=Y\cap B\); both have size at least \(n/4\).
    One of \(V_0\cap X\) and \(V_0\cap Y\) has size at least
    \(\beta n/2\).  If \(|V_0\cap X|\ge\beta n/2\), uniform density gives
    \[
      \begin{split}
        e_H(V_0\cap X,X_B,Y_B)&\ge10\mu n^3,\\
        e_H(V_0\cap X,Y_B,Y_B)&\ge10\mu n^3.
      \end{split}
    \]
    Consequently \(H_i\) has at least \(\mu n^3\) edges of each of the
    index types \((2,1)\) and \((1,2)\).  If
    \(|V_0\cap Y|\ge\beta n/2\), use instead
    \[
      (V_0\cap Y,X_B,X_B)
      \quad\text{and}\quad
      (V_0\cap Y,X_B,Y_B).
    \]
    This proves that both mixed vectors are valid.
  \end{proof}

  Put \(Z_0=F\cup W\), \(J=H_i-(A\cup Z_0)\), and
  \(\Omega=\Omega_i\).  Deleting all edges incident with \(A\cup Z_0\)
  destroys at most \(O((\eta+\nu)n^3)\) edges of any fixed index.
  Consequently, after replacing \(\mu\) by \(\mu/2\), the index set
  \[
    \mathcal I=
    \begin{cases}
      \{(3)\},&|\mathcal P_0|=1,\\
      \{(2,1),(1,2)\},&\mathcal P_0=\{X,Y\},
    \end{cases}
  \]
  lies in \(\mathcal I_{\mu/2}(J,\mathcal P_0)\).
  The graph \(J\) is \(\Omega\)-supported.  Moreover every member of
  \(\Omega_i\), and also its reverse, lies in \(\mathcal C_i\).  Let
  \(L_{\rm RCC}\) be the path-length bound given by
  \cref{equivalentclass}.  The constant map
  \(\kappa_i:\Omega_i\to[1]\) makes
  \((\Omega_i,\kappa_i)\)
  \((A,\zeta,L_{\rm RCC})\)-RCC-connectable by
  \cref{equivalentclass}.  Finally,
  \[
    \delta_2\bigl(H-(A\cup Z_0)\bigr)
    \ge(1/3+\alpha/2)|V(H)\setminus(A\cup Z_0)|
  \]
  by the hierarchy.  Since $L_{\rm RCC}$ is independent of $n$, it is at
  most $\zeta^2n$ for sufficiently large $n$.  Hence
  \cref{prop:3graph-rcc-cover}, applied to $H$ with $Z=Z_0$, $H_0=J$,
  $\sigma=\alpha/2$, $\chi=\zeta$, and $\omega=\zeta^2$, keeping $\theta$
  and $\varepsilon$ as above, gives the
  $(A,Z_0;\theta,\varepsilon,\zeta^2)$-RCC cover property.
  Apply \cref{lem:indexed-absorber} with $k=3$ and
  \(\mathcal P_0,\Omega_i,\kappa_i,J,\mathcal I\), using \(3\nu\) as its
  forbidden-set parameter.  Since all pairs in $\Omega_i$ belong to the same RCC,
  the lemma gives one color-\(i\) path \(P_{\rm abs}\), of size at most
  \(\varepsilon n\le\beta n\) and using at most $\zeta^2n$ vertices of
  $A$.

  It remains only to check that the balance condition implies membership in
  \(\mathsf M(\mathcal I)\).  Recall that
  \(\mathbf w\in\mathsf M(\mathcal I)\) means that
  \(\mathbf w=\sum_{\mathbf v\in\mathcal I}a_{\mathbf v}\mathbf v\) for
  some coefficients \(a_{\mathbf v}\in\mathbb Z_{\ge0}\).
  If \(|\mathcal P_0|=1\), then
  \(|L|\in3\mathbb N\) gives
  \(\mathbf i_{\mathcal P_0}(L)\in\mathsf M(\{(3)\})\).
  Suppose that \(\mathcal P_0=\{X,Y\}\), and put
  \[
    x=|L\cap X|,\qquad y=|L\cap Y|,\qquad q=|L|/3.
  \]
  A decomposition into \(a\) triples of type \(XXY\) and \(b\) triples
  of type \(XYY\) must, and can only, have
  \[
    a=\frac{2x-y}{3}=x-q,
    \qquad
    b=\frac{2y-x}{3}=y-q.
  \]
  Hence such a decomposition exists exactly when
  \(x+y\equiv0\pmod3\) and \(x/2\le y\le2x\), equivalently
  \(q\le x,y\le2q\).  The balance hypothesis gives these inequalities, so
  \[
    \mathbf i_{\mathcal P_0}(L)
    =a(2,1)+b(1,2)\in\mathsf M(\mathcal I).
  \]
  The absorbing conclusion is now
  \cref{lem:indexed-absorber}\textup{(ii)}.
\end{proof}

As in the switcher case, the last step needs short tight cycles whose lengths allow a modulo-$3$ adjustment.
Here the cycles must be compatible with the color-$i$ connecting structure, so we use the following monochromatic version.
Since the proof is a standard modification of the argument of Piga, Sanhueza-Matamala and Schacht~\cite{PIGA20261}, together with removal and supersaturation, we defer it to \cref{app:monochromatic-c10}.

\begin{lemma}\label{lem:monochromatic_c10}
  Let $1/n \ll \rho \ll \zeta \ll \eta \ll \beta \ll \beta_1 \ll \min\{\alpha,d\} \le \max\{\alpha,d\} \ll 1$.
  Let $H$ be an $n$-vertex $(\rho,d)$-dense $3$-graph with $\delta_2(H) \ge (1/3+\alpha)n$.
  Let $A \subseteq V(H)$ be a reservoir set satisfying $\eta n\le |A|\le 2\eta n$ and, for every pair $uv \in \binom{V(H)}{2}$, we have $\deg_A(uv) \ge (1/3+\alpha/2)|A|$.
  Let $\mathcal C_1,\mathcal C_2$ be the RCCs given by \cref{equivalentclass} with codegree slack $\alpha/2$ and $\phi$ be the $A$-connected coloring determined by $\mathcal C_1,\mathcal C_2$.
  Suppose that Case~\ref{case3} holds with respect to $A$.
  Let $W$ be the exceptional vertex set provided by \cref{prop:coloritem}, and let $i\in\{1,2\}$ be the spanning color on $V(H)\setminus W$.
  Then $H[V(H)\setminus(A\cup W)]$ contains two vertex-disjoint copies of $C_{10}^{(3)}[2]$ such that every shadow pair of each copy has color $i$; equivalently, for every edge $xyz$ in either copy, we have $\phi(xy)=\phi(xz)=\phi(yz)=i$.
\end{lemma}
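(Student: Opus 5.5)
The plan is to find the two copies inside the color-$i$ $3$-graph $H_i$ on $B'=V(H)\setminus(A\cup W)$. Its edges are the $xyz\in E(H[B'])$ with $\phi(xy)=\phi(xz)=\phi(yz)=i$. By \cref{prop:coloritem}\ref{coloritem2}, every edge of $H[B']$ is monochromatic. Hence $H_i$ is exactly the subgraph of $H[B']$ induced on the color-$i$ shadow, and any copy of $C_{10}^{(3)}[2]$ in $H_i$ has all its shadow pairs of color $i$. I would feed $H_i$ into the active-shadow blow-up lemma \cref{lem:active-shadow-c10-blowup}, which was already used in the proof of \cref{tightcycleexist}. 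There it was applied with $G=H$ and $R=K_n$. Here I would apply it with $G=H_i$ and with $R=G_i[B']$, where $G_i=\partial_2^i(H\setminus W)$ is the color-$i$ shadow graph.

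I would verify the hypotheses in two steps. First, \cref{lem:shadowlocaldense} shows that $G_i$ is $(\rho',d)$-dense. Deleting $A$ changes densities only by $O(\eta)$ in the normalisation. After rescaling to $|B'|$, $R$ is therefore $(\rho'',d)$-dense, where we choose $\rho\ll\rho'\ll\rho''$; this plays the role of the pair $(\rho,\tau)$ there, with $\tau=d$. Second, every pair of $R$ must have large codegree inside $H_i$. Take $xy\in R$. By \cref{prop:coloritem}\ref{coloritem2}, at least $(1/3+\alpha/4)n$ vertices $z\notin W$ satisfy that $xyz$ is a monochromatic edge, and since $\phi(xy)=i$ these edges lie in $H_i$. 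Removing $A$ costs at most $2\eta n$ of them, so the codegree of $xy$ in $H_i$ is at least $(1/3+\alpha/8)|B'|$. The lemma then yields one copy of $C_{10}^{(3)}[2]$ in $H_i$.

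For the second, disjoint copy, I would delete the $20$ vertices of the first copy and repeat the argument. Both the uniform density of $R$ and the relative codegree bound survive, up to $O(1/n)$ losses. Alternatively, if the lemma gives supersaturation, I would pick two disjoint copies from the $\Omega(n^{20})$ copies it provides. This is where the removal and supersaturation remark comes in.

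The main obstacle is matching the exact hypotheses of \cref{lem:active-shadow-c10-blowup}. I expect its condition to read ``$\partial G\subseteq R$, $R$ uniformly dense, and every pair of $R$ has codegree at least $(1/3+\gamma)$ in $G$.'' If it instead requires the full shadow $\partial G$ to equal $R$, note that $\partial H_i=R$ holds exactly, because every color-$i$ pair in $B'$ has positive $H_i$-codegree. If the lemma requires $R$ to be dense in the stronger sense that every vertex has linear $R$-degree, I would use the spanning property of color $i$. Each vertex of $B'$ meets a color-$i$ pair, and then \ref{coloritem2} gives it at least $(1/3+\alpha/8)n$ color-$i$ neighbours.
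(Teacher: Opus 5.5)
Your proposal is correct and follows essentially the same route as the paper. The paper also applies \cref{lem:active-shadow-c10-blowup} to the color-$i$ $3$-graph on $V(H)\setminus(A\cup W\cup Z)$, taking $R$ to be the color-$i$ shadow. It gets the density of $R$ from \cref{lem:shadowlocaldense} and the codegree bound from \ref{coloritem2}, and it obtains the second copy by rerunning the argument with $Z$ equal to the $20$ vertices of the first copy.
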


\subsection[Proof of the non-switcher case lemma]{Proof of \cref{nonswitchercase}}

\begin{proof}
  Choose constants in the following order:
  \[
    1/n \ll \rho \ll \zeta \ll \eta\ll\nu
    \ll \gamma_{\rm cov}
    \ll \gamma_{\rm abs}
    \ll \beta \ll \beta_1 \ll \min\{\alpha,d\} \le \max\{\alpha,d\} \ll 1.
  \]
  Let $\phi$ be the $A$-connected coloring determine by $\mathcal C_1, \mathcal C_2$.
  Let $W$ be the exceptional set given by \cref{prop:coloritem}, so $|W|\le6$.
  Fix the spanning color $i\in\{1,2\}$ from \cref{lem:monosetexist}.
  Thus $i$ spans $V(H)\setminus W$, and both orientations of every color-$i$ pair lie in the same robust connection class.

  \proofstep{1}{Reserving monochromatic replaceable segments}
  By \cref{lem:monochromatic_c10}, there are two vertex-disjoint copies $C_1,C_2$ of $C_{10}^{(3)}[2]$ in $V(H)\setminus(A\cup W)$ such that every shadow pair of each copy has color $i$.
  For $j\in[2]$, write
  \[
    V(C_j)=\{x_1^j,x_2^j,\dots,x_{10}^j,y_1^j,\dots,y_{10}^j\},
  \]
  and let
  \begin{align*}
    R_j \coloneqq x_1^jx_2^jx_3^jx_4^jx_5^j
    \quad\text{and}\quad
    R_j^+ \coloneqq x_1^jx_2^jy_3^jy_4^j\cdots y_{10}^jy_1^jy_2^jx_3^jx_4^jx_5^j.
  \end{align*}
  Then $R_j$ and $R_j^+$ are tight paths with the same ordered end pairs $(x_2^j,x_1^j)$ and $(x_4^j,x_5^j)$, and $|V(R_j^+)|-|V(R_j)|=10$.
  Moreover, these ordered end pairs have color $i$.
  Hence any tight cycle containing $R_j$ as a segment remains a tight cycle after replacing $R_j$ by $R_j^+$.

  \proofstep{2}{Constructing the absorber}
  Apply \cref{absorber2} with $F=V(C_1\cup C_2)$, forbidden-set
  parameter $\nu$, and absorption parameter $\gamma_{\rm abs}$.
  We obtain a partition $\mathcal P_0$ of $V(H)$ into either one or two
  parts and one tight path $P_{\rm abs}$ in
  $H-(W\cup V(C_1\cup C_2))$ such that
  $|V(P_{\rm abs})|\le\beta n$, both ordered end pairs have color $i$,
  $|V(P_{\rm abs})\cap A|\le\zeta^2n$,
  and $P_{\rm abs}$ has the absorbing property stated in
  \cref{absorber2} with respect to $\mathcal P_0$.

  \proofstep{3}{Choosing a balancing set}
  We choose a set $A_{\rm bal}\subseteq V(H)\setminus(A\cup W\cup V(C_1\cup C_2)\cup V(P_{\rm abs}))$ such that $\gamma_{\rm abs}n/3\le |A_{\rm bal}|\le \gamma_{\rm abs}n/2$.
  If $\mathcal P_0=\{X,Y\}$ has two parts, we additionally choose $A_{\rm bal}$ so that
  \[
    \frac{|A_{\rm bal}\cap X|}{|A_{\rm bal}|},
    \frac{|A_{\rm bal}\cap Y|}{|A_{\rm bal}|}
    \in
    \left[
      \frac13+\frac\alpha3,\,
      \frac23-\frac\alpha3
    \right].
  \]
  Such a choice is possible because each part has size at least $n/3$, while the previously forbidden set has size at most $2\beta n$; hence each part still contains linearly many available vertices, far more than the $O(\gamma_{\rm abs}n)$ vertices requested here.

  \proofstep{4}{Covering almost all remaining vertices}
  Apply \cref{lem:longpath} with the fixed color $i$ and with $B_{\rm used}=V(P_{\rm abs})\cup V(C_1\cup C_2)\cup A_{\rm bal}$.
  Since $|B_{\rm used}|\le \beta n+40+\gamma_{\rm abs}n/2\le \beta_1 n$
  and $|A\cap B_{\rm used}|\le\zeta^2n\le\zeta n/2$ by the hierarchy,
  \cref{lem:longpath} gives two vertex-disjoint tight paths $Q_1,Q_2$ in
  $H-(W\cup B_{\rm used})$ whose ordered end pairs have color $i$, and
  which cover all but at most $\gamma_{\rm cov}n$ vertices of
  $V(H)\setminus(A\cup W\cup B_{\rm used})$.
  Let $L_{\rm cov}$ denote this uncovered set, so $|L_{\rm cov}|\le\gamma_{\rm cov}n$.

  \proofstep{5}{Connecting the paths into a cycle}
  Arrange the five paths $P_{\rm abs},Q_1,Q_2,R_1,R_2$ in an arbitrary cyclic
  order.  Every end pair has color $i$, so every attachment pair also lies
  in $\mathcal C_i$.  Use four robust connections between consecutive
  segments and \cref{lem:gluing}(i) to obtain one tight path containing all
  five segments.  A fifth robust connection between its two attachment
  pairs closes it into a tight cycle by \cref{lem:gluing}(ii).

  At each step, the forbidden set in the definition of robust tight
  connectedness contains all reservoir vertices already used except the
  vertices in the two attachment pairs selected for that connection.
  The paths $Q_1,Q_2$ may already contain at most $\zeta |A|/2$ vertices
  from $A$, coming from the connections used in \cref{lem:longpath}, and
  $P_{\rm abs}$ uses at most $\zeta^2n$ reservoir vertices in its final
  RCC cover.  In the present step we use at most five further
  reservoir connections.  Since
  $|A|\le2\eta n$ and $\eta\ll1$, the total number of reservoir vertices
  that must be avoided is at most
  $6\zeta^2n+\zeta |A|/2\le\zeta n$.
  This yields a tight cycle $\Gamma$ containing
  $P_{\rm abs},Q_1,Q_2,R_1,R_2$ as subpaths.

  Let $L_0=V(H)\setminus V(\Gamma)$.
  Then $L_0$ is contained in the union of the reservoir vertices not used by the connecting paths, the exceptional set $W$, the balancing set $A_{\rm bal}$, the leftover set $L_{\rm cov}$, and the vertices of $C_1\cup C_2$ not covered by $R_1\cup R_2$.
  Hence
  \[
    |L_0|\le 2\eta n+6+|A_{\rm bal}|+\gamma_{\rm cov}n+30
    \le \gamma_{\rm abs}n.
  \]

  \proofstep{6}{Adjusting divisibility}
  Replacing $R_j$ by $R_j^+$ increases the number of covered vertices by $10$, and therefore decreases the size of the leftover set by $10$.
  Since $10\equiv1\pmod3$, choose $J\subseteq\{1,2\}$ such that $|J|\equiv |L_0|\pmod3$.
  For every $j\in J$, replace the segment $R_j$ of $\Gamma$ by $R_j^+$.
  Let the resulting tight cycle be $\Gamma'$, and put $L=V(H)\setminus V(\Gamma')$.
  Then $|L|\in3\mathbb N$ and $|L|\le |L_0|\le \gamma_{\rm abs}n$.

  Note that $A_{\rm bal}\subseteq L$.
  If $\mathcal P_0$ has two parts, say $\mathcal P_0=\{X,Y\}$, then $L$ satisfies the balance condition required by \cref{absorber2}.
  Indeed, $A_{\rm bal}$ has both part ratios in $\left[ \frac13+\frac\alpha3,\, \frac23-\frac\alpha3 \right]$,
  while the contribution from $L\setminus A_{\rm bal}$ has size at most $2\eta n+\gamma_{\rm cov}n+O(1)$.
  Since $\max\{\eta,\gamma_{\rm cov}\}\ll\gamma_{\rm abs}\ll\alpha$, and $|A_{\rm bal}|\ge \gamma_{\rm abs}n/3$, it follows that
  \[
    \frac{|L\cap X|}{|L|},
    \frac{|L\cap Y|}{|L|}
    \in
    \left[
      \frac13+\frac\alpha4,\,
      \frac23-\frac\alpha4
    \right].
  \]
  If $\mathcal P_0$ has one part, no balance condition is needed.

  \proofstep{7}{Absorbing the leftover}
  The cycle $\Gamma'$ contains $P_{\rm abs}$ as a subpath.  By the
  absorbing property of \cref{absorber2}, applied to $L$, there is a path
  $P_{\rm abs}'$ with the same ordered end pairs and
  $V(P_{\rm abs}')=V(P_{\rm abs})\cup L$.  Replacing $P_{\rm abs}$ by
  $P_{\rm abs}'$ in $\Gamma'$ gives a tight Hamiltonian cycle of $H$.
\end{proof}

We now are ready to prove \cref{thm:main_codegree}.

\begin{proof}[Proof of \cref{thm:main_codegree}]
  Fix $d,\alpha>0$.  Since decreasing either $d$ or $\alpha$ only weakens
  the hypotheses, we may replace them by smaller positive constants and
  assume throughout that $\max\{d,\alpha\}\le 10^{-3}$.
  Choose constants so that all applications below are valid and
  \(
    1/n\ll \rho\ll \zeta\ll \eta\ll \min\{\alpha,d\}.
  \)
  In particular, choose $\zeta$ below the threshold in
  \cref{equivalentclass,dividecase}, and choose $\rho$ below the thresholds
  required in \cref{maincase1,nonswitchercase}.

  Let $H$ be an $n$-vertex $(\rho,d)$-dense $3$-graph with
  $\delta_2(H)\ge(1/3+\alpha)n$.
  Put $m=\lceil\eta n\rceil$ and choose $A$ uniformly at random among all
  $m$-subsets of $V(H)$.  Thus
  \(
    \eta n\le |A|\le 2\eta n
  \)
  for all sufficiently large $n$.
  For each pair $uv$, the random variable $\deg_A(u,v)$ is hypergeometric
  with mean
  \(
    \frac{|A|}{n}\deg_H(u,v)
    \ge \left(\frac13+\alpha\right)|A|.
  \)
  The hypergeometric Chernoff bound and a union bound over the at most $n^2$
  pairs imply that there is a choice of $A$ such that
  \[
    \deg_A(u,v)\ge
    \left(\frac13+\frac\alpha2\right)|A|
    \qquad\text{for every }uv\in\binom{V(H)}2.
  \]

  Apply \cref{equivalentclass} with parameters $\alpha/2$ and $\eta$ to
  obtain at most two robust connection components, and then apply
  \cref{dividecase}.
  If Case~\ref{case1} or Case~\ref{case2} occurs, \cref{maincase1} gives a
  tight Hamiltonian cycle in $H$.
  If Case~\ref{case3} occurs, \cref{nonswitchercase} gives a tight Hamiltonian
  cycle in $H$.
  These cases exhaust all possibilities.
\end{proof}
\section{Lower bound constructions}\label{const}
This section gives the lower-bound constructions used to show that the main thresholds are asymptotically sharp.
The common idea is to start with a random $2$-edge-coloring of the complete graph and then take hyperedges from the monochromatic triangles.
In the vertex-degree construction, we additionally delete one class of monochromatic triangles to create a divisibility obstruction.

The following concentration lemma is used repeatedly to check uniform density in these random coloring constructions.
It is stated separately because the triangle indicators are not independent.

\begin{lemma}\label{lem:coloring_density_concentration}
  Let $V$ be an $n$-vertex set and let $S\subseteq V$ be fixed with $|S|\le 1$.
  Suppose that a $3$-graph $H$ on $V$ is obtained from an edge-coloring of the complete graph on $V$ in which the colors of the edges in a fixed set $F\subseteq\binom V2$ are prescribed and the colors of the remaining edges are mutually independent random variables.
  Assume that there is a constant $p_0>0$ such that every triple $T\in\binom{V\setminus S}{3}$ becomes an edge of $H$ with probability at least $p_0$.
  Then, for every $\rho>0$, asymptotically almost surely,
  $e_H(U_1,U_2,U_3) \ge p_0|U_1||U_2||U_3|-\rho n^3$
  for all sets $U_1,U_2,U_3\subseteq V$, not necessarily pairwise disjoint.
\end{lemma}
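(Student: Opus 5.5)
The plan is to prove a concentration statement for the count $e_H(U_1,U_2,U_3)$ for each fixed triple of sets, and then take a union bound over all choices. There are at most $2^{3n}$ choices of $(U_1,U_2,U_3)$, so the failure probability for a single triple must be $\exp(-\Omega(n^2))$. The bounded differences inequality (\cref{lem:bounded_differences}) gives exactly this, because the underlying independent variables are the colors of the $\le\binom n2$ unprescribed pairs.

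Fix $U_1,U_2,U_3\subseteq V$ and set $f=e_H(U_1,U_2,U_3)$, viewed as a function of the independent colors $X_e$, $e\in\binom V2\setminus F$.

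\textbf{Expectation.} For the lower bound on $\mathbb E f$, sum over ordered triples of distinct entries.
\begin{itemize}
  \item Triples whose underlying set $T$ avoids $S$ are edges with probability at least $p_0$.
  \item Since $|S|\le1$, at most $3n^2$ ordered triples meet $S$.
  \item Ordered triples with a repeated entry number at most $3n^2$, since $|U_1||U_2||U_3|$ counts all tuples.
\end{itemize}
Hence $\mathbb E f\ge p_0|U_1||U_2||U_3|-6n^2$.

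\textbf{Lipschitz constant.} Changing the color of one pair $xy$ can only affect whether triples containing both $x$ and $y$ are edges. There are at most $n$ such unordered triples, and each corresponds to at most $6$ ordered triples. So $c_e\le 6n$, and therefore $\sum_e c_e^2\le \binom n2\cdot 36n^2\le 18n^4$.

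\textbf{Concentration and union bound.} Apply \cref{lem:bounded_differences} with $t=\rho n^3/2$. This gives
\[
\mathbb P\bigl[f\le \mathbb E f-\rho n^3/2\bigr]\le 2\exp\bigl(-\rho^2n^6/(36n^4)\bigr)=2\exp(-\rho^2n^2/36).
\]
A union bound over the $8^n$ choices of $(U_1,U_2,U_3)$ keeps the total failure probability $o(1)$. On the complementary event, for every triple of sets,
\[
e_H(U_1,U_2,U_3)\ge p_0|U_1||U_2||U_3|-6n^2-\rho n^3/2\ge p_0|U_1||U_2||U_3|-\rho n^3
\]
once $n$ is large.

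The only subtle point is the dependence between triangle indicators, and this is exactly why we work with the independent edge colors rather than the triangles. The Lipschitz bound $O(n)$ per color is what makes the variance proxy $O(n^4)$, which is small enough against the deviation $\rho n^3$ and the $2^{O(n)}$ union bound. Prescribed colors on $F$ are simply constants and cause no difficulty.
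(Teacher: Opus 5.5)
Your proof is correct and follows essentially the same route as the paper: a lower bound $\mathbb{E}[e_H(U_1,U_2,U_3)]\ge p_0|U_1||U_2||U_3|-O(n^2)$, a Lipschitz constant of $6n$ per independently colored pair, the bounded differences inequality giving failure probability $\exp(-\Omega(n^2))$, and a union bound over the $8^n$ choices of $(U_1,U_2,U_3)$. Like the paper, you implicitly use that whether a triple is an edge depends only on the colors of its own three pairs; this holds in every construction where the lemma is applied.
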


\begin{proof}
  Fix sets $U_1,U_2,U_3\subseteq V$ and let
  $Z\coloneqq e_H(U_1,U_2,U_3)$.
  Among the ordered triples in $U_1\times U_2\times U_3$, at most $3n^2$
  have two equal entries and at most $3n^2$ contain the possible vertex of
  $S$.  Thus at least $|U_1||U_2||U_3|-6n^2$ of them have distinct entries
  and avoid $S$.
  Hence
  \[
    \mathbb{E}[Z]\ge p_0\bigl(|U_1||U_2||U_3|-6n^2\bigr).
  \]

  Changing the color of one random pair can affect at most $6n$ ordered
  triples counted by $Z$.
  By \cref{lem:bounded_differences}, for every fixed $\tau>0$,
  \[
    \mathbb{P}\left[Z<\mathbb{E}[Z]-\tau n^3\right]
    \le \exp(-\Omega_\tau(n^2)).
  \]
  There are at most $8^n$ choices for the ordered triple $(U_1,U_2,U_3)$.
  Taking $\tau=\rho/2$ and applying a union bound, we get that asymptotically almost surely, for all such triples $(U_1,U_2,U_3)$,
  \[
    Z\ge p_0|U_1||U_2||U_3|-6p_0n^2-\frac{\rho}{2}n^3.
  \]
  Since $6p_0n^2\le \rho n^3/2$ for all sufficiently large $n$, the desired inequality follows.
\end{proof}

Before treating the individual constructions, we record a simple consequence of using monochromatic triangles as hyperedges.

\begin{fact}\label{fact:monochromatic-tight-cycle}
  Let $H$ be a $3$-graph obtained from an edge-coloring of a complete graph by taking some monochromatic triangles as hyperedges.
  Then every tight cycle in $H$ is monochromatic in the underlying edge-coloring.
\end{fact}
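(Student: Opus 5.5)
The plan is to show that any two hyperedges that are consecutive in a tight cycle have the same colour, and then propagate this around the cycle. Let $C=v_1v_2\dots v_m$ be a tight cycle in $H$, with $m\ge4$ and indices taken modulo $m$. Every edge $e_j=v_jv_{j+1}v_{j+2}$ is a hyperedge of $H$. By assumption, each hyperedge of $H$ is a monochromatic triangle of the underlying colouring. So every $e_j$ has a well-defined colour $c_j$, namely the common colour of its three pairs $v_jv_{j+1}$, $v_{j+1}v_{j+2}$ and $v_jv_{j+2}$.

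The key step is the comparison of consecutive edges. The edges $e_j$ and $e_{j+1}$ share the pair $v_{j+1}v_{j+2}$. This pair is a side of both triangles, so
\[
  c_j=\phi(v_{j+1}v_{j+2})=c_{j+1},
\]
where $\phi$ denotes the underlying edge-colouring. Inducting along $j=1,\dots,m-1$ gives $c_1=c_2=\cdots=c_m$.

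It remains to check that every shadow pair of the cycle is covered by this argument. Each pair $v_jv_{j+1}$ and each pair $v_jv_{j+2}$ is a side of $e_j$, so every such pair has colour $c_1$. Hence all hyperedges of $C$, and all pairs in its shadow, receive a single colour, which is what is meant by $C$ being monochromatic.

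I do not expect any real obstacle here. The only point needing care is to state precisely what ``monochromatic'' means: all edges of the cycle are monochromatic triangles of one common colour. The argument does not use cyclicity beyond connectedness, so the same conclusion holds for tight paths.
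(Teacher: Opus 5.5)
Your proof is correct and is essentially the paper's argument: consecutive edges $v_jv_{j+1}v_{j+2}$ and $v_{j+1}v_{j+2}v_{j+3}$ share the pair $v_{j+1}v_{j+2}$, so they have the same colour, and this equality propagates around the cycle. Your closing remark is also right: only tight connectivity is used, so the same conclusion holds for tight paths.
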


\begin{proof}
  Two consecutive hyperedges of a tight cycle share a graph edge.
  Since each hyperedge is a monochromatic triangle, the two hyperedges have
  the same color.  This equality propagates around the cycle.
\end{proof}

\subsection[Proof of the codegree lower bound]{Proof of \cref{thm:codegree_lower_bound}}

The first construction shows that the threshold in \cref{thm:main_codegree} cannot be lowered below $n/3$ asymptotically.
The construction is shown in \cref{coloring3unicodegree}.

\begin{figure}[htbp]\centering
  \begin{tikzpicture}[scale=0.6, transform shape]\tikzstyle{set} = [draw, circle, minimum size=3.2cm, thick, fill=gray!5]\tikzstyle{vertex} = [fill, circle, inner sep=1.8pt]\tikzstyle{rededge} = [draw, red, thick]\tikzstyle{blueedge} = [draw, blue, thick]\tikzstyle{reddashed} = [draw, red, thick, dashed]\tikzstyle{bluedashed} = [draw, blue, thick, dashed]

    \node[set] (X) at (0, 3.8) {};
    \node[set] (Y) at (-3.5, 0) {};
    \node[set] (Z) at (3.5, 0) {};

    \node at (0, 5.8) {\Large \textbf{$X$}};
    \node at (-5, -1.5) {\Large \textbf{$Y$}};
    \node at (5, -1.5) {\Large \textbf{$Z$}};

    \node[vertex] (v) at (0, 0) {};
    \node[above right, font=\large] at (v) {$v$};

    \node[vertex] (x1) at (-0.8, 3.8) {};
    \node[vertex] (x2) at (0.8, 3.8) {};
    \node[above left] at (x1) {$x_1$};
    \node[above right] at (x2) {$x_2$};

    \node[vertex] (y1) at (-2.5, -0.8) {};
    \node[vertex] (y2) at (-4.2, -0.2) {};
    \node[above right] at (y1) {$y_1$};
    \node[below left] at (y2) {$y_2$};

    \node[vertex] (z1) at (2.5, -0.8) {};
    \node[vertex] (z2) at (4.2, -0.2) {};
    \node[above left] at (z1) {$z_1$};
    \node[below right] at (z2) {$z_2$};

    \draw[rededge] (v) -- (x1);
    \draw[rededge] (v) -- (x2);
    \draw[rededge] (v) -- (y1);
    \draw[rededge] (v) -- (z1);

    \draw[blueedge] (x1) -- (y1);
    \draw[blueedge] (x2) -- (z1);
    \draw[blueedge] (y1) to[bend right=20] (z1);

    \draw[reddashed] (x1) to[bend left=45] node[above, text=black, font=\small] {$1-\varepsilon$} (x2);
    \draw[bluedashed] (x1) to[bend right=45] node[below, text=black, font=\small] {$\varepsilon$} (x2);

    \draw[reddashed] (y2) to[bend left=45] (y1);
    \draw[bluedashed] (y2) to[bend right=45] (y1);

    \draw[reddashed] (z1) to[bend left=45] (z2);
    \draw[bluedashed] (z1) to[bend right=45] (z2);
  \end{tikzpicture}
  \caption{Illustration of the random $2$-coloring $\chi$. Edges incident to $v$ are red, cross-edges between different parts are blue, and internal edges are red with probability $1-\varepsilon$ and blue with probability $\varepsilon$. The hyperedges of $H$ are the monochromatic triangles under $\chi$.}
  \label{coloring3unicodegree}

\end{figure}
\begin{proof}[Proof of \cref{thm:codegree_lower_bound}]
  Choose $0<\varepsilon\ll\gamma$ and set $d=\varepsilon^3$.
  Let $V=X\cup Y\cup Z\cup\{v\}$, where $|X|,|Y|,|Z|$ are as equal as possible.
  Thus each of $|X|,|Y|,|Z|$ is at least $n/3-2$.

  We color the pairs of $V$ as follows.
  Pairs inside $X,Y,Z$ are colored red with probability $1-\varepsilon$ and blue with probability $\varepsilon$, independently.
  Pairs between two different parts among $X,Y,Z$ are colored blue.
  Pairs incident to $v$ are colored red.
  Let $H$ be the $3$-graph whose edges are the monochromatic triangles.

  We first check the minimum codegree.
  Let $\{x,y\}$ be a pair of vertices.
  If $x$ and $y$ lie in two different parts among $X,Y,Z$, then every vertex in the third part completes a blue triangle with $x$ and $y$.
  Hence $\deg_H(x,y)\ge n/3-2$.

  Suppose next that $x,y\in X$; the cases inside $Y$ and $Z$ are the same.
  If $\chi(xy)=B$, then every vertex of $Y\cup Z$ completes a blue triangle with $x$ and $y$, so $\deg_H(x,y)\ge 2n/3-4$.
  If $\chi(xy)=R$, then the number of vertices $z\in X\setminus\{x,y\}$ such that $xz$ and $yz$ are both red is distributed as $\mathrm{Bin}(|X|-2,(1-\varepsilon)^2)$.
  Since $\varepsilon\ll\gamma$, Chernoff's bound implies that this number is at least $(1/3-\gamma)n$ with probability $1-\exp(-\Omega(n))$.

  Finally, suppose the pair is $\{v,x\}$ with $x\in X$; the cases $x\in Y$ and $x\in Z$ are identical.
  The number of vertices $z\in X\setminus\{x\}$ such that $xz$ is red is distributed as $\mathrm{Bin}(|X|-1,1-\varepsilon)$.
  Again by Chernoff's bound, this number is at least $(1/3-\gamma)n$ with probability $1-\exp(-\Omega(n))$.

  Taking a union bound over all pairs, we get $\delta_2(H)\ge (1/3-\gamma)n$ asymptotically almost surely.

  We now check uniform density.
  Every triple contained in $V\setminus\{v\}$ becomes a hyperedge with probability at least $\varepsilon^3$.
  Indeed, a triple inside one part is monochromatic with probability $(1-\varepsilon)^3+\varepsilon^3$, a triple meeting exactly two parts is blue with probability $\varepsilon$, and a triple meeting all three parts is always blue.
  Applying \cref{lem:coloring_density_concentration} with $S=\{v\}$ and $p_0=\varepsilon^3$, we obtain that $H$ is $(\rho,d)$-dense asymptotically almost surely.

  It remains to rule out tight Hamiltonian cycles.
  Suppose that $H$ contains a tight Hamiltonian cycle $C$.
  By \cref{fact:monochromatic-tight-cycle}, all hyperedges of $C$ have the same color.
  If $C$ is blue, then it cannot contain $v$, because $v$ is isolated in the blue graph.

  If $C$ is red, then the cyclic ordering of the vertices of $C$ gives a spanning cycle in the underlying red graph: every pair of consecutive vertices of $C$ is contained in a red hyperedge and hence is a red graph edge.
  However, after deleting $v$, the red graph has no edges between the parts $X,Y,Z$.
  Thus $v$ is a cut vertex of the red graph, and a graph with a cut vertex cannot contain a spanning cycle.
  This is a contradiction.

  The codegree and density events both hold with probability $1-o(1)$.
  Hence there exists a coloring with all the required properties.
\end{proof}

\subsection[Proof of the vertex-degree lower bound]{Proof of \cref{thm:vertex_degree_lower_bound}}

We next show that uniform density does not lower the vertex-degree threshold.
Here the obstruction comes from an unbalanced two-part partition.
\begin{figure}[htbp]\centering
  \begin{tikzpicture}[scale=0.7, transform shape]\tikzstyle{setA} = [draw, circle, minimum size=4cm, thick, fill=gray!5]\tikzstyle{setB} = [draw, circle, minimum size=3cm, thick, fill=gray!5]\tikzstyle{vertex} = [fill, circle, inner sep=1.8pt]\tikzstyle{blueedge} = [draw, blue, thick]\tikzstyle{reddashed} = [draw, red, thick, dashed]\tikzstyle{bluedashed} = [draw, blue, thick, dashed]

    \node[setA] (A) at (-2.5, 0) {};
    \node[setB] (B) at (2.5, 0) {};

    \node at (-2.5, 2.5) {\Large \textbf{$X$}};
    \node at (2.5, 2) {\Large \textbf{$Y$}};
    \node at (-2.5, -2.5) {\small $|X|=2n/3+1$};
    \node at (2.5, -2) {\small $|Y|=n/3-1$};

    \node[vertex] (a1) at (-2.5, -1) {};
    \node[vertex] (a2) at (-2.5, 1) {};
    \node[above] at (a1) {$a_1$};
    \node[above] at (a2) {$a_2$};

    \node[vertex] (b1) at (2.7, -1) {};
    \node[vertex] (b2) at (2.7, 1) {};
    \node[above] at (b1) {$b_1$};
    \node[above] at (b2) {$b_2$};

    \draw[blueedge] (a1) -- (b1) node[midway, above, text=black] {$\chi=B$};
    \draw[blueedge] (a2) to[bend left=15] (b2);

    \draw[reddashed] (a1) to[bend left=60] node[left, text=black, font=\small] {$1-\varepsilon$} (a2);
    \draw[bluedashed] (a1) to[bend right=60] node[right, text=black, font=\small] {$\varepsilon$} (a2);

    \draw[bluedashed] (b1) to[bend left=60] node[left, text=black, font=\small] {$1-\varepsilon$} (b2);
    \draw[reddashed] (b1) to[bend right=60] node[right, text=black, font=\small] {$\varepsilon$} (b2);
  \end{tikzpicture}
  \caption{Illustration of the random $2$-coloring $\chi$. Cross-edges are blue. Inside $X$, edges are red with probability $1-\varepsilon$. Inside $Y$, edges are blue with probability $1-\varepsilon$. The hyperedges of $H$ are the monochromatic triangles under $\chi$, except that blue triangles contained entirely in $X$ are deleted.}
  \label{fig:construction_vertex_degree}

\end{figure}
\begin{proof}[Proof of \cref{thm:vertex_degree_lower_bound}]
  Choose $0<\varepsilon\ll\gamma$ and set $d=\varepsilon$.
  Let $V=X\cup Y$, where $|X|=\lfloor 2n/3\rfloor+1$ and $|Y|=n-|X|=\lceil n/3\rceil-1$.
  Color pairs inside $X$ red with probability $1-\varepsilon$ and blue with probability $\varepsilon$, color pairs inside $Y$ blue with probability $1-\varepsilon$ and red with probability $\varepsilon$, and color all pairs in $X\times Y$ blue.
  Let $H$ consist of all monochromatic triangles under this coloring, except that blue triangles contained entirely in $X$ are deleted.

  We first check the minimum vertex degree.
  For $x\in X$, we count red triangles inside $X$ and blue triangles with $x$ and two vertices of $Y$.
  This gives
  \[
    \mathbb{E}[\deg_H(x)]
    \ge
    \binom{|X|-1}{2}(1-\varepsilon)^3
    +\binom{|Y|}{2}(1-\varepsilon)
    \ge
    \left(\frac{5}{9}-\frac{\gamma}{2}\right)\binom{n}{2}
  \]
  for sufficiently large $n$ and $\varepsilon\ll\gamma$.
  This is only a lower bound, since there may also be blue triangles of type $XXY$.

  For $y\in Y$, we count blue triangles inside $Y$ and blue triangles with one vertex in $X$ and two vertices in $Y$.
  Thus
  \[
    \mathbb{E}[\deg_H(y)]
    \ge
    \binom{|Y|-1}{2}(1-\varepsilon)^3
    +|X|(|Y|-1)(1-\varepsilon)
    \ge
    \left(\frac{5}{9}-\frac{\gamma}{2}\right)\binom{n}{2}.
  \]
  Fix a vertex $v$.  Changing the color of a random pair incident with
  $v$ can affect $\deg_H(v)$ by at most $n$, and there are at most $n-1$
  such random pairs.  Changing the color of a random pair not incident with
  $v$ affects at most the single triple obtained by adding $v$, and hence
  changes $\deg_H(v)$ by at most $1$.  Therefore
  \[
    \sum_e c_e^2\le(n-1)n^2+\binom{n-1}{2}=O(n^3).
  \]
  By \cref{lem:bounded_differences}, a deviation of order $\gamma n^2$
  has probability $\exp(-\Omega_\gamma(n))$.
  A union bound over all vertices gives $\delta_1(H)\ge \left(\frac{5}{9}-\gamma\right)\binom{n}{2}$ asymptotically almost surely.

  We now check uniform density.
  Every triple of vertices becomes an edge of $H$ with probability at least $\varepsilon$.
  For triples inside $X$, a red triangle appears with probability $(1-\varepsilon)^3\ge \varepsilon$.
  For triples with two vertices in $X$ and one in $Y$, the two cross-pairs are blue and the internal pair of $X$ is blue with probability $\varepsilon$.
  The other cases have probability at least $(1-\varepsilon)^3$.
  Applying \cref{lem:coloring_density_concentration} with $S=\emptyset$ and $p_0=\varepsilon$, we obtain that $H$ is $(\rho,d)$-dense asymptotically almost surely.

  It remains to rule out tight Hamiltonian cycles.
  Suppose that $H$ contains a tight Hamiltonian cycle $C$.
  By \cref{fact:monochromatic-tight-cycle}, all hyperedges of $C$ have the same color.
  If $C$ is red, then the underlying graph cycle of $C$ lies in the red graph.
  This is impossible because all pairs between $X$ and $Y$ are blue, so the red graph is disconnected.

  If $C$ is blue, then every hyperedge of $C$ contains at least one vertex from $Y$, since all blue triangles contained entirely in $X$ were deleted.
  A tight Hamiltonian cycle on $n$ vertices has exactly $n$ hyperedges, and each vertex lies in exactly three of them.
  Hence
  \[
    3|Y|=\sum_{e\in E(C)}|e\cap Y|\ge n,
  \]
  which implies $|Y|\ge n/3$.
  This contradicts $|Y|=\lceil n/3\rceil-1<n/3$.
  Thus $H$ contains no tight Hamiltonian cycle.

  The degree and density events hold simultaneously with positive probability, and the construction follows.
\end{proof}

\subsection[Proof of the Ara{\'u}jo--Piga--Schacht lower bound]{Proof of \cref{thm:araujo_piga_schacht}}

Finally, we give a second codegree construction.

\begin{figure}[htbp]\centering
  \begin{tikzpicture}[scale=0.7, transform shape]\tikzstyle{set} = [draw, circle, minimum size=3.5cm, thick, fill=gray!5]\tikzstyle{vertex} = [fill, circle, inner sep=1.8pt]\tikzstyle{rededge} = [draw, red, thick]\tikzstyle{blueedge} = [draw, blue, thick]\tikzstyle{reddashed} = [draw, red, thick, dashed]\tikzstyle{bluedashed} = [draw, blue, thick, dashed]

    \node[set] (X) at (-3.5, -1) {};
    \node[set] (Y) at (3.5, -1) {};

    \node at (-3.5, 1.2) {\Large \textbf{$X$}};
    \node at (3.5, 1.2) {\Large \textbf{$Y$}};

    \node[vertex] (v) at (0, 3) {};
    \node[above right, font=\large] at (v) {$v$};

    \node[vertex] (x1) at (-4.5, -1) {};
    \node[vertex] (x2) at (-2.5, -1) {};
    \node[below] at (x1) {$x_1$};
    \node[below] at (x2) {$x_2$};

    \node[vertex] (y1) at (2.5, -1) {};
    \node[vertex] (y2) at (4.5, -1) {};
    \node[below] at (y1) {$y_1$};
    \node[below] at (y2) {$y_2$};

    \draw[rededge] (v) -- (x1);
    \draw[rededge] (v) -- (x2);
    \draw[rededge] (v) -- (y1);
    \draw[rededge] (v) -- (y2);

    \draw[blueedge] (x2) -- (y1) node[midway, above, text=black] {$\chi=B$};

    \draw[reddashed] (x1) to[bend left=60] node[above, text=black, font=\small] {$\sqrt{3}-1$} (x2);
    \draw[bluedashed] (x1) to[bend right=60] node[below, text=black, font=\small] {$2-\sqrt{3}$} (x2);

    \draw[reddashed] (y1) to[bend left=60] node[above, text=black, font=\small] {$\sqrt{3}-1$} (y2);
    \draw[bluedashed] (y1) to[bend right=60] node[below, text=black, font=\small] {$2-\sqrt{3}$} (y2);
  \end{tikzpicture}
  \caption{Illustration of the random $2$-coloring $\chi$. The vertex set is $X\cup Y\cup\{v\}$. Edges incident to $v$ are red, cross-edges between $X$ and $Y$ are blue, and internal edges are red with probability $\sqrt{3}-1$ and blue with probability $2-\sqrt{3}$.}
  \label{fig:construction_simon}
\end{figure}
\begin{proof}[Proof of \cref{thm:araujo_piga_schacht}]
  Put $q \coloneqq 2-\sqrt{3}$ and $p \coloneqq \sqrt{3}-1$, so $p+q=1$ and $p^2=2q$.
  Let $V=X\cup Y\cup\{v\}$, where $|X|$ and $|Y|$ are as equal as possible.
  Color pairs inside $X$ and inside $Y$ red with probability $p$ and blue with probability $q$, independently.
  Color all pairs in $X\times Y$ blue, and all pairs incident to $v$ red.
  Let $H$ be the $3$-graph consisting of all monochromatic triangles.

  We first check the minimum codegree.
  Let $\{x,y\}$ be a pair.
  First suppose that $x,y$ lie in the same part, say $X$.
  If $\chi(xy)=R$, then the number of vertices $z\in X\setminus\{x,y\}$ such that $xz$ and $yz$ are both red is distributed as $\mathrm{Bin}(|X|-2,p^2)$.
  Since $p^2|X|=(q-o(1))n$, Chernoff's bound gives at least $(q-\gamma)n$ such vertices asymptotically almost surely.
  If $\chi(xy)=B$, then every vertex of $Y$ completes a blue triangle with $x$ and $y$, so $\deg_H(x,y)\ge |Y|\ge(q-\gamma)n$ for all sufficiently large $n$.

  If $x\in X$ and $y\in Y$, then $\chi(xy)=B$.
  A vertex $z\in X\setminus\{x\}$ completes a blue triangle exactly when $xz$ is blue, and a vertex $z\in Y\setminus\{y\}$ completes a blue triangle exactly when $yz$ is blue.
  The total number of such vertices has expectation $(q-o(1))n$, and Chernoff's bound gives degree at least $(q-\gamma)n$ asymptotically almost surely.

  Finally, suppose that the pair is $\{v,x\}$ with $x\in X$; the case $x\in Y$ is the same.
  The number of vertices $z\in X\setminus\{x\}$ such that $xz$ is red has expectation $(p/2-o(1))n$.
  Since $p/2>q$, Chernoff's bound gives at least $(q-\gamma)n$ such vertices asymptotically almost surely.
  A union bound over all pairs yields $\delta_2(H)\ge(q-\gamma)n$ asymptotically almost surely.

  We now check uniform density.
  We check triples not containing $v$.
  If a triple is contained in $X$ or in $Y$, then it is monochromatic with probability $p^3+q^3>q$.
  If it has vertices in both $X$ and $Y$, then the two cross-pairs are blue, and the triple is blue exactly when the unique internal pair is blue, which happens with probability $q$.
  Therefore every triple in $V\setminus\{v\}$ becomes an edge with probability at least $q$.
  Applying \cref{lem:coloring_density_concentration} with $S=\{v\}$ and $p_0=q$, we obtain that $H$ is $(\rho,q)$-dense asymptotically almost surely.

  It remains to rule out tight Hamiltonian cycles.
  Suppose that $H$ contains a tight Hamiltonian cycle $C$.
  By \cref{fact:monochromatic-tight-cycle}, all hyperedges of $C$ have the same color.
  If $C$ is blue, then it cannot contain $v$, because $v$ is isolated in the blue graph.
  If $C$ is red, then the underlying graph cycle of $C$ lies in the red graph.
  However, after deleting $v$, there are no red pairs between $X$ and $Y$, so $v$ is a cut vertex of the red graph.
  Hence the red graph contains no spanning cycle, a contradiction.

  The codegree and density events hold simultaneously with positive probability, and the construction follows.
\end{proof}

\section{Concluding remarks}\label{sec:concludingremark}

The arguments and constructions above leave several natural directions for further study.
One is to seek analogous Hamiltonicity results for uniformly dense $k$-graphs.
For larger uniformity, both the appropriate absorbing structure and the extremal examples appear less transparent.

Even in the $3$-uniform case, it remains open whether the positive linear slack in the codegree condition is necessary.
\Cref{thm:main_codegree} proves Hamiltonicity above $(1/3+\alpha)n$, while \cref{thm:codegree_lower_bound} shows that the constant $1/3$ cannot be improved asymptotically.
In our proof, the slack above $n/3$ is used at several points, in particular to obtain the large monochromatic set $V_0$ in \cref{lem:monosetexist}.
This leads to the following problem.

\begin{problem}
  Does every sufficiently large $(\rho,d)$-dense $3$-graph $H$ with $\delta_2(H)\ge n/3$ contain a tight Hamiltonian cycle?
\end{problem}



It would also be interesting to extend our result to $k$-uniform hypergraphs. A natural pseudorandomness assumption is full $(k-2)$-layout density, which coincides with uniform density when $k=3$. Under this assumption, it seems plausible that the asymptotically optimal codegree threshold is $n/k$, although a higher-dimensional analogue of the dominant-color argument is still needed.

\section*{Acknowledgments}

\begin{sloppypar}
  Y.C.~and X.L.~are grateful for the research visit to the second author at Beijing Institute of Technology in January 2026, where this work was conducted.
  X.L. was supported by the Excellent Young Talents Program (Overseas) of the National Natural Science Foundation of China.  Y.C. was supported by the Institute for Basic Science (IBS-R029-C4).
  J.H. was partially supported by the National Natural Science Foundation of China (12371341).
\end{sloppypar}

\bibliographystyle{abbrv}
\bibliography{hamiltoniancycle}

\begin{appendix}

  \renewcommand{\appendixpagename}{Appendix}
  \appendixpage

  \section{Auxiliary results and proofs for RCC-class absorption}
  \label{app:indexed-absorber}

  \subsection{cover lemmas}

  \begin{lemma}[Greedy RCC cover]
    \label{lem:greedy-rcc-cover}
    In the setting of \cref{def:rcc-cover}, suppose that
    $(\Omega,\kappa)$ is $(A,\chi,L)$-RCC-connectable, where $L$ is
    independent of $n$, and that
    \(
      |A\cap Z|+\frac{\theta L}{k-1}n\le\chi n,
      ~
      \theta\left(1+\frac{L}{k-1}\right)\le\varepsilon,
      ~
      \frac{\theta L}{k-1}\le\omega.
    \)
    Then $(\Omega,\kappa)$ has the
    $(A,Z;\theta,\varepsilon,\omega)$-RCC cover property.
  \end{lemma}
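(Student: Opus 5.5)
The plan is to build the extension greedily by merging paths one pair at a time. Let $\mathcal Q$ be a nonempty family of vertex-disjoint $\Omega$-ended tight paths in $H-(A\cup Z)$ with $|V(\mathcal Q)|\le\theta n$. Since each path has at least $k-1$ vertices, $|\mathcal Q|\le\theta n/(k-1)$.

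We maintain a current family $\mathcal P$ of vertex-disjoint tight paths in $H-Z$, initialised as $\mathcal P=\mathcal Q$. Throughout we keep three invariants: every end of every member of $\mathcal P$ is an original end of a member of $\mathcal Q$; each member of $\mathcal Q$ is a subpath of exactly one member of $\mathcal P$; and all new vertices lie in $A\setminus Z$.

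While $|\mathcal P|>r$, we do the following merge step.
\begin{enumerate}[label=\rm(\alph*)]
\item Pick $r+1$ distinct members of $\mathcal P$ and one end $p_j$ of each.
\item Since $\Omega$ is reversal-closed, the attachment pairs $p_j^{\rm rev}$ lie in $\Omega$ and are pairwise vertex-disjoint.
\item Because $\kappa$ takes only $r$ values, pigeonhole gives two of these attachment pairs, say $p_a^{\rm rev},p_b^{\rm rev}$, with the same $\kappa$-value.
\item Set $F$ to be $(A\cap Z)$ together with all vertices of $A$ already in $V(\mathcal P)$, minus $V(p_a)\cup V(p_b)$.
\item RCC-connectability gives a tight $p_a^{\rm rev}p_b^{\rm rev}$-path $C$ on at most $L$ vertices whose internal vertices lie in $A\setminus F$, provided $|F|\le\chi n$.
\item Apply the $(k-1)$-uniform analogue of \cref{lem:gluing}(i), with the same traversal argument. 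This merges the two chosen paths through $C$.
\end{enumerate}
The merged path has the two unchosen ends as its ends, so the invariants persist. The internal vertices of $C$ avoid $Z$ and all previously used vertices: members of $\mathcal Q$ avoid $A$, and earlier connector vertices were placed in $F$.

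Each merge lowers $|\mathcal P|$ by one, so there are fewer than $|\mathcal Q|\le\theta n/(k-1)$ merges. Each merge adds fewer than $L$ new vertices, all in $A$. Hence at every step
\[
|F|\le|A\cap Z|+\frac{\theta L}{k-1}n\le\chi n,
\]
which justifies each application of connectability. At the end,
\[
|V(\mathcal P)|\le\theta n+\frac{\theta L}{k-1}n\le\varepsilon n
\qquad\text{and}\qquad
|A\cap V(\mathcal P)|\le\frac{\theta L}{k-1}n\le\omega n,
\]
and $1\le|\mathcal P|\le r$, as required. If $|\mathcal Q|\le r$ already, no merge is needed.

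The main obstacle is the bookkeeping in the forbidden set, together with verifying that the reversal convention makes the gluing valid. The connector's end pairs are the reversed end pairs. Its internal vertices are disjoint from $V(\mathcal P)$ because those vertices either lie outside $A$ or lie in $F$. The only exceptions are the attachment vertices themselves, which are exactly the shared $(k-1)$-sets needed for gluing.
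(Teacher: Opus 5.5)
Your proposal is correct and follows essentially the same argument as the paper. You bound $|\mathcal Q|\le\theta n/(k-1)$, repeatedly use pigeonhole on the reversed ends of $r+1$ paths to find two in the same $\kappa$-fibre, join them through $A$ while forbidding $A\cap Z$ and the previously used reservoir vertices, and count at most $\theta n/(k-1)$ joins to get the three required bounds. One small wording slip: the gluing step you need is the $k$-uniform analogue of \cref{lem:gluing}(i) with $(k-1)$-tuple ends, not a ``$(k-1)$-uniform'' one.
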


  \begin{proof}
    For a given family $\mathcal Q$, we have
    $|\mathcal Q|\le\theta n/(k-1)$.  While more than $r$ paths remain,
    reverse one end of each of $r+1$ paths and join two ends in the same
    $\kappa$-fibre through $A$, forbidding $A\cap Z$ and all previously used
    reservoir vertices.  The first inequality permits every join.  It
    preserves the two unchosen ends and each path in $\mathcal Q$ as a
    subpath.  At most $\theta n/(k-1)$ joins use at most
    $\theta Ln/(k-1)$ reservoir vertices, so the other two inequalities give
    the required size and reservoir bounds.
  \end{proof}

  For \(3\)-graphs we use a stronger cover lemma.

  \begin{lemma}[Path cover; cf.~{\cite[Lemma~3.1]{Han22}}]
    \label{connecter}
    Given $\alpha>0$, there exists $\zeta_0>0$ such that, for every
    $0<\zeta\le\zeta_0$, there exists $n_0=n_0(\alpha,\zeta)$ for which the
    following holds for all $n\ge n_0$.
    Let $H$ be an $n$-vertex $3$-graph with
    $\delta_2(H)\ge(1/3+\alpha)n$.
    Suppose that $q\ge1$ and that $P_1,\dots,P_q$ are vertex-disjoint tight
    paths in $H$ with
    $|V(P_1\cup\cdots\cup P_q)|\le \zeta n$.
    Then there are $s\in\{1,2\}$ and vertex-disjoint tight paths
    $P_1',\dots,P_s'$ such that all of the following hold:
    \begin{enumerate}[label=\rm(\roman*)]
      \item $P_1',\dots,P_s'$ contain $P_1,\dots,P_q$ as subpaths;
      \item $|V(P_1'\cup\cdots\cup P_s')|\le \sqrt{\zeta}n$;
      \item every end pair of $P_1',\dots,P_s'$ is an end pair of one of
        $P_1,\dots,P_q$.
    \end{enumerate}
  \end{lemma}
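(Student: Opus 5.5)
The plan is to follow Han's connecting argument (cf.~\cite[Lemma~3.1]{Han22}), carried out inside $H$ itself rather than through a separate reservoir. First, a random set $A\subseteq V(H)\setminus V(P_1\cup\cdots\cup P_q)$ of size about $\zeta^{2/3}n$ (or any size between $\zeta n$ and $\sqrt\zeta n/2$) is fixed such that $\deg_A(u,v)\ge(1/3+\alpha/2)|A|$ for every pair. This follows from \cref{lem:chernoff} and a union bound. Next, \cref{equivalentclass} is applied with reservoir $A$, codegree slack $\alpha/2$ and a connection parameter $\zeta'$ with $\zeta\ll\zeta'$. The parameter $\zeta'$ is still chosen small enough in terms of $|A|/n$ that the forbidden sets arising below stay below $\zeta' n$. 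This yields a map $\kappa$ into at most two robust connection components. The relevant consequence is the pigeonhole statement there: among any three pairwise vertex-disjoint ordered pairs, two are robustly connected through $A$ by tight paths with at most $L_{\rm RCC}=O(1)$ vertices.

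The merging is then greedy. While at least three paths remain, pick one end from each of three current paths and pass to their attachment pairs, i.e.\ the reversals. These three pairs are pairwise disjoint, so two of them lie in the same class. Connect those two through $A$, forbidding all reservoir vertices already used, and glue by \cref{lem:gluing}(i). Each step reduces the number of paths by one, preserves the unchosen ends, and keeps every $P_j$ as a subpath. Since the $P_j$ are disjoint and each has at least two vertices, $q\le\zeta n/2$. So at most $q$ joins are performed, using at most $qL_{\rm RCC}\le L_{\rm RCC}\zeta n/2$ reservoir vertices in total.

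For the bookkeeping, the forbidden set must stay below $\zeta' n$. This is guaranteed by choosing $\zeta$ small relative to $\zeta'$ and $L_{\rm RCC}(\alpha,\eta,\zeta')$, where $\eta=|A|/n$. The total size is then at most $\zeta n+L_{\rm RCC}\zeta n\le\sqrt\zeta n$, giving (ii). The final end pairs are ends of the original paths because joins only consume ends, giving (iii). The process stops with $s\in\{1,2\}$ paths, giving (i).

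The main obstacle is the order of quantifiers. $L_{\rm RCC}$ depends on $\zeta'$ and on $\eta$, while $\zeta$ must be chosen afterwards but also governs $\eta$. I would resolve this by fixing $\eta=\eta(\alpha)$ absolutely small first, then $\zeta'\le\zeta_0(\alpha/2,\eta)$, and then $\zeta_0\ll\min\{\zeta',\eta^2,1/L_{\rm RCC}^2\}$. This keeps $|A|\le\sqrt\zeta n/2$ only if $\eta$ is allowed to depend on $\zeta$. If that circularity bites, I would instead drop the separate reservoir and connect through all of $V(H)$ minus the used vertices, i.e.\ take $A=V(H)$. Then $|V(P'_1\cup\cdots\cup P'_s)|\le(1+L_{\rm RCC})\zeta n\le\sqrt\zeta n$ once $\zeta\le\zeta_0$ is small in terms of $L_{\rm RCC}(\alpha,1,\zeta_0)$. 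This works because $L_{\rm RCC}$ is a fixed constant once the connection parameter is fixed at $\zeta_0$.
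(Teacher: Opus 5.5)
Your argument is correct, but it does not follow the paper's route. The paper proves this lemma by citation. For $q\ge2$ it applies \cite[Lemma~3.1]{Han22} as a black box. The only thing it adds is the observation that every join in that lemma is a gluing as in \cref{lem:gluing}(i), so the unchosen ends survive and (iii) holds.

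You instead rebuild the merging from the paper's robust-connection machinery. \cref{equivalentclass} provides at most two classes, and pigeonhole on three disjoint attachment pairs then drives a greedy merge down to $s\le2$ paths. This is not circular, because \cref{equivalentclass} rests only on the regular slice lemma and Han's Lemmas~5.1 and~5.2, not on \cref{connecter}. Your version makes (iii) and the dependence of $\zeta_0$ on a single connector length $L_{\rm RCC}$ explicit. It also keeps the appendix self-contained relative to Section~4. The price is that an elementary merging statement is routed through the full regular-slice argument, whereas the citation is one line and uses a lemma built for exactly this purpose.

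Two bookkeeping corrections.

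\emph{The bound $|A|\le\sqrt\zeta n/2$ is never needed.} Unused reservoir vertices do not enter $P_1',\dots,P_s'$, so (ii) only requires $\zeta n+qL_{\rm RCC}\le\sqrt\zeta n$. Your first variant therefore already works if you take $A$ disjoint from the $P_j$ and fix the parameters in this order: $\eta=\eta(\alpha)$, then the connection parameter $\zeta'$, then $\zeta_0$. What is genuinely circular is tying $\eta$ to $\zeta$, e.g.\ $\eta=\zeta^{2/3}$. The admissible connection parameter in \cref{equivalentclass} depends on $\eta$ (its proof keeps it below $\eta^2$), so $\zeta\ll\zeta'$ then becomes impossible.

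\emph{In the $A=V(H)$ variant, do not use $\zeta_0$ itself as the connection parameter.} Instead:
\begin{itemize}
\item fix $\zeta'$ below the threshold of \cref{equivalentclass} for slack $\alpha$ and $\eta=1$;
\item set $L=L_{\rm RCC}(\alpha,1,\zeta')$;
\item take $\zeta_0\le\min\{\zeta'/(1+L),(1+L)^{-2}\}$.
\end{itemize}
The forbidden set must now contain every vertex of the current paths except the four attachment vertices. Its size is at most $(1+L/2)\zeta n\le\zeta' n$, so robust connectedness applies at every step. The same quantity bounds the final order by $\sqrt\zeta n$.
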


  \begin{proof}
    Take $\zeta_0\le1$.  The case $q=1$ is immediate.  For $q\ge2$, apply
    \cite[Lemma~3.1]{Han22} after designating two paths and one end of
    each.  Its successive joins preserve the unchosen ends by
    \cref{lem:gluing}\textup{(i)}, proving~\textup{(iii)}; the other
    assertions are those of that lemma.
  \end{proof}

  \begin{proposition}[RCC covers for $3$-graphs]
    \label{prop:3graph-rcc-cover}
    Let $H$ be an $n$-vertex $3$-graph, let $A,Z\subseteq V(H)$ with
    $|A\cup Z|\le n/2$, and put $H_0=H-(A\cup Z)$ and
    $N_0=|V(H_0)|$.  Suppose that
    \(
      \delta_2(H_0)\ge(1/3+\sigma)N_0.
    \)
    Let $\kappa:\Omega\to[r]$ be surjective, where $r\in\{1,2\}$ and
    $\Omega\subseteq\overrightarrow{\partial}_2(H)$ is reversal-closed.
    Suppose that $(\Omega,\kappa)$ is $(A,\chi,L)$-RCC-connectable and
    $|A\cap Z|\le\chi n$.  When $r=1$, suppose in addition that
    $L\le\omega n$.

    If $n$ is sufficiently large and
    \(
      2\theta\le\zeta_0(\sigma),
      ~
      \sqrt{2\theta}\le\varepsilon/2,
      ~
      \omega\le\varepsilon/2,
    \)
    where $\zeta_0(\sigma)$ is given by \cref{connecter}, then
    $(\Omega,\kappa)$ has the
    $(A,Z;\theta,\varepsilon,\omega)$-RCC cover property.  If
    $r=2$, the resulting paths can moreover be chosen inside $H_0$.
  \end{proposition}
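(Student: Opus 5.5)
Given a nonempty family $\mathcal Q=\{Q_1,\dots,Q_q\}$ of vertex-disjoint $\Omega$-ended tight paths in $H_0=H-(A\cup Z)$ with $|V(\mathcal Q)|\le\theta n$, the plan is to first merge them inside $H_0$ using \cref{connecter}, and then, if needed, join the resulting paths through the reservoir using RCC-connectability. Since $|A\cup Z|\le n/2$ we have $N_0\ge n/2$, so $|V(\mathcal Q)|\le\theta n\le 2\theta N_0\le\zeta_0(\sigma)N_0$. The hypothesis $\delta_2(H_0)\ge(1/3+\sigma)N_0$ therefore lets us apply \cref{connecter} to $H_0$ (with $\zeta=2\theta$, and $N_0$ large since $n$ is). This yields $s\in\{1,2\}$ vertex-disjoint tight paths $P_1',\dots,P_s'$ in $H_0$. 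They contain every $Q_j$ as a subpath, have at most $\sqrt{2\theta}N_0\le(\varepsilon/2)n$ vertices, and each of their end pairs is an end pair of some $Q_j$. In particular, every final end is from $\mathcal Q$ and lies in $\Omega$. No reservoir vertex is used, because the paths lie in $H-(A\cup Z)$.

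\emph{Case $r=2$.} Here $s\le2=r$, so $\mathcal P=\{P_1',\dots,P_s'\}$ already satisfies the RCC cover property. It lies inside $H_0\subseteq H-Z$, uses $0\le\omega n$ vertices of $A$, and has at most $\varepsilon n$ vertices. This also gives the ``moreover'' statement.

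\emph{Case $r=1$.} If $s=1$ we are done as above. If $s=2$, choose one end pair $p$ of $P_1'$ and one end pair $q$ of $P_2'$. Their reversals $p^{\rm rev},q^{\rm rev}$ lie in $\Omega$ by reversal-closedness, and they are vertex-disjoint. Since $\kappa$ is constant, they are in the same fibre. Apply $(A,\chi,L)$-RCC-connectability with forbidden set $F=A\cap Z$, noting that $|F|\le\chi n$ and $F$ avoids $V(p)\cup V(q)\subseteq V(H_0)$. This gives a tight path $C$ on at most $L$ vertices with ends $p^{\rm rev},q^{\rm rev}$ and internal vertices in $A\setminus Z$, hence disjoint from $P_1'\cup P_2'\subseteq H_0$. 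By \cref{lem:gluing}(i), $P_1'\cup C\cup P_2'$ is a single tight path in $H-Z$. Its ends are the unchosen ends of $P_1',P_2'$, hence ends of members of $\mathcal Q$, and it contains each $Q_j$ as a subpath. It uses at most $L\le\omega n$ vertices of $A$, and in total at most $(\varepsilon/2)n+L\le(\varepsilon/2+\omega)n\le\varepsilon n$ vertices.

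The only point needing care is the transfer of the parameters of \cref{connecter} from $n$ to $N_0$: using $N_0\ge n/2$ as above, with $\zeta=2\theta$. One should also check that the end-pair preservation (iii) of \cref{connecter} gives exactly the requirement that ``every final end is from $\mathcal Q$''. I expect this to be routine.
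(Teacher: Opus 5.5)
Your proposal is correct and follows the paper's proof essentially step for step: apply \cref{connecter} in $H_0$ with parameter $2\theta$ using $N_0\ge n/2$, keep the at most two inherited-end paths when $r=2$ or $s=1$, and otherwise join the two paths through $A$ by RCC-connectability with forbidden set $A\cap Z$ and \cref{lem:gluing}(i). The checks you add (that $A\cap Z$ avoids $V(p)\cup V(q)$, that the connector's internal vertices miss $P_1'\cup P_2'$, and that the reversed ends stay in $\Omega$) are exactly the routine verifications the paper leaves implicit.
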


  \begin{proof}
    Let $\mathcal Q$ be a nonempty path family satisfying the conditions in
    \cref{def:rcc-cover}.
    Since $N_0\ge n/2$, it has at most
    $\theta n\le2\theta N_0$ vertices.  Apply \cref{connecter} in $H_0$
    with parameters $\sigma$ and $2\theta$ to obtain at most two
    inherited-end paths on at most
    $\sqrt{2\theta}N_0\le\varepsilon n/2$ vertices.  Retain them if there is
    one path or if $r=2$.  If $r=1$ and there are two, choose one end from
    each and apply RCC-connectability to their reversals, forbidding
    $A\cap Z$; \cref{lem:gluing}\textup{(i)} joins the paths.  The join uses
    at most $L\le\omega n$ reservoir vertices, preserves the two other ends,
    and has total order at most $\varepsilon n/2+\omega n\le\varepsilon n$.
  \end{proof}

  \subsection{Reachability and local absorbers}

  \begin{proposition}\label{prop:absorbernumber}
    Suppose that $1/n\ll \eta \ll \beta \ll \alpha$.
    Let $H$ be an $n$-vertex $3$-graph with $\delta_2(H)\ge \left(\frac{1}{3}+\alpha\right)n$, and let $A\subseteq V(H)$ be a forbidden set of size at most $\eta n$.
    Then, for every $v\in V(H)$, we have $|\tilde N_{A,\beta,1}(v)| \ge \left(\frac{1}{3}+\frac{\alpha}{2}\right)n$.
  \end{proposition}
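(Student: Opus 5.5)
The plan is to follow the uncolored version of the argument already carried out for \cref{prop:absorbernumbernonswitcher}. The difference is that every end pair in the shadow is now admissible, since $\Omega$ is the full ordered shadow. We first isolate a sufficient condition for one-step reachability and then double count. For a vertex $v$, put
\[
  L_v\coloneqq\left\{\{x,y\}\in\tbinom{V(H)\setminus A}{2}:vxy\in E(H)\right\}.
\]
By the codegree condition, $|L_v|\ge\frac12(n-1-|A|)\bigl((1/3+\alpha)n-|A|-1\bigr)\ge n^2/7$. This bound uses $\eta\ll\alpha$.

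\textbf{Step 1 (large common link gives reachability).} Fix $\gamma$ with $\beta\ll\gamma\ll\alpha$. We claim that if $u\ne v$ and $|L_u\cap L_v|\ge\gamma n^2$, then $u,v$ are $(\beta,1)$-reachable in $H-A$. Let $G$ be the graph on $V(H)\setminus(A\cup\{u,v\})$ with edge set $L_u\cap L_v$; it still has at least $\gamma n^2/2$ edges. By graph supersaturation (counting walks, or Cauchy--Schwarz on degrees), $G$ contains at least $c(\gamma)n^4$ labeled paths $abxy$ on four distinct vertices. For each such path, both $a\,b\,u\,x\,y$ and $a\,b\,v\,x\,y$ are tight paths. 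Indeed, $abu,abv\in E(H)$ because $ab\in L_u\cap L_v$; $bux,bvx\in E(H)$ because $bx\in L_u\cap L_v$; and $uxy,vxy\in E(H)$ because $xy\in L_u\cap L_v$. Both paths have three edges and the same ordered ends $(b,a),(x,y)$. Here $k=3$ and $i=1$, so the witness set $T=\{a,b,x,y\}$ has size $(2k-1)i-1=4$. Each $4$-set comes from $O(1)$ labeled paths, so taking $\beta\ll c(\gamma)$ gives at least $\beta n^4$ witness sets $T$.

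\textbf{Step 2 (double counting).} Fix $v$ and suppose, for a contradiction, that $|\tilde N_{A,\beta,1}(v)|<(1/3+\alpha/2)n$. Let $B_0=V(H)\setminus(\tilde N_{A,\beta,1}(v)\cup\{v\})$. By Step~1, every $u\in B_0$ satisfies $|L_u\cap L_v|<\gamma n^2$, and so $\sum_{u\in B_0}|L_u\cap L_v|<\gamma n^3$. On the other hand,
\[
  \sum_{u\in B_0}|L_u\cap L_v|=\sum_{S\in L_v}\deg_{B_0}(S).
\]
We need $u\notin S$ here; this holds since $uS\in E(H)$ forces it. For each $S\in L_v$ we have $\deg_{B_0}(S)\ge(1/3+\alpha)n-(1/3+\alpha/2)n-1\ge\alpha n/3$. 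Hence the sum is at least $\frac{n^2}{7}\cdot\frac{\alpha n}{3}$, which contradicts $\gamma\ll\alpha$.

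\textbf{Main obstacle.} The main point needing care is Step~1. We must check that $A$ is excluded from the witnesses, which is why $L_v$ is defined inside $V(H)\setminus A$. We must also check that the supersaturation count yields genuinely distinct $4$-sets avoiding $u,v$, with the correct witness size $(2k-1)i-1=4$ for $k=3$ and $i=1$. The rest is routine bookkeeping with the hierarchy $\eta\ll\beta\ll\gamma\ll\alpha$.
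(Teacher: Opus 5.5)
Your proposal is correct and follows essentially the same route as the paper. Supersaturation of four-vertex paths $abxy$ in the common link $L_u\cap L_v$ (inside $V(H)\setminus A$) gives $(\beta,1)$-reachability. Double counting pairs $(u,S)$ with $S$ in the link of $v$ then forces $|\tilde N_{A,\beta,1}(v)|\ge(1/3+\alpha/2)n$. Your version is slightly more careful than the paper's one-line inequality, since you explicitly account for $v$ itself and check that the witness sets avoid $A\cup\{u,v\}$.
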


  \begin{proof}
    Choose $\beta\ll\gamma\ll\alpha$.  Let $L'(v)$ be the link graph of
    $v$ induced by $V(H)\setminus A$.  If
    $|E(L'(u)\cap L'(v))|\ge\gamma n^2$, supersaturation gives at least
    $\beta n^4$ four-vertex paths, each witnessing that $u$ and $v$ are
    $(\beta,1)$-reachable.  Thus every non-reachable $u$ satisfies
    $|E(L'(u)\cap L'(v))|<\gamma n^2$.

    Since $|E(L'(v))|\ge n^2/6$, double-counting pairs $(u,S)$ and splitting
    them according to reachability gives
    \[
      \left(\frac{1}{3}+\alpha\right)n|E(L'(v))|
      \le \sum_{S\in E(L'(v))}\deg_H(S)\le
      |\tilde N_{A,\beta,1}(v)||E(L'(v))|+\gamma n^3.
    \]
    The result follows from $\gamma\ll\alpha$.
  \end{proof}

  We use~\cite[Lemma~2.7]{DHS}, whose proof applies to the above
  reachability relation.

  \begin{lemma}
    [{\cite[Lemma~2.7]{DHS}}]
    \label{lem:partition}
    Given $\delta>0$ and $0<\eta\ll\beta\ll\delta$, the following holds for all sufficiently large $n$.
    Suppose that $H$ is an $n$-vertex $3$-graph and $A$ is a forbidden set of size at most $\eta n$ such that $|\tilde N_{A,\beta,1}(v)|\ge \delta n$ for every $v\in V(H)$.
    Then there is a partition $\mathcal P$ of $V(H)$ into $V_1,\dots,V_r$ with $r\le 1/\delta$ such that each $|V_i|\ge(\delta-\eta)n$ and each $V_i$ is $(\beta,2^{\lfloor 1/\delta\rfloor-1})$-closed in $H-A$.
  \end{lemma}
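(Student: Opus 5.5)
The statement is \cref{lem:partition}: under the hypothesis $|\tilde N_{A,\beta,1}(v)|\ge\delta n$ for every $v$, there is a partition into at most $1/\delta$ closed parts. I would run the standard reachability-partition argument of Lo and Markström, as adapted by Han and others. The first ingredient is a \emph{concatenation} property: if $u\in\tilde N_{A,\beta,i}(w)$ and $w\in\tilde N_{A,\beta,j}(v)$, then $u\in\tilde N_{A,\beta',i+j}(v)$ for some $\beta'=\beta'(\beta,i,j)>0$.

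To see this, take a witness set $T_1$ for the pair $(u,w)$ and a disjoint witness set $T_2$ for $(w,v)$ that avoids $u,v$. Then $T_1\cup\{w\}\cup T_2$ is a witness for $(u,v)$ with $i+j$ paths:
\begin{itemize}
\item together with $u$, use the decomposition of $T_1\cup\{u\}$ and of $T_2\cup\{w\}$;
\item together with $v$, use that of $T_1\cup\{w\}$ and of $T_2\cup\{v\}$.
\end{itemize}
The path ends are unchanged, so both completions are paired with the same ordered ends. Counting choices of $w$ (at least $\delta n$ of them, or just $\beta n$ after averaging), $T_1$ and $T_2$, and discarding intersecting choices, gives $\Omega(\beta^2 n^{5(i+j)-1})$ sets, each counted boundedly often. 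We also need \emph{monotonicity} (inflation): $(\beta,i)$-reachable implies $(\beta'',i')$-reachable for $i'\ge i$. This follows by appending disjoint $3$-edge tight paths to the witness; they are plentiful because $H$ has $\Omega(n^3)$ edges. In the degenerate case this can be avoided by phrasing closedness with "at most $i$" steps.

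The partition itself is built greedily as in \cite{DHS}. Say $v$ is \emph{$(\beta,i)$-robust} if many pairs reach each other; more concretely:
\begin{itemize}
\item Pick a largest set $\{v_1,\dots,v_s\}$ of vertices that are pairwise not $(\beta_s,2^{s-1})$-reachable, for a decreasing sequence $\beta\gg\beta_1\gg\beta_2\gg\cdots$.
\item The neighbourhoods $\tilde N(v_j)$ pairwise overlap in at most $o(n)$ vertices; otherwise concatenation through a common vertex makes the pair reachable.
\item Since each neighbourhood has size at least $\delta n$, this forces $s\le 1/\delta$.
\item Form the parts $U_j=\tilde N_{A,\beta_s,2^{s-1}}(v_j)\cup\{v_j\}$, resolving overlaps arbitrarily.
\item By maximality, every other vertex reaches some $v_j$; add it to such a $U_j$.
\end{itemize}

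It remains to show each $U_j$ is closed. Two vertices $x,y\in U_j$ both reach $v_j$, and concatenating through $v_j$ doubles the step count. This is why the exponent $2^{\lfloor1/\delta\rfloor-1}$ appears. The constants must be re-indexed so that the final step count and $\beta$ match the statement, with $\beta$ renamed downward by the hierarchy $\eta\ll\beta\ll\delta$. The bound $|U_j|\ge(\delta-\eta)n$ comes from $|\tilde N(v_j)|\ge\delta n$, after losing at most the $o(n)$ overlaps, which are absorbed into $\eta$.

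The main obstacle is bookkeeping the iteration carefully, so that the step counts stay at most $2^{\lfloor 1/\delta\rfloor-1}$ while the constants $\beta_s$ remain above the required threshold. The standard fix is a top-down choice of constants indexed by $s\le1/\delta$. A second point to verify is that concatenation really does preserve the "paired with the same ordered ends" requirement; it does, since the end pairs of the $T_1$-paths and the $T_2$-paths are each unchanged in both completions.
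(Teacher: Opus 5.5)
\textbf{There is a genuine gap.} The paper gives no argument for this lemma beyond citing \cite[Lemma~2.7]{DHS} and noting that its proof transfers to the present reachability relation. So the benchmark is the standard Lo--Markstr\"om/Han partition argument, and your proposal departs from it exactly where it breaks.

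\emph{The closedness step fails.} Concatenating through $v_j$ uses a single intermediate vertex. Every set $T_1\cup\{v_j\}\cup T_2$ produced this way contains $v_j$, so you obtain at most $O(n^{5(i+j)-2})$ witness sets, not the required $\beta' n^{5(i+j)-1}$. For the same reason, your concatenation property is false as stated for a single $w$. It holds only under a hypothesis such as $|\tilde N_{A,\beta,i}(u)\cap\tilde N_{A,\beta,j}(v)|\ge\gamma n$. You do have such a hypothesis when bounding the overlaps of the neighbourhoods of the $v_j$. But two vertices $x,y\in U_j$ are only known to share the single reachable vertex $v_j$, so nothing shows they are reachable to each other. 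Even granting the doubling, your count gives $2\cdot 2^{s-1}=2^{s}$, which can be as large as $2^{\lfloor 1/\delta\rfloor}$. So your scheme does not produce the exponent $2^{\lfloor1/\delta\rfloor-1}$ you attribute to it.

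\emph{The covering step also fails as set up.} In your scheme, level $s$ means $(\beta_s,2^{s-1})$-reachability, and this notion gets weaker as $s$ grows. Maximality of $s$ therefore only tells you that $\{v,v_1,\dots,v_s\}$ contains a $(\beta_{s+1},2^{s})$-reachable pair. That pair may be some $v_jv_{j'}$, which contradicts nothing, since the $v_j$ are only known to be non-reachable at the stronger level $s$.

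\emph{What the standard proof does instead.}
\begin{enumerate}
\item It nests the levels the other way: a $(d+1)$-set is tested against a stronger notion (fewer steps, larger constant), so that reachability at level $d+1$ implies reachability at level $d$.
\item It takes $d$ maximum.
\item It discards the at most $\binom{d}{2}\gamma n$ vertices reachable to two of the $v_\ell$. These are few because of concatenation through $\Omega(n)$ common vertices.
\item It proves the $j$-th part is closed by applying maximality of $d$ to the set $\{x,y\}\cup\{v_\ell:\ell\ne j\}$. The only pair there that can be reachable is $xy$.
\item It reattaches the discarded vertices using their $\delta n$-size neighbourhoods, which meet some part in linearly many vertices. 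Every concatenation used therefore goes through linearly many intermediate vertices, followed by inflation.
\end{enumerate}
The maximality trick in step 4 is the idea missing from your proposal; no single-vertex concatenation can replace it.
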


  Here \(K^{(k)}_{a_1,\dots,a_k}\) denotes the complete \(k\)-partite
  \(k\)-graph with class sizes \(a_1,\dots,a_k\).

  Given a $k$-graph $H$, a reversal-closed
  $\Omega\subseteq\overrightarrow{\partial}_{k-1}(H)$, and $b\in\mathbb N$, a
  labeled $b$-vertex absorber configuration consists of a set
  $W\in\binom{V(H)}b$ with a fixed labeling by $[b]$, together with a
  fixed vertex-disjoint family
  \(\mathcal Q^-(W)\) of \(\Omega\)-ended tight paths spanning \(W\).  It is
  \emph{incident with} a set \(S\) disjoint from \(W\) if there is a
  vertex-disjoint tight-path family \(\mathcal Q^+(W,S)\) spanning \(W\cup S\)
  with the same respective ordered ends.  Configurations intersect when their
  labeled vertex sets intersect; for each incident pair we fix one choice of
  \(\mathcal Q^+(W,S)\).

  \begin{lemma}[Same-index local absorbers]
    \label{lem:k-same-index-local-absorbers}
    Let $k\ge3$ and $m\ge1$ be fixed, and let
    $1/n\ll\nu\ll\lambda,\mu,1/k,1/m$.  Let $H$ be an $n$-vertex
    $k$-graph, let $A,Z\subseteq V(H)$ with $|Z|\le\nu n$, and put
    $H_0=H-(A\cup Z)$.  Let
    $\Omega\subseteq\overrightarrow{\partial}_{k-1}(H)$ be reversal-closed,
    let $\mathcal P=\{X_1,\dots,X_m\}$ be a partition of $V(H)$ whose
    parts are $(\lambda,2;\Omega)$-closed in $H-A$, and let
    $J\subseteq H_0$ be $\Omega$-supported.

    There is $c=c(k,m,\lambda,\mu)>0$ such that every
    $S\in\binom{V(H)}k$ with
    $\mathbf i_{\mathcal P}(S)\in\mathcal I_\mu(J,\mathcal P)$ has at
    least $cn^{b_k}$ configurations incident with $S$ whose labeled vertex
    sets lie in $H_0$, where $b_k=4k^2-2$.
  \end{lemma}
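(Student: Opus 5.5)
The plan follows the standard Lo--Markström / Han absorber construction, adapted to keep every end $\Omega$-ended.

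\textbf{Step 1 (the edge $e$).} Fix $S=\{s_1,\dots,s_k\}$ with $\mathbf v=\mathbf i_{\mathcal P}(S)\in\mathcal I_\mu(J,\mathcal P)$. By definition $J$ has at least $\mu n^k$ edges of index $\mathbf v$. At most $O(n^{k-1})$ of them meet $S$, so we may choose an edge $e=\{w_1,\dots,w_k\}\in E(J)$ disjoint from $S$ and ordered so that $w_j$ and $s_j$ lie in the same part $X_{\ell_j}$. Because $J$ is $\Omega$-supported and $J\subseteq H_0$, the edge $e$, read as a $k$-vertex tight path, is $\Omega$-ended with all orderings of its $(k-1)$-subsets in $\Omega$, and $e\subseteq V(H_0)$.

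\textbf{Step 2 (swapping $s_j$ for $w_j$).} For each $j$, the vertices $s_j$ and $w_j$ lie in the same $(\lambda,2;\Omega)$-closed part. Hence there are at least $\lambda n^{2(2k-1)-1}$ sets $T_j$ of size $2(2k-1)-1=4k-3$, avoiding $A$, such that both $T_j\cup\{s_j\}$ and $T_j\cup\{w_j\}$ decompose into two $k$-edge $\Omega$-ended tight paths with the same ordered ends. Discarding sets that meet $Z$, $S$, $e$, or earlier choices costs only $O(\nu n^{4k-3})+O(n^{4k-4})$. Since $\nu\ll\lambda$, at least $(\lambda/2)n^{4k-3}$ valid choices remain at each step.

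\textbf{Step 3 (the configuration).} Set $W=e\cup T_1\cup\dots\cup T_k$, so that
$|W|=k+k(4k-3)=4k^2-2k$. This does not match $b_k=4k^2-2$ exactly; I would pad with extra vertex-disjoint $J$-edges or other fixed filler to reach size $b_k$. If the bookkeeping forces it, I would instead adjust the witness type so the count comes out right. Define the two path families as follows:
\begin{itemize}
\item $\mathcal Q^-(W)$ consists of the path $e$ together with the $2k$ paths decomposing $T_j\cup\{w_j\}$ for $j\in[k]$. This is a problem, since $w_j$ is already used in $e$.
\item The standard fix is the other way round: $\mathcal Q^-(W)$ consists of the $2k$ paths decomposing $T_j\cup\{w_j\}$, so it spans $W$.
\item $\mathcal Q^+(W,S)$ consists of the paths decomposing $T_j\cup\{s_j\}$ for every $j$, plus the edge $e$ as a separate path. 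It spans $W\cup S$.
\end{itemize}
In $\mathcal Q^+$ the $T_j$-paths have the same ordered ends as in $\mathcal Q^-$, because reachability pairs the ends. The extra path $e$, however, has no counterpart in $\mathcal Q^-$. I would repair this with the family-level formulation, taking the configuration's path list to include a placeholder path. Most likely, though, I would use the variant of the witnesses in which $T_j\cup\{w_j\}$ and $T_j\cup\{s_j\}$ are replaced by equal-end families, and embed $e$ inside one member of $\mathcal Q^+$ by also absorbing the ends of $e$.

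\textbf{Step 4 (counting).} Multiplying the counts gives at least $c\,n^{|W|}$ labeled configurations, with $c$ depending on $\mu,\lambda,k$. The main obstacle is Step 3: the bookkeeping must make $\mathcal Q^-$ and $\mathcal Q^+$ have identical ordered end lists while $\mathcal Q^+$ additionally covers $S$. I expect it to be settled by letting $e$ ride inside an extended reachability witness.
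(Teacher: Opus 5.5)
There is a genuine gap, and it is the one you flag in Step~3. Your construction is the matching-type absorber. Take $\mathcal Q^-$ to be the $2k$ reachability paths on the sets $T_j\cup\{w_j\}$, and $\mathcal Q^+$ the $2k$ same-ended paths on $T_j\cup\{s_j\}$. Then $w_1,\dots,w_k$ are left uncovered in $\mathcal Q^+$. Adding $e$ as a separate path is not allowed, because $\mathcal Q^+$ must consist of paths with the same respective ordered ends as $\mathcal Q^-$.

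A placeholder path cannot repair this. The two ordered ends of $e$, read as a $k$-vertex path, together cover all of $e$. So any member of $\mathcal Q^-$ with those ends would contain $w_1,\dots,w_k$, which are already used in the reachability paths. Padding to reach $b_k$ does not touch the structural problem either.

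What is missing is a \emph{host path}: a tight path $Q$ in $\mathcal Q^-$, disjoint from $e$, together with a tight path $Q^+$ on $V(Q)\cup e$ that has the same ordered ends as $Q$. Your vertex count is short by exactly $b_k-(4k^2-2k)=2k-2$, which is the order of this host path.

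The paper obtains $Q$ and $Q^+$ by applying supersaturation to the edges of $J$ of index $\mathbf v$ (there are at least $\mu n^k$ of them), instead of choosing a single edge $e$. This gives $c_0n^{3k-2}$ labeled copies of $F_k=K^{(k)}_{3,\dots,3,2,2}$ in $J$, all of whose edges have index $\mathbf v$; all but $O(n^{3k-3})$ of them avoid $S$. Label the classes as follows:
\[
C_i=\{u_i,u_{k+i},w_{i+1}\}\ (i\le k-2),\qquad C_{k-1}=\{u_{k-1},w_k\},\qquad C_k=\{u_k,w_1\}.
\]
Then both $Q=u_1\cdots u_{2k-2}$ and $Q^+=u_1\cdots u_{k-1}w_1\cdots w_ku_k\cdots u_{2k-2}$ run along the periodic class sequence $C_1,\dots,C_k,C_1,\dots$. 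Hence they are tight paths in $J$, and so $\Omega$-ended. They have common ends $(u_{k-1},\dots,u_1)$ and $(u_k,\dots,u_{2k-2})$, and $\{w_1,\dots,w_k\}$ is a transversal edge of index $\mathbf v$.

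Comparing two transversal edges that differ only in one class shows that each $C_i$ lies in a single part of $\mathcal P$. So each $w_j$ can be paired with some $s_j$ in the same part, exactly as in your Step~2; your disjoint, $Z$-avoiding choice of the witnesses $T_j$ is correct and matches the paper.

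With this in place, take $\mathcal Q^-$ to be $Q$ plus the reachability paths on $T_j\cup\{w_j\}$, and $\mathcal Q^+$ to be $Q^+$ plus the same-ended paths on $T_j\cup\{s_j\}$. These two families have the same respective ordered ends and differ in vertex set by exactly $S$. The configuration has $(3k-2)+k(4k-3)=4k^2-2$ vertices. Finally, $c_0n^{3k-2}\bigl((\lambda/2)n^{4k-3}\bigr)^k$ divided by a bounded labelling multiplicity gives the required $cn^{b_k}$.
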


  \begin{proof}
    Fix $S=\{v_1,\dots,v_k\}$, put
    $\mathbf v=\mathbf i_{\mathcal P}(S)$, and let $J_{\mathbf v}$ contain
    the edges of index $\mathbf v$.  Supersaturation gives
    $c_0n^{3k-2}$ labeled copies of
    $F_k=K^{(k)}_{\underbrace{3,\dots,3}_{k-2},2,2}$ in $J_{\mathbf v}$;
    all but $O(n^{3k-3})$ avoid $S$.

    Label the classes of one remaining copy as
    \[
      C_i=\{u_i,u_{k+i},w_{i+1}\}\quad(1\le i\le k-2),
      \qquad
      C_{k-1}=\{u_{k-1},w_k\},
      \qquad
      C_k=\{u_k,w_1\}.
    \]
    Set
    \[
      Q=u_1\cdots u_{2k-2}
      \quad\text{and}\quad
      Q^+=u_1\cdots u_{k-1}w_1\cdots w_ku_k\cdots u_{2k-2}.
    \]
    Their class sequences are segments of
    $C_1,\dots,C_k,C_1,\dots$, so they are tight paths in $J_{\mathbf v}$
    with ends $(u_{k-1},\dots,u_1)$ and $(u_k,\dots,u_{2k-2})$.

    Comparing two transversal edges that differ only in $C_i$ shows that
    every $C_i$ lies in one part of $\mathcal P$.  Since
    $\{w_1,\dots,w_k\}$ has index $\mathbf v$, relabel $S$ so that $v_j$
    and $w_j$ lie in the same part for every $j\in[k]$.

    For each $j$, closedness gives $\lambda n^{4k-3}$ witnesses $T_j$
    replacing $w_j$ by $v_j$.  Choose them disjointly outside
    $Z\cup S\cup V(F_k)$.  At each step at most
    $O(\nu n^{4k-3})+O_k(n^{4k-4})$ choices are lost, leaving at least
    $(\lambda/2)n^{4k-3}$.  Fix the two
    $\Omega$-ended paths $R^w_{j,1},R^w_{j,2}$ spanning
    $T_j\cup\{w_j\}$ and their same-ended counterparts
    $R^v_{j,1},R^v_{j,2}$ spanning $T_j\cup\{v_j\}$.

    The families $\{Q\}\cup\{R^w_{j,h}\}$ and
    $\{Q^+\}\cup\{R^v_{j,h}\}$, indexed by $j\in[k]$ and $h\in[2]$, are
    vertex-disjoint, have the same respective ends, and differ in vertex set
    by $S$.  The number of configuration vertices is
    \[
      |V(F_k)|+\sum_{j=1}^k|T_j|
      =(3k-2)+k(4k-3)=4k^2-2=b_k.
    \]
    Since $J$ is $\Omega$-supported, $Q,Q^+$ are
    $\Omega$-ended.  Dividing by the bounded labeling multiplicity $D_k$
    leaves at least
    \[
      \frac{c_0}{D_k}n^{3k-2}
      \left((\lambda/2)n^{4k-3}\right)^k
      \ge cn^{4k^2-2}.
    \]
  \end{proof}

  \subsection{Proofs of the absorption lemmas}

  \begin{proof}[Proof of \cref{lem:indexed-absorber}]
    Call a $k$-set $S$ admissible if
    $\mathbf i_{\mathcal P}(S)\in\mathcal I$.  Put $b=4k^2-2$.  By
    \cref{lem:k-same-index-local-absorbers}, every admissible set is incident
    with at least $cn^b$ configurations for some $c>0$.  Choose
    $\gamma\ll\xi\ll\min\{c,\theta\}$ and sample each
    configuration with probability $p=\xi n^{-(b-1)}$.  There are
    $O(n^b)$ configurations and $O(n^{2b-1})$ intersecting pairs.  Chernoff's
    inequality, a union bound over the at most $n^k$ admissible sets, and
    Markov's inequality give a sample of size $O(\xi n)$ with
    $O(\xi^2n)$ intersecting pairs and at least $c\xi n/2$ members incident
    with every admissible set.  Deleting one configuration from each
    intersecting pair leaves a vertex-disjoint family $\mathcal F$ of
    size $O(\xi n)$ and at least $c\xi n/4$ choices for every admissible set.

    The families $\mathcal Q^-(W)$, for $W\in\mathcal F$, form an
    $\Omega$-ended path system in $H_0$
    on $O_k(\xi n)\le\theta n$ vertices.  RCC cover gives
    $1\le t\le r$ paths $P_1,\dots,P_t$ satisfying~\textup{(i)} and
    containing all these paths as subpaths.

    If $L$ satisfies~\textup{(ii)}, an expression of
    $\mathbf i_{\mathcal P}(L)$ in $\mathsf M(\mathcal I)$ partitions $L$
    into admissible $k$-sets by distributing each $L\cap X_j$ among the
    coordinate demands.  Greedily assign the at most
    $\gamma n/k\ll c\xi n$ blocks to distinct incident configurations and
    switch them, reversing a replacement whenever its non-absorbing path is
    embedded in the opposite direction.  This preserves all attachment ends,
    hence every joining path, and absorbs exactly $L$.
  \end{proof}

  \begin{proof}[Proof of \cref{absorber1}]
    Choose auxiliary constants so that
    \[
      \max\{\rho,\zeta,\eta,\nu,\gamma\}
      \ll\mu\ll\lambda\ll\theta\ll\varepsilon^2
      \ll\varepsilon\ll\beta\ll\min\{\alpha,d\}.
    \]
    By \cref{prop:absorbernumber,lem:partition}, with parameters
    $2\eta,\lambda$ and $\delta=1/3+\alpha/2$, there is a partition
    $\mathcal P=\{X_1,\dots,X_m\}$ with $m\in\{1,2\}$ whose parts have size
    at least $(1/3+\alpha/4)n$ and are $(\lambda,2)$-closed in $H-A$.

    Put $\Omega=\overrightarrow{\partial}_2(H)$ and $J=H-(A\cup Z)$, and let
    $\mathcal I$ contain all nonnegative $m$-vectors of coordinate sum $3$.
    For $\mathbf v\in\mathcal I$, choose parts $Y_1,Y_2,Y_3$ with
    multiplicities $\mathbf v$ and put
    $Y_\ell^\circ=Y_\ell\setminus(A\cup Z)$.  Uniform density gives
    \[
      e_H(Y_1^\circ,Y_2^\circ,Y_3^\circ)
      \ge d|Y_1^\circ||Y_2^\circ||Y_3^\circ|-\rho n^3
      \ge 6\mu n^3.
    \]
    Each edge is counted at most six times, so
    $\mathcal I\subseteq\mathcal I_\mu(J,\mathcal P)$.

    The hierarchy gives $|A\cup Z|\le n/2$ and
    $\delta_2(J)\ge(1/3+\alpha/2)|V(J)|$.  Moreover, $\Omega$ is
    reversal-closed, $J$ is $\Omega$-supported, and
    \cref{equivalentclass} makes $(\Omega,\kappa)$
    $(A,\zeta,L_{\rm RCC})$-RCC-connectable.  Set $\omega_r=\zeta^2$ for
    $r=1$ and $\omega_r=0$ for $r=2$.
    Apply \cref{prop:3graph-rcc-cover} to $H$ with $H_0=J$,
    $\sigma=\alpha/2$, $\chi=\zeta$, and $\omega=\omega_r$, keeping $\theta$
    and $\varepsilon$ as above.  Its hypotheses follow from the preceding codegree bound,
    $|A\cap Z|\le\zeta n/2$, and the auxiliary hierarchy; when $r=1$, the
    additional requirement is
    $L_{\rm RCC}\le\zeta^2n=\omega_rn$.  Thus $(\Omega,\kappa)$ has the
    $(A,Z;\theta,\varepsilon,\omega_r)$-RCC cover property.
    Applying \cref{lem:indexed-absorber} with $k=3$ and forbidden-set
    parameter $3\nu$ gives $1\le t\le r$ paths of total order at most
    $\varepsilon n\le\beta n$ and reservoir use at most $\omega_rn$.  If
    $r=2$, then $\omega_r=0$; since the paths lie in $H-Z$, they lie in $J$.
    Finally, every set $S$ specified in \cref{absorber1} splits into triples,
    so
    $\mathbf i_{\mathcal P}(S)\in\mathsf M(\mathcal I)$ and
    \cref{lem:indexed-absorber} absorbs it.
  \end{proof}

  \section[Proof of the monochromatic C10 lemma]{Proof of \cref{lem:monochromatic_c10}}
  \label{app:monochromatic-c10}

  Throughout this appendix, $\phi$ is the $A$-connected coloring, $W$ is the exceptional set given by \cref{prop:coloritem}, and $i\in\{1,2\}$ is the fixed color from \cref{lem:monosetexist}.
  Roughly speaking, $G_i[B]$ keeps the hyperedges whose three pairs all have
  color $i$, while $\partial_i[B]$ records all color-$i$ pairs.
  For $B\subseteq V(H)\setminus W$, let $G_i[B]$ be the $3$-graph on $B$ whose edges are those $xyz\in E(H[B])$ satisfying $\phi(xy)=\phi(xz)=\phi(yz)=i$.
  We write $\partial_i[B]$ for the graph on $B$ whose edges are the pairs $xy\in\binom B2$ with $\phi(xy)=i$.

  A \emph{homomorphism} from a $3$-graph $F$ to a $3$-graph $G$ is a map
  $f:V(F)\to V(G)$ such that $f(e)\in E(G)$ for every $e\in E(F)$.  Thus the
  three vertices of each edge of $F$ have distinct images, although vertices not
  lying together in an edge may be identified.  A homomorphic copy of $F$ in
  $G$ is the image of such a homomorphism.

  We establish the following strengthening of the result by Piga--Sanhueza-Matamala--Schacht~\cite{PIGA20261}.

  \begin{lemma}
    \label{lem:active-shadow-pss}
    For every $\gamma,\tau>0$, there exist $m_0\in\mathbb N$ and $\rho>0$ such that the following holds for all $m\ge m_0$.
    Let $G$ be an $m$-vertex $3$-graph, and put $R\coloneqq \partial G$.
    Suppose that $R$ is $(\rho,\tau)$-dense and $\deg_G(xy)\ge \left(\frac13+\gamma\right)m$ for every $xy\in E(R)$.
    Then $G$ contains a homomorphic copy of $C_{10}^{(3)}$.
  \end{lemma}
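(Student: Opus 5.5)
We would prove \cref{lem:active-shadow-pss} by adapting the argument of Piga, Sanhueza-Matamala and Schacht, with the shadow graph $R$ playing the role of the complete host graph. The proof has three steps: pass to a reduced $3$-graph by regularity, find a single tight component there with large minimum positive codegree, and close a closed tight walk of length $\equiv 1 \pmod 3$.

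\textbf{Step 1: regularity.} Apply the regular slice lemma (\cref{thm:strengthened_regularity}) to $G$. Let $R_d$ be the $d$-reduced $3$-graph, with $d\ll\gamma$. The local codegree conclusion and \cref{lem:Rdeg} apply to pairs of clusters $Y$ whose supporting $2$-graph of the slice carries $\Omega(m_*^2)$ pairs of $R$. Since every pair of $R$ has codegree at least $(1/3+\gamma)m$, such a $Y$ has degree at least $(1/3+\gamma/2)t$ in $R_d$. The uniform density of $R$ (with $\rho\ll 1/T_1$) guarantees that $R$ has density at least $\tau/2$ between any two clusters. After the cleaning step \cref{lem:codegree-cleaning}, we obtain a subgraph $R'\subseteq R_d$ with
\[
\delta_2^+(R')\ge (1/3+\gamma/4)t,\qquad \bigl|\partial R'\bigr| = \Omega\bigl(t^2\bigr).
\]
By \cref{twocom}, $R'$ has at most two tight components.

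\textbf{Step 2: a homomorphic $C_{10}^{(3)}$ in $R'$.} Homomorphisms compose. A homomorphic copy of $C_{10}^{(3)}$ in $R'$ therefore lifts, by the extension/counting lemma (\cref{lem:ext}) applied to regular triads, to a homomorphic copy in $G$, indeed to many genuine copies of its blow-up. It thus suffices to find a closed tight walk in $R'$ of length $\ell\equiv 1\pmod 3$ with $\ell\le 10$, or more generally any length that yields a homomorphism from $C_{10}^{(3)}$. The key fact is that $C_{10}^{(3)}$ admits a homomorphism to $C_\ell^{(3)}$ whenever $10$ can be written as a nonnegative combination of $\ell$ and $3$ in the appropriate sense. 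For example, $C_{10}^{(3)}\to C_4^{(3)}=K_4^{(3)}$ via the sequence $1234123\,123$, which is valid because $K_4^{(3)}$ is complete on $4$ vertices. Hence it suffices to find either a copy of $K_4^{(3)}$ or a closed walk of length $\equiv 1\pmod 3$ inside one tight component.

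\textbf{Step 3: the parity argument (main obstacle).} Within a tight component $C$ of $R'$, consider the possible lengths modulo $3$ of closed tight walks. If all of them are $\equiv 0\pmod 3$, then $C$ is $3$-partite in the tight-walk sense: there is a vertex $3$-colouring in which every edge of $C$ is rainbow and the colouring is consistent along walks. We would show that this contradicts the minimum positive codegree exceeding $t/3$. Take a pair $xy$ in the shadow of $C$, with $x,y$ in colour classes $1$ and $2$. All of its more than $t/3$ neighbours lie in class $3$, so each class has size more than $t/3$. A further degree count then yields an edge inside two classes, contradicting the $3$-partite structure.

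Making this rigorous is exactly where the $1/3$ threshold is tight, and we expect it to be the hardest step. We would follow the corresponding lemma in Piga--Sanhueza-Matamala--Schacht. The idea is to exhibit a $K_4^-$, which \cref{equivalentclass}'s argument provides since codegree exceeds $1/3$, and then combine two such configurations to produce a closed walk of length $4$ or $7$.
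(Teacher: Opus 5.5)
\textbf{There is a genuine gap.} Your Step~3 rests on a false dichotomy, and Steps~1--2 discard the one hypothesis that makes the lemma true.

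Take $G$ on $V_0\cup V_1$ with $|V_0|=|V_1|=m/2$, whose edges are all triples with exactly two vertices in $V_0$. Every three consecutive vertices of a tight walk contain exactly one vertex of $V_1$. So the sequence of parts is $3$-periodic, every closed tight walk has length divisible by $3$, and $G$ has no homomorphic copy of $C_{10}^{(3)}$.

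Nevertheless $G$ is a single tight component, every shadow pair has codegree about $m/2>m/3$, and $|\partial G|\approx\frac34\binom m2$. So it has everything you retain about $R'$: minimum positive codegree above $1/3$, $\Omega(t^2)$ shadow pairs, and at most two tight components. It has no vertex $3$-colouring with all edges rainbow, since three vertices of $V_0$ together with one of $V_1$ span a $K_4^{(3)-}$. It also contains many copies of $K_4^{(3)-}$, so exhibiting copies of $K_4^{(3)-}$ does not by itself yield a short closed walk of length $\equiv1\pmod3$.

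The only hypothesis this example violates is uniform density of the shadow: no pair inside $V_1$ lies in $\partial G$. Since Step~1 keeps only $|\partial R'|=\Omega(t^2)$, no argument on $R'$ of the kind you describe can succeed.

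Step~1 is also unjustified as written. The quantity $\overline{\deg}(\J_Y;G)$ averages over all pairs of the slice cell, including non-shadow pairs of codegree $0$. Moreover, \cref{lem:codegree-cleaning} needs all but a small fraction of cluster pairs to have large degree, whereas here up to a $(1-\tau)$-fraction may be inactive. Finally, a closed walk of length $\ell\equiv1\pmod3$ gives a homomorphic $C_{10}^{(3)}$ only when $\ell\le10$, a restriction you state and then drop.

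\textbf{The missing idea} is the one the paper supplies, working directly in $G$ without regularity. Assuming no homomorphic $C_{10}^{(3)}$, every edge lies in a $K_4^{(3)-}$ whose apex is in that edge. A pair of $R$ cannot be both an apex pair and a base pair, nor an apex pair with both orientations. Each of these situations gives an explicit closed tight walk of length $10$ on the union of two copies of $K_4^{(3)-}$; this is the precise form of your ``combine two configurations''.

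Hence every $uv\in E(R)$ is exactly one of $(u,v)\in D$, $(v,u)\in D$, $uv\in B$. Pushing this through codegree neighbourhoods gives, with $c=1/3+\gamma$:
$d_B(v)>0\Rightarrow d^+_D(v)\ge cm$, $d^+_D(v)>0\Rightarrow d_B(v)\ge cm$, and $d^-_D(v)>0\Rightarrow d^-_D(v)\ge cm$.

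Uniform density of $R$ is then used precisely to show that the set of vertices with $d^+_D=0$, which spans no pair of $R$, is tiny. In the example above this set is all of $V_1$. Taking an edge $xyz$ outside this set, which density and codegree provide, the apex of a $K_4^{(3)-}$ through $xyz$ has three pairwise disjoint neighbourhoods $N^+_D$, $N^-_D$, $N_B$ of total size at least $3cm>m$, a contradiction.
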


  \begin{proof}
    We adapt the proof of Piga, Sanhueza-Matamala and Schacht~\cite{PIGA20261} to the active shadow $R$.
    Choose a constant $\xi>0$ with $\xi\ll\min\{\gamma,\tau\}$, then choose $\rho\ll \xi^2\tau$, and finally choose $m_0$ sufficiently large.
    Suppose, for a contradiction, that $G$ contains no homomorphic copy of $C_{10}^{(3)}$.
    For a pair $xy\in E(R)$, write
    \[
      N_G(xy)=\{z\in V(G): xyz\in E(G)\}.
    \]

    We first show that every edge of $G$ is contained in a copy of $K_4^{(3)-}$ whose apex lies in that edge.
    Let $xyz\in E(G)$.
    Since $\partial G\subseteq R$, all three pairs $xy,xz,yz$ belong to $E(R)$.
    Hence
    \[
      |N_G(xy)|+|N_G(xz)|+|N_G(yz)|
      \ge (1+3\gamma)m.
    \]
    For sufficiently large $m$, some vertex $w\notin\{x,y,z\}$ lies in at least two of the three neighborhoods.
    For instance, if $w\in N_G(xy)\cap N_G(xz)$, then the three edges $xyz,xyw,xzw$ form a copy of $K_4^{(3)-}$.
    The other two cases are symmetric.

    In a copy of $K_4^{(3)-}$, call the unique vertex of degree $3$ the \emph{apex}.
    For a pair $uv\in E(R)$, call $uv$ an \emph{apex pair} if some copy of $K_4^{(3)-}$ in $G$ contains $u$ and $v$ and has one of $u,v$ as its apex.
    Call $uv$ a \emph{base pair} if some copy of $K_4^{(3)-}$ in $G$ contains $u$ and $v$ and has neither $u$ nor $v$ as its apex.
    Since $\deg_G(uv)>0$ for every $uv\in E(R)$, the preceding paragraph implies that every pair in $E(R)$ is either an apex pair or a base pair.

    A pair in $E(R)$ cannot be both an apex pair and a base pair.
    Indeed, suppose that $uv$ is both.
    Then there are two copies $K,K'$ of $K_4^{(3)-}$ containing $u$ and $v$, such that $v$ is the apex of $K$, while neither $u$ nor $v$ is the apex of $K'$.
    Write $V(K)=\{u,v,x,y\}$ and $V(K')=\{u,v,a,b\}$, where $a$ is the apex of $K'$.
    Then the cyclic sequence
    \[
      v,u,a,b,v,a,u,v,x,y
    \]
    maps every consecutive triple to an edge of $K\cup K'$.
    Thus it gives a homomorphic copy of $C_{10}^{(3)}$ in $G$, a contradiction.

    Define a directed graph $D$ on $V(G)$ by putting $(u,v)\in E(D)$ whenever $uv\in E(R)$ is an apex pair with apex $v$.
    Let $B$ be the graph consisting of all base pairs in $E(R)$.
    The directed graph $D$ has no directed $2$-cycle.
    Indeed, if both $(a,x)$ and $(x,a)$ are arcs, then there are two copies $K,K'$ of $K_4^{(3)-}$ containing $a$ and $x$, such that $a$ is the apex of $K$ and $x$ is the apex of $K'$.
    Write $V(K)=\{a,x,b,c\}$ and $V(K')=\{a,x,y,z\}$.
    Then the cyclic sequence
    \[
      x,a,y,x,z,a,x,b,a,c
    \]
    maps every consecutive triple to an edge of $K\cup K'$ and gives a homomorphic copy of $C_{10}^{(3)}$, again a contradiction.
    Consequently, for every pair $uv\in E(R)$, exactly one of the following alternatives holds:
    \[
      (u,v)\in E(D),\qquad (v,u)\in E(D),\qquad uv\in E(B).
    \]

    We next record three useful degree bounds.
    Put $c\coloneqq 1/3+\gamma$.
    First, if $d_B(v)>0$, then $d_D^+(v)\ge cm$.
    To see this, choose $u$ with $uv\in E(B)$.
    For every $w\in N_G(uv)$, the edge $uvw$ is contained in a copy of $K_4^{(3)-}$.
    In such a copy, the apex must be one of $u,v,w$.
    It cannot be $u$ or $v$, because then $uv$ would be an apex pair, contradicting that $uv\in E(B)$.
    Hence $w$ is the apex, and in particular $(v,w)\in E(D)$.
    Thus $N_G(uv)\subseteq N_D^+(v)$, so $d_D^+(v)\ge \deg_G(uv)\ge cm$.

    Second, if $d_D^+(v)>0$, then $d_B(v)\ge cm$.
    Choose $u$ with $(v,u)\in E(D)$.
    For every $w\in N_G(vu)$, take a copy of $K_4^{(3)-}$ containing the edge $vuw$.
    Its apex cannot be $v$, since then both $(v,u)$ and $(u,v)$ would be arcs of $D$.
    Its apex also cannot be $w$, since then $vu$ would be a base pair.
    Hence the apex is $u$, and therefore $vw\in E(B)$.
    Thus $N_G(vu)\subseteq N_B(v)$, and $d_B(v)\ge cm$.

    Third, if $d_D^-(v)>0$, then $d_D^-(v)\ge cm$.
    Choose $u$ with $(u,v)\in E(D)$.
    For every $w\in N_G(uv)$, the same argument shows that in any copy of $K_4^{(3)-}$ containing $uvw$, the apex must be $v$.
    Therefore $(w,v)\in E(D)$.
    Hence $N_G(uv)\subseteq N_D^-(v)$, and $d_D^-(v)\ge cm$.

    Let
    \[
      S=\{v\in V(G): d_D^+(v)=0\}.
    \]
    We claim that $|S|\le \xi m$.
    Indeed, there is no edge of $R$ inside $S$.
    For if $uv\in E(R[S])$, then the three cases above and the first degree implication force at least one of $u,v$ to have positive out-degree in $D$.
    If $|S|>\xi m$, split $S$ into two disjoint sets $S_1,S_2$ with $|S_1|,|S_2|\ge \xi m/3$.
    Since $R[S]$ has no edge, $e_R(S_1,S_2)=0$.
    This contradicts the $(\rho,\tau)$-density of $R$, because $\rho\ll \xi^2\tau$.

    Now put $T=V(G)\setminus S$.
    Since $|T|\ge (1-\xi)m$, we can split $T$ into two disjoint sets $T_1,T_2$ of size at least $m/3$.
    The $(\rho,\tau)$-density of $R$ gives an edge $xy$ with $x\in T_1$ and $y\in T_2$.
    As $\deg_G(xy)\ge cm$ and $|S|\le \xi m\ll cm$, we may choose $z\in T\setminus\{x,y\}$ such that $xyz\in E(G)$.
    By the first paragraph, the edge $xyz$ is contained in a copy of $K_4^{(3)-}$.
    Let $v^\ast\in\{x,y,z\}$ be the apex of this copy.
    Since $v^\ast\in T$, we have $d_D^+(v^\ast)>0$.
    Also, the two other vertices of the edge $xyz$ send arcs to the apex, so $d_D^-(v^\ast)>0$.
    By the second degree implication, $d_B(v^\ast)>0$.

    Applying the three degree implications to $v^\ast$, the three sets $N_D^+(v^\ast)$, $N_D^-(v^\ast)$, and $N_B(v^\ast)$ each have size at least $cm$.
    They are pairwise disjoint by the three cases for pairs in $E(R)$.
    Hence their union has size at least $3cm=(1+3\gamma)m$, which is impossible in an $m$-vertex graph.
    This contradiction proves that $G$ contains a homomorphic copy of $C_{10}^{(3)}$.
  \end{proof}

  We shall also use two standard lemmas to deduce a blow-up copy from a
  homomorphic copy.

  \begin{lemma}[Hypergraph removal lemma {\cite{Gowers2007}}]
    \label{lem:hypergraph-removal}
    Let $\mathcal F$ be a finite family of fixed $3$-graphs.
    For every $\delta>0$, there exists $\varepsilon>0$ such that the following holds for all sufficiently large $m$.
    If an $m$-vertex $3$-graph $G$ contains at most $\varepsilon m^{v(F)}$ labeled copies of each $F\in\mathcal F$, then one can delete at most $\delta m^3$ edges from $G$ and obtain a $3$-graph which contains no copy of any $F\in\mathcal F$.
  \end{lemma}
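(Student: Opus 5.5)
The statement is the hypergraph removal lemma of Gowers, and I would prove it by the standard regularity-plus-counting route, using the tools of \cref{section:regularity}. Since $\mathcal F$ is finite, it suffices to prove the statement for a single $F$ with parameter $\delta/|\mathcal F|$ and take the minimum of the resulting $\varepsilon$'s. Deleting $\delta m^3/|\mathcal F|$ edges for each $F$ in turn then destroys all members of $\mathcal F$, because deleting edges never creates new copies.

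Fix $F$ with $f=v(F)$ vertices. I would choose constants
\[
1/m\ll\varepsilon\ll 1/r,\e\ll c\ll \e_3, d_2\ll d_0\ll \delta,1/f,
\]
and apply the regular slice lemma (\cref{thm:strengthened_regularity}, with trivial $\mathcal Q$ and $A_0=V(G)$) to obtain a $(t_0,T_1,\e,\e_3,r)$-regular slice $\mathcal J$ for $G$ with $t\le T_1$ clusters.

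The cleaning step deletes four kinds of edges of $G$:
\begin{enumerate}[label=(\alph*)]
\item edges meeting two vertices of the same cluster, at most $O(m^3/t_0)$ of them;
\item edges whose three pairs do not all lie in $\mathcal J$;
\item edges lying on a triple of clusters $X$ for which $G$ is irregular with respect to $\widehat{\mathcal J}_X$, at most $\e_3\binom t3(m/t)^3$ of them;
\item edges on regular triads of relative density $d^\ast(X)<d_0$.
\end{enumerate}
The main accounting issue is (b): the slice is only a $d_2$-fraction of the pairs, so edges outside it are not few. The usual fix is to use the full regular complex from the regularity lemma, with the 2-graph partitioned into $1/d_2$ regular pieces, rather than a single slice. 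Equivalently, one can average over random slices: \cite[Lemma~6]{ABCM} gives slices that represent the partition, so the reduced graph $R_{d_0}(G)$ captures all but $(\delta/4) m^3$ edges. I would invoke the strong regularity lemma in this partition form, so that every triple lies in some triad, and then delete the edges in irregular or sparse triads. Sparse triads contribute at most $d_0m^3$ edges, and irregular ones at most $\e_3m^3$ edges after summing over the triads. The total number of deleted edges is then at most $\delta m^3$.

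Now suppose that a copy of $F$ survives in the cleaned graph $G'$. Its vertices lie in distinct clusters, each of its pairs lies in one of the regular $(d_2,\e)$-pieces, and each of its edges lies in a triad which is $(d',\e_3,r)$-regular with $d'\ge d_0$. Taking $H'$ to be the complex generated by $F$, and $H$ to be the empty subcomplex, \cref{lem:ext} (the extension/counting lemma with $t=0$) yields at least $c(m/t)^{f}\ge c\,T_1^{-f}m^f$ partition-respecting labeled copies of $F$ in $G$. This contradicts the hypothesis that there are at most $\varepsilon m^f$ copies, since $\varepsilon\ll c\,T_1^{-f}$. Hence $G'$ is $F$-free.

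The main obstacle is the bookkeeping for step (b): ensuring that the 2-level structure covers almost all triples, so that only $o(m^3)$ edges are lost. This is resolved by working with the full family of regular 2-graph pieces produced by the regularity lemma, instead of a single slice. The remaining steps are the routine counting and union estimates described above.
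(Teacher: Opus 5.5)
The paper gives no argument for this lemma; it is imported as a black box from \cite{Gowers2007}. It also follows from the R\"odl--Skokan regularity lemma combined with the Nagle--R\"odl--Schacht counting lemma. Your outline (regularity partition, delete edges inside clusters and in irregular or sparse triads, then count) is the standard derivation behind those references, so in substance it follows the cited proof rather than a new route.

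It is not more self-contained than the citation. The partition form of the regularity lemma you end up needing is not among the tools stated in the paper: every crossing pair lies in one of $1/d_2$ regular pieces, and all but $\e_3 m^3$ triples lie in regular triads. You are right that the single slice of \cref{thm:strengthened_regularity} cannot be used for deletion, since it supports only about a $d_2^3$ fraction of all triples. However, ``averaging over random slices'' does not by itself produce a deletion set, so you must commit to the partition version.

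The counting step needs repair in three places.

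\emph{Distinct clusters.} Step~(a) only ensures that the three vertices of each surviving edge lie in distinct clusters. So your claim that all vertices of a surviving copy of $F$ lie in distinct clusters is unjustified. Two vertices not covered by a common edge of $F$ may share a cluster. Also, two shadow pairs of the copy over the same pair of clusters may lie in different regular pieces, while \cref{lem:ext} has a single $(d_2,\e)$-regular graph on each pair of classes. The standard fix is an auxiliary complex:
\begin{enumerate}[label=(\roman*)]
\item take one class per vertex of $F$, namely a copy of the cluster containing its image;
\item on each shadow pair of $F$, place the piece containing the corresponding pair of the copy;
\item on every other class pair, place any $(d_2,\e)$-regular graph (e.g.\ a random one);
\item count partition-respecting copies there, and discard the $O(m^{f-1})$ degenerate ones in which duplicated classes receive the same vertex.
\end{enumerate}

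\emph{The case $t=0$.} \cref{lem:ext} requires $t\ge1$. Take $H$ to be a single vertex instead, which gives at least $(1-\beta)c(m/t)^f$ copies.

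\emph{Order of constants.} $d_2\ge 1/T_1$ is an output of the regularity lemma, not a constant you can place in the a priori hierarchy. Since $c$ depends on $d_2$, you must fix $\e(\cdot)$ and $r(\cdot)$ beforehand as functions of the number of clusters, and choose $\varepsilon\ll c\,T_1^{-f}$ only after $T_1$ is known.

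With these changes your argument is a correct proof of the lemma.
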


  \begin{lemma}[Hypergraph supersaturation lemma {\cite{Keevash2011}}]\label{lem:blowup-supersaturation}
    Let $F$ be a fixed $3$-graph and let $s\in\mathbb N$.
    For every $c>0$, there exists $n_0$ such that the following holds for all $n\ge n_0$.
    If an $n$-vertex $3$-graph $G$ contains at least $c n^{v(F)}$ labeled copies of $F$, then $G$ contains a copy of $F[s]$.
  \end{lemma}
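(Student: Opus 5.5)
This is a standard consequence of Erdős's theorem that complete $f$-partite $f$-graphs have Tur\'an density zero. The plan is to encode labeled copies of $F$ as edges of an auxiliary $f$-partite $f$-graph and find a complete $f$-partite subgraph with classes of size $s$ there.

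\textbf{Step 1: set-up.} Put $f=v(F)$ and label $V(F)=\{x_1,\dots,x_f\}$. Choose a partition $V(G)=V_1\cup\cdots\cup V_f$ uniformly at random. Consider a labeled copy $\psi$ of $F$. It is \emph{aligned} if $\psi(x_j)\in V_j$ for every $j$, which happens with probability $f^{-f}$. So some partition has at least $cf^{-f}n^f$ aligned copies. Fix such a partition. Let $\mathcal A$ be the $f$-partite $f$-graph on $V_1\cup\cdots\cup V_f$ whose edges are the transversals $(v_1,\dots,v_f)$ for which $x_j\mapsto v_j$ is a labeled copy of $F$ in $G$. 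Then $\mathcal A$ has at least $c'n^f$ edges, where $c'=cf^{-f}$.

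\textbf{Step 2: Erdős's theorem.} I would show that an $f$-partite $f$-graph with at least $c'n^f$ edges contains $K^{(f)}_{s,\dots,s}$ with one class in each $V_j$, provided $n\ge n_0(c',s,f)$. The argument is by induction on $f$, and it is the only real work.

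For $f=1$ the claim is trivial. For the inductive step, consider the link of each $(f-1)$-tuple $T\in V_1\times\cdots\times V_{f-1}$ in $V_f$. Double count pairs $(T,S)$ with $S\in\binom{V_f}{s}$ contained in the link of $T$. By convexity (Jensen applied to $\binom{\cdot}{s}$), there are at least $c''n^{f-1}\binom{n}{s}$ such pairs. Hence some $s$-set $S\subseteq V_f$ lies in the common link of at least $c'''n^{f-1}$ tuples $T$, with $c'''>0$ depending only on $c',s,f$. These tuples form a dense $(f-1)$-partite $(f-1)$-graph. The induction hypothesis gives $U_1,\dots,U_{f-1}$ of size $s$ with every transversal among these tuples. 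Putting $U_f=S$ completes the step.

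I expect the main obstacle to be keeping the constants uniform through this induction. That is routine Kővári--Sós--Tur\'an-type bookkeeping: the constants degrade polynomially, but each depends only on $c,s,f$.

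\textbf{Step 3: reading off $F[s]$.} Take $U_j\subseteq V_j$ with $|U_j|=s$ and every transversal of $U_1\times\cdots\times U_f$ an edge of $\mathcal A$. Set $X_{x_j}=U_j$; these sets are pairwise disjoint because the $V_j$ are. Let $x_ax_bx_c\in E(F)$ and take any $u_a\in U_a$, $u_b\in U_b$, $u_c\in U_c$. Extend $(u_a,u_b,u_c)$ by arbitrary choices in the remaining $U_j$. The result is a labeled copy of $F$, so $u_au_bu_c\in E(G)$. Therefore $G$ contains a copy of $F[s]$, as claimed.
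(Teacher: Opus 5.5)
Your proof is correct: the random partition turns the labeled copies into a dense $f$-partite $f$-graph, and the Jensen-based induction is the standard proof of Erd\H{o}s's theorem that $K^{(f)}_{s,\dots,s}$ has Tur\'an density zero. Step~3 correctly reads off $F[s]$, because every transversal of $U_1\times\cdots\times U_f$ is a labeled copy of $F$.

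The paper gives no proof of its own and only cites Keevash's survey. There, the statement is likewise obtained from Erd\H{o}s's theorem applied to an auxiliary hypergraph of copies of $F$, so your route is essentially the cited one. Your partition into aligned labeled copies simply builds the labeling into that auxiliary hypergraph.
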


  Combining the active-shadow lemma with removal and supersaturation gives the form used in the proof of \cref{lem:monochromatic_c10}.

  \begin{lemma}\label{lem:active-shadow-c10-blowup}
    For every $\gamma,\tau>0$, there exist $m_0\in\mathbb N$ and $\rho>0$ such that the following holds for all $m\ge m_0$.
    Let $G$ be an $m$-vertex $3$-graph, and let $R$ be a graph on $V(G)$ with $\partial G\subseteq R$.
    Suppose that $R$ is a $(\rho,\tau)$-dense $2$-graph and $\deg_G(xy)\ge \left(\frac13+\gamma\right)m$ for every $xy\in E(R)$.
    Then $G$ contains a copy of $C_{10}^{(3)}[2]$.
  \end{lemma}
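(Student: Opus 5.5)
The plan is to combine \cref{lem:active-shadow-pss} with removal and supersaturation, following the route announced before \cref{lem:hypergraph-removal}. The point is to upgrade ``one homomorphic copy'' to ``many labeled copies'' of some fixed $3$-graph whose existence in quantity forces $C_{10}^{(3)}[2]$.

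\emph{Step 1: choose the family to remove.} Let $\mathcal F$ be the finite family of all homomorphic images of $C_{10}^{(3)}$, namely the $3$-graphs obtained from $C_{10}^{(3)}$ by identifying vertices that never lie in a common edge, with the identifications leaving all edge images $3$-sets. Every homomorphic copy of $C_{10}^{(3)}$ in $G$ is a genuine copy of some $F\in\mathcal F$. Conversely, $C_{10}^{(3)}[2]$ is contained in $F'[2\cdot 10]$ for every $F'\in\mathcal F$: blow each vertex of $F'$ up to $20$ vertices and map the preimages of that vertex injectively into its blob. So it suffices to find, for some $F\in\mathcal F$, a copy of $F[20]$ in $G$.

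\emph{Step 2: many copies via removal.} Choose $\tau,\gamma$, set $\gamma'=\gamma/2$ and $\tau'=\tau/2$, and let $\rho_0,m_1$ be given by \cref{lem:active-shadow-pss} for $(\gamma',\tau')$. Pick $\delta\ll\min\{\gamma,\rho_0,\tau\}$, then $\varepsilon$ from \cref{lem:hypergraph-removal} for $\mathcal F$ and $\delta$, then $\rho\ll\delta$. Suppose every $F\in\mathcal F$ has fewer than $\varepsilon m^{v(F)}$ labeled copies in $G$. Removal deletes at most $\delta m^3$ edges and yields a $\mathcal F$-free $G'$, which therefore has no homomorphic copy of $C_{10}^{(3)}$. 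The main obstacle is that \cref{lem:active-shadow-pss} needs $\partial G'$ to be uniformly dense with codegree $\ge(1/3+\gamma')m$ on every shadow pair, and deletions can break both conditions.

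I would handle this by cleaning. Call a pair $xy\in E(R)$ \emph{bad} if more than $\sqrt\delta\, m$ of its edges were deleted. There are at most $3\sqrt\delta m^2$ bad pairs. Iteratively delete every edge of $G'$ containing a bad pair or a pair whose current codegree dropped below $(1/3+\gamma')m$. A standard counting argument shows that only $O(\delta^{1/4}m^2)$ pairs become inactive: each newly killed pair loses at least $(\gamma-\gamma')m$ edges, and the cascade is charged to the $O(\delta m^3)$ removed edges. The result is $G''\subseteq G'$ in which every pair of $\partial G''$ has codegree at least $(1/3+\gamma')m$, and $\partial G''$ differs from $R$ on $O(\delta^{1/4}m^2)$ pairs. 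That difference costs at most $O(\delta^{1/4})m^2\le\rho_0m^2$ in any $e(X,Y)$. Hence $\partial G''$ is $(\rho+\rho_0/2,\tau)$-dense, so in particular $(\rho_0,\tau')$-dense. The one remaining thing to check is that the definition of density used in \cref{lem:active-shadow-pss} refers to $R=\partial G''$ itself, and it does. Now \cref{lem:active-shadow-pss} produces a homomorphic copy of $C_{10}^{(3)}$ in $G''\subseteq G'$, contradicting $\mathcal F$-freeness.

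\emph{Step 3: supersaturation.} So some $F\in\mathcal F$ has at least $\varepsilon m^{v(F)}$ labeled copies. \cref{lem:blowup-supersaturation} with $s=20$ then gives $F[20]\subseteq G$, and Step 1 gives $C_{10}^{(3)}[2]$.
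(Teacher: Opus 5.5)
Your overall route is the paper's: the finite family of quotients of $C_{10}^{(3)}$, removal, a cleaning step, \cref{lem:active-shadow-pss} for the contradiction, and supersaturation with $s=20$ via $C_{10}^{(3)}[2]\hookrightarrow F[20]$. The gap is in the cleaning step, which you yourself flag as the main obstacle. The justification you give for the cascade bound is not valid.

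A pair that is not bad loses at most $\sqrt\delta\,m$ edges to the removal. So when such a pair is killed later, essentially all of its loss comes from edges deleted by the cascade, not from the $\delta m^3$ removed edges. Killing the $3\sqrt\delta\,m^2$ bad pairs alone can already delete on the order of $\sqrt\delta\,m^3\gg\delta m^3$ edges.

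Charging to all deleted edges does not close the count either. A killed pair can destroy about $m$ edges, and so subtract up to about $2m$ from the codegrees of other pairs. Killing a fresh pair, however, only requires a loss of $(\gamma-\gamma')m<2m$. Double counting therefore gives no bound at all, and a priori the cascade could propagate.

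What is missing is a locality argument. An edge $xyz$ disappears in the cascade only because one of its pairs was killed. Hence a pair $xy$ can lose $\Omega(\gamma m)$ edges only if $x$ or $y$ lies in $\Omega(\gamma m)$ killed pairs. Let $S$ be the set of vertices lying in at least $\gamma m/16$ bad pairs; then $|S|=O(\sqrt\delta\,m/\gamma)$.

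Consider the first moment at which a non-bad pair $xy$ with $x,y\notin S$ would be killed. All earlier killed pairs are bad or meet $S$. So $xy$ has lost at most $\sqrt\delta\,m+\gamma m/8+2|S|<\gamma m/2$ edges and still has codegree above $(1/3+\gamma/2)m$, a contradiction with $\gamma'=\gamma/2$. Hence only bad pairs and pairs meeting $S$ are ever killed, $O(\sqrt\delta/\gamma)\,m^2$ pairs in total. With this repair, the rest of your argument (density of $\partial G''$, then \cref{lem:active-shadow-pss}) goes through.

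The paper avoids the iteration altogether. It calls a pair bad when its codegree in $G'$ drops below $(1/3+\gamma/2)m$, and deletes the at most $2\kappa m$ vertices lying in more than $\kappa m$ bad pairs. It then keeps only those edges of $G'[U]$ whose three pairs are good. A single count then shows that every surviving good pair keeps codegree at least $(1/3+\gamma/4)m'$.
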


  \begin{proof}
    Let $C=C_{10}^{(3)}$.
    Let $\mathcal Q(C)$ be the finite family of all quotient $3$-graphs of $C$ obtained by identifying vertices of $C$ in such a way that every edge of $C$ is mapped to a $3$-edge.
    Thus $G$ contains a homomorphic copy of $C$ if and only if $G$ contains an injective copy of some $Q\in\mathcal Q(C)$.

    Let $\rho_1>0$ and $m_1$ be the constants given by \cref{lem:active-shadow-pss} with parameters $\gamma/4$ and $\tau/4$.
    Choose constants $0<\delta\ll \kappa\ll \min\{\gamma,\tau,\rho_1\}$.
    Apply \cref{lem:hypergraph-removal} to the finite family $\mathcal Q(C)$ with this value of $\delta$, and let $\varepsilon>0$ be the resulting constant.

    For every $Q\in\mathcal Q(C)$, apply \cref{lem:blowup-supersaturation} with $F=Q$, $s=20$, and $c=\varepsilon$.
    We choose $m_0$ sufficiently large for all these applications, and also large compared with $m_1$.
    Finally choose $\rho>0$ sufficiently small in terms of $\rho_1,\tau,\kappa$.

    Suppose, for a contradiction, that $G$ contains no copy of $C_{10}^{(3)}[2]$.
    We claim that, for every $Q\in\mathcal Q(C)$, the graph $G$ contains fewer than $\varepsilon m^{v(Q)}$ labeled copies of $Q$.
    Indeed, otherwise \cref{lem:blowup-supersaturation} would give a copy of $Q[20]$ in $G$.
    Let $\pi:V(C)\to V(Q)$ be the quotient map.  Each vertex of $Q$ has at
    most $10$ preimages under $\pi$.  Since $Q[20]$ has $20$ vertices above
    each vertex of $Q$, we may assign two distinct vertices to each preimage,
    giving an embedding $C[2]\hookrightarrow Q[20]$.  Hence $Q[20]$ contains a copy of
    $C_{10}^{(3)}[2]$, a contradiction.

    By \cref{lem:hypergraph-removal}, there is a set $E_0\subseteq E(G)$ with $|E_0|\le \delta m^3$ such that $G'\coloneqq G\setminus E_0$ contains no copy of any $Q\in\mathcal Q(C)$.
    Equivalently, $G'$ contains no homomorphic copy of $C$.

    Call a pair $xy\in E(R)$ bad if $\deg_{G'}(xy)<(1/3+\gamma/2)m$.
    Since each bad pair has lost at least $\gamma m/2$ incident hyperedges, and each deleted hyperedge is incident with exactly three pairs, the number of bad pairs is at most $(6\delta/\gamma)m^2\le \kappa^2m^2$.
    Let $B_{\rm bad}$ be the graph of bad pairs.

    Delete every vertex whose degree in $B_{\rm bad}$ is larger than $\kappa m$, and let the remaining vertex set be $U$.
    Since $e(B_{\rm bad})\le \kappa^2m^2$, we have $|V(G)\setminus U|\le 2\kappa m$.
    Put $m'=|U|$.
    Then $m'\ge (1-2\kappa)m$.

    Let $R_U\coloneqq R[U]\setminus B_{\rm bad}$.
    For all sets $X,Y\subseteq U$, we have
    \[
      e_{R_U}(X,Y)
      \ge \tau|X||Y|-(\rho+2\kappa^2)m^2.
    \]
    Since $m'\ge(1-2\kappa)m$, our choice
    $\max\{\rho,\kappa\}\ll\min\{\rho_1,\tau\}$ implies
    \[
      e_{R_U}(X,Y)
      \ge \frac{\tau}{4}|X||Y|-\rho_1(m')^2.
    \]
    Thus $R_U$ is $(\rho_1,\tau/4)$-dense on $U$.

    Now define a $3$-graph $G_U$ on $U$ by keeping those edges $xyz\in E(G'[U])$ for which all three pairs $xy,xz,yz$ lie in $R_U$.
    Then $\partial G_U\subseteq R_U$.

    We claim that every pair $xy\in E(R_U)$ satisfies $\deg_{G_U}(xy)\ge (1/3+\gamma/4)m'$.
    Indeed, since $xy$ is not bad, $\deg_{G'}(xy)\ge (1/3+\gamma/2)m$.
    Passing from $V(G)$ to $U$ loses at most $2\kappa m$ possible third vertices.
    Moreover, because $x,y\in U$, each of $x$ and $y$ is incident with at most $\kappa m$ bad pairs.
    Thus, when we require also $xz,yz\in R_U$, we lose at most another $2\kappa m$ choices of $z$.
    Hence
    \[
      \deg_{G_U}(xy)
      \ge
      \left(\frac13+\frac{\gamma}{2}\right)m-4\kappa m
      \ge
      \left(\frac13+\frac{\gamma}{4}\right)m',
    \]
    using $\kappa\ll\gamma$.

    Since $\partial G_U\subseteq R_U$ and every pair in $E(R_U)$ has positive degree in $G_U$, we have $R_U=\partial G_U$.
    Therefore the pair $(G_U,R_U)$ satisfies the assumptions of \cref{lem:active-shadow-pss} with parameters $\gamma/4$ and $\tau/4$.
    Hence $G_U$ contains a homomorphic copy of $C$.
    But $G_U\subseteq G'$, contradicting the fact that $G'$ contains no homomorphic copy of $C$.

    This contradiction proves that $G$ contains a copy of $C_{10}^{(3)}[2]$.
  \end{proof}

  \begin{proof}[Proof of \cref{lem:monochromatic_c10}]
    We prove the following slightly stronger statement.
    For every set $Z\subseteq V(H)\setminus(A\cup W)$ with $|Z|\le 20$, the graph $H[V(H)\setminus(A\cup W\cup Z)]$ contains a copy of $C_{10}^{(3)}[2]$ whose shadow pairs all have color $i$.

    Fix such a set $Z$ and put $B'\coloneqq V(H)\setminus(A\cup W\cup Z)$.
    Let $G=G_i[B']$ and $R=\partial_i[B']$.
    Then $\partial G\subseteq R$ by definition.

    By \cref{lem:shadowlocaldense}, the color-$i$ shadow graph on $V(H)\setminus W$ is dense.
    Restricting this graph to $B'=V(H)\setminus(A\cup W\cup Z)$, and weakening the constants if necessary, we obtain that $R=\partial_i[B']$ is a $(\rho',\tau)$-dense $2$-graph for some constants satisfying $\rho'\ll \tau \ll \min\{\alpha,\beta,d\}$.
    Here $\tau$ is a fixed positive constant, and the restriction loses only the vertices in $A\cup Z$.

    Let $xy\in E(R)$.
    By \cref{prop:coloritem}~\ref{coloritem2}, there are at least $(1/3+\alpha/4)n$ vertices $z\in V(H)\setminus W$ such that $xyz\in E(H)$ and $\phi(xy)=\phi(xz)=\phi(yz)=i$.
    After deleting $A\cup Z$ and the vertices $x,y$, we still have $\deg_G(xy) \ge \left(\frac13+\frac{\alpha}{5}\right)n \ge \left(\frac13+\frac{\alpha}{6}\right)|B'|$ for sufficiently large $n$.

    Apply \cref{lem:active-shadow-c10-blowup} to $G$ and $R$ with $\gamma=\alpha/6$ and with the above value of $\tau$.
    We obtain a copy of $C_{10}^{(3)}[2]$ in $G$.

    By the definition of $G_i[B']$, every hyperedge of this copy is an edge of $H$ whose three shadow pairs have color $i$.
    Hence every shadow pair appearing in the copy has color $i$.
    Thus the stronger statement holds.

    Applying the stronger statement first with $Z=\emptyset$ and then with $Z$ equal to the vertex set of the first copy gives two vertex-disjoint copies of $C_{10}^{(3)}[2]$ in $H[V(H)\setminus(A\cup W)]$, both with all shadow pairs of color $i$.
    This finishes the proof.
  \end{proof}
\end{appendix}
\end{document}